\newenvironment{enumeratei}{\begin{enumerate}[\upshape (i)]}{\end{enumerate}}
\definecolor{MyDarkblue}{rgb}{0,0.08,0.50}
\definecolor{Brickred}{rgb}{0.65,0.08,0}
\newtheorem{theorem}{Theorem}[section]
\newtheorem{lemma}[theorem]{Lemma}
\newtheorem{proposition}[theorem]{Proposition}
\newtheorem{corollary}[theorem]{Corollary}
\newtheorem{definition}[theorem]{Definition}
\newtheorem{assumption}[theorem]{Assumption}
\newtheorem{remark}[theorem]{Remark}
\newtheorem{claim}[theorem]{Claim}
\newcommand{\Pv}{\mathbb{P}}
\newcommand{\Ev}{\mathbb{E}}
\newcommand{\CC}{\mathcal{C}}
\newcommand{\sss}{\scriptscriptstyle}
\newcommand{\Var}{{\rm Var}}
\newcommand{\CE}{{\mathcal{E}}}
\newcommand*{\CMD}{{\mathrm{CM}}_n(\boldsymbol{d})}
\newcommand{\e}{{\mathrm e}}
\numberwithin{equation}{section}
\newcommand{\R}{\mathbb{R}}
\newcommand{\N}{\mathbb{N}}
\newcommand{\Z}{\mathbb{Z}}
\newcommand{\union}{\cup}
\newcommand{\CA}{\mathcal {A}}
\newcommand{\CB}{\mathcal {B}}
\newcommand{\CD}{\mathcal {D}}
\newcommand{\CG}{\mathcal {G}}
\newcommand{\CL}{\mathcal {L}}
\newcommand{\CM}{\mathcal {M}}
\newcommand{\CP}{\mathcal {P}}
\newcommand{\CR}{\mathcal {R}}
\newcommand{\CT}{\mathcal {T}}
\newcommand*{\wih}{\widehat}
\newcommand*{\vr}{\varrho}
\newcommand*{\la}{\lambda}
\newcommand*{\ve}{\varepsilon}
\newcommand*{\Vv}{{\text{\bf Var}}}
\newcommand*{\be}{\begin{equation}}
\newcommand*{\ee}{\end{equation}}
\newcommand*{\ba}{\begin{aligned}}
\newcommand*{\ea}{\end{aligned}}
\newcommand*{\barr}{\begin{array}{c}}
\newcommand*{\earr}{\end{array}}
\def \toinp    {\buildrel {\Pv}\over{\longrightarrow}}
\def \toindis  {\buildrel {d}\over{\longrightarrow}}
\def \toas     {\buildrel {a.s.}\over{\longrightarrow}}
\def \simp {\buildrel {\mathbb{P}} \over{\sim}  }
\newcommand*{\Yrn}{Y_r^{\scriptscriptstyle{(n)}}}
\newcommand*{\Ybn}{Y_b^{\scriptscriptstyle{(n)}}}
\newcommand*{\Ywn}{Y_1^{\scriptscriptstyle{(n)}}}
\newcommand*{\Yln}{Y_2^{\scriptscriptstyle{(n)}}}
\newcommand*{\bnr}{b_n^{\sss{(r)}}}
\newcommand*{\bnb}{b_n^{\sss{(b)}}}
\newcommand*{\anr}{\alpha_n^{\sss{(r)}}}
\newcommand*{\anb}{\alpha_n^{\sss{(b)}}}
\newcommand*{\dnr}{\delta_n^{\sss{(r)}}}
\newcommand*{\dnb}{\delta_n^{\sss{(b)}}}
\newcommand*{\benb}{\beta_n^{\sss{(b)}}}
\newcommand*{\wur}{\widetilde u^{\sss{(r)}}}
\newcommand*{\wub}{\widetilde u^{\sss{(b)}}}
\newcommand*{\wgar}{\widetilde \Gamma^{\sss{(r)}}}
\newcommand*{\wgab}{\widetilde \Gamma^{\sss{(b)}}}
\newcommand*{\CMBN}{\mathcal M^{\sss{(b)}}_n}
\newcommand*{\MBN}{M^{\sss{(b)}}_n}
\newcommand*{\wit}{\widetilde}
\newcommand*{\opt}{\mathcal{O}}
\newcommand*{\ops}{\mathrm{O}}
\newcommand*{\opss}{\mathrm{Opt}}
\newcommand*{\ind}{\mathbbm{1}}
\def\namedlabel#1#2{\begingroup
    #2%
    \def\@currentlabel{#2}%
    \phantomsection\label{#1}\endgroup
}
\begin{document}
	\title[Competition on Configuration model for $\tau\in (2,3)$]{Fixed speed competition on the configuration model with infinite variance degrees: equal speeds}

	\date{\today}
	\subjclass[2000]{Primary: 60C05, 05C80, 90B15.}
	\keywords{Random networks, configuration model, competition, power law degrees, typical distances, co-existence}

	\author[van der Hofstad]{Remco van der Hofstad}
	\address{Department of Mathematics and
	    Computer Science, Eindhoven University of Technology, P.O.\ Box 513,
	    5600 MB Eindhoven, The Netherlands.}
\author[Komj\'athy]{J\'ulia Komj\'athy}
	\email{rhofstad@win.tue.nl, j.komjathy@tue.nl}

\begin{abstract}
We study competition of two spreading colors starting from single sources on the configuration model with i.i.d.\ degrees following a power-law distribution with exponent $\tau\in (2,3)$.
In this model two colors spread with a fixed and equal speed
on the unweighted random graph.
 We analyse how many vertices the two colours paint eventually, in the quenched setting. We show that coexistence sensitively depends on the initial local neighbourhoods of the source vertices: if these neighbourhoods are `dissimilar enough', then there is no coexistence, and the `loser' color paints a polynomial fraction of the vertices with a random exponent.
   If the  local neighbourhoods of the starting vertices are `similar enough', then there is coexistence, i.e., both colors paint a strictly positive proportion of vertices. We give a quantitative characterisation of `similar' local neighborhoods: two random variables describing the double exponential growth of local neighborhoods of the source vertices must be within a factor $\tau-2$ of each other.

   This picture reinforces the common belief that location is an important feature in advertising.
   This paper is a follow-up of the similarly named paper that handles the case when the speeds of the two colors are not equal. There, we have shown that  the faster color paints almost all vertices, while the slower color paints only a random subpolynomial fraction of the vertices.
\end{abstract}

\maketitle

\section{Introduction and results}
\subsection{The model and the main result}
Let us consider the configuration model $\CMD$ on $n$ vertices, where the degrees $D_v, v\in \{1,2,\dots, n\}:=[n]$ are i.i.d.\ with a power-law tail distribution. That is, given the number of vertices $n$, to each vertex we assign a random number of half-edges drawn independently from a distribution $F$ and the half-edges are then paired randomly to form
edges. In case the total number of half-edges $\CL_n:=\sum_{v\in[n]} D_v$ is not even, then we add one half-edge to $D_n$ (see below for more details).
 We assume that
\be\label{eq::F} \frac{c_1}{x^{\tau-1}}\le 1- F(x)= \Pv(D>x) \le \frac{C_1}{x^{\tau-1}},\ee
 with $\tau \in (2,3)$, and all edges have weight $1$. We assume that $\Pv(D\ge 2)=1$ guaranteeing that the graph has almost surely a unique connected component of size $n(1-o_{\Pv}(1))$ see e.g.\ \cite[Vol II., Theorem 4.1]{H10} or \cite{MolRee95, MolRee98}.\\
 We further denote the mass function of  (the \emph{size-biased version} of $D)-1$ by
 \be\label{def::size-biased1}
f^\star_{j}:=\frac{(j+1)\Pv(D=j+1)}{\Ev[D]}\text{, }\quad j\geq 0.
\ee
We write $F^\star(x)$ for the distribution function $F^\star(x)=\sum_{j=0}^{\llcorner x\lrcorner}f^\star_{j}$.
It is not hard to see
that there exist $0<c_1^\star\le C_1^\star<\infty$, such that
\be \label{eq::size-biased2} \frac{c_1^\star}{x^{\tau-2}}\le 1- F^\star(x)\le \frac{C_1^\star}{x^{\tau-2}}.  \ee
 Pick two vertices $\mathcal R_0$ (the red source) and $\mathcal B_0$ (the blue source) uniformly at random in $[n]$,
 and consider these as two sources of spreading infections.
 Each infection spreads \emph{deterministically} on the graph: in general,
for color blue it takes $\la$ time units to pass through an edge, while color red needs $1$ unit of time for that.
  Without loss of generality we can assume that $\la\ge 1$. The $\la>1$ case was treated in \cite{BarHofKom14}, while in this paper we study the case where $\la=1$. Each vertex is painted the color of the infection that reaches it
  first, keeps its color forever, and starts coloring the outgoing edges at the speed of its color.  When the two colors reach a vertex at the \emph{same} time, the vertex gets color red
  or blue with  probability $1/2$ each, independently of everything else. In general, this rule could be modified to an \emph{arbitrary adapted rule}, i.e., a rule that does not depend on the future. Throughout this paper, we handle the case when the rule is such that at all times, there is a strictly positive probability for both colors to get the vertex under consideration, and this decision is independent of the decision about other vertices. However, we emphasise that other rules can be handled analogously with possibly different outcomes for coexistence.

	Let $\mathcal R_t:=\mathcal R_t(n)$ and $\mathcal B_t:=\mathcal B_t(n)$
  denote the number of red and blue vertices occupied up to time $t$, respectively. We denote by $\CB_{\infty}:=\CB_{\infty}(n)$ the number of vertices eventually occupied by blue.
   Roughly speaking, the first result of this paper, Theorems \ref{thm::main} and \ref{thm::main2} below, tell us that with high probability (whp), i.e., with probability tending to $1$ as the size of the graph tends to infinity, one out of two things can happen:

   1.) When the local neighbourhoods of the source vertices are \emph{dissimilar enough}, i.e., if one of them grows at a significantly faster speed than the other, then there is no coexistence, and the color with faster-growing local neighbourhood gets $n-o_{\Pv}(n)$ vertices.
 The number of vertices the other color paints is a polynomial of $n$ with a random exponent. More precisely, blue paints whp $n^{H_n(Y_r, Y_b)}$ many vertices, where the coefficient  $H_n(Y_r, Y_b)<1$ is a random function that depends on $n, \tau$, and two random variables $Y_r$ and $Y_b$, that can intuitively be interpreted as some measure of `how fast' the neighbourhoods of the source vertices grow: the faster the local neighbourhoods grow, the larger these variables are. Moreover, $H_n(Y_r, Y_b)$ does not converge in distribution: it has an oscillatory part that exhibits  `$\log\log$-periodicity'.

  2.) If the local neighbourhoods of the source vertices are \emph{similar enough}, i.e., the random variables $Y_r, Y_b$, describing the speed of growth of the neighbourhoods, are within a factor $\tau-2$ of each other, then both colors get a linear proportion of vertices, i.e., there is asymptotic coexistence in the model.
 More precisely, both $\CB_\infty/n, \CR_\infty/n$ stay strictly between $0$ and $1$ as $n\to \infty$. We also show that as the ratio of $Y_r, Y_b$ approaches $\tau-2$, the proportion of vertices painted by the color with the smaller $Y$ value tends to zero, and in this sense, the transition from coexistence to non-coexistence is smooth. These results are established by the analysis of a \emph{branching process random coloring scheme}, which we find interesting in its own right. In this coloring scheme, the BP is run until the maximal degree reaches some value $Q$, and then, the vertices in the last generation of the stopped BP are colored red, blue or stay uncolored according to their degrees. The colored vertices spread their color to earlier generation vertices on the BP tree, following a rule that is similar to the spreading dynamics for red and blue in the graph. We show that the root of the BP can get both colors with positive probability uniformly as $Q\to \infty$, which in turn implies coexistence in the configuration model. For more on this problem see Section \ref{sc::BP-color} below.

The heuristic interpretation of these results and the main result in \cite{BarHofKom14} in marketing terminology is as follows: if a company gains customers via word-of-mouth recommendations, then the one with faster spreading speed takes most of the network. On the other hand, if both of the companies gain customers at the same speed, then the one with better starting location can gain most of the network, at least if this location is significantly better than the starting location of the other company (companies).
This result reinforces the common sense about importance of location in (online) location-based  advertisement: there is even a slogan called `Location, location, location', see e.g.  \cite{BruKum07, Das05, LLL, XuOhTe09}. 

According to our knowledge this is the first random graph model that can both produce asymptotic coexistence and non-coexistence when the spreading speeds are equal (i.e., both outcomes happen with positive probability). Further, which one of the two outcome happens depends on the starting location of the spreading colors, and moreover, the asymptotic probability of these events is explicitly computable. 

  The other main result, Theorem \ref{thm::distances}, describes the distribution of the fluctuations of typical distances in the graph. More precisely, it was shown in \cite{HHZ07}  that the graph distance between two uniformly chosen vertices is concentrated around $2 \log \log n / |\log (\tau-2)|$, with bounded and non-converging fluctuations around this value.
  Here, we provide a different, in some sense more natural, proof of this fact, and provide a different representation of the fluctuation: we describe it as a simple function of $n$ and two independent random variables that describe the growth rate of the local neighborhood of the source vertices. This function contains integer parts, hence, the same $\log \log$-periodicity  phenomenon is present as the one in Theorems \ref{thm::main} and Theorem \ref{thm::main2}, coming from the fact that the edge weights are concentrated on a lattice. Although it is not apparent from their final forms, we emphasize that the result of \cite{HHZ07} and Theorem \ref{thm::distances} are \emph{the same}: we show this fact in Appendix \ref{appendix::distances}.

  To be able to state the main theorem precisely, let us define the following random variables:
  \begin{definition}[Galton-Watson limits]\label{def::limit-variables}
  Let $Z_k^{\sss{(r)}}, Z_k^{\sss{(b)}}$ denote the number
 of individuals in the $k$th generation of two independent copies of a Galton-Watson process described as follows: the size of the first generation has distribution $F$ satisfying \eqref{eq::F},
 and all the further generations have offspring distribution $F^\star$ from \eqref{def::size-biased1}.
 Then, for a fixed but small $\vr>0$ let us define
\be\label{def::yrn-ybn}\Yrn:=(\tau-2)^{t(n^\vr)} \log (Z^{\sss{(r)}}_{t(n^{\vr})}),  \quad \Ybn:=(\tau-2)^{ t(n^{\vr})} \log (Z^{\sss{(b)}}_{ t(n^{\vr})}),\ee
  where $t(n^\vr)=\inf_k\{\max\{ Z_k^{\sss{(r)}}, Z_k^{\sss{(b)}} \} \ge n^\vr\}$.
Let us further introduce
  \be\label{def::Y} Y_r:= \lim_{k\to\infty} (\tau-2)^k \log (Z_k^{\sss{(r)}}), \quad Y_b:=\lim_{k\to\infty} (\tau-2)^k \log (Z_k^{\sss{(b)}}).\ee
%  and finally, set
%\be\label{def::y1n}\ba \Ywn:= \max\{\,\Yrn, \Ybn\}, &\quad \Yln:= \min\{\,\Yrn, \Ybn\}\\
%Y_1:= \lim_{n\to\infty} \Ywn, &\quad Y_2:= \lim_{n\to\infty} \Yln\ea \ee
\end{definition}
  We will see in Section \ref{sc::BP} below that these quantities are well-defined and that $(\Yrn, \Ybn)\toindis (Y_r,Y_b)$ from \eqref{def::Y} as $n\to \infty$. To be able to state the results shortly, let us further define, for $j=r,b$,
\be\label{def::ti-bi} T_j:=\left\lfloor\frac{\log\log n -\log((\tau-1)Y_j^{\sss{(n)}})}{|\log (\tau-2)|}-1\right\rfloor, \quad
b_n^{(j)}:= \left\{\frac{\log\log n -\log((\tau-1)Y_j^{\sss{(n)}})}{|\log (\tau-2)|}\right\},\ee
where $\lfloor x\rfloor $ denotes the largest integer that is at most $x$ and $\{x\}= x-\lfloor x\rfloor$ denotes the fractional part of $x$.

Let us also introduce four events $E_<, E_>, O_<, O_>$ where $E,O$ stands for the events that $T_r+T_b-1$ is even or odd, respectively, and the subscript $<$ is added when $\tau-1<(\tau-2)^{\bnr}+(\tau-2)^{\bnb}$ and the subscript $>$  is added when $\tau-1>(\tau-2)^{\bnr}+(\tau-2)^{\bnb}$. We write $D_n^{\max}(t)$ for the degree of the maximal degree vertex occupied by the losing color at time $t$.

For sequences of random or deterministic variables $X_n, Y_n$ we write $X_n = o_{\Pv}(Y_n)$ and $X_n = O_{\Pv}(Y_n)$ if the sequence $X_n/Y_n \toinp 0 $ and is tight, respectively.

 Recall that $\CB_\infty, \CR_\infty$ denotes the number of vertices eventually occupied by the blue and red colors, respectively. With these notations in mind, we can state our main results:
%%%%%%%%%%%%%%%%%%%%%%%

\begin{theorem}[Total number of vertices painted by the losing color]\label{thm::main} Let us assume wlog that $\Yrn>\Ybn$ in Definition \ref{def::limit-variables}, that is, the `losing' color is blue.
When $\Ybn/\Yrn \le \tau-2$, then $\CR_\infty/n\toinp 1$ whp, and
\[ \frac{\log (\CB_\infty)}{\log n\cdot (\tau-1)^{-1} f_n(\Yrn, \Ybn) }  \toindis \sqrt{\frac{Y_b}{Y_r}}, \]
where $f_n(\Yrn, \Ybn)$ is an oscillating random variable  given by
%\be\label{eq::main-f} \ba f_n(\Yrn, \Ybn) =& \ \ (\tau-2)^{(\bnr+\bnb - 1 + \ind_{O<} - \ind_{O>})/2}  \\
%&+(\tau-2)^{(\bnr -\bnb -1 +\ind_{O_>}-\ind_{O_<})/2} (\tau-1-(\tau-2)^{\bnr}) .\ea  \ee
%%%this is the same, but fits in a line
\be\label{eq::main-f} f_n(\Yrn, \Ybn) \!=\! (\tau\!-\!2)^{(\bnr-\bnb - 1 - \ind_{O})/2}
\!\!\left(\! (\tau-2)^{\bnb\! + \ind_{O_<}}\! +\! (\tau\!-\!1\!-\!(\tau\!-\!2)^{\bnr})(\tau\!-\!2)^{ \ind_{O_>}}\! \right)\! ,  \ee
where $O=O_< \cup O_>$.
%Further, in all cases $\sqrt{\Ybn/\Yrn} f_n(\Yrn, \Ybn) (\tau-1)^{-1}<1$, that is, $\CB_\infty=o(n)$.
\end{theorem}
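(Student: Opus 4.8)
\smallskip
\noindent\emph{Proof strategy.} My starting point is a deterministic simplification that is special to equal speeds: since each colour traverses every edge in exactly one time unit, a straightforward induction along shortest paths (using only the triangle inequality) shows that the infection from $\mathcal R_0$ reaches a vertex $v$ at time exactly $\dist(\mathcal R_0,v)$ whenever $\dist(\mathcal R_0,v)<\dist(\mathcal B_0,v)$, and symmetrically for $\mathcal B_0$; hence, up to the vertices with $\dist(\mathcal R_0,v)=\dist(\mathcal B_0,v)$ — each of which is an independent fair coin flip — the vertex $v$ is red precisely when it is closer to $\mathcal R_0$. Therefore
\[
\CB_\infty=\#\{v:\dist(\mathcal B_0,v)<\dist(\mathcal R_0,v)\}+\big(\text{Binomial over the tie vertices}\big),
\]
and the theorem reduces to understanding the joint distance profiles from $\mathcal R_0$ and $\mathcal B_0$ in $\CMD$, which is exactly the setting of \cite{BarHofKom14,HHZ07}. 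The tie term is negligible for the leading order $\sqrt{Y_b/Y_r}$, but it will have to be controlled for the oscillating correction.

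First I would install the branching-process picture. Exploring the two neighbourhoods in parallel until one of them reaches $n^\vr$ vertices couples them, with error $\op(1)$, to two independent copies of the Galton--Watson process of Definition~\ref{def::limit-variables}; this is how $\Yrn,\Ybn$ and their limits $Y_r,Y_b$ enter, and it yields the double-exponential growth $\log Z^{\sss{(j)}}_k=(\tau-2)^{-k}Y^{\sss{(n)}}_j(1+o(1))$ together with the companion estimate that the largest degree reached in generation $k$ is $(Z^{\sss{(j)}}_k)^{1/(\tau-2)+o(1)}$, so that a vertex of degree $d$ reached at time $t$ is joined one step later to one of degree $\approx d^{1/(\tau-2)}$. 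Iterating this ``climb'' along the greedy spine of successive maximum-degree vertices, the red cluster reaches a vertex of degree $\asymp n^{(\tau-2)^{\bnr}/(\tau-1)}$ at time $T_r+1$ and the absolute maximum-degree vertex (degree $\asymp n^{1/(\tau-1)}$) one or two steps later; here $T_r$ and $\bnr$ of \eqref{def::ti-bi} are precisely the integer part and the fractional overshoot of $(\log\log n-\log((\tau-1)\Yrn))/|\log(\tau-2)|$. Since $\Yrn>\Ybn$ forces the analogous blue times to be strictly larger, red reaches the ``core'' (the vertices of degree above a slowly growing threshold, which has diameter $O(1)$) strictly before blue, and a standard first/second-moment argument in $\CMD$ then shows that from the core red floods all of $[n]$ except a small ball around $\mathcal B_0$ that blue has time to capture. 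In particular $\CB_\infty$ will turn out to be $n^{c}$ with a random $c<1$, and then $\CR_\infty/n\toinp1$ is immediate once the count of $\CB_\infty$ is in hand.

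The heart of the proof is to locate the collision on blue's spine and to count $\CB_\infty$. Writing $w_0=\mathcal B_0,w_1,w_2,\dots$ for blue's greedy spine with $\deg(w_s)=\exp\big((\tau-2)^{-s}\Ybn(1+o(1))\big)$, I would show that red's flood, descending the same spine from the core, reaches $w_s$ at time $\approx h_r+(L_b-s)$ with $L_j:=(\log\log n-\log((\tau-1)Y^{\sss{(n)}}_j))/|\log(\tau-2)|$ and $h_r\approx L_r$, whereas blue reaches $w_s$ at time $s$; consequently a vertex keeps the colour blue exactly when its branch-off point on the spine occurs before generation $S^\star\approx\tfrac12(h_r+L_b)\approx\tfrac12(L_r+L_b)$. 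Summing the subtrees hanging off the first $S^\star$ spine vertices, and using $(\tau-2)^{-L_j}=\log n/((\tau-1)Y^{\sss{(n)}}_j)$, yields on the exponential scale
\[
\log\CB_\infty=(\tau-2)^{-S^\star}\Ybn(1+\op(1))=\frac{\log n}{\tau-1}\sqrt{\frac{\Ybn}{\Yrn}}\,(1+\op(1)),
\]
so that after dividing by $\log n\cdot(\tau-1)^{-1}$ and inserting the oscillating factor one obtains the limit $\sqrt{Y_b/Y_r}$ from $(\Yrn,\Ybn)\toindis(Y_r,Y_b)$. The hypothesis $\Ybn/\Yrn\le\tau-2$ is exactly the statement $L_b-L_r\ge 1$, i.e.\ blue lags at least a full generation at the core; since the core has diameter $O(1)$, this is precisely what guarantees that red captures \emph{all} of it before blue can reach any hub, so blue never participates in the global flooding and the finite-ball count above — rather than a coexistence scenario — is the operative one (the complementary regime is handled by the branching-process colouring scheme of Section~\ref{sc::BP-color}).

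Finally, the oscillating factor $f_n$ comes from redoing the previous paragraph with exact integer arithmetic in place of ``$\approx$''. Two lattice effects enter. First, the parity of $T_r+T_b-1$ decides whether the two frontiers meet exactly at a vertex or at the midpoint of an edge — the latter creating a whole coin-flipped tie layer — and this is the source of the indicators $\ind_O$. Second, whether the red and blue spine-tops at times $T_r+1$ and $T_b+1$ are already adjacent or need one extra edge is governed by the ``birthday-problem'' comparison of $\tau-1$ with $(\tau-2)^{\bnr}+(\tau-2)^{\bnb}$: two hubs of degrees $\asymp n^{(\tau-2)^{\bnr}/(\tau-1)}$ and $\asymp n^{(\tau-2)^{\bnb}/(\tau-1)}$ find each other among the $\asymp n$ half-edges iff the exponents sum to at least $1$, which is exactly the $<$/$>$ dichotomy. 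Carrying the correct floor for $S^\star$, together with the exact degree of the collision vertex, through $\log\CB_\infty=(\tau-2)^{-S^\star}\Ybn$ in each of the four cases $E_<,E_>,O_<,O_>$ produces \eqref{eq::main-f}. I expect this bookkeeping — keeping the $O(1)$ multiplicative corrections exact and uniform in $n$ while simultaneously controlling the stochastic fluctuations of the cluster sizes and of the number of tie-vertices, so that neither perturbs $f_n$ — to be the main obstacle; the underlying probabilistic estimates (branching-process coupling, path counting in $\CMD$, concentration of the core) are by now standard and largely available from \cite{BarHofKom14,HHZ07,H10}.
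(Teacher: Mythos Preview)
Your spine-collision heuristic correctly locates the leading exponent $\sqrt{Y_b/Y_r}$, and the reduction to distance comparison is a clean observation special to equal speeds. But the derivation of the oscillating factor $f_n$ has a concrete gap.

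You propose to recover \eqref{eq::main-f} by carrying exact integer values of $S^\star$ through $\log\CB_\infty=(\tau-2)^{-S^\star}\Ybn$. This cannot work: for any integer choice of $S^\star$ that expression yields a \emph{single} power of $(\tau-2)$ as the correction to $\sqrt{\Ybn/\Yrn}\,\log n/(\tau-1)$, whereas $f_n$ in \eqref{eq::main-f} is visibly a \emph{sum of two distinct} such powers. What is missing is blue's growth \emph{after} the collision. At time $t_b=T_r+\lfloor t_c\rfloor+1$ blue holds $\MBN$ half-edges at its maximal-degree layer, and from there it continues to expand for $o(\log\log n)$ more steps into low-degree vertices, with the degree at step $j$ truncated below $\wur_{t_b+j-1-T_r}$ by the advancing red avalanche. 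In the paper this post-collision growth is handled by path counting (Lemma~\ref{lem::wihZk}, Corollary~\ref{cor::chebisev}), giving
\[
\log\CB_\infty=\log\MBN+\sum_{j\ge 1}\log\nu_j+\op(1),
\]
where \emph{both} summands are of order $\sqrt{\Ybn/\Yrn}\,\log n/(\tau-1)$; the final factor is $f_n=h_n^{\text{half-edge}}+h_n^{\text{paths}}$ (see \eqref{def::gfunction} and \eqref{def::Cpath}). Moreover $\log\MBN$ is not simply the BP size at time $t_b$: in cases $E_>,O_<$ blue's last upward jump is only a \emph{partial} layer, truncated at $\wur_{\lfloor t_c\rfloor}$, and counting the half-edges reached there brings in the extra factor $(\xi^{-1}+3-\tau)$ with $\xi=(\tau-2)^{-2\{t_c\}}$ from Lemma~\ref{lem::verticeswithmaxdegree}. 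Your spine picture sees neither the partial-jump correction nor the post-collision path counting, so ``exact integer arithmetic'' on a single formula cannot recover $f_n$.

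Two smaller points. The identity $\log Z_k^{\sss{(b)}}\approx(\tau-2)^{-k}\Ybn$ is only justified while the branching-process coupling holds, i.e.\ for $k\le t(n^\vr)$; extrapolating it to $k\sim\log\log n/|\log(\tau-2)|$ is not valid, and the paper switches after the BP phase to the deterministic layer structure $\{\Gamma_i^{\sss{(b)}}\}$ together with the bad-path estimate (Lemma~\ref{lem::badpaths}). And your reading of $\Ybn/\Yrn\le\tau-2$ as ``blue never reaches any hub'' is slightly off: in Case~(3)(a) of Section~\ref{sc::peak}, where $T_b=T_r+1$, blue does paint a few vertices of degree $\simp n^{(\tau-2)^{\bnb}/(\tau-1)}$; what the hypothesis actually guarantees is that blue cannot \emph{cross} the peak, since the layer $\wgab_1$ it would need on the far side is already contained in the red set $\wgar_1$.
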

\begin{remark}\normalfont
Note that in Theorem \ref{thm::main}, the function $f_n(\Yrn, \Ybn)$ filters out the oscillations coming from $\log\log$-periodicity, and hence, it depends on $\Yrn, \Ybn$.  We emphasise that in general it is not true that $f_n(\Yrn, \Ybn) (\tau-1)^{-1}<1$. However, it is true that
\[ \sqrt{\Ybn/\Yrn} f_n(\Yrn, \Ybn) (\tau-1)^{-1} < 1, \]
hence we get the statement that $\CB_\infty=o_{\Pv}(n)$. See Lemma \ref{lem::no-coexistence} below for the proof of this fact.
\end{remark}
The next theorem holds under the following (technical) assumption:
\begin{assumption}\label{assume::abs-cont}
The limiting random variable $Y=\lim_{n\to \infty} (\tau-2)^k\log Z_k$ of the BP described in Definition \ref{def::limit-variables} has an absolutely continuous distribution function with support containing an interval of the form $(0, K), \ K\in \R^+ \cup\{\infty\}$.
\end{assumption}
 The criteria on $F$ required for this assumption to hold are not obvious: according to our knowledge, no necessary and sufficient condition for absolute continuity can be found in the literature. We provide some necessary criterion based on the work \cite{Sene73, Sene74}   below in Assumption \ref{assume::convex}. This assumption is not tight, though, milder criteria on the slowly varying function hidden in \eqref{eq::F} can also guarantee the statement. We improve the already existing criteria in an upcoming short note to be published elsewhere \cite{HofKom15}.
\begin{theorem}[Asymptotic coexistence when $Y_b/Y_r>\tau-2$]\label{thm::main2}
Let us assume wlog that $\Yrn>\Ybn$ in Definition \ref{def::limit-variables}, that is, the `losing' color is blue, and that Assumption \ref{assume::abs-cont} holds. When $q:=\Ybn/\Yrn > \tau-2$, then both colors can paint a linear proportion of the vertices. More precisely, there exists deterministic constants $0<c(q)<C(q)<1$ such that whp as $n\to \infty$
\[ c(q)\le \frac{\CB_\infty}{n}\le C(q). \]
Further, we have $c(q), C(q)\to 0$ as $q \searrow \tau-2$.
\end{theorem}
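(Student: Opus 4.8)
\emph{Strategy.} The proof decomposes the competition on $\CMD$ into an \emph{outward} phase --- the two infections race from $\mathcal R_0$ and $\mathcal B_0$ to the high-degree core of the graph --- and an \emph{inward} phase --- the already-coloured core floods the bulk of the vertices. Fix a small $\vr>0$ and explore the neighbourhood of each source by breadth-first search, stopping colour $j\in\{r,b\}$ at the first generation $T_j$ at which its cluster reaches a vertex of degree $\geq Q:=n^\vr$. Coupling these partial explorations with two independent Galton--Watson trees (offspring law $F$ in the first generation, $F^\star$ afterwards), as in Section \ref{sc::BP}, identifies $T_r,T_b$ and the fractional parts $\bnr,\bnb$ with those of \eqref{def::ti-bi}, and the double-exponential arithmetic $\log Z_k\sim(\tau-2)^{-k}Y_j$ shows that the gap $T_b-T_r$ belongs to $\{0,1\}$ exactly on the event $\Ybn/\Yrn>\tau-2$ (and is $\geq 1$ otherwise): this is the origin of the threshold. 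On the other side, the standard structure of $\CMD$ for $\tau\in(2,3)$ (see e.g.\ \cite{H10,HHZ07}) provides a small-diameter core of highest-degree vertices, from which a typical vertex lies at graph-distance $\log\log n/|\log(\tau-2)|+O_{\Pv}(1)$.

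\emph{The branching-process random colouring scheme.} Consequently the colour of a typical vertex $v$ is determined by the scheme of Section \ref{sc::BP-color}, run with root $v$: grow the Galton--Watson tree until the maximal degree in a generation first exceeds $Q$; colour each last-generation vertex red if its degree exceeds a large threshold, blue if its degree lies in an intermediate window, uncoloured otherwise, the two thresholds chosen to encode the head start $T_r$ versus $T_b$ together with the fractional parts $\bnr,\bnb$; then let the colours propagate back toward the root at unit speed with the fair tie-break. The heart of the matter is to prove that for every $q>\tau-2$ there is a deterministic $c(q)>0$ such that, \emph{uniformly in $Q$}, the root is coloured red with probability $\geq c(q)$ and blue with probability $\geq c(q)$. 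This is where the four cases $E_<,E_>,O_<,O_>$ enter: the parity of $T_r+T_b-1$ (which, via a race on a path, governs who arrives first at intermediate vertices) and the sign of $\tau-1-(\tau-2)^{\bnr}-(\tau-2)^{\bnb}$ hand one colour a structural advantage, but in each case the intermediate window, measured on the logarithmic degree scale, has width bounded below, hence contains a diverging number of last-generation vertices, each founding a macroscopic subtree that reaches the root with positive, $Q$-independent probability.

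\emph{Transfer and behaviour near the threshold.} It then remains to couple the inward phase of the true competition with the back-propagation in the scheme --- a core vertex of degree $d$ is won by the colour whose front reaches it first (fair ties) and transmits its colour to $\Theta(d)$ neighbours and, iterating, to a positive proportion of the vertices routed to the core through it. Conditionally on $\{Y_b/Y_r=q\}$, which is a legitimate conditioning under Assumption \ref{assume::abs-cont} --- precisely what that assumption is for --- a concentration argument over the uniformly chosen vertex shows that $\CB_\infty/n$ equals, up to an $o_{\Pv}(1)$ error, the probability that the root of the scheme is coloured blue (and likewise $\CR_\infty/n$ for red). By the previous paragraph this probability lies in a deterministic interval $[c(q),C(q)]$ with $0<c(q)\leq C(q)<1$, which is the asserted bound. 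Finally, as $q\searrow\tau-2$ the deficit $T_b-T_r$ is forced to the value $1$ and the intermediate window available to blue shrinks to zero width, so the root-blue probability tends to $0$ uniformly; hence $c(q),C(q)\to 0$, matching the smooth transition into the non-coexistence regime of Theorem \ref{thm::main}.

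\emph{Main obstacle.} The crux is the uniform-in-$Q$ lower bound on the probability that the root receives each colour. One must show the scheme is stable as $Q\to\infty$: the decisive generations stay a bounded distance below the last one, all relevant degrees are polynomial in the local scale, and the oscillatory factors $(\tau-2)^{\bnr},(\tau-2)^{\bnb}$ stay bounded away from the degenerate configurations when $q$ is bounded away from $\tau-2$. Controlling these quantities uniformly, together with the coupling errors between the idealised back-propagation and the genuine shortest-path competition across the configuration-model core (including the fair tie-break at equidistant vertices), is the real work of the proof.
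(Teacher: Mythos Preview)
Your overall architecture---reduce the question to a branching-process colouring scheme for a typical vertex, prove a uniform-in-$Q$ lower bound for each colour at the root, and transfer to $\CB_\infty/n$ by a second-moment/coupling argument---is precisely the paper's route. But several of the concrete steps are wrong or missing, and one of them is load-bearing.

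\textbf{The starting rule is mis-specified.} You colour a last-generation vertex \emph{red} if its degree exceeds a large threshold and \emph{blue} if it lies in an intermediate window. The actual core structure produced by the mixed avalanche (Lemmas \ref{lem::independence} and \ref{lem::red-intervals}) is the opposite: the \emph{lower} interval $[Q,Q^{\gamma})$ is (essentially) all red, while the \emph{higher} interval $[Q^{\gamma},Q^{1/(\tau-2)})$ is mixed, i.e.\ each vertex is red or blue with probability $1/2$ by an independent fair coin. There is no interval that is deterministically blue. This matters: with your rule the blue mass in $\CG_\kappa$ is of the wrong order, and more importantly the recursive argument that carries blue down to the root relies essentially on the fair-coin structure of the mixed interval (see below), not on a deterministic blue band.

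\textbf{The characterisation of $T_b-T_r$ is incorrect.} You assert that $T_b-T_r\in\{0,1\}$ \emph{exactly} when $\Ybn/\Yrn>\tau-2$. In fact $T_b-T_r=1$ also occurs when $\Ybn/\Yrn\in((\tau-2)^2,\tau-2]$ (Case (3)(a) in Section \ref{sc::peak}); what distinguishes the coexistence regime is not the value of $T_b-T_r$ alone but whether $\anb<\anr$ at the crossing step, equivalently whether $\gamma:=\anb(\tau-2)^{\wit\ind-1}/\anr$ lies in $(1,1/(\tau-2))$. Relatedly, the four events $E_<,E_>,O_<,O_>$ you invoke govern the non-coexistence analysis of Theorem \ref{thm::main}, not the colouring scheme here; and the claim that ``$T_b-T_r$ is forced to the value $1$'' as $q\searrow\tau-2$ is false (both $T_b=T_r$ and $T_b=T_r+1$ are compatible with $\gamma\nearrow 1/(\tau-2)$, cf.\ Remark \ref{rem::q-gamma-relation}).

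\textbf{The crux is not argued.} You correctly flag the uniform-in-$Q$ lower bound for $\Pv(\text{root is blue})$ as the main obstacle but give no mechanism. In the paper this is Lemma \ref{lem::blue-first-moment}: conditionally on $M_\kappa\ge Q^{\gamma(1+\ve)}$ (which has probability bounded below by Lemma \ref{lem::q-delta}, using Assumption \ref{assume::abs-cont} to control $\Pv(c_Q\in(0,\wit\gamma))$ uniformly in $n$), one shows that a sibling of the ray at depth $\kappa-i$ is blue with probability at least $s_i$, where $s_0\asymp Q^{-\gamma(\tau-2)}$ and $s_{i+1}=\tfrac{c_2^\star}{4}s_i^{\tau-2}$. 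Solving this recursion and plugging in $\kappa$ from \eqref{eq::kappa} gives $s_{\kappa-1}\ge C\e^{-\gamma Y_\kappa^w}$, which is bounded away from zero uniformly in $Q$. The point is that the $(\tau-2)$-power in the flow rule exactly cancels the double-exponential growth of generation sizes; your sketch (``the intermediate window \ldots\ contains a diverging number of last-generation vertices, each founding a macroscopic subtree that reaches the root with positive, $Q$-independent probability'') does not capture this and, as written, would also ``prove'' coexistence for $q<\tau-2$.

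In short: the strategy is right and matches the paper, but the starting rule, the $T_b-T_r$ dichotomy, and the role of $E_\lessgtr,O_\lessgtr$ need correcting, and the uniform lower bound---the real content of Proposition \ref{prop::BP-color}---still has to be proved.
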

\begin{remark} For the specific values of $c(q)$ and $C(q)$ see the proof of the theorem on page 49.
\end{remark}
For an event $A$, we denote $\Pv_n(A):=\Pv(A| D_1, D_2, \dots, D_n)$. We say that the two competing spreading processes asymptotically coexist on a sequence of finite graphs indexed by $n$, if the limiting ratios $\CR_\infty(n)/n$ and $\CB_\infty(n)/n$ are both strictly positive with strictly positive probability, as $n\to \infty$.
An immediate consequence of Theorem \ref{thm::main2} is the following result:
\begin{corollary}[Probability of coexistence] In the above competition model with equal speeds, under Assumption \ref{assume::abs-cont}, whp
\[ \lim_{n\to \infty }\Pv_n( \text{ coexistence occurs } )  = \Pv\,(\, Y_b/ Y_r \in ( \tau-2, (\tau-2)^{-1}) ). \]
\end{corollary}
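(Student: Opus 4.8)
The plan is to read the corollary off from Theorems~\ref{thm::main} and~\ref{thm::main2} together with the convergence $(\Yrn,\Ybn)\toindis(Y_r,Y_b)$ established in Section~\ref{sc::BP}. Write $\mathrm{Coex}_n$ for the coexistence event at level $n$ (both colours eventually painting a linear fraction of the vertices, with a threshold that may be allowed to vanish slowly in $n$ so as to accommodate $c(q)\searrow0$ as $q\searrow\tau-2$), and set
\[
B_n:=\Big\{\Ybn/\Yrn\in(\tau-2,(\tau-2)^{-1})\Big\}.
\]
Theorem~\ref{thm::main}, used both as stated (blue losing, $\Ybn/\Yrn\le\tau-2$) and in the colour-interchanged version (red losing, $\Yrn/\Ybn\le\tau-2$), shows that on $B_n^c$, up to the $\Pv$-null event $\{\Yrn=\Ybn\}$, the winning colour paints $n-o_\Pv(n)$ vertices while the losing colour paints only $o_\Pv(n)$, so $\mathrm{Coex}_n$ fails whp; Theorem~\ref{thm::main2} shows that on $B_n$ both colours paint a linear fraction, so $\mathrm{Coex}_n$ holds whp. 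Hence $\Pv(\mathrm{Coex}_n\,\triangle\,B_n)\to0$.

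I then transfer this to the quenched measure. Taking expectation over the degrees gives $\Pv(\cdot)=\E[\Pv_n(\cdot)]$, so the estimates $\Pv(\mathrm{Coex}_n\setminus B_n)\to0$ and $\Pv(B_n\setminus\mathrm{Coex}_n)\to0$ together with $\Pv_n(\cdot)\ge0$ and Markov's inequality yield $\Pv_n(\mathrm{Coex}_n\setminus B_n)\toinp0$ and $\Pv_n(B_n\setminus\mathrm{Coex}_n)\toinp0$; therefore
\[
\big|\Pv_n(\mathrm{Coex}_n)-\Pv_n(B_n)\big|\le\Pv_n(\mathrm{Coex}_n\setminus B_n)+\Pv_n(B_n\setminus\mathrm{Coex}_n)\toinp0.
\]
It remains to identify $\lim_n\Pv_n(B_n)$. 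For this I would use that the neighbourhoods of the two uniformly chosen sources, explored up to the slowly growing generation $t(n^\vr)$, are whp tree-like, asymptotically independent, and — crucially — that their joint law converges \emph{conditionally on the degree sequence} to that of two independent copies of the Galton--Watson tree of Definition~\ref{def::limit-variables}; consequently $(\Yrn,\Ybn)\toindis(Y_r,Y_b)$ holds for the quenched law as well. Under Assumption~\ref{assume::abs-cont} the law of $Y_r$ (hence of $Y_b$) is absolutely continuous on an interval $(0,K)$, so the law of the ratio $Y_b/Y_r$ is absolutely continuous and assigns zero mass to the two-point set $\{\tau-2,(\tau-2)^{-1}\}$. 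The portmanteau theorem applied to the quenched laws then gives $\Pv_n(B_n)\toinp\Pv\big(Y_b/Y_r\in(\tau-2,(\tau-2)^{-1})\big)$, which combined with the previous display proves the corollary.

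The main obstacle is this last step: upgrading the annealed convergence $(\Yrn,\Ybn)\toindis(Y_r,Y_b)$ to convergence of the \emph{conditional} laws, equivalently showing that $\Pv_n(B_n)$ concentrates around its mean. I would establish this by a second-moment estimate: compare two conditionally independent copies of the source pair that share the same degree sequence and show their explored neighbourhoods decorrelate as $n\to\infty$, which is where the i.i.d.\ structure of the degrees and the fact that $\Yrn$ is a functional of a ball of radius $o(\log n)$ are used. The remaining ingredients — the colour-symmetrisation of Theorem~\ref{thm::main}, the Markov step, and the null-set argument for the boundary — are routine.
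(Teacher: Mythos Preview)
Your proposal is correct and is essentially the natural way to flesh out what the paper leaves implicit: the paper gives no proof of this corollary, simply declaring it ``an immediate consequence of Theorem~\ref{thm::main2}'' together with Theorem~\ref{thm::main}. Your decomposition $\Pv_n(\mathrm{Coex}_n)=\Pv_n(B_n)+o_\Pv(1)$ via Markov's inequality, followed by identifying the limit of $\Pv_n(B_n)$, is exactly the right skeleton.

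Two remarks on the step you flag as the main obstacle. First, the concentration of $\Pv_n(B_n)$ around its mean via a second moment over two conditionally independent source pairs is precisely the argument the paper already carries out in its proof of Theorem~\ref{thm::main2}: there, the local neighbourhoods of \emph{four} uniform vertices (two sources plus two test vertices $w,z$) are coupled to four independent branching processes with vanishing error, by extending \cite[Lemma~2.2]{BarHofKom14}. The same four-vertex coupling, applied to two independent source pairs, gives $\Ev[\Pv_n(B_n)^2]\to\Pv(Y_b/Y_r\in(\tau-2,(\tau-2)^{-1}))^2$, hence $\Var(\Pv_n(B_n))\to0$. So the obstacle is already resolved inside the paper, just not pointed to explicitly. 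Second, the null-set argument at the boundary $\{\tau-2,(\tau-2)^{-1}\}$ indeed requires Assumption~\ref{assume::abs-cont}, as you note; without it the ratio $Y_b/Y_r$ could have atoms there and the portmanteau step would fail.
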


\begin{remark}\normalfont The core of the proof of Theorem \ref{thm::main2} is Proposition \ref{prop::BP-color} below. This proposition establishes that the probability that the root of a BP described in Definition \ref{def::limit-variables} gets both colors with strictly positive probability in a random coloring scheme, see Section \ref{sc::BP-color} below.
\end{remark}
\begin{remark}\label{rem::coexistence-precise}\normalfont
 We conjecture that the statement of Theorem \ref{thm::main2} can be further sharpened, namely, we suspect that there exists a random variable $\widehat h_n (\Yrn, \Ybn) \in (0,1)$ so that on the event $\{\Ybn/\Yrn \in (\tau-2,1)\}$,  we have
 $\CB_\infty/(n\!\cdot\! \widehat h_n(\Yrn, \Ybn)) \toinp 1$ as $n\to \infty$, where $\widehat h(n, \Yrn, \Ybn)$ does not converge, but oscillates between two constants $ c_2(q)< C_2(q)$ with $n$, $c_2(q), C_2(q)$ satisfying $c(q)\le c_2(q)< C_2(q)\le C(q)$, where $q=\Ybn/\Yrn$, and both $c_2(q), C_2(q) \to 0$ as $q \searrow \tau-2$.
(Here, $c(q),C(q)$ are from Theorem \ref{thm::main2}.)
\end{remark}

\begin{remark} \normalfont The statements of Theorems \ref{thm::main} and \ref{thm::main2} remain valid also if the red and blue processes are started from either ends of a uniformly chosen edge. In this case, the laws of $Y_r, Y_b$ are limits of branching processes as in Definition \ref{def::limit-variables}, where also the root has offspring distribution $F^\star$.
 \end{remark}
All the consecutive results hold again \emph{without} Assumption \ref{assume::abs-cont}.
Let us define $D_n^{\max}(t)$ as the degree of the maximal degree vertex that blue has colored before or at time $t$.
The next theorem is about the degree of the maximal degree vertex that each color can eventually paint:

\begin{theorem}[Maximal degree of the `losing' color]\label{thm::maxdegree}  Let us assume wlog that $\Ybn<\Yrn$ in Definition \ref{def::limit-variables}, i.e., the losing color is blue. Then, with high probability,
the degree $D_n^{\max, \mathrm{red}}(\infty)$ of the maximal degree vertex that red can eventually paint always tends to the maximal degree in the graph, that is,
\[  D_{n}^{\max, \mathrm{red}}(\infty) = n^{1/(\tau-1)(1+o_{\Pv}(1))}. \]
The maximal degree vertex that blue can eventually paint satisfies the following:

(i) When $\Ybn/\Yrn\le \tau-2$,
\[ \frac{\log D_n^{\max}(\infty)}{\log n \cdot (\tau-1)^{-1} h_n(\Yrn, \Ybn)} \toindis \sqrt{\frac{Y_b}{Y_r}}. \]
where $h_n(\Yrn, \Ybn)\le 1$ is an oscillating random variable  given by
\be\ba\label{eq::h1} h_n(\Yrn, \Ybn) =& \ind_{ E_< \cup O_>} (\tau-2)^{(\bnb+\bnr-1 - \ind_{O_>})/2} +\\
&+\ind_{ E_> \cup O_<} (\tau-2)^{(\bnr-\bnr-1 - \ind_{O_<})/2} ((\tau-1)-(\tau-2)^{\bnr}).    \ea \ee

(ii) When $\Ybn/\Yrn > \tau-2$, the maximal degree of blue sensitively depends on the fractional parts $\bnr, \bnb$. With $T_r, T_b$ as in \eqref{def::ti-bi},
\begin{enumerate}[(a)]
\item if $T_b-T_r=0$, then whp
 \[ D_{n}^{\max}(\infty) = n^{1/(\tau-1)(1+o_{\Pv}(1))},\]
 \item if $T_b-T_r=1$ and $\tau-1<(\tau-2)^{\bnr}+(\tau-2)^{\bnb}$, then whp
 \[ D_{n}^{\max}(\infty)  = n^{(\tau-2)^{\bnb}/(\tau-1)(1+o_{\Pv}(1))},\]
 \item if $T_b-T_r=1$ and $\tau-1>(\tau-2)^{\bnr}+(\tau-2)^{\bnb}$, then whp
 \[ D_{n}^{\max}(\infty) = n^{ ((\tau-1)-(\tau-2)^{\bnr})/(\tau-1)(1+o_{\Pv}(1))}.\]
 \end{enumerate}

\end{theorem}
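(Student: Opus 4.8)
The engine of the proof is the fact that for equal speeds the competition is a \emph{Voronoi tessellation by graph distance}. Writing $\CS_r=\CR_0$, $\CS_b=\CB_0$, one checks inductively along a geodesic from whichever source is closer to a vertex $v$ that every vertex on it is occupied by that source's colour at a time equal to its distance from the source, the only way the induction can stall being an exact tie, which forces $d_G(\CR_0,v)=d_G(\CB_0,v)$; hence, up to the (coin--flip) tie--set, $v$ is blue iff $d_G(\CB_0,v)\le d_G(\CR_0,v)$ and the winning colour reaches $v$ at time $\min\{d_G(\CR_0,v),d_G(\CB_0,v)\}$. Thus $D_n^{\max}(\infty)=\max\{D_v:d_G(\CB_0,v)\le d_G(\CR_0,v)\}$ (and analogously for red), and once we know that $\Yrn>\Ybn$ forces $d_G(\CR_0,\cdot)\le d_G(\CB_0,\cdot)$ at the maximal--degree vertex of $\CMD$ (which falls out of the estimates below), the red assertion $D_n^{\max,\mathrm{red}}(\infty)=n^{1/(\tau-1)(1+o_{\Pv}(1))}$ is immediate.

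The heart of the matter is a precise description of the \emph{ball--growth profiles}, carried over from the proofs of Theorems \ref{thm::main}--\ref{thm::distances}. For $k\le t(n^{\vr})$ the ball $\SWG^{(j)}_k$ couples to the Galton--Watson process of Definition \ref{def::limit-variables}, so that $\log|\SWG^{(j)}_k|=(\tau-2)^{-k}Y_j^{\sss{(n)}}(1+o_{\Pv}(1))$, its maximal degree is $n^{o(1)}\exp\big((\tau-2)^{-(k+1)}Y_j^{\sss{(n)}}\big)$, and — crucially — for every $0<\beta\le(\tau-2)^{b_n^{(j)}}$ the ball $\SWG^{(j)}_{T_j}$ contains a polynomially large ``spray'' of vertices of degree $n^{\beta/(\tau-1)(1+o(1))}$. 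Once the ball reaches the core $\{D_v\ge(\log n)^{\sigma}\}$ one invokes the standard structure of $\CMD$ for $\tau\in(2,3)$ (a degree--$d$ vertex has neighbours up to degree $d^{1/(\tau-2)}$; two vertices of degrees $d_1,d_2$ are adjacent once $d_1d_2\gtrsim\CL_n$; the degree--$\ge d$ vertices carry half--edge mass $n^{1+o(1)}d^{-(\tau-2)}$) to show that for $m\ge1$ the ball $\SWG^{(j)}_{T_j+m}$ contains \emph{every} core vertex of degree $\ge n^{\ell_m^{(j)}/(\tau-1)}$ with $\ell_m^{(j)}=(\tau-2)^{m-1}\big((\tau-1)-(\tau-2)^{b_n^{(j)}}\big)$; hence for a generic core vertex $v$ of degree $n^{\beta_v/(\tau-1)}$ with $\beta_v\le(\tau-1)-(\tau-2)^{b_n^{(j)}}$,
\[
 d_G(\CS_j,v)=T_j+1+\Big\lceil\tfrac{\log\big((\tau-1)-(\tau-2)^{b_n^{(j)}}\big)-\log\beta_v}{|\log(\tau-2)|}\Big\rceil ,
\]
$d_G(\CS_j,v)=T_j+1$ for larger $\beta_v$, and the sprayed vertices inside $\SWG^{(j)}_{T_j}$ sit at distance $T_j$. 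The ceilings are the origin of the $\log\log$--periodicity.

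Inserting the two profiles into the Voronoi criterion, $D_n^{\max}(\infty)=n^{\beta^\ast/(\tau-1)(1+o_{\Pv}(1))}$ where $\beta^\ast$ is the largest level $\beta_v$ realised by some $v$ with $d_G(\CB_0,v)\le d_G(\CR_0,v)$. Since $T_r\le T_b$, red reaches the hub and all near--hub vertices by time $T_r+1\le T_b+1$, so blue can own a high--degree vertex only by reaching one inside $\SWG^{(b)}_{T_b}$ (at time $T_b$, possibly tying red) or one vertex farther from $\CB_0$; solving the resulting integer optimisation splits precisely along $T_b-T_r$ and the fractional parts. If $\Ybn/\Yrn>\tau-2$ then $T_b-T_r\in\{0,1\}$: for $T_b-T_r=0$ both balls reach every near--maximal degree at the same integer time $T_r+1$, the race ties there, blue wins a positive fraction of the $n^{1/(\tau-1)-o(1)}$ near--hub vertices by the coin flips, so $\beta^\ast=1-o(1)$ — case (ii)(a); for $T_b-T_r=1$ (whence $b_n^{(b)}<b_n^{(r)}$ automatically) blue grabs the sprayed vertices of $\SWG^{(b)}_{T_b}$ of degree up to $n^{(\tau-2)^{b_n^{(b)}}/(\tau-1)}$ — these tie with red's arrival at time $T_r+1$ — and also the vertices at distance $T_b+1$ from $\CB_0$ of degree up to $n^{((\tau-1)-(\tau-2)^{b_n^{(r)}})/(\tau-1)}$, so $\beta^\ast=\max\big((\tau-2)^{b_n^{(b)}},(\tau-1)-(\tau-2)^{b_n^{(r)}}\big)$, which is $(\tau-2)^{b_n^{(b)}}$ in case (ii)(b) and $(\tau-1)-(\tau-2)^{b_n^{(r)}}$ in case (ii)(c), the switch being $\sgn\big((\tau-1)-(\tau-2)^{b_n^{(r)}}-(\tau-2)^{b_n^{(b)}}\big)$. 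If $\Ybn/\Yrn\le\tau-2$, then $T_b-T_r\ge1$ can be large, blue is cut off correspondingly deeper in its own ball, and the same optimisation gives $D_n^{\max}(\infty)=n^{(\tau-1)^{-1}h_n(\Yrn,\Ybn)\sqrt{Y_b/Y_r}(1+o_{\Pv}(1))}$ with $h_n$ the oscillating function of \eqref{eq::h1}; the $\sqrt{Y_b/Y_r}$ factor, and the fact that the limit is only in distribution, arise as in Theorem \ref{thm::main}, from evaluating $\exp\big((\tau-2)^{-(k+1)}\Ybn\big)$ at the non--integer optimal generation and from $(\Yrn,\Ybn)\toindis(Y_r,Y_b)$ with continuity of the maps involved off a Lebesgue--null set.

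The main obstacle is making the profiles of the second paragraph precise to within $1+o_{\Pv}(1)$ in the exponent \emph{while keeping the $O(1)$ graph--distance fluctuations exact}, since those $O(1)$ terms — the ceilings, the comparison of $b_n^{(b)}$ with $b_n^{(r)}$, and the coin--flip tie--set — are exactly what $h_n$ (and $f_n$ of Theorem \ref{thm::main}) record. Concretely one needs matching bounds: that red's invading ball really contains every core vertex of the stated degree by the stated time (upper bound on $\beta^\ast$); that blue's ball really contains the claimed polynomial spray of high--degree vertices red has not yet reached (lower bound on $\beta^\ast$); and a second--moment argument showing the tie--set at the level $\beta^\ast$ is non--empty whp, so that the coin flips force blue to capture one of its vertices. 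This is the same delicate bookkeeping that underlies Theorems \ref{thm::main}--\ref{thm::main2}, and the cleanest route is to re--use those estimates rather than reprove them.
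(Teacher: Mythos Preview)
Your Voronoi reformulation is correct and is a genuinely different entry point from the paper's. The paper never states the distance criterion explicitly; instead it tracks the competition \emph{dynamically}: red climbs to the top in $T_r$ steps, crosses the peak at time $T_r+1$, then avalanches down through the layers $\wgar_\ell$, while blue keeps climbing up its own layers, and the two fronts collide at a real time $t_c$ defined by equating red's avalanche exponent with blue's climb exponent (see \eqref{eq::intersect}--\eqref{eq::tc2}). The four cases $E_<,E_>,O_<,O_>$ then arise from the parity of $T_b-T_r-1$ together with the sign of $\bnb-\dnr$, which jointly determine whether at time $T_r+\lfloor t_c\rfloor+1$ blue makes one more full layer jump or is capped by red's already--occupied layer (the analysis around \eqref{eq::dmax-tc}--\eqref{eq::dtc2} and the case list just below it). Your distance--profile formula plus the integer optimisation over $\beta_v$ would reproduce exactly this case split, and your treatment of part (ii) matches the paper's Cases (2), (3)(a), (3)(b) at the end of Section~\ref{sc::meetingtime} line by line, including the identification $\beta^\ast=\max\big((\tau-2)^{\bnb},(\tau-1)-(\tau-2)^{\bnr}\big)$.

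What the Voronoi framing buys you is conceptual cleanliness: the winning criterion is static, and by symmetry the one--colour distance profile is the same object for both sources, so one computation suffices. What the paper's dynamical approach buys is that it never has to speak of blue's \emph{hypothetical} avalanche (which does not occur in the actual process when $T_b-T_r\ge2$), and the collision picture at $t_c$ feeds directly into the path--counting of Section~\ref{sc::path-counting} needed for Theorem~\ref{thm::main}. Your sketch for part (i) is correct in outline but does not actually carry out the four--case bookkeeping; the paper's derivation via $2\{t_c\}$ and the value of $\lfloor t_c\rfloor$ in each case is where the explicit form of $h_n$ emerges, and that computation is not materially shortened by the Voronoi viewpoint---you would end up redoing the same parity/fractional--part analysis inside your ceiling function.
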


\begin{remark}\normalfont Compare Theorem \ref{thm::maxdegree} to Theorem \ref{thm::main} to see that Case (ii) corresponds to coexistence.
Also, we emphasise that there is a coupling of the graphs $\CMD$ for $n\ge 1$ so that even the stronger statement $(\Yrn, \Ybn) \toinp (Y_r, Y_b)$ is valid. In this coupling construction, the event $\{ \Ybn/\Yrn> \tau-2\}$ converges to the event $\{Y_b/Y_r > \tau-2\}$. Case (ii) in Theorem \ref{thm::maxdegree} heuristically says that even under this limiting event, the maximal degree of blue shows some oscillation with $n$. \end{remark}

As a side result of the proof of Theorem \ref{thm::maxdegree}, we get a new description of typical distances in the graph:

\begin{theorem}[Fluctuations of typical distances]\label{thm::distances}
In the configuration model with i.i.d. degrees from distribution $D$ satisfying \eqref{eq::F} with power-law exponent $\tau\in(2,3)$, the typical distance $\CD_n(u,v)$ between two uniformly picked vertices $u:=\CR_0,v:=\CB_0$ can be described using $\Yrn$ and $\Ybn$ in \eqref{def::yrn-ybn} as
\be\label{eq::duv-1} \ba   \CD_n(u,v)=&\left\lfloor\frac{\log\log n -\log((\tau-1)\Ybn)}{|\log (\tau-2)|}\right\rfloor + \left\lfloor\frac{\log\log n -\log((\tau-1)\Yrn)}{|\log (\tau-2)|}\right\rfloor  \\
&\, -1 + \ind\{(\tau-2)^{b_n^{(r)}}+(\tau-2)^{b_n^{(b)}}<\tau-1 \} + o_\Pv(1), \ea\ee
whp, where $b_n^{(j)}=\left\{ \frac{\log\log n -\log((\tau-1)Y_j^{\sss{(n)}})}{|\log (\tau-2)|}\right\}$ for $j=r,b$. Equivalently,
\[\CD_n(u,v)- \frac{2\log\log n}{|\log (\tau-2)|}+1 +b_n^{(r)}+b_n^{(b)} - \ind\{\tau-1>(\tau-2)^{b_n^{(r)}}+(\tau-2)^{b_n^{(b)}}\} \toindis \frac{-\log((\tau-1)^2 Y_rY_b)}{|\log (\tau-2)|}. \]
\end{theorem}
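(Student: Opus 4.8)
\emph{Strategy.} The plan is to read $\CD_n(u,v)$ off the same two-sided breadth-first exploration that drives the proof of Theorem~\ref{thm::maxdegree}. Explore simultaneously from $u=\CR_0$ and $v=\CB_0$, pairing half-edges one generation at a time, and couple the two explorations to two independent Galton--Watson trees with first-generation law $F$ and later offspring law $F^\star$ as in Definition~\ref{def::limit-variables}; by the facts recalled in Section~\ref{sc::BP} this coupling is faithful up to an event of probability $o_\Pv(1)$ while the explored clusters have size $n^{o(1)}$, which makes sense of $\Yrn,\Ybn$ and yields $(\Yrn,\Ybn)\toindis(Y_r,Y_b)$. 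Writing $\partial B_k(u)$ for the $k$-th generation around $u$ and $M_k(u):=\sum_{w\in\partial B_k(u)}(D_w-1)$ for the number of its free half-edges, the first step is to carry the doubly-exponential growth and the nested-hub picture into $\CMD$ beyond the coupling window, exactly as in the proof of Theorem~\ref{thm::maxdegree}: one shows
\[
\log|\partial B_k(u)| = (\tau-2)^{-k}\,\Yrn\,(1+o_\Pv(1)),\qquad M_k(u)=|\partial B_{k+1}(u)|^{\,1+o_\Pv(1)},
\]
for every $k$ up to the generation where $\partial B_k(u)$ first reaches a vertex of maximal degree $n^{1/(\tau-1)(1+o_\Pv(1))}$, the growth at each step being carried by a hub whose degree is the previous generation size raised to the power $(\tau-2)^{-1}$, truncated at $n^{1/(\tau-1)}$. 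With $T_r,b_n^{(r)}$ as in~\eqref{def::ti-bi} this identifies $T_r$ as the index for which $\partial B_{T_r}(u)$ carries $n^{(\tau-2)^{b_n^{(r)}}/(\tau-1)+o_\Pv(1)}$ free half-edges while $\partial B_{T_r+1}(u)$ already contains a maximal-degree vertex, so that $M_{T_r+1}(u)=n^{(\tau-2)^{1+b_n^{(r)}}/(\tau-1)+3-\tau+o_\Pv(1)}\ge n^{1/(\tau-1)+o_\Pv(1)}$; symmetric statements hold around $v$.

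\emph{Upper bound.} Conditionally on the two explored clusters, the chance that no free half-edge of $\partial B_k(u)$ is paired to a free half-edge of $\partial B_\ell(v)$ is $\exp\{-(1+o_\Pv(1))M_k(u)M_\ell(v)/\CL_n\}$, with $\CL_n=n^{1+o_\Pv(1)}$, so an edge between these two boundaries --- hence a $u$--$v$ path of length $k+\ell+1$ --- is present whp as soon as $M_k(u)M_\ell(v)/\CL_n\to\infty$. For $(k,\ell)=(T_r,T_b)$ the exponent of $M_{T_r}(u)M_{T_b}(v)/\CL_n$ is $((\tau-2)^{b_n^{(r)}}+(\tau-2)^{b_n^{(b)}})/(\tau-1)-1$, which is positive exactly when $(\tau-2)^{b_n^{(r)}}+(\tau-2)^{b_n^{(b)}}>\tau-1$, giving $\CD_n(u,v)\le T_r+T_b+1$ in that case. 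Otherwise one more step on one side suffices: since $M_{T_r+1}(u)\ge n^{1/(\tau-1)+o_\Pv(1)}$ and $\tfrac{1}{\tau-1}+\tfrac{(\tau-2)^{b_n^{(b)}}}{\tau-1}>1$ (because $(\tau-2)^{b_n^{(b)}}>\tau-2$), the pair $(T_r+1,T_b)$ connects whp, so $\CD_n(u,v)\le T_r+T_b+2$ always. Since $\lfloor y_j\rfloor=T_j+1$ with $y_r:=(\log\log n-\log((\tau-1)\Yrn))/|\log(\tau-2)|$ and $y_b$ defined analogously, this is the upper half of~\eqref{eq::duv-1}.

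\emph{Lower bound.} The matching lower bound is a first-moment union bound: the expected number of edges between $\partial B_k(u)$ and $\partial B_\ell(v)$ is at most $M_k(u)M_\ell(v)/\CL_n$, and using the growth estimates above --- together with the recursion $a\mapsto(\tau-2)a+(3-\tau)$ that governs the exponent of $M_\cdot(u)$ once its generating hub has saturated at $n^{1/(\tau-1)}$ --- one checks that this is $n^{-\Omega(1)}$ whp for \emph{every} pair $(k,\ell)$ that would realise a shorter distance. Each such pair reduces, after extracting a geometric factor $(\tau-2)^{\,j}$ produced by the tree-like decay of the smaller boundary, to one of two comparisons: $(\tau-2)^{b_n^{(r)}}+(\tau-2)^{1+b_n^{(b)}}<\tau-1$ and its mirror, which always hold because $(\tau-2)^{b_n^{(r)}},(\tau-2)^{b_n^{(b)}}\le 1$, and which forbid distance $\le T_r+T_b$; and, in the regime $(\tau-2)^{b_n^{(r)}}+(\tau-2)^{b_n^{(b)}}<\tau-1$, that inequality itself, which forbids distance $\le T_r+T_b+1$. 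Boundary values of $(b_n^{(r)},b_n^{(b)})$ at which a comparison is tight form an event of probability $o_\Pv(1)$, absorbed into the error term. Combining the two bounds gives~\eqref{eq::duv-1}, and the ``Equivalently'' reformulation then follows by pure algebra: substitute $\lfloor y_j\rfloor=y_j-b_n^{(j)}$, use $(\Yrn,\Ybn)\toindis(Y_r,Y_b)$ with continuity of $\log$, and collect the $2\log\log n/|\log(\tau-2)|$ term on the left-hand side.

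\emph{Main obstacle.} The real work is the first step: transporting the doubly-exponential growth and the hub structure out of the Galton--Watson regime into the dense regime of $\CMD$ with $n^{o_\Pv(1)}$-accurate control of the half-edge counts $M_{T_r}(u),M_{T_r+1}(u)$ and their $v$-analogues --- precisely the resolution at which the sub-polynomial comparison producing the $\ind\{\cdot\}$ term becomes visible, and which must be maintained through the saturation of each successive hub at the graph-maximal degree. As the excerpt already signals, this information is assembled in the course of proving Theorem~\ref{thm::maxdegree}, so the present theorem comes out of re-reading that analysis rather than of an independent argument.
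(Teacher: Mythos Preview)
Your approach is essentially the paper's, repackaged in the language of half-edge products $M_k(u)M_\ell(v)/\CL_n$ rather than the layer/hub structure. For the upper bound the paper climbs both sides to a hub in $T_r,T_b$ steps (Section~\ref{sc::climbup}) and then connects the two top vertices either by a single edge via Lemma~\ref{lem::direct_connect} when $(\tau-2)^{b_n^{(r)}}+(\tau-2)^{b_n^{(b)}}\ge\tau-1$, or through a common vertex in $\widetilde\Gamma_1^{(r)}\cap\widetilde\Gamma_1^{(b)}$ otherwise; this is exactly your $(T_r,T_b)$ versus $(T_r+1,T_b)$ dichotomy read through the layer picture rather than the birthday formula.

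The one place your write-up over-engineers is the lower bound. Since $\{\CD_n\le m\}=\{B_k(u)\cap B_{m-k}(v)\neq\varnothing\}$ is the \emph{same} event for every split $k$, it suffices to run a first-moment bound at a single split, and the paper does so at $k=T_r$: Lemma~\ref{cl::no-early-meeting} path-counts within the climbing phase only (using Lemma~\ref{lem::badpaths}) to get $H(\CR_{T_r})\le n^{(\tau-2)^{b_n^{(r)}}/(\tau-1)(1+o_\Pv(1))}$ and the analogous bound on $H(\CB_{T_b-i})$, and then \eqref{eq::early-error} and \eqref{eq::top-connect-bound} dispose of both cases. Your ``every pair $(k,\ell)$'' together with the avalanche recursion $a\mapsto(\tau-2)a+(3-\tau)$ commits you to an \emph{upper} bound on $M_{T_r+j}(u)$ for $j\ge1$, which the paper never proves (it only shows $\widetilde\Gamma_j^{(r)}\subset\CR_{T_r+j}$, a containment in the wrong direction for your purpose). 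This is not fatal --- such a bound could be obtained by extending the path-counting --- but it is entirely avoidable: drop the avalanche excursion, fix the split at $(T_r,T_b)$, and your lower bound collapses to the paper's.
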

\begin{remark}\normalfont  Note that Theorem \ref{thm::distances} implies that the typical distances in the graph are concentrated around $2 \log\log n / |\log (\tau-2)|$ with bounded fluctuations, a result that already appeared in \cite{HHZ07} under weaker assumptions on $F$. Our proof is considerably simpler than that in \cite{HHZ07}. The second statement of the theorem `filters out' the oscillations arising from fractional part issues: it is not hard to see that
\[ 1 +b_n^{(r)}+b_n^{(b)} - \ind\{\tau-1>(\tau-2)^{b_n^{(r)}}+(\tau-2)^{b_n^{(b)}}\}\in \Big[\frac{2 \log\tfrac{2}{\tau-1} } { |\log (\tau-2)|}, 2\Big)\]
oscillating with $n$. We emphasise here that the essential statement of Theorem \ref{thm::distances} and \cite[Theorem 1.2]{HHZ07} are the same, however, they provide a different description of typical distances.
\end{remark}

%\begin{remark}\normalfont  \KJ{Compare it to typical distance theorem of Remco. or write sg about it. I would really appreciate it if someone else would do this, not me... :)}
%\end{remark}

\subsection{Random coloring schemes for branching process trees}\label{sc::BP-color}
The crucial ingredient in the proof of Theorem \ref{thm::main2} boils down to the analysis of the following problem, that we find interesting in its own right. The specific version of the problem, which we solve in this paper, can be described as follows:

Suppose we have an infinite-mean Galton-Watson BP with offspring distribution given in \eqref{eq::size-biased2}. We let this BP grow until a vertex with degree at least $Q$ appears in the process.

Then, there is a  \emph{starting rule}: We fix a parameter $\gamma \in (1, 1/(\tau-2))$. In the last generation of the stopped BP we paint every vertex  with degree in the interval $[Q, Q^\gamma)$ red and vertices with degree in the interval $[Q^\gamma, Q^{1/(\tau-2)})$ red or blue with equal probability (if any).

After this, we sequentially color earlier generations, using a \emph{flow rule}: if a vertex has both red and blue children, it gets painted red or blue with equal probability; if it has children of only one color, then it takes that color; if it has no colored children, it stays uncolored, independently for each vertex in the same generation.

\begin{proposition}\label{prop::BP-color}
Fix a $\gamma\in (1,1/(\tau-2))$ and consider the above described coloring scheme of a branching process described in Definition \ref{def::limit-variables}. Assume further that Assumption \ref{assume::abs-cont} holds for the limiting random variable $Y$ of this BP.
Then there exist constants $0<c(\gamma)\le C(\gamma)<1$ such that
\[ c(\gamma)\le\liminf_{Q \to \infty} \Pv( \emph{root is painted blue}) \le \limsup_{Q \to \infty} \Pv( \emph{root is painted blue}) \le C(\gamma).\]
Further, $C(\gamma) \searrow 0$ as $\gamma\nearrow 1/(\tau-2)$.
\end{proposition}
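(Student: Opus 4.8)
The plan is to analyze the coloring scheme by tracking, generation by generation backwards from the stopping level, the random "profile" of colored subtrees hanging off each vertex. The key observation is that after the starting rule, one should look at the \emph{relative sizes} of the red and blue populations in the last generation rather than their absolute sizes. Because the BP has infinite mean and the stopping rule stops when a vertex of degree $\ge Q$ appears, the last few generations are dominated by one exceptionally large-degree vertex (or a bounded number of them), whose $\Theta(Q)$ children are split into a red pile of size $\asymp Q^{\gamma-1}$-fraction and a blue-eligible pile; the randomization in the starting rule gives a positive-probability chance that the blue pile is nonempty and of comparable order. First I would set up this picture precisely: condition on the degree $D_\star$ of the first vertex to exceed $Q$, use \eqref{eq::size-biased2} to control its law (it is $Q$ times a stable-type random variable, truncated), and show that with probability bounded away from $0$ as $Q\to\infty$ both colors occupy a $\Theta(1)$-fraction of its children.

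Second, I would propagate colors one generation toward the root. The core mechanism is the flow rule: a vertex at generation $k$ is colored iff at least one child is colored, and then it is red/blue depending on which colors are present among its colored children (with a fair coin if both). I would introduce the pair $(p^{(r)}_k, p^{(b)}_k)$ (or better, the full joint law of the indicator vector of which colors reached a given generation-$k$ vertex), and write a one-step recursion for how these evolve as $k$ decreases, driven by the offspring distribution $F^\star$. Since $F^\star$ has a heavy tail with exponent $\tau-2\in(0,1)$, a single child of very large degree can carry an entire color class upward, which is exactly why the parameter $\gamma<1/(\tau-2)$ enters: it guarantees that the blue pile, though smaller than the red pile, is still "thick enough" in the heavy-tailed sense that it survives the thinning induced by repeatedly passing through the flow rule, down to the root. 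I would make this quantitative by showing that the probability a vertex is blue stays bounded below by some $c(\gamma)>0$ uniformly in the generation and in $Q$, using that each generation only loses a bounded multiplicative factor; and conversely that it stays bounded above by $C(\gamma)<1$ because red always has the "numerical advantage" baked in by $\gamma>1$, so at every merge step there is a uniformly positive chance the vertex turns red rather than blue.

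Third, I would take $Q\to\infty$. The number of generations between the root and the stopping level is random but, by Definition \ref{def::limit-variables} and the double-exponential growth $(\tau-2)^k\log Z_k\to Y$, it is $\approx \log\log Q / |\log(\tau-2)|$, i.e., it grows to infinity; this is why the bounds must be \emph{uniform} in the generation index, not just valid for finitely many steps. Here Assumption \ref{assume::abs-cont} is used: it ensures that the limit variable $Y$ (and hence the relevant sizes at each level) has no atoms that could, on an event of non-vanishing probability, make the blue pile empty or make the red/blue split degenerate in a way that kills the lower bound. I would combine the uniform one-step estimates into a bound on $\Pv(\text{root blue})$ that is squeezed between $c(\gamma)$ and $C(\gamma)$ for all large $Q$, then take $\liminf$/$\limsup$. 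Finally, the claim $C(\gamma)\searrow 0$ as $\gamma\nearrow 1/(\tau-2)$ follows because the blue-eligible interval $[Q^\gamma, Q^{1/(\tau-2)})$ shrinks to a point in the exponent scale, so the blue pile's size relative to the red pile tends to $0$, and the heavy-tailed propagation can no longer sustain a positive fraction.

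The main obstacle I expect is the second step: obtaining \emph{uniform-in-generation} two-sided bounds on the probability that a given vertex receives the blue color. The difficulty is that the relevant state is not a single number but the joint law of (red present?, blue present?, and the "size" of each colored subtree in the appropriate heavy-tailed scaling), and the recursion mixes these in a nonlinear way through the maximum/merge structure of the flow rule and through the stable-tail offspring law. Controlling this likely requires identifying the correct scaling functional preserved by the recursion — presumably something like $(\tau-2)^k\log(\text{colored population})$, mirroring the $Y_r, Y_b$ construction — and then showing that the red and blue versions of this functional stay within a bounded ratio of each other all the way down to the root, with the ratio's fluctuations controlled by a contraction-type argument for the heavy-tailed recursive distributional equation. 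The factor $\tau-2$ separating coexistence from non-coexistence in Theorems \ref{thm::main}–\ref{thm::main2} should emerge here as precisely the amount by which one color's scaling functional can dominate the other's and still allow both to reach the root.
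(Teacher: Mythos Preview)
Your outline identifies the right objects (the stopping generation $\kappa$, the heavy-tailed offspring law, the flow rule) but contains a genuine conceptual gap in the lower-bound step, and overcomplicates the upper bound.

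\textbf{The lower bound.} You write that ``the probability a vertex is blue stays bounded below by some $c(\gamma)>0$ uniformly in the generation'' and that ``each generation only loses a bounded multiplicative factor.'' This is false, and the correct mechanism is almost the opposite. In generation $\kappa$, the probability that an individual (off the ray to the maximal-degree vertex $v^\star$) is blue is
\[
s_0 \;\asymp\; \Pv(D^\star \ge Q^\gamma \mid D^\star \le M_\kappa) \;\asymp\; Q^{-\gamma(\tau-2)} \;\to\; 0.
\]
The flow rule then gives the recursion $s_{i+1} \ge \tfrac12\bigl(1-\widehat h(1-s_i)\bigr) \ge c\, s_i^{\tau-2}$ for the probability that a vertex in generation $\kappa-i$ is blue, using the generating-function asymptotics $1-h^\star(1-s)\asymp s^{\tau-2}$. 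Since $\tau-2<1$, this recursion \emph{increases} small probabilities. Solving it explicitly yields
\[
s_{\kappa-1} \;\ge\; C\, s_0^{(\tau-2)^{\kappa-1}},
\]
and the crucial cancellation is that $(\tau-2)^{\kappa}\asymp Y^w/\log Q$ (from $\kappa \approx (\log\log Q - \log Y^w)/|\log(\tau-2)|$), so that
\[
s_0^{(\tau-2)^{\kappa-1}} \;\asymp\; \exp\bigl\{ -\gamma(\tau-2)\cdot (\tau-2)^{-1}\cdot Y^w \bigr\} \;\ge\; e^{-\gamma}\quad\text{on }\{Y^w\le 1\}.
\]
Thus the tiny starting value and the $\sim\log\log Q$ many iterations cancel exactly, leaving a constant. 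Your ``bounded loss per step'' picture would not produce this; after $\Theta(\log\log Q)$ steps it would give something vanishing or diverging, not a constant. The paper also needs to condition on $\{M_\kappa \ge Q^{\gamma(1+\varepsilon)}\}$ so that $s_0$ is genuinely of the order above; Assumption~\ref{assume::abs-cont} enters here to show this event has probability bounded away from $0$ uniformly in $Q$ (since it corresponds to the fractional part $c_Q$ of $(\log\log Q-\log Y^w)/|\log(\tau-2)|$ lying in a fixed nonempty subinterval of $[0,1)$).

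\textbf{The upper bound.} The paper's argument is much simpler than your per-generation merge-probability scheme. On the event $\{M_\kappa < Q^\gamma\}$, \emph{every} vertex in $\CG_\kappa$ is colored red by the starting rule, and hence the root is red deterministically. So $\Pv(\text{root blue}) \le 1- \Pv(M_\kappa < Q^\gamma)$. One then shows $\Pv(M_\kappa < Q^\gamma)\ge q_\gamma^{\text{red}}>0$ uniformly in $Q$, again via the fractional-part argument and absolute continuity of $Y^w$. The limit $C(\gamma)\searrow 0$ as $\gamma\nearrow 1/(\tau-2)$ follows because $\Pv(M_\kappa \ge Q^\gamma)\to 0$ in that regime (the corresponding subinterval of $[0,1)$ for $c_Q$ shrinks to a point).

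\textbf{Minor point.} Your first step describes coloring the \emph{children} of the maximal-degree vertex; in fact the starting rule colors the vertices of generation $\CG_\kappa$ themselves according to their own degrees, and the maximal-degree vertex $v^\star$ is one of them. The ``ray'' from $v^\star$ back to the root is singled out only because the subtrees hanging off its siblings are conditionally independent given $M_\kappa$ and $\kappa$, which is what makes the recursion for $s_i$ clean.
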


This result is the core of the proof of the coexistence in Theorem \ref{thm::main2}. Note that this proposition is itself non-trivial since the proportion of blue vertices among all colored vertices in the last generation tends to zero as $Q \to \infty$, and further, the generation where the process is stopped also tends to infinity as $Q\to \infty$. As a result of these two effects, a smaller and smaller proportion of blue vertices have to `make their way' down to the root that is further and further away. Heuristically speaking, the rule that a vertex flips a coin that \emph{does not depend on
the number of its red and blue children} saves the blue color: this effect `exaggerates' the proportion of blue vertices as the generation number decreases towards the root.

To gain a more precise result on the proportion of blue vertices in the graph in Theorem \ref{thm::main2} (in particular, to prove the conjecture in Remark \ref{rem::coexistence-precise}), one has to gain a deeper understanding of the probability that the root is painted blue in this coloring scheme. In particular, the dependence of $\Pv(\text{root is blue})$ on the parameter $\gamma$ directly translates to the dependence of $\CB_{\infty}/n$ on the ratio $q=\Ybn/\Yrn \in (\tau-2, 1)$. However, for a more detailed analysis of $\Pv(\text{root is blue})$, one has to know more about the shape of the density function of the limiting variable $Y=\lim_{k\to \infty} (\tau-2)^k \log Z_k$ similar as in \eqref{def::Y}. Without more specific  assumptions on the offspring distribution than the one in \eqref{eq::F}, this is beyond our reach.

The random coloring scheme above might be generalised as follows: Suppose we have a branching process (a simple discrete time Galton-Watson BP in the case of this paper, but not necessarily in general) that we let grow until the (random) time when an individual with degree at least $Q$ appears, for some number $Q\gg 1$. Then, there is a \emph{starting rule} that colors some individuals in the BP, where the color depends on the degree of the individual. Further, this dependence is so that one color (say red) paints significantly more vertices than the other color (say blue), and the difference is exaggerated as $Q\to \infty$. Plus, the starting rule is so that only high-enough degrees get colored, the rest of the vertices in the BP remains unpainted.

Then, we prescribe a \emph{flow rule}: any vertex in the BP takes the color of one of its children according to some rule that might depend on the number of children of each color, but is independent for different vertices.

The question is: under what circumstances can the root be painted by both colors with strictly positive probability, as $Q \to \infty$? If this is possible, then how does this probability depend on the parameter of the starting rule?

We suspect that the property that the offspring distribution has infinite mean is crucial, as well as the strictly positive probability of taking each color in the presence of children of both colors. For instance, we conjecture that coexistence might not occur with other very natural degree dependent coloring rules, e.g.,  when a vertex takes the color of one of its neighbors proportional to the number of neighbors of that color.

\subsection{Discussion and open problems}
%%%%%%%%%%%%%%%%%%%%%%%%%%%%%%%%%%%%
We gave an overview of related literature on first passage percolation, competitive spreading processes on lattices, applications such as word-of-mouth recommendations in online and offline marketing and epidemiology references in \cite[Section 1.2]{BarHofKom14}. Hence, we omit repetition and refer the reader for references about related mathematical and applied models there.

Here we review only results on competition on random graphs, and state our conjectures about possible generalizations.

Antunovic, Dekel, Mossel and Peres \cite{ADMP11}
give a detailed analysis of competition on random regular graphs (degree at least $3$) on $n$ vertices with i.i.d. exponential edge weights. They analyse scenarios where the speed of the two colors $\lambda_{1},\lambda_{2}$ might differ, and also the initial number of colored vertices might grow with $n$. They show that whp the color with higher rate occupies $n-o_{\Pv}(n)$ vertices and the slower color paints approximately $n^\beta$ vertices for some deterministic function $\beta(\la_1, \la_2)$.  When the speeds are equal, they show coexistence starting from single sources. We conjecture that their result can be generalised for the configuration model with i.i.d.\ continuous edge weights as long as the second moment of the degree distribution is finite, i.e., $\tau >3$ holds in \eqref{eq::F}.

Next, van der Hofstad and Deijfen \cite{DH13} studied competition of two colors from uniformly picked single source vertices with i.i.d.\  exponential edge weights, on the configuration model with i.i.d.\ degrees satisfying \eqref{eq::F} with $\tau\in (2,3)$. They prove that even if the speeds are not equal, the `winner' color is \emph{random}, and the winning color paints all but a finite number of vertices. The randomness of the `winner' color comes from the fact that the underlying Markov branching process explodes in finite time, and the slower color has a positive chance to explode earlier than the faster color.

Then, in \cite{BarHofKom14} we treated \emph{fixed speed} competition on the configuration model with i.i.d. degrees satisfying \eqref{eq::F}, when the speeds of the two colors are not equal. Suppose it takes $1$ and $\la>1$ unit of time for red and blue to spread across an edge, respectively. We have shown that in case the two colors start to spread from uniformly chosen single source vertices, the red color paints $n-o_{\Pv}(n)$ vertices, while blue paints
\[ \exp \left\{  (\log n)^{2/(\la +1)} g_n(\Yrn, \Ybn)\right\}\]
many vertices, where $g_n(\Yrn, \Ybn)$ is a random variable that shows $\log\log$-periodicity, and can be rewritten in a similar ($\la$-dependent) form as the one in the  statement of Theorem \ref{thm::main}.

Recently Cooper \emph{et al.} \cite{CooElsOgiRad15} have analysed a similar fixed speed competition model, also on the configuration model with power-law degrees with exponent $\tau\in(2,3)$. In what they call Model 2, one of the colors (say red) is called a `malicious information' (i.e., it might model the spread of a virus), while the other color (say blue) `immunizes' vertices. The main difference from the spreading rules of this paper is that in their model the infection does not spread to all the neighbors of a vertex, only a fixed subset of the nodes. Further, the immunization process starts from the not-yet infected neighbors of infected vertices (with a delay), and spreads then in a similar manner as the blue color in this paper. This implies a dependence between the two processes beyond the obvious `blocking' effect: if the red color paints a high-degree vertex (a hub), the blue color automatically reaches these hubs as well.
The authors show that in this competing scheme, the immunization process can block the spread of the infection, i.e., the infection can only spread to $o(n)$ many vertices.

%%%%%%%%%%%%%%%%%%%%%%%%%%%%%%%%%%%%
Finally, this paper finishes the description of fixed speed competition when the speeds are equal.
Let us here compare the results  to those in \cite{BarHofKom14}.
 Theorem \ref{thm::main} and the first part of Theorem \ref{thm::maxdegree} correspond to  \cite[Theorem 1.2]{BarHofKom14} and \cite[Theorem 1.4]{BarHofKom14}, respectively, and they could be interpreted as follows: if $\Ybn/\Yrn< \tau-2$, then the local neighbourhoods of the source vertices differ enough to build up a significant difference between the spread of the two colors. Hence, both the maximal degree and the total number of vertices occupied by blue can be obtained by substituting $\la=1$ in the formulas in \cite[Theorem 1.4]{BarHofKom14} and \cite[Theorem 1.2]{BarHofKom14}. However, if $\Ybn/\Yrn> \tau-2$, then the growth of the two colors does not differ enough, hence, both colors can paint a positive proportion of the hubs. Once the highest degree vertices are coloured, their coloring `rolls down' to smaller and smaller degree vertices to eventually occupy the whole graph in a manner that maintains the fact that `significantly many' of the vertices are coloured both blue and red. Eventually, there is asymptotic coexistence in the model.

\subsubsection*{Open problems}
 The analysis of competition on the configuration model is far from complete. One can for instance ask about different spreading dynamics (edge lengths) and different power-law exponents. Further, one can ask what happens if the colors have entirely different passage time distributions (e.g.\ one is explosive and the other is not), or what happens if one of the colours have a main advantage by starting from one or many initial vertices of very high degree. These can correspond to e.g.\ competition advantage of different product on the network or to different marketing strategies.

Here we list some conjectures for competition on $\CMD$ with i.i.d. power law degrees of distribution $D$ with exponent $\tau$.
We further assume that the time to passage times can be represented as i.i.d. random variables on edges, from distribution $I_r$, $I_b$ for red and blue, respectively.

\emph{Uniformly chosen source vertices, $\tau\in (2,3)$:}

A.  If the spreading dynamics are so that the underlying
branching processes defined by $D^\star,I_r$ and $D^\star, I_b$ are both explosive, then we conjecture that there is \emph{never} coexistence and either of the two colors can win. We conjecture that the number of vertices the losing color can paint depends on the behavior of density of the explosion time around $0$.  A step towards proving this is to understand typical distances in $\CMD$ for arbitrary i.i.d. edge weights: this is done in an upcoming paper \cite{BarHofKomdist}.

B. If the underlying BP for one color is explosive while the other one is not, than we suspect that the explosive one always wins and the number of vertices the other color paints is tight.

C. If the edge weights are \emph{separated away from $0$}, in the sense that they can be written in the form $c+X$ for some random variable $X\ge 0$ and some constant $c>0$, then we conjecture that the different speed case  ($\la >1$) will be similar to the results in \cite{BarHofKom14}: the faster color wins. We conjecture that even the number of vertices that the slower color paints should be the same as the result given in \cite{BarHofKom14} as long as the fluctuation of typical distances are \emph{tight} around $c \cdot 2 \log \log n/ |\log (\tau-2)|$. We investigate typical distances in this setting in the upcoming paper \cite{BarHofKomdist} and tightness in a subsequent paper. Tightness depends sensitively on the precise behavior of the random variable $X$ around the origin.

D. If the edge weights are so that the support of the distribution contains an interval $[0,\ve]$ for some $\ve>0$, and the underlying BP is not explosive, then even typical distances in the graph are not understood. This is mainly due to a lack of literature about conservative infinite mean BPs: a precise understanding of the time it takes in the BP to reach $m$ individuals would be necessary.

\emph{Special source vertices, $\tau \in (2,3)$:}

It would be interesting to study scenarios where at least one of the sources has a `big head start' in the sense that it is started from a vertex with degree that grows with $n$. Since the number of initial half-edges is the important parameter here, this problem is essentially equivalent to starting from multiple source vertices, where the number of source vertices grows with $n$. The first question that we might ask: how many half-edges are needed for a process to start from to guarantee the winning of that color? If it does not have the necessary amount of initial half-edges for winning, what is the number of painted vertices, and how does it depend on the initial size of the source set and on the speeds of the two colors?

Based on the results of this paper, it is reasonable to conjecture that at least in the fixed speed setting, if the slower color can occupy all the highest degree vertices  (degrees larger than $n^{(\tau-2)/(\tau-1)}$), then it blocks the way of the other color from spreading, and as a result, it might flip the outcome and paint almost all vertices. If this is not the case, but the slower color has a head start, then similar estimates such as the maximal degree it can paint and the number of half-edges with this degree should determine the size it can eventually paint.

\emph{Uniformly chosen sources, $\tau>3$:}
As mentioned above, our conjecture for $\tau>3$ is mostly based on the result in \cite{ADMP11}.

 We suspect that if the transmission times $I_r, I_b$ both have continuous distribution, and the branching process approximations of them have different Malthusian parameters, then there is no coexistence, and the number of vertices painted by the slower color is $n^{\beta}$ for some $\beta\in(0,1)$.
When the Malthusian parameters agree, we suspect that there is asymptotic co-existence.

\emph{Uniformly chosen sources, $\tau=3$:}

In this case $\Pv(D>x) = L(x)/ x^2$, with $L(x)$ a slowly varying function at infinity. We suspect that $L(x)$ and the transmission distributions $I_r, I_b$ jointly determine what happens:
 In the case when one or more of the BPs might be explosive, we suspect that the outcome is similar to cases A and B above. The non-explosive cases might be harder, at least for the case when $L(x)$ is so that the $\Ev[D^\star] = \infty$.

\subsection{Overview of the proof and structure of the paper}
The heuristic idea of the initial parts of proof is the same as for the $\la>1$ case.  The main idea is  to extensively use the fact that in the configuration model, half-edges can be paired in an arbitrarily chosen order. This allows for a joint construction of the graph with the growing of the two colored clusters. The growth has six phases, out of which the first one (Section \ref{sc::climbup}) is essentially the same as for $\la>1$. In Section \ref{sc::peak}, the two cases, i.e., $\Ybn/\Yrn<\tau-2$ or $\Ybn/\Yrn>\tau-2$, separate and the proof of coexistence in the latter case is entirely new. The methodology of the proof for Theorem \ref{thm::main} for the $\Ybn/\Yrn<\tau-2$ remains in essence the same as the proof of \cite[Theorem 1.2]{BarHofKom14}, with some adjustments needed to handle larger error terms due to $\la=1$ instead of $\la>1$.
Wherever we can, we try to keep the overlap with \cite{BarHofKom14} minimal, and for a more detailed overview of the methodology for $\Ybn/\Yrn<\tau-2$ we refer the reader to \cite[Section 1.3]{BarHofKom14}.

We use the shorthand notation $X_n \simp n^{a}$ if there exists a constant $b\ge 0$ such that $\Pv( X_n \in ( (\log n)^{-b} n^a, (\log n)^b n^a) ) \to 1$. We call vertices with degree at least $\simp\!n^{(\tau-2)/(\tau-1)}$ \emph{hubs}.
\begin{enumeratei}
\item \label{ph::bp} \emph{Branching process phase.}\\  We couple the initial stages of the growth to two independent branching processes. The coupling fails when  one of the colors (wlog we assume it is red)
    reaches  size $n^\vr$ for some $\vr>0$ sufficiently small.
 \item \label{ph::montain_up}\emph{Mountain climbing phase.}\\
    After the coupling fails, we build a path through higher and higher degree vertices to a vertex with degree at least $\simp n^{(\tau-2)/(\tau-1)}$. The length of this path is of constant order. We denote the total time to reach such a vertex by red by $T_r$, and the time it would take for blue (if red would not be present at all) by $T_b$. While doing so, we arrange the vertices in layers of nested sets that can be though of as level sets of a (imaginary) mountain where the height function is linear in the $\log \log$( degree), hence the name of the phase.

 \item \label{ph::peak}\emph{Crossing the peak of the mountain.} \\
 The degree of the  maximal degree vertex in the graph is $\simp\!n^{1/(\tau-1)}$, and vertices of approximately this degree form a complete graph whp. Hence, when red occupies one of these hubs, in the next step it occupies all of them. We very carefully handle how red crosses the peak. If blue is at much lower degrees at time $T_r$ (meaning, $T_b\ge T_r+2$), then the proof follows the same method as for the $\la>1$ case, and red will occupy all vertices with degree higher than  $\simp\!n^{(\tau-2)^{\dnr}/(\tau-1)}$ for some $\dnr \in (0,1)$.

 On the other hand, if $T_b=T_r$, then both colors arrive to the hubs at the same time, hence, in the next step, they arrive to almost all hubs at the same time, and hence, these vertices are painted with equal probability red and blue, respectively. Hence, we get Theorem \ref{thm::maxdegree} part 2(a).
 Further, if $T_b=T_r+1$, then at time $T_r+1$ red can cross the peak, and occupies almost all vertices with degree higher than $\simp\! n^{(\tau-2)^{\dnr}/(\tau-1)}$ for some $\dnr \in (0,1)$, while blue occupies a few vertices of degree higher than $\simp\!n^{(\tau-2)/(\tau-1)}$, leading to Theorem \ref{thm::maxdegree} part 2(b), 2(c).
  The question at this point becomes what happens at time $T_r+2$. We say that at time $T_r+2$, blue can also `cross over' the already red peak, if there is a $\dnb>\dnr$ so that blue occupies approximately half of the vertices between $\simp\! n^{(\tau-2)^{\dnb}/(\tau-1)}$ and $\simp\! n^{(\tau-2)^{\dnr}/(\tau-1)})$. We show that this happens if and only if $\Ybn/\Yrn> \tau-2$.
 From here, the proofs separate for $\Ybn/\Yrn>\tau-2$ and  $\Ybn/\Yrn<\tau-2$.

 As a side-result, the analysis of the crossing of the mountain phase leads to the proof of Theorem \ref{thm::distances}.
 Note that for typical distances, it is not necessary to let red and blue grow simultaneously. Hence, we let red grow $T_r$ steps, blue $T_b$ steps, show that they whp do not meet until this time, but they reach the top of the mountain, and then the same analysis as the one for crossing the peak shows that the typical distance is either $T_r+T_b+1$ or $T_r+T_b+2$.

 \end{enumeratei}
 The proof for $\Ybn/\Yrn<\tau-2$:
 \begin{enumerate}
 \item[(iv)(1)] \label{ph::mountain_down}\emph{Red avalanche from the peak.}\\
   After crossing the mountain, red starts occupying all vertices of less and less degree. We call this the \emph{avalanche-phase of red}. This is similar to the $\la>1$ case.
   \item[(v)(1)] \label{ph::meeting}\emph{At the collision time.}\\
Meanwhile, blue did its mountain climbing phase as well, and at time $T_r+1$, it is exactly $T_b-T_r-1$ many steps away from reaching the top of the mountain.
Then, approximately at time $T_r + 1 + (T_b- T_r-1)/2$ the red avalanche has sloped down to the same degree vertices as blue has climbed up to, hence they meet by arriving to vertices at the same time. Since this expression is not necessarily an integer, we have to investigate their meeting time\footnote{Note that this method provides and alternative proof for typical distances in Theorem \ref{thm::distances}. Of course, the two methods yield the exact same result, but the proof presented in Section \ref{sc::meetingtime} is much shorter.} and the maximal degree of blue more carefully.  \item[(vi)(1)] \label{ph::after_meeting}\emph{Competing with the avalanche.}\\
 After the meeting time, blue cannot occupy higher degree vertices anymore, since those are already all red.
 Note that at this time most of the graph is still not reached by any color.
 We estimate how many vertices blue can still paint  in two steps:  first we calculate the size of the `optional cluster of blue', i.e.\ we calculate the size of the $k$-neighborhood of blue half-edges via path-counting methods, yielding an upper bound.
Vertices in the optional cluster of blue are `close' to a blue half edge, hence, they will be blue unless they are occupied by red simply because they are `accidentally' also `close' to some red half-edge. In the second step we estimate the size of the intersection between the optional cluster of  blue and the red cluster. The two steps together provide matching upper and lower bounds for the number of vertices that blue occupies after the intersection.   \end{enumerate}

 The proof for $\Ybn/\Yrn>\tau-2$:
 \begin{enumeratei}
  \item[(iv)(2)]\label{ph::mountain_down-mixed}\emph{Mixed avalanche from the peak.}
   While most vertices close to the mountain-top are all red, there is an interval $(\dnr, \dnb)$ in the $\log\log$( degrees) that has `mixed' coloring (see the interval in (iii)). We show that this pattern `rolls down' the mountain, i.e., for each $m\le \nu \log\log n/|\log(\tau-2)|$ for some $\nu<1$, each vertex with degree in between  $\simp\! n^{(\tau-2)^{m+\dnb}/(\tau-1)}$ and $\simp\!n^{(\tau-2)^{m+\dnr}/(\tau-1)}$ is again painted red and blue with equal probability. We stop this process after approximately $\nu \log\log n/|\log(\tau-2)|$ steps and denote the degree of vertices `at the bottom of' the last colored interval by $Q$.
    \item[(v)(2)] \label{ph::meeting}\emph{Coloring the neighbourhood of a random vertex.}\\
    To determine the proportion of blue vertices, we look at a uniform random vertex $w$. We couple its local neighborhood in the graph to a branching process that is a copy of the BPs described in the branching process phase.
We introduce the random stopping time that is the number of generations needed for this BP to reach at least one colored vertex with degree at least $Q$. We look at the vertices in the last generation of the stopped BP with degree higher than $Q$. We color such a vertex red or blue with equal probability if its degree falls into a mixed interval, and color it red if it falls into an all-red interval, providing a partial coloring of the last generation of the stopped BP.

 \item[(vi)(2)] \label{ph::random bootstrap}\emph{Random bootstrap percolation to the root.}\\
 After the last generation of the BP tree with root $w$ has been partially colored, we do the following recursive procedure to (partially) color the earlier generations of the BP:
 if a vertex has children of both colors, then we paint it blue and red with equal probability. If it has children of only one color, then we paint it in that color deterministically. If it has no colored children, then it stays uncolored. It is not hard to see that this is exactly the procedure how the two colors  reach the local neighbourhood of a vertex $w$.
 We show that in this random bootstrap procedure, the root gets color blue or red each with strictly positive probability (summing up to $1$) as $n\to \infty$. A second moment method - investigating two vertices instead of only one - finishes the proof of coexistence.

  \end{enumeratei}

\subsubsection*{Notation}
We write $[n]$ for the set of integers $\{1,2,\dots, n\}$. We denote by the same name and add a superscript $(r), (b)$ to random variables, sets or other quantities belonging to the red and blue processes, respectively.  We write $E(\CMD)$ for the set of edges in $\CMD$. Note that multiple edges might also occur. For any set of vertices $S\subset [n]$, we write $N(S)$ for the set of their neighbors, i.e.,
\be \label{def::ns}N(S)=\{y\in [n]: \exists x\in S, (x,y) \in E(\CMD)\},\ee
where $(x,y)$  might not be unique if there are multiple edges between a vertex $x\in S$ and $y\in N(S)$.
 For any event $A$, $\Pv_n(A):=\Pv(A| D_1, D_2, \dots, D_n)$. As usual, we write i.i.d.\ for independent and identically distributed, lhs and rhs for left-hand side and right-hand side. We write $\lfloor x\rfloor, \lceil x \rceil$ for the lower and upper integer part of $x\in \R$, and $\{x\}$ for the fractional part of $x\in \R$. Slightly abusing the notation, we use curly brackets around set elements, events and long exponents as well.

 We use $\toindis, \toinp, \toas$ for convergence in distribution, in probability and almost surely, respectively. We use the Landau symbols $o(\cdot), O(\cdot), o_{\Pv}(\cdot), O_{\Pv}(\cdot)$ in the usual way.

 We say that a sequence of events $\mathcal E_n$ occurs with high probability (whp) when $\lim_{n\to \infty}\Pv(\mathcal E_n) = 1.$ In this paper, constants are typically denoted by $c$ in lower and $C$ in upper bounds (possible with indices to indicate which constant is coming from which bound), and their precise values might change from line to line. Typically, all the whp-events hold whp under the event $\{\CL_n\in [ \Ev[D]n /2, 2 \Ev[D] n] \}.$

\section{The branching process phase}\label{sc::BP}
%%%%%%%%%%%%%%%%%%%%%%%%%%%%%%%%%%
In this section we briefly summarise the key idea of this phase. In the construction of the configuration model, we start pairing the half-edges in an arbitrary order, and each time we pair a half-edge, we can pick the next one to pair as we wish. Hence, we can do the pairing in an order that corresponds to the spread of the two colors. More precisely, first we pair all the outgoing half-edges from the source vertices (time $t=1$), then we pair the outgoing half-edges from the neighbors of the source vertices (time $t=2$), and so on, in a breadth-first search manner. Whenever we finish pairing all the half-edges attached to vertices at a given graph distance from the source vertices, we increase the spreading time in the competitive coloring process by $1$. This process of joint construction of the competition and graph building is often called the \emph{exploration process} in the literature.

The key idea is that cycles in the exploration process are improbably as long as the total number of half-edges attached to colored vertices is small. Hence, the initial stage of the exploration can be coupled to a random tree, i.e., a branching process. For more details on the exploration process specific to this particular model, see \cite[Section 2]{BarHofKom14}.

Let $B_{i}$ stand for the \emph{forward-degree} of the $i$-th colored vertex $v_{i}$ for $i\ge 2$ in this exploration process, i.e., the number of half-edges incident to $v_i$ that are not paired yet when $v_i$ is reached in the exploration process.  Since the probability of picking a half-edge that belongs to a vertex with degree $j+1$ is approximately equal to $(j+1)\Pv(D=j+1)/\Ev[D]$, we get the size-biased distribution \eqref{def::size-biased1}
 as a natural candidate for the forward degrees of the vertices $v_i$ in the exploration process.

More precisely, \cite[Lemma 2.2]{BarHofKom14}  based on \cite[Proposition 4.7]{BHH10} states that the number of vertices and their forward degrees in the exploration process can be coupled to i.i.d.\ degrees having distribution function $F^\star$ from \eqref{def::size-biased1}, as long as the total number of vertices of the colored clusters does not exceed $n^{\vr'}$ for some small $\vr'>0$.

An immediate consequence of this lemma is that locally we can consider the growth
 of $\mathcal{R}_{t}$ and $\mathcal{B}_t $ as independent branching processes $(Z_{k})_{k>0}$  with offspring distribution $F^\star$ for the second and further generations, and with offspring distribution given by $F$ for the first generation.

The following theorem by Davies \cite{D78} describes the growth rate of a similar branching process:

\begin{theorem}[Branching process with infinite mean \cite{D78}]\label{thm::davies} Let $\wit Z_k$ denote the $k$-th generation of a branching process with offspring distribution given by the distribution function $F^\star$. Suppose there exists an $x_{0}>0$ and a function $x\mapsto\kappa(x)$ on $\R^+$ that satisfies the following conditions:
\begin{enumeratei}
\item  $\kappa(x)$ is non-negative and non-increasing,
\item $x^{\kappa(x)}$ is non decreasing,
\item $\int\limits_0^\infty\kappa\left(\mathrm e^{\mathrm e^x}\right)\mathrm d x<\infty$.
\end{enumeratei}
 Let us assume that the tail of the offspring distribution satisfies that for some $\tau\in(2,3)$ and for all $x\ge x_0$,
\begin{equation}\label{eq::davies_cond}
    x^{-(\tau-2)-\kappa(x)}\leq 1-F^\star(x)\leq x^{-(\tau-2)+\kappa(x)}.
\end{equation}
Then $(\tau-2)^{k}\log(\wit{Z}_{k}\vee 1)$ converges almost surely to a random variable $\wit Y$. Further, the variable $\wit Y$ has exponential tails: if $J(x):=\Pv(\wit Y\le x)$, then
\be\label{eq::exp-tails-Y}   \lim_{x\to \infty} \frac{- \log (1-J(x))}{x} =1.\ee

 \end{theorem}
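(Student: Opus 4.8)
I would prove the two assertions in tandem, both resting on a single one‑step concentration estimate. Write $\alpha:=\tau-2\in(0,1)$ and $W_k:=\alpha^k\log(\wit Z_k\vee1)$. The estimate is probabilistic only through the branching property: conditionally on $\wit Z_k=m\ge1$, the next generation $\wit Z_{k+1}=\sum_{i=1}^m\xi_i$ is a sum of $m$ i.i.d.\ copies of $F^\star$, whose tail obeys $x^{-\alpha-\kappa(x)}\le\Pv(\xi>x)\le x^{-\alpha+\kappa(x)}$ by \eqref{eq::davies_cond}. Since $\alpha<1$, such a sum is dominated by its maximum, so $\log\wit Z_{k+1}$ concentrates around $\tfrac1\alpha\log m$: a union bound gives $\Pv(\wit Z_{k+1}\ge m^{(1+\delta)/\alpha}\mid\wit Z_k=m)\le m\,\Pv(\xi>m^{(1+\delta)/\alpha})\le m^{-\delta+o(1)}$, whereas $\Pv(\max_{i\le m}\xi_i\le m^{(1-\delta)/\alpha}\mid\wit Z_k=m)\le\exp(-m\,\Pv(\xi>m^{(1-\delta)/\alpha}))$ is doubly small; hence, outside an event of conditional probability $m^{-\delta+o(1)}$,
\be\label{eq:pl-step}
|W_{k+1}-W_k|=\alpha^{k+1}\Bigl|\log\wit Z_{k+1}-\tfrac1\alpha\log\wit Z_k\Bigr|\le\delta\,W_k .
\ee
The point is that on the event that the $W_j$ stay of constant order one has $\wit Z_j\asymp\exp(c\,\alpha^{-j})$, which is \emph{doubly} exponentially large, so these union bounds are extremely efficient; moreover the substitution $\e^{\e^x}\asymp\wit Z_j$, i.e.\ $x\asymp j|\log\alpha|$, turns $\sum_j\kappa(\wit Z_j)$ into a Riemann sum for $|\log\alpha|^{-1}\int_0^\infty\kappa(\e^{\e^x})\dd x$, which is finite by hypothesis (iii) --- hypotheses (i)--(ii) supplying the monotonicity of $\kappa$ and of $x^{\kappa(x)}$ used along the way. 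So I would fix a summable sequence $\delta_k\downarrow0$ dominating the $\kappa$-corrections, so that \eqref{eq:pl-step} with $\delta=\delta_k$ holds at every step with failure probabilities that are both summable in $k$ and small once the current generation is large.

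Granting this, \emph{almost-sure convergence} follows by a standard telescoping argument. On the extinction event $W_k\equiv0$ eventually. On survival $\wit Z_k\to\infty$, so for each $\eta>0$ there is an a.s.\ finite $k_0$ with $\wit Z_{k_0}$ so large that, by Borel--Cantelli applied from $k_0$ onward, with probability $\ge1-\eta$ the bound \eqref{eq:pl-step} holds for all $k\ge k_0$; on that event $W_k\le W_{k_0}\prod_{j\ge k_0}(1+\delta_j)<\infty$ stays bounded and $\sum_{k\ge k_0}|W_{k+1}-W_k|\le\sum_{k\ge k_0}\delta_k W_k<\infty$, so $(W_k)$ is Cauchy; letting $\eta\downarrow0$ gives $W_k\to\wit Y$ a.s.\ with $\wit Y$ finite. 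Non-degeneracy comes from running the estimate forward: conditionally on $\{\wit Z_1=m\}$ with $m$ large, all the one-step bounds hold with conditional probability $\ge\tfrac12$, and then $\wit Y=\lim_kW_k\ge W_1\prod_{j\ge1}(1-\delta_j)\ge\tfrac12\alpha\log m>0$ (after shrinking $\sum\delta_j$), so $\Pv(\wit Y>0)>0$; in particular $\Pv(\wit Y>0)=1$ when $f^\star_0=0$, so that the process never dies.

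For the \emph{exponential tail} the guiding picture is $\wit Y\equalsd\alpha\max_{i\le\wit Z_1}\wit Y^{(i)}$ --- at the doubly-exponential scale the log of a sum is the log of the largest subtree --- so $\wit Y$ is essentially $\alpha\log$ of one large early generation, and since $\Pv(\wit Z_1>\e^{x/\alpha})\asymp\e^{-x}$ the tail should be $\e^{-x}$. For the lower bound I would refine the non-degeneracy step: for every $\ve>0$ there is $m_0$ with $\Pv(\wit Y\ge(1-\ve)\alpha\log m\mid\wit Z_1=m)\ge\tfrac12$ for $m\ge m_0$, so with $x=(1-\ve)\alpha\log m$ and $\Pv(\wit Z_1\ge m)\ge m^{-\alpha-\kappa(m)}$, $\kappa(m)\to0$,
\be\label{eq:pl-low}
1-J(x)\ \ge\ \tfrac12\,m^{-\alpha-\kappa(m)}\ =\ \tfrac12\exp\!\Bigl(-\tfrac{x}{1-\ve}\,(1+o(1))\Bigr),
\ee
whence $\limsup_{x\to\infty}\frac{-\log(1-J(x))}{x}\le(1-\ve)^{-1}$, hence $\le1$. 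For the matching upper bound it suffices to show $\Ev[\e^{\theta\wit Y}]<\infty$ for every $\theta\in(0,1)$, as then Markov's inequality gives $\liminf_{x\to\infty}\frac{-\log(1-J(x))}{x}\ge\theta$. With $s_k:=\theta\alpha^k$ (so $s_{k+1}=\alpha s_k<\tau-2$) one has $\e^{\theta W_k}=(\wit Z_k\vee1)^{s_k}$; subadditivity of $t\mapsto t^{s_{k+1}}$ together with the classical uniform moment bound for sums in the domain of attraction of a positive $\alpha$-stable law gives $\Ev[(\sum_{i=1}^m\xi_i)^{s_{k+1}}\mid\wit Z_k=m]\le(1+O(s_{k+1}))\,m^{s_{k+1}/\alpha}=(1+O(\alpha^{k}))\,m^{s_k}$, the factor $1+O(s_{k+1})$ coming from $\Ev[((\sum_i\xi_i)/m^{1/\alpha})^{s}]=1+O(s)$ uniformly in $m$ as $s\downarrow0$ (valid since $\Ev[\,|\log((\sum_i\xi_i)/m^{1/\alpha})|\,]$ is bounded uniformly in $m$). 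Taking expectations and iterating,
\[
\Ev\bigl[\e^{\theta W_k}\bigr]=\Ev\bigl[(\wit Z_k\vee1)^{s_k}\bigr]\ \le\ \Bigl(\prod_{j\ge1}(1+O(\alpha^{j}))\Bigr)\,\Ev\bigl[(\wit Z_1\vee1)^{\theta\alpha}\bigr]\ <\ \infty
\]
uniformly in $k$ (the product converges and $\theta\alpha<\tau-2$; the extinction atom, being bounded, is absorbed separately), so by Fatou $\Ev[\e^{\theta\wit Y}]\le\liminf_k\Ev[\e^{\theta W_k}]<\infty$. Combining the two bounds gives $\lim_{x\to\infty}\frac{-\log(1-J(x))}{x}=1$.

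The main obstacle is the quantitative version of \eqref{eq:pl-step} with errors summable over generations: since the scale is doubly exponential the $\kappa$-corrections must be absorbed at height $\log\log\wit Z_k\approx k|\log\alpha|$, which is exactly what hypothesis (iii) controls, but because $\wit Z_k$ is itself random and only controlled on the good event one is trying to build, the estimates have to be bootstrapped --- condition on the history, advance one generation, apply Borel--Cantelli. For the tail the subtle point is pinning the rate to exactly $1$ rather than merely to some positive constant: the lower bound forces one to quantify that after a large first generation the doubly-exponential growth persists with probability bounded away from $0$, and the upper bound needs the sharp $(1+O(s))$ form of the stable-moment bound, uniformly in $m$, so that the infinite product above converges.
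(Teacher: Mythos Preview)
The paper does not prove this theorem: it is quoted verbatim from Davies \cite{D78} as a known result, and the only sentence following the statement merely checks that the hypotheses apply to the particular $F^\star$ arising in the paper. There is therefore nothing in the paper to compare your argument against.

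For what it is worth, your sketch does capture the mechanism behind such results. The one-step estimate $\log\wit Z_{k+1}\approx\alpha^{-1}\log\wit Z_k$ with multiplicative error governed by $\kappa(\wit Z_k)$, and then summability via hypothesis (iii) read as $\sum_k\kappa(\e^{\e^{ck}})<\infty$, is exactly the engine; the Borel--Cantelli bootstrap on survival is standard and correct. The lower tail bound by conditioning on a large first generation is also fine. The place where your write-up is genuinely loose is the upper tail bound: the assertion $\Ev\bigl[\bigl(\sum_{i\le m}\xi_i\bigr)^{s_{k+1}}\bigr]\le(1+O(s_{k+1}))\,m^{s_k}$ is not ``subadditivity plus a classical moment bound''. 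Subadditivity of $t\mapsto t^{s}$ yields only $m\,\Ev[\xi^{s}]$, which is the wrong power of $m$; what you actually need is a uniform-in-$m$ bound on the $s$-th moment of the \emph{normalized} sum $m^{-1/\alpha}\sum_{i\le m}\xi_i$, with constant $1+O(s)$ as $s\downarrow0$. That is obtainable, but it requires genuine work when the tail carries the $\kappa(x)$ correction (you are not exactly in a stable domain of attraction with clean norming $m^{1/\alpha}$), and this is precisely where hypotheses (i)--(iii) must be invoked a second time. As written, that step is asserted rather than argued.
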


It is an elementary calculation to check that $F^\star$ in \eqref{def::size-biased1} satisfies the criterions of this theorem, and a simple modification yields that the same convergence holds for the branching process where the size of the first generation has distribution $F$ instead of $F^\star$, see \cite[Lemma 2.4]{BarHofKom14}.
Hence, we get that there exists a random variable $Y$ with exponentially decaying tail, such that this BP satisfies
\be\label{eq::limitY} \lim_{k\to \infty} (\tau-2)^k \log Z_k \toas Y,\ee
since $Z_k\ge 1$ a.s., so $Z_k\vee 1 = Z_k$.

The random variables defined in Definition \ref{def::limit-variables} are finite time approximations of two independent copies of $Y$, denoted by $Y_r$ and $Y_b$ (standing for red and blue), respectively, at the time when the number of vertices colored by one of the colors reaches size $n^{\vr}$ for some small $\vr>0$. From now on, and without loss of generality, we assume that this color is red.

%%%%%%%%%%%%%%%%%%%%%%%%%%%%%%

%%%%%%%%%%%%%%%%%%%%%%%%%%%%%%

\section{Mountain-climbing phase}\label{sc::climbup}
In this section we briefly summarise the key ideas in  the mountain-climbing phase and recall notation that will be needed later on. For a more detailed description we refer the reader to \cite[Section 3]{BarHofKom14}. Again, we assume that red is the color that reaches size $n^\vr$ first.

Recall that the coupling of the number of vertices in the BP and in the growing cluster of the colors fails when one of the colors reaches $n^{\vr'}$ many vertices. For our purposes later on, we rather want to guarantee that the number of vertices in the last generation of the BP is \emph{at least} some power of $n$. We can assure this by stopping the BP a bit earlier: let us first set some $\vr<\vr' (\tau-2)$ and define
\[ t(n^{\vr})=\inf \{k:  Z_k^{\sss{(r)}}  \ge n^{\vr} \}.\]
Recall Definition \ref{def::limit-variables}, i.e.,
\be \label{def::Y_r*}\Yrn:=(\tau-2)^{t(n^{\vr})}\log Z_{t(n^{\vr})}. \ee
An elementary rearrangement yields that, taking $\Yrn$ as given and with $\{x\}=x-\lfloor x \rfloor$,
\be\label{eq::an2} t(n^{\vr}) = \frac{\log(\vr/\Yrn) + \log\log n}{|\log(\tau-2)|}+ 1-a_n^{\sss{(r)}},\ee
where
 \be\label{eq::an} a_n^{\sss{(r)}}=  \left\{ \frac{\log(\vr/\Yrn) + \log\log n}{|\log(\tau-2)|}\right\}.  \ee
Note that $1-a_n^{\sss{(r)}}$ is there to make the expression on the rhs of $t(n^{\vr})$ equal to its upper integer part. Due to this effect, the last generation has a bit more vertices than $n^{\vr}$, so let us introduce the notation $\vr^{\sss{(r)}}$ for the random exponent of the overshoot
\be \label{eq::rho_0} Z_{t(n^{\vr})}^{\sss{(r)}}= n^{ \vr (\tau-2)^{a_n^{\sss{(r)}}-1} }:= n^{\vr^{\sss{(r)}}},\ee
We get this expression by rearranging \eqref{def::Y_r*} and using the value $t(n^{\vr})$ from \eqref{eq::an2}.
 The property $\vr< \vr'(\tau-2)$ and $a_n^{\sss{(r)}} \in [0,1)$ implies that $\vr^{\sss{(r)}}< \vr'$, which in turn guarantees that the coupling is still valid, i.e., we can also couple the \emph{degrees of vertices} in the $t(n^{\vr})$th generation of the branching process to i.i.d.\ size-biased degrees.

The second step in this section is to decompose the high-degree vertices in the graph into the following sets, that we call \emph{layers}:
\be\label{def::Gamma_i} \Gamma_i^{\sss{(r)}}:=\{ v: D_v>u_i^{\sss{(r)}} \}, \ee
where $u_i^{\sss{(r)}}$ is defined recursively by
\be\label{eq::ui_recursion} u_{i+1}^{\sss{(r)}} =\left(\frac{u_{i}^{\sss{(r)}}}{C\log n}\right)^{1/(\tau-2)}, \ \ \qquad u_0^{\sss{(r)}}:= \bigg(\frac{n^{\vr^{\sss{(r)}}}}{C\log n}\bigg)^{1/(\tau-2)} \ee
for a large enough constant $C>0$ (e.g. $C= 8/c_1$ is sufficient, where $c_1$ is introduced in \eqref{eq::F}). It is not hard to see that
\be\label{def::ui} u_{i}^{\sss{(r)}}= n^{\vr^{\sss{(r)}} (\tau-2)^{-(i+1)}} (C\log n)^{-e_i}\quad \mbox{with} \quad
 e_i =  \frac{1}{3-\tau}\bigg( \Big( \frac{1}{\tau-2}\Big)^{i+1}-1\bigg). \ee
Note that since $(\tau-2)^{-1}>1$, $u_i^{\sss{(r)}}$ is growing, hence $\Gamma_0 \supset \Gamma_1 \supset \Gamma_2 \supset \dots$.

The third step is to show that $Z_{t(n^{\vr})}$ has a nonempty intersection with the initial layer $\Gamma_0$.
The proof of this is based on the following lemma, that we cite here, since we repeatedly use it later on.
\begin{lemma}\label{lem::maxdegree}
Let $X_i, \ i=1, \dots, m$ be i.i.d.\  random variables  with power-law exponent $\alpha$, i.e., the distribution function of $X_i$ satisfies \eqref{eq::F} with $\tau-1$ replaced by any $\alpha>0$.
%Then there exists a random variable $\xi_\alpha$ such that
%\[ \max_{i=1,\dots, m} X_i/(m^{1/\alpha}) \buildrel {d}\over{\longrightarrow}  \xi_\alpha,  \]
Then for any $K>0$
\be\label{eq::logwhp} \Pv\bigg(\max_{i=1,\dots, m} X_i < \Big(\frac{ m}{K\log m}\Big)^{1/\alpha} \bigg) \le \frac{1}{m^{c_1K}}, \ee
where $c_1$ is introduced in \eqref{eq::F}.
\end{lemma}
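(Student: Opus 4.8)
The plan is a routine first-moment (union bound) argument. Write $m^{-1/\alpha}$-scaled threshold $t_m := \big(\tfrac{m}{K\log m}\big)^{1/\alpha}$, so that we need to bound $\Pv\big(\max_i X_i < t_m\big) = \Pv(X_1 < t_m)^m$ by independence. Using the lower tail bound from \eqref{eq::F} with $\tau-1$ replaced by $\alpha$, namely $\Pv(X_1 > t_m) \ge c_1 t_m^{-\alpha}$, we get
\be\label{eq::prop-plan1}
\Pv(X_1 < t_m) \le 1 - c_1 t_m^{-\alpha} = 1 - c_1 \frac{K\log m}{m}.
\ee
Then I would use the elementary inequality $1-x \le \e^{-x}$ to obtain
\be\label{eq::prop-plan2}
\Pv\Big(\max_{i=1,\dots,m} X_i < t_m\Big) \le \Big(1 - \frac{c_1 K \log m}{m}\Big)^m \le \exp\{- c_1 K \log m\} = m^{-c_1 K},
\ee
which is exactly the claimed bound.

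A couple of small points need attention along the way. First, one should make sure the threshold $t_m$ is at least the constant $x_0$ (or whatever cutoff below which \eqref{eq::F} is only asserted to hold), so that the tail bound is applicable; this holds for $m$ large, and for small $m$ the statement can be checked directly or the constant absorbed. Second, one needs $c_1 t_m^{-\alpha} \le 1$ for \eqref{eq::prop-plan1} to be a meaningful probability bound and for $1-x\le \e^{-x}$ to be applied with $x\in[0,1]$; again this is automatic once $m$ is large enough that $c_1 K \log m \le m$. Neither of these is a real obstacle — they are the usual boundary bookkeeping — but they are worth a sentence in the writeup.

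I do not anticipate any genuine difficulty here: the lemma is a standard maximum-of-i.i.d.-heavy-tails estimate, and the only content is choosing the threshold at the right scale $(m/\log m)^{1/\alpha}$ so that the per-variable failure probability is $\asymp (\log m)/m$ and the $m$-th power decays polynomially with exponent $c_1 K$. The "hardest" part is purely notational: keeping track of the substitution $\tau-1 \rightsquigarrow \alpha$ in \eqref{eq::F} and making sure the constant $c_1$ that appears in the exponent is indeed the one from the lower tail bound, so that the lemma dovetails with its later applications (e.g.\ bounding the probability that $Z_{t(n^\vr)}$ misses the layer $\Gamma_0$, where $m$ is a power of $n$ and $c_1 K$ can be taken large to beat polynomially many bad events in a union bound).
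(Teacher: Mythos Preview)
Your argument is correct and is precisely the standard one-line proof of this kind of maximum-of-heavy-tails estimate. Note that the paper does not actually supply a proof of this lemma --- it is stated without proof as a known result that is ``cite[d] here, since we repeatedly use it later on'' --- so there is nothing to compare against; your write-up (independence, the lower tail bound from \eqref{eq::F}, and $1-x\le \e^{-x}$) is exactly what one would fill in.
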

Now we can apply this lemma since the last generation in the BP is an i.i.d.\  collection of $n^{\vr^{\sss{(b)}}}$ many power-law random variables, and the maximum of these behaves approximately as  $n^{\vr^{\sss{(r)}}/(\tau-2)}$. The definition of $u_0^{\sss{(r)}}$ adds a logarithmic correction term to this, hence, whp there is a vertex with degree at least $u_0^{\sss{(r)}}$ in the last generation of the BP.  For more details, see \cite[Lemma 3.1]{BarHofKom14}.

The fourth step is to show the existence of a red path from $\Gamma_0\cap \CR_{t(n^{\vr})}$ to a vertex that has degree at least $\simp\!n^{(\tau-2)/(\tau-1)}$.
The existence of such a path is guaranteed by  \cite[Lemma 3.4]{BarHofKom14}, based on simple concentration of binomial random variables, stating that whp
\be\label{eq::gamma-i-connectivity} \forall i\ \  \Gamma_i \subset N(\Gamma_{i+1}), \ee
where recall that  $N(S)$ stands for the set of neighbors of $S$.
\begin{figure}\label{fig::mountain}
\includegraphics[width=0.5\textwidth]{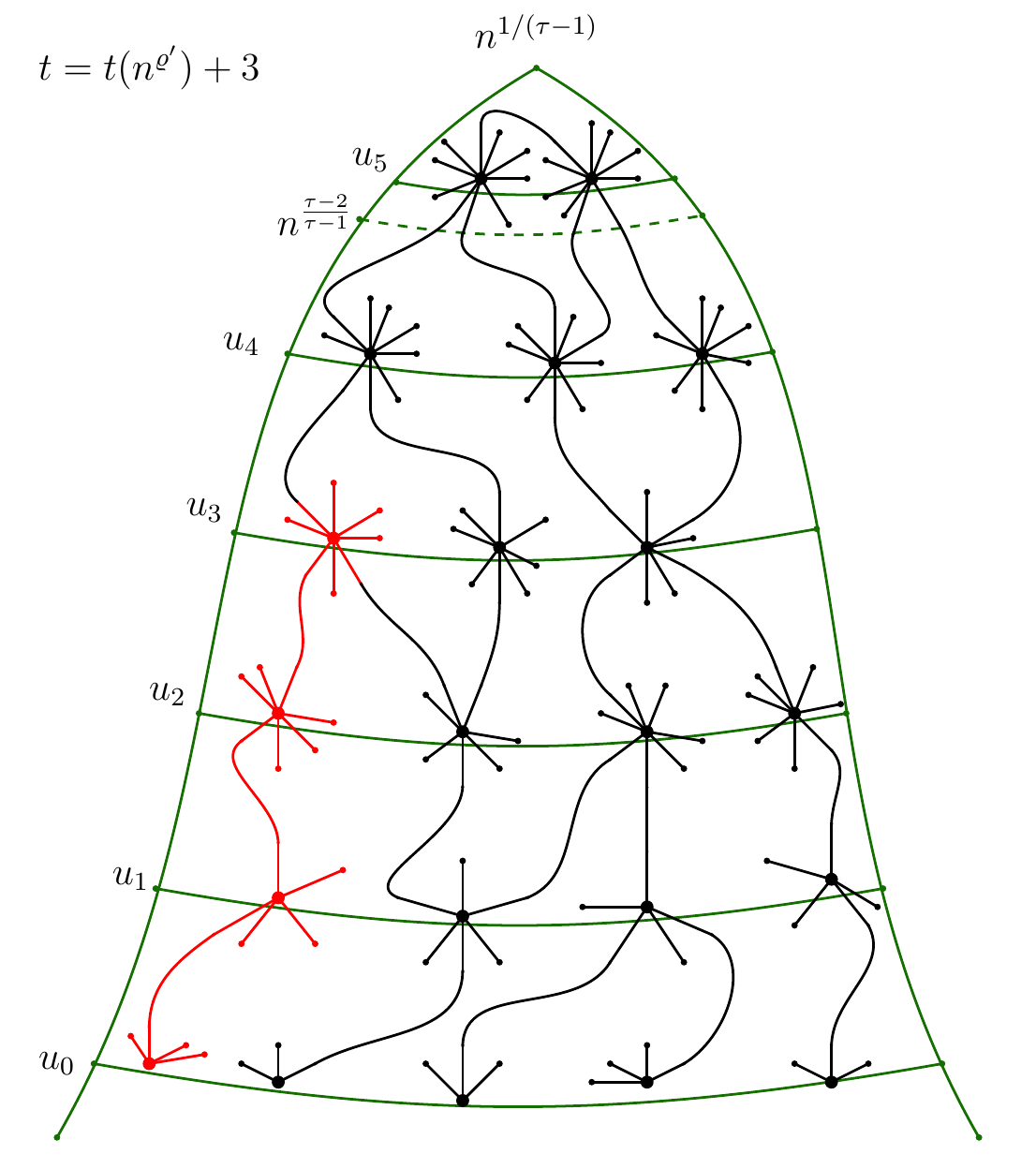}
\caption{An illustration of the layers and the mountain climbing phase at time $t(n^{\vr})+3$. Disclaimer: the degrees on the picture are only an illustration.}
\end{figure}

With \eqref{eq::gamma-i-connectivity} in hand, we can determine how long it takes to climb up through the layers $\Gamma_i^{\sss{(r)}}$ to the highest-degree vertices in the graph.  Lemma \ref{lem::maxdegree} with $X_i=D_i\sim F$, $\alpha=\tau-1$ shows that the maximal degree in $\CMD$ is of order $n^{1/(\tau-1)}$. We write $i_{\star\sss{(r)}}$ for the last index\footnote{We put the brackets $(r)$ now in the subscript only because we want to avoid that quantities with both subscripts and superscripts look messy: compare $u_{ i_{\star\sss{(r)}}}^{\sss{(r)}}$ to $ u_{ i_{\star}^{\sss{(r)}}}^{\sss{(r)}}$.} when $\Gamma_{i}^{\sss{(r)}}$ is whp nonempty, i.e.,
\be\label{def::i*}i_{\star\sss{(r)}}:=\inf \{ i: u_i^{\sss{(r)}} \le n^{1/(\tau-1)} < u_{i+1}^{\sss{(r)}}\}. \ee

An easy calculation using \eqref{def::ui} shows that
\be\label{eq::value_i*} i_{\star \sss{(r)}}= -1+ \frac{-\log ((\tau-1)\vr^{\sss{(r)}})}{|\log(\tau-2)|}-\bnr,
\quad \mbox{ with }\quad
 \bnr=\left\{ \frac{-\log ((\tau-1)\vr^{\sss{(r)}})}{|\log(\tau-2)|}\right\}. \ee
Using the value of the overshoot exponent $\vr^{\sss{(r)}}$ in \eqref{eq::rho_0} and then the value $a_n^{\sss{(r)}}$ in \eqref{eq::an}, plus the fact that $\{x - 1+\{y\}\}=\{x+y\}$,  we get that
\be\label{eq::bn} \bnr = \left\{ \frac{-\log ((\tau-1)\vr)}{|\log(\tau-2)|}+ a_n^{\sss{(r)}}-1\right\} = \left\{ \frac{-\log ((\tau-1)\Yrn)+\log \log n}{|\log(\tau-2)|}\right\}.\ee
From \eqref{def::ui} one can easily calculate that
\be\label{eq::ui*} \ba u_{i_{\star\sss{(r)}}}^{\sss{(r)}}&=n^{\frac{(\tau-2)^{\bnr}}{\tau-1}} (C \log n)^{-e_{i_{\star\sss{(r)}}}},  \quad \mbox{ with} \\
e_{i_{\star\sss{(r)}}}&=\frac{1}{3-\tau}\left(\frac{(\tau-2)^{\bnr}}{(\tau-1)\vr^{\sss{(r)}}} -1\right)\le\frac{1}{(3-\tau)}\left(\frac{1}{(\tau-1)\vr^{\sss{(r)}}}-1\right). \ea \ee
In what follows, we will often use the total time to reach the top, so let us introduce the notation
\be\label{eq::k*+i*}  T_r:=t(n^{\vr})+i_{\star\sss{(r)}}=\frac{\log\log n-\log \left((\tau-1) \Yrn\right)}{|\log(\tau-2)|} -1-\bnr,\ee
which only depends  on $\vr$ through the approximating $\Yrn$, and $\bnr$ is exactly the fractional part of the expression on the rhs of $T_r$. Since also $\Yrn\toindis Y_r$ irrespective of the choice of $\vr$, this establishes that the choice of $\vr$ is not relevant in the proof.

Note that if color red would not be present in the graph, we could repeat the same procedure for blue, yielding the definitions \eqref{def::ti-bi}.  It is not hard to see using \eqref{eq::an2} that
\be\label{eq::BPblue} Z_{t(n^{\vr})}^{\sss{(b)}} = n^{ \vr (\tau-2)^{a_n^{\sss{(r)}}-1} \Ybn/\Yrn },\ee
hence, using $\vr^{\sss{(b)}}:=\vr (\tau-2)^{a_n^{\sss{(r)}}-1} \Ybn/\Yrn$ instead of $\vr^{\sss{(r)}}$, we can define
\be\label{eq::b-definitions}\ba u_0^{\sss{(b)}}&:= Z_{t(n^{\vr})}^{\sss{(b)}} = n^{ \vr^{\sss{(b)}} }, \qquad \ \ \
 u_{i+1}^{\sss{(b)}}:=\left(\frac{u_{i}^{\sss{(b)}}}{C\log n}\right)^{1/(\tau-2)}, \\
 \Gamma_i^{\sss(b)}&:= \{ v: D_v>u_i^{\sss{(b)}} \},  \qquad i_{\star \sss{(b)}} := -1+ \frac{-\log ((\tau-1)\vr^{\sss{(b)}})}{|\log(\tau-2)|}-\bnb\ea \ee
 where $ \bnb=\left\{ \frac{-\log ((\tau-1)\Ybn)+\log \log n}{|\log(\tau-2)|}\right\}$ similarly as in \eqref{eq::bn}.

Note also that establishing the existence of a path to the top of the degree mountain only provides an \emph{upper bound} on how long it takes for each color to reach the top. However, with a bit of work we can turn these bounds into a \emph{matching lower bound} as well. This will be relevant later, since it also determines \emph{an upper bound on the maximal degree} of the colors in their climbing phase as well as the number of vertices in each layer they occupy.

Similarly as in \eqref{def::Gamma_i} and \eqref{eq::ui_recursion}, let us define, for $j=r,b$,
\be\ba\label{eq::uibar}
   \widehat u_0^{\sss(j)}&:= (Z^{\sss{(j)}}_{t(n^{\vr})}\cdot C \log n)^{1/(\tau-2)}, \\
  \widehat u_{i+1}^{\sss{(j)}}&:= (\widehat u_i^{\sss{(j)}}\cdot C \log n)^{1/(\tau-2)},  \\
\widehat\Gamma_i^{\sss{(j)}}&:=\{v \in \CMD: d_v \ge \widehat u_i^{\sss{(j)}}\},
\ea\ee

Note that $\widehat\Gamma_i^{\sss{(j)}}$ grows faster than $\Gamma_i^{\sss{(j)}}$ since there is always an extra $(C\log n)^2$ factor causing an initial `gap' of order $(\log n)^2$ between $u_0^{\sss{(r)}}, \widehat u_0^{\sss{(r)}}$ and `opening up' as $i$ gets larger.

The next lemma handles the upper bound on the maximal degree of red or blue at any time $t(n^{\vr})+i$, but first some definitions. We say that a sequence of vertices and half-edges $(\pi_0, s_0, t_1, \pi_1, s_1, t_2,  \dots,  t_k, \pi_k)$ forms a \emph{path} in $\CMD$, if for all $0< i\le k$, the half edges $s_i, t_i$ are incident to the vertex $\pi_i$ and $(s_{i-1}, t_i)$ forms an edge between $\pi_{i-1},\pi_i$.
Let us denote the vertices in a path starting from a half-edge in $Z^{\sss{(j)}}_{ t(n^{\vr})}$ by $\pi_0, \pi_1, \dots $. We say that a path is \emph{good} if $\deg(\pi_i)\le\widehat u_i^{\sss{(j)}}$ holds for every $i$. Otherwise we call it \emph{bad}. We decompose the set of red and blue bad paths in terms of where they turn bad, i.e.\  we say that a bad path of color $j$ is belonging to  $\CB ad \CP_k^{\sss{(j)}}$ if it turns bad at the $k$th step (for $j=r,b$):
\be\label{def::badpaths} \ba  \CB ad\CP_k^{\sss{(j)}} := &\{ (\pi_0, s_0, t_1, \pi_1, s_1 \dots, t_k, \pi_k) \text{ is a path, } \\
 &\quad \pi_0\!\in\!\CC^{\sss{(j)}}_{ t(n^{\vr}) },\   \deg(\pi_i)\!\le\! \widehat u_i^{\sss{(j)}} \ \forall i\le  k-1,\ \deg(\pi_k)\!>\!\widehat u_k^{\sss{(j)}}   \},\ea\ee where $\CC^{\sss{(r)}}=\CR, \CC^{\sss{(b)}}=\CB$.
The following lemma tells us that the probability of having a bad path is tending to zero:
Note that $u_{i}^{\sss{(j)}}, \widehat u_{i}^{\sss{(j)}}$ and hence $\Gamma_i^{\sss{(r)}}, \widehat \Gamma_{i}^{\sss{(j)}}, i_{\star \sss{(j)}}$ are random variables that depend (only) on $n, \Yrn, \Ybn$. For a short notation for conditional probabilities and expectation let us introduce
\be\label{def::py-ey} \Pv_{Y,n}(\,\cdot\,):=\Pv(\,\cdot\, | \Yrn, \Ybn, n) \quad \Ev_{Y,n}[\,\cdot\,]:= \Ev[ \,\cdot\, | \Yrn, \Ybn, n].\ee

\begin{lemma}\label{lem::badpaths}
The following bound on the probability of having any bad paths holds for color $j=r,b$:
\be \Pv_{Y,n}( \exists k\le i_{\star\sss{(j)}}:  \CB ad\CP_k^{\sss{(j)}} \neq \varnothing) \le \frac{2}{C\log n}.\ee
\end{lemma}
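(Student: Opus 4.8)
The plan is to bound $\Pv_{Y,n}(\CB ad\CP_k^{\sss{(j)}}\neq\varnothing)$ by a first-moment (union bound) argument over all potential bad paths of length $k$, and then sum over $k\le i_{\star\sss{(j)}}$. Fix the color $j$ and drop the superscript. A potential path of length $k$ starting from a half-edge attached to a colored vertex in generation $t(n^\vr)$ is specified by choosing, step by step, which not-yet-paired half-edge $t_i$ to pair $s_{i-1}$ with, and then which half-edge $s_i$ at $\pi_i$ to continue along. Conditionally on the degrees $D_1,\dots,D_n$ (and hence on $\Yrn,\Ybn$, as these are measurable with respect to the degree sequence up to the coupling), the probability that a given half-edge $s_{i-1}$ is paired to a half-edge at a vertex of degree $d$ is at most $\big(d\cdot\#\{\text{vertices of degree }d\}\big)/(\CL_n - 2|\text{already paired}|)$, which on the good event $\{\CL_n\in[\Ev[D]n/2,2\Ev[D]n]\}$ and as long as we have paired at most $n^{\vr'}$ half-edges is $\le 2 d\cdot\#\{\text{degree }d\}/(\Ev[D]n)$. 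So the expected number of length-$k$ paths that are good up to step $k-1$ and then land on a vertex of degree exceeding $\widehat u_k$ is at most
\[
\Ev_{Y,n}\big[|\CB ad\CP_k|\big]\ \le\ |\CC_{t(n^\vr)}|\cdot\prod_{i=1}^{k-1}\Big(\tfrac{2}{\Ev[D]n}\sum_{d\le \widehat u_i} d^2\,\#\{\text{deg }d\}\Big)\cdot \tfrac{2}{\Ev[D]n}\sum_{d>\widehat u_k} d^2\,\#\{\text{deg }d\},
\]
where each factor counts the choice of $t_i$ weighted by the forward degree $\deg(\pi_i)-1\le \deg(\pi_i)$ available for the choice of $s_i$. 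The truncated second moment $\sum_{d\le x}d^2\#\{\text{deg }d\}$ concentrates around $n\,\Ev[D^2\ind\{D\le x\}]\asymp n\,x^{3-\tau}$ by the power-law tail \eqref{eq::F}, and the overshoot tail sum $\sum_{d>x}d^2\#\{\text{deg }d\}$ is $\asymp n\,x^{3-\tau}$ as well, so each of the first $k-1$ factors is $\Op\big((\widehat u_i)^{3-\tau}\big)$ and the last is $\Op\big((\widehat u_k)^{3-\tau}\big)$.

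Now I plug in the explicit recursion for $\widehat u_i$. From \eqref{eq::uibar}, $\widehat u_{i+1}^{3-\tau} = (\widehat u_i\,C\log n)^{(3-\tau)/(\tau-2)} = (\widehat u_i\,C\log n)^{-(3-\tau)}\cdot(\widehat u_i\,C\log n)$, hmm — better: write $\log \widehat u_{i+1}=\tfrac{1}{\tau-2}(\log\widehat u_i+\log(C\log n))$, so that $(\widehat u_i)^{3-\tau}=\exp\{(3-\tau)\log\widehat u_i\}$ and the telescoping product $\prod_{i\le k}(\widehat u_i)^{3-\tau}$ collapses: each factor of the form $\widehat u_i^{3-\tau}\cdot(C\log n)^{\pm}$ is designed so that when you pass from layer $i$ to layer $i+1$ you pick up exactly one power of $(C\log n)$ in the denominator (this is precisely the role of the $C\log n$ in \eqref{eq::uibar}, giving the `extra $(\log n)^2$ gap' noted after \eqref{eq::uibar}). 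Carefully bookkeeping, the product of all the second-moment factors telescopes to something of order $(\widehat u_0)^{3-\tau}\cdot \widehat u_k^{\,3-\tau}\cdot(C\log n)^{-(k-1)}$ times bounded constants, and since $\widehat u_0^{\,3-\tau} = (Z_{t(n^\vr)}\,C\log n)^{(3-\tau)/(\tau-2)}$ while $\widehat u_k^{\,3-\tau}$ grows only doubly-exponentially slowly in $k$, the whole expected count is bounded by $(C\log n)^{-1}$ uniformly in $k\le i_{\star\sss{(j)}}$ — more precisely by $\tfrac{1}{(C\log n)^2}$ per layer, so that summing the $\le i_{\star\sss{(j)}} = O(\log\log n)$ terms still gives $\le \tfrac{2}{C\log n}$. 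Finally, $\Pv_{Y,n}(\CB ad\CP_k\neq\varnothing)\le \Ev_{Y,n}[|\CB ad\CP_k|]$ by Markov, and $\Pv_{Y,n}(\exists k\le i_{\star\sss{(j)}}:\CB ad\CP_k\neq\varnothing)\le\sum_{k\le i_{\star\sss{(j)}}}\Ev_{Y,n}[|\CB ad\CP_k|]\le \tfrac{2}{C\log n}$, using that constant $C$ is chosen large.

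The main obstacle I expect is the bookkeeping of the telescoping product together with making the concentration of the truncated second moments $\sum_{d\le x}d^2\#\{\text{deg }d\}$ uniform over all the (random) thresholds $x=\widehat u_i$, $i\le i_{\star\sss{(j)}}$, and over the conditioning on $\Yrn,\Ybn$: one wants a single high-probability event (contained in $\{\CL_n\in[\Ev[D]n/2,2\Ev[D]n]\}$ and the event that $Z_{t(n^\vr)}=n^{\vr^{\sss{(j)}}}$ with $\vr^{\sss{(j)}}<\vr'$, ensuring the coupling to i.i.d.\ size-biased degrees is valid for the whole climb) on which all the truncated-moment estimates hold simultaneously. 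Since there are only $O(\log\log n)$ thresholds and each concentrates with polynomially small failure probability (Chernoff for sums of independent contributions, with the power-law tail controlling the heavy upper part), a union bound over the thresholds is harmless; this is the routine but slightly delicate part, and it is exactly parallel to the argument behind \cite[Lemma 3.4]{BarHofKom14} and \eqref{eq::gamma-i-connectivity}.
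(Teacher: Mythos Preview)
Your approach --- a first-moment bound on $|\CB ad\CP_k|$ via path counting and then summing over $k$ --- is exactly the right one, and is what the paper invokes by citing \cite[Lemma~5.2]{BarHofKom14}. But two of your factors are wrong in a way that breaks the bound. First, the bad path $(\pi_0,s_0,\dots,t_k,\pi_k)$ terminates at $\pi_k$ with \emph{no} outgoing half-edge $s_k$, so the last factor should be $\tfrac{2}{\Ev[D]n}\sum_{d>\widehat u_k} d\,\#\{\text{deg }d\}\asymp(\widehat u_k)^{2-\tau}$, not $(\widehat u_k)^{3-\tau}$. Second, the initial count must be the number of \emph{half-edges} attached to the last BP generation, i.e.\ $\sum_{v\in\CG_{t(n^\vr)}} D_v^\star$, which is whp at most $\widehat u_0$ (this is the content of Claim~\ref{cl::stoch-dom-alpha-stable}); it is not the number of vertices $|\CC_{t(n^\vr)}|\approx Z_{t(n^\vr)}=(\widehat u_0)^{\tau-2}/(C\log n)$. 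With your factors as written, already $\Ev[|\CB ad\CP_1|]$ diverges as a positive power of $\widehat u_0$, so your ``telescopes to $(\widehat u_0)^{3-\tau}\widehat u_k^{3-\tau}(C\log n)^{-(k-1)}$'' cannot be right.

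Once corrected, the telescoping is clean: the recursion $(\widehat u_i)^{\tau-2}=\widehat u_{i-1}\,C\log n$ gives $(\widehat u_i)^{3-\tau}=\widehat u_i/(\widehat u_{i-1}\,C\log n)$ and $(\widehat u_k)^{2-\tau}=1/(\widehat u_{k-1}\,C\log n)$, so
\[
\prod_{i=1}^{k-1}(\widehat u_i)^{3-\tau}\cdot(\widehat u_k)^{2-\tau}
=\frac{\widehat u_{k-1}}{\widehat u_0\,(C\log n)^{k-1}}\cdot\frac{1}{\widehat u_{k-1}\,C\log n}
=\frac{1}{\widehat u_0\,(C\log n)^k},
\]
and hence $\Ev_{Y,n}[|\CB ad\CP_k|]\le\widehat u_0\cdot(\mathrm{const})^k/(\widehat u_0(C\log n)^k)=(\mathrm{const}/(C\log n))^k$, summable over $k\ge 1$ to at most $2/(C\log n)$ for $C$ large. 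One further point the paper adds, which sharpens your ``$O(\log\log n)$ layers'': $i_{\star\sss{(r)}}$ is actually a tight sequence by \eqref{eq::value_i*}, and $i_{\star\sss{(b)}}$ is whp $o(\log\log n)$ since $\vr^{\sss{(b)}}=\vr(\tau-2)^{a_n^{\sss{(r)}}-1}\Ybn/\Yrn$ is whp bounded below by $1/\omega(n)$ for any $\omega(n)\to\infty$.
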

\begin{proof}
The statement is a direct consequence of the proof of \cite[Lemma 5.2]{BarHofKom14} in the Appendix of that paper. In that proof, the estimate gets worse as the total number of layers grows, (which is of order $\log\log n$ there): here the total number of layers is bounded for color red and whp less then any increasing function for color blue.  More precisely, for red,  $\vr\le \vr^{\sss{(r)}}\le \vr(\tau-2)^{-1}$, $\bnr\in [0,1)$, hence $i_{\star \sss{(r)}}$ is bounded (see \ref{eq::value_i*}). For $i_{\star \sss{(b)}}$, pick any function $\omega(n)$ such that $\log \omega(n)=o(\log\log n)$ that tends to infinity (e.g. $\omega(n) = \log\log n$ will do). Then,
\[ \Pv( \vr^{\sss{(b)}}=\vr' (\tau-2)^{a_n^{\sss{(r)}}-1} \Ybn/\Yrn < \omega(n)^{-1}) \to 0 \]
as $n\to \infty$, since $a_n^{\sss{(r)}}\in [0,1)$. Then, this implies that $i_{\star \sss{(b)}} = -1+ \frac{-\log ((\tau-1)\vr^{\sss{(b)}})}{|\log(\tau-2)|}-\bnb$ is whp $o(\log\log n)$ and hence the proof of  \cite[Lemma 5.2]{BarHofKom14} is also valid for the blue process.   \end{proof}
An immediate consequence is the following:
\begin{corollary}\label{core::time-to-top-who}
For color red whp it takes time $T_r$ to reach a vertex with degree at least $n^{(\tau-2)/(\tau-1)}$, and for color blue it would take time $T_b$ whp to do so if red would not be present.
\end{corollary}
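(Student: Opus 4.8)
The plan is to prove, for red and then for the (hypothetical) red‑free blue process, two matching whp bounds: an \emph{upper bound} saying the colour colours a vertex of degree at least $n^{(\tau-2)/(\tau-1)}$ by the stated time, and a \emph{lower bound} saying it cannot do so any earlier. Both directions merely chain together the ingredients already assembled in Sections~\ref{sc::BP}--\ref{sc::climbup}, so the argument is short; we keep the standing assumption of this section that red reaches size $n^{\vr}$ first.

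\emph{Upper bound for red.} By the branching‑process coupling and \eqref{eq::rho_0}, at time $t(n^{\vr})$ the red cluster contains the whole $t(n^{\vr})$‑th generation of the BP, an i.i.d.\ sample of $n^{\vr^{\sss{(r)}}}$ size‑biased degrees; Lemma~\ref{lem::maxdegree} (with $\alpha=\tau-2$, $m=n^{\vr^{\sss{(r)}}}$, $K=C/c_1$) gives that whp one of them exceeds $u_0^{\sss{(r)}}$, i.e.\ red already occupies a vertex of $\Gamma_0^{\sss{(r)}}$ at time $t(n^{\vr})$. Now iterate the layer‑connectivity \eqref{eq::gamma-i-connectivity}: whp $\Gamma_i^{\sss{(r)}}\subset N(\Gamma_{i+1}^{\sss{(r)}})$ for every $i\le i_{\star\sss{(r)}}$, so a red vertex of $\Gamma_i^{\sss{(r)}}$ present at time $t(n^{\vr})+i$ has a neighbour in $\Gamma_{i+1}^{\sss{(r)}}$, which red colours at time $t(n^{\vr})+i+1$ (red is whp not pre‑empted there, the two colour clusters being vertex‑disjoint until red reaches the top). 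After $i_{\star\sss{(r)}}$ such steps, that is at time $T_r=t(n^{\vr})+i_{\star\sss{(r)}}$ as in \eqref{eq::k*+i*}, red occupies a vertex of $\Gamma_{i_{\star\sss{(r)}}}^{\sss{(r)}}$; by \eqref{def::i*} this layer contains the maximal‑degree vertex of $\CMD$, whose degree is $\simp n^{1/(\tau-1)}\ge n^{(\tau-2)/(\tau-1)}$ (Lemma~\ref{lem::maxdegree} with $\alpha=\tau-1$). Hence whp red reaches such a vertex by time $T_r$.

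\emph{Lower bound for red, and the blue statement.} By Lemma~\ref{lem::badpaths} there are whp no red bad paths, so every red vertex present at any time $t\le t(n^{\vr})+i$ with $i\le i_{\star\sss{(r)}}-1$ lies on a good path and hence has degree at most $\widehat u_i^{\sss{(r)}}\le\widehat u_{i_{\star\sss{(r)}}-1}^{\sss{(r)}}$ (and $\widehat u_0^{\sss{(r)}}$ already bounds the red maximal degree for $t\le t(n^{\vr})$, taking $k=0$ in \eqref{def::badpaths}). A short computation from \eqref{eq::uibar} together with the value of $i_{\star\sss{(r)}}$ in \eqref{eq::value_i*} gives $\widehat u_{i_{\star\sss{(r)}}-1}^{\sss{(r)}}=n^{(\tau-2)^{\bnr+1}/(\tau-1)}(C\log n)^{O(1)}$; since whp $\bnr\in(0,1)$, $(\tau-2)^{\bnr+1}<\tau-2$ with a genuine polynomial gap that absorbs the (boundedly many) logarithmic factors, so $\widehat u_{i_{\star\sss{(r)}}-1}^{\sss{(r)}}<n^{(\tau-2)/(\tau-1)}$ whp. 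Thus red reaches degree $\ge n^{(\tau-2)/(\tau-1)}$ no earlier than $T_r$, which with the upper bound gives the claim for red. For blue I would run both arguments verbatim on the process with red deleted, using $Z^{\sss{(b)}}_{t(n^{\vr})}=n^{\vr^{\sss{(b)}}}$ from \eqref{eq::BPblue}, the blue layers and index $i_{\star\sss{(b)}}$ from \eqref{eq::b-definitions}, the blue instance of \eqref{eq::gamma-i-connectivity} (see \cite[Lemma 3.4]{BarHofKom14}), and Lemma~\ref{lem::badpaths} with $j=b$.

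\emph{Main obstacle.} The only delicate point is the blue lower bound: $i_{\star\sss{(b)}}$ is no longer $O(1)$, so one must check that the $(C\log n)$‑factors accumulated in $\widehat u_i^{\sss{(b)}}$ over $i\le i_{\star\sss{(b)}}-1$ steps remain $n^{o(1)}$ --- this is exactly where one invokes the whp bound $i_{\star\sss{(b)}}=o(\log\log n)$ established inside the proof of Lemma~\ref{lem::badpaths}, after which the polynomial gap $(\tau-2)^{\bnb+1}<\tau-2$ (valid whp since $\bnb\in(0,1)$ whp) again dominates. A minor technicality is the lattice coincidence $\bnr=0$ or $\bnb=0$, which has probability $o(1)$ and may simply be discarded; one also uses implicitly that $\vr$ is small enough for the coupling of the degrees in generation $t(n^{\vr})$ to i.i.d.\ size‑biased degrees to remain valid, as in \eqref{eq::rho_0}.
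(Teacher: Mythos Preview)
Your proposal is correct and follows essentially the same route as the paper: the upper bound is exactly the layer‑connectivity climb via \eqref{eq::gamma-i-connectivity}, and the lower bound is the no‑bad‑paths event of Lemma~\ref{lem::badpaths} combined with the polylog comparison between $\widehat u_i^{\sss{(j)}}$ and $u_i^{\sss{(j)}}$. The only cosmetic difference is that the paper phrases the lower bound via the auxiliary index $\widehat i_{\star,\sss{(j)}}:=\inf\{i:\widehat u_i^{\sss{(j)}}\le n^{1/(\tau-1)}\le \widehat u_{i+1}^{\sss{(j)}}\}$ and argues $\widehat i_{\star,\sss{(j)}}=i_{\star,\sss{(j)}}$ using \eqref{eq::wideui-ui}, whereas you compute $\widehat u_{i_{\star\sss{(j)}}-1}^{\sss{(j)}}$ directly and compare it to the threshold $n^{(\tau-2)/(\tau-1)}$; these are the same calculation. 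One small remark on your upper bound: you need not invoke the maximal‑degree vertex of $\CMD$, since any vertex of $\Gamma_{i_{\star\sss{(r)}}}^{\sss{(r)}}$ already has degree $>u_{i_{\star\sss{(r)}}}^{\sss{(r)}}\simp n^{(\tau-2)^{\bnr}/(\tau-1)}>n^{(\tau-2)/(\tau-1)}$ whp.
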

\begin{proof} We have seen that $T_j$, $j=r, b$,  is a whp an upper bound to reach the top (due to \eqref{eq::gamma-i-connectivity} yielding the existence of a path of length $T_j$). Now we argue that $T_j$ is also whp a lower bound to reach the top, that is, there is whp no path to the top shorter than $T_j$.

On the event $\{ \CB ad\CP_k^{\sss{(r)}} = \varnothing, \CB ad\CP_k^{\sss{(b)}} \neq \varnothing \}$, which occurs whp, we can use the upper bound $\widehat u_i^{\sss{(j)}}$ on the degrees at time $t(n^{\vr})+i$ for all $i\ge 0$, hence
we get that the time it takes to reach a vertex of  degree at least $n^{(\tau-2)/(\tau-1)}$ is at least
\[ \widehat i_{\star, \sss{(j)}} :=\inf \{ i: \widehat u_i^{\sss{(j)}} \le n^{1/(\tau-1)} \le \widehat u_{i+1}^{\sss(j)} \}. \]
Combine this with the fact that
\be\label{eq::wideui-ui} \widehat u_i^{\sss{(j)}} = u_i^{\sss{(j)}} (C\log n)^{\tfrac{2}{3-\tau}\Big( \big( \tfrac{1}{\tau-2}\big)^{i+1}-1\Big)},\ee and we get that $\widehat i_{\star, \sss{(j)}}=i_{\star, \sss{(j)}} $ whp.
Hence, $T_b$ and $T_r$ are both upper and lower bounds for this quantity.
\end{proof}
Later, we will need to estimate the number of vertices in each layer $\Gamma_i^{\sss{(j)}}$ at the time of occupation (without the presence of the other color).
Let us  denote the set and number of color $j$ vertices in the $i$th layer $\Gamma_i^{\sss{(j)}}$ right at the time when color $j$ (denoted by $\CC^{\sss{(j)}}$ below) reaches it, by
\be\label{def::Ai} \CA_i^{\sss{(j)}}:=\CC_{t(n^{\vr})+ i}^{\sss{(j)}}\cap\Gamma_i^{\sss{(j)}}, \qquad A_i^{\sss{(j)}}:=|\CA_i^{\sss{(j)}}|. \ee

\begin{lemma}\label{lem::numberofverticesinGamma}  For $j=r,b$, on the event $\{\CB ad\CP_k^{\sss{(j)}}=\varnothing \ \forall k\le  i_{\star \sss{(j)}} \}$, whp for all $ i\le i_{\star \sss{(j)}}$,
 \be \label{eq::aifinal} \ A_i^{\sss{(j)}}\le\exp\left\{ \log (C\log n) \cdot \frac{2 (\tau-2)^{-i}}{(3-\tau)^2} \right\}. \ee
\end{lemma}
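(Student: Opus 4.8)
\emph{Proof idea.} The plan is to turn the bound into a first-moment path-counting estimate, in the spirit of the proof of Lemma~\ref{lem::badpaths}, and then to use the recursive definitions \eqref{eq::ui_recursion} of $u_i^{\sss{(j)}}$ and \eqref{eq::uibar} of $\widehat u_i^{\sss{(j)}}$ to make all powers of $n$ cancel, leaving only a controlled power of $C\log n$. First I would reduce the statement to a count over a single generation: on the event $\{\CB ad\CP_k^{\sss{(j)}}=\varnothing\ \forall k\le i_{\star\sss{(j)}}\}$ every vertex that color $j$ reaches at generation $t(n^\vr)+i'$ has degree at most $\widehat u_{i'}^{\sss{(j)}}$, and since the exponent of $n$ in $\widehat u_{i'}^{\sss{(j)}}$ is $\vr^{\sss{(j)}}(\tau-2)^{-(i'+1)}$, which is strictly smaller than the exponent $\vr^{\sss{(j)}}(\tau-2)^{-(i+1)}$ of $n$ in $u_i^{\sss{(j)}}$ whenever $i'<i$, for large $n$ no vertex of degree $>u_i^{\sss{(j)}}$ can be reached before generation $t(n^\vr)+i$. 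Hence $A_i^{\sss{(j)}}$ equals the number of generation-$(t(n^\vr)+i)$ vertices of degree $>u_i^{\sss{(j)}}$. For $i=0$ this is immediate: among the $Z_{t(n^\vr)}^{\sss{(j)}}=n^{\vr^{\sss{(j)}}}$ individuals in the last branching-process generation, each has size-biased degree exceeding $u_0^{\sss{(j)}}$ with probability $\asymp (u_0^{\sss{(j)}})^{-(\tau-2)}\asymp C\log n/n^{\vr^{\sss{(j)}}}$ by \eqref{eq::size-biased2} and \eqref{eq::ui_recursion}, so a Chernoff bound gives $A_0^{\sss{(j)}}\le 2C\log n\le(C\log n)^{2/(3-\tau)^2}$ whp, using $(3-\tau)^2<1$.

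For $1\le i\le i_{\star\sss{(j)}}$, on the no-bad-path event each generation-$(t(n^\vr)+i)$ vertex of degree $>u_i^{\sss{(j)}}$ is the endpoint of a length-$i$ path $(\pi_0,\dots,\pi_i)$ starting from a free half-edge of $\CC_{t(n^\vr)}^{\sss{(j)}}$ with $\deg(\pi_k)\le\widehat u_k^{\sss{(j)}}$ for every $k$, so $A_i^{\sss{(j)}}$ is bounded by the number of such paths. Taking the $\Pv_{Y,n}$-expectation and exposing the pairing edge by edge --- using that only $o_{\Pv}(n)$ half-edges are paired during the whole mountain-climbing phase, so every prescribed pairing occurs with probability at most $2/\CL_n$ --- one obtains, after conditioning on the degree sequence and on $\CC_{t(n^\vr)}^{\sss{(j)}}$ and on a high-probability event $\mathcal{G}$ on which $\CL_n\ge\E[D]n/2$, on which the truncated degree sums satisfy $\sum_{v:D_v\le M}D_v(D_v-1)\le CnM^{3-\tau}$ and $\sum_{v:D_v>M}D_v\le CnM^{2-\tau}$ (from \eqref{eq::F}, summation by parts and Chebyshev, for all $M=\widehat u_k^{\sss{(j)}}$ with $k<i$ and $M=u_i^{\sss{(j)}}$, which are $\le n^{1/(\tau-1)}$ since $\widehat i_{\star\sss{(j)}}=i_{\star\sss{(j)}}$ whp by Corollary~\ref{core::time-to-top-who}), and on which the starting count $W_0:=\#\{(v,h):v\in\CC_{t(n^\vr)}^{\sss{(j)}},\ h\text{ a free half-edge of }v\}$ satisfies $W_0\le\widehat u_0^{\sss{(j)}}$, the bound
\[ \E_{Y,n}\big[A_i^{\sss{(j)}};\mathcal{G}\big]\ \lesssim\ \widehat u_0^{\sss{(j)}}\,\Big(\prod_{k=1}^{i-1}(\widehat u_k^{\sss{(j)}})^{3-\tau}\Big)\,(u_i^{\sss{(j)}})^{2-\tau}. \]
The estimate $W_0\le\widehat u_0^{\sss{(j)}}$ uses that $\E[D^\star\wedge\widehat u_0^{\sss{(j)}}]\asymp(\widehat u_0^{\sss{(j)}})^{3-\tau}$, whence $\E[W_0;\text{no bad path}]\asymp n^{\vr^{\sss{(j)}}}(\widehat u_0^{\sss{(j)}})^{3-\tau}\asymp\widehat u_0^{\sss{(j)}}/(C\log n)$ and Markov applies; this is the one spot where the non-concentration of the half-edge count at a branching-process generation is handled, and it works precisely because $\widehat u_0^{\sss{(j)}}$ already slightly exceeds the typical maximal degree there.

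Finally I would substitute $\widehat u_k^{\sss{(j)}}=n^{\vr^{\sss{(j)}}(\tau-2)^{-(k+1)}}(C\log n)^{e_k}$ and $u_i^{\sss{(j)}}=n^{\vr^{\sss{(j)}}(\tau-2)^{-(i+1)}}(C\log n)^{-e_i}$ with $e_k=\tfrac1{3-\tau}((\tau-2)^{-(k+1)}-1)$, cf.\ \eqref{def::ui} and \eqref{eq::wideui-ui}. In the exponent of $n$ the total contribution is $\vr^{\sss{(j)}}\big[(\tau-2)^{-1}+(3-\tau)\sum_{k=1}^{i-1}(\tau-2)^{-(k+1)}-(\tau-2)\cdot(\tau-2)^{-(i+1)}\big]$, which telescopes to $0$ --- this cancellation is the entire point of the recursive choice \eqref{eq::ui_recursion}. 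What survives is a power of $C\log n$ whose exponent, after summing the geometric series and using $(\tau-2)\cdot(\tau-2)^{-1}=1$, equals $\tfrac{2(\tau-2)^{-i}}{3-\tau}+O(i)$; since $(3-\tau)<1$ this is strictly smaller than the target exponent $\tfrac{2(\tau-2)^{-i}}{(3-\tau)^2}$, with a gap that grows with $i$ and is already bounded below by $\tfrac{2(4-\tau)}{(3-\tau)^2}-1>1$ at $i=1$. This gap absorbs the implicit multiplicative constant (a factor $C_3^{\,i}$, with $i$ bounded for $j=r$ and, exactly as in the proof of Lemma~\ref{lem::badpaths}, taken to grow more slowly than $\log\log n$ for $j=b$), the harmless polylogarithmic losses in the truncated-sum estimates, and the slack in the final Markov step. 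A Markov inequality then promotes the displayed bound to $A_i^{\sss{(j)}}\le(C\log n)^{2(\tau-2)^{-i}/(3-\tau)^2}$ whp, and a union bound over $i\le i_{\star\sss{(j)}}$ completes the proof.

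\textbf{Main obstacle.} The genuinely delicate point is that the forward half-edge counts at successive branching-process generations have tail exponent $\tau-2<1$ and therefore never concentrate around a (finite) mean, so one can never iterate a mean-offspring estimate; instead the path-count must carry the hard truncation at $\widehat u_k^{\sss{(j)}}$ through every step, and the initial count $W_0$ needs the separate argument above. Everything else --- in particular checking that the accumulated constant and polylogarithmic factors over the $i_{\star\sss{(j)}}$ steps stay under the target exponent --- is routine once one notices the exact telescoping of the $n$-powers.
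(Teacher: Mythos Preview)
Your proposal is correct, and in fact proves a sharper intermediate bound than the lemma states. The paper takes a different route: rather than a one-shot path count, it argues inductively layer by layer via binomial domination --- given $A_i^{\sss{(j)}}$ vertices in $\Gamma_i^{\sss{(j)}}$, each of degree at most $\widehat u_i^{\sss{(j)}}$ on the no-bad-path event, the number of their neighbours in $\Gamma_{i+1}^{\sss{(j)}}$ is stochastically dominated by ${\sf Bin}\big(A_i^{\sss{(j)}}\widehat u_i^{\sss{(j)}},\ \CE_{\ge u_{i+1}^{\sss{(j)}}}/\CL_n\big)$, and concentration of this binomial yields the recursion $A_{i+1}^{\sss{(j)}}\lesssim A_i^{\sss{(j)}}(C\log n)^{2e_i+1}$, whose solution has leading exponent exactly $2(\tau-2)^{-i}/(3-\tau)^2$.

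Your direct first-moment path count telescopes the $n$-powers in one shot and produces the smaller leading exponent $2(\tau-2)^{-i}/(3-\tau)$, which you then relax to the stated bound. The paper's recursion buys simplicity --- it lands on the target exponent without slack and needs no separate argument for $W_0$, since the base case $A_0^{\sss{(j)}}\le C\log n$ is a clean binomial concentration. Your approach buys a tighter bound and a single Markov inequality per $i$, at the cost of the extra $W_0\le\widehat u_0^{\sss{(j)}}$ step; that step is in fact exactly Claim~\ref{cl::stoch-dom-alpha-stable}, which the paper establishes later for an unrelated purpose.
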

\begin{proof}  The proof is based on concentration of binomial random variables, applied in two settings: (1) In the last generation of the BP approximation the degrees are i.i.d., hence the number of vertices with degree at least $u_0^{\sss{(j)}}$ is binomial\footnote{In the graph $\CMD$ is hypergeometric, but in the coupling to i.i.d degrees in the BP we simply have $Z_{t(n^{\vr})}^{\sss{(j)}}$ many i.i.d. variables with distribution $D^\star$, so the number of vertices with degree at least $u_0^{\sss{(j)}}$ is binomial in the BP}, and so whp there are only $A_0^{\sss{(j)}}:=C\log n$ many vertices with degree at least $u_0^{\sss{(j)}}$. (2) Given that there are $A_i^{\sss{(j)}}$ many color $j$ vertices in layer $\Gamma_i^{\sss{(j)}}$ with degree at most $\widehat u_i^{\sss{(j)}}$, the number of vertices that they connect to in $\Gamma_{i+1}^{\sss{(j)}}$ is stochastically dominated by a \[ {\sf Bin}\left(A_i^{\sss{(j)}} \widehat u_{i}^{\sss{(j)}}, \frac{\CE_{\ge u_{i+1}^{\sss{(j)}}}}{\CL_n(1+o(1))}\right)\]
random variable, where $\CE_{\ge y}$ denotes the total number of half-edges incident to vertices with degree at least $y$. More details are worked out in the proof of \cite[Lemma 5.4]{BarHofKom14}. %The estimates clearly only work as long as blue is still jumping up in its climbing phase (started after time $t(n^{\vr})$), hence the criterion $ i\le \min\{i_{\star \sss{(b)}}, \lfloor t_c \rfloor-t(n^{\vr})+\ind_{E_<\cup O_>}\}$.
\end{proof}

\section{Crossing the peak of the mountain}\label{sc::peak}
Next we investigate what happens when the path through the layers reaches the highest degree vertices. This is also the part where the two proofs for $\Ybn/\Yrn < \tau-2$ or $\Ybn/\Yrn >\tau-2$ separate. To be able to handle all the cases, we add the superscript $(r)$ and $(b)$ to quantities in the previous section, meaning that they belong to the description of the growth of the red and the blue cluster, respectively.

For red, we have just seen that $u_{i_{\star \sss{(r)}}}^{\sss{(r)}} \simp n^{\frac{(\tau-2)^{\bnr}}{\tau-1}}$, where the exponent is $ \in \left( \frac{\tau-2}{\tau-1}, \frac{1}{\tau-1}\right)$. Since the maximum degree in the graph is $\simp n^{\frac{1}{\tau-1}}$ whp, we have $\Gamma_{1+i_{\star \sss{(r)}}}^{\sss{(r)}}=\varnothing$.

Following the lines of \cite{BarHofKom14}, we cite the lemma from \cite[Volume II., Chapter 5]{H10}:
 \begin{lemma}\label{lem::direct_connect} Consider two sets of vertices $A$ and $B$. If the number of half-edges $\mathcal S_A=o(n)$ and $\mathcal S_B$ satisfy
 \[ \frac{\mathcal S_A \mathcal S_B}{n}> h(n),\]
 for some function $h(n)$, then conditioned on the degree sequence with $\CL_n\le 2\Ev[D] n$, the probability that the two sets are not directly connected can be bounded from above by
 \[ \Pv_n(A\nleftrightarrow B)< \e^{-\tfrac{h(n)}{4\Ev[D]}}.\]
  \end{lemma}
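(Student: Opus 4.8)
The plan is to use the standard sequential (adaptive) construction of the configuration model, in which the half-edges are matched one at a time in an order of our choosing, and to expose the pairings of the half-edges incident to $A$ one by one, counting ``near misses'' of an $A$--$B$ edge. Working throughout conditionally on the degree sequence (so that the matching is uniformly random) and on $\{\CL_n\le 2\Ev[D]n\}$, order the $\mathcal S_A$ half-edges incident to $A$ arbitrarily and match them in this order, skipping any that have already been matched to another $A$-half-edge. On the event $\{A\nleftrightarrow B\}$ no $A$--$B$ edge has been created, so none of the $\mathcal S_B$ half-edges incident to $B$ has been consumed (they can only leave the pool by being matched to an $A$-half-edge, which would produce an $A$--$B$ edge; the case $A\cap B\neq\emptyset$ only helps, up to a harmless constant if one is fussy about self-loops). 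Hence when we come to match the next still-unpaired $A$-half-edge, the conditional probability that it is matched to one of $B$'s half-edges is
\[
\frac{\mathcal S_B}{(\#\text{ unpaired half-edges})-1}\ \ge\ \frac{\mathcal S_B}{\CL_n}\ \ge\ \frac{\mathcal S_B}{2\Ev[D]n}=:p ,
\]
where we used that the pool of unpaired half-edges only shrinks, and $\CL_n\le 2\Ev[D]n$; the hypothesis $\mathcal S_A=o(n)$ ensures the pool does not empty during the procedure, so this lower bound is valid at every step.

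Next I would count the number of ``trials.'' Each matching of an $A$-half-edge either creates an $A$--$B$ edge (success) or consumes one or two half-edges from $A$'s pool; since the pool has size $\mathcal S_A$, at least $\lceil\mathcal S_A/2\rceil\ge \mathcal S_A/2$ matchings can be performed before it is exhausted, as long as no success has occurred. Thus, on $\{A\nleftrightarrow B\}$ we perform at least $\mathcal S_A/2$ matching steps, each of which, conditionally on all earlier steps having failed, succeeds with probability at least $p$. By the usual iterated-conditioning argument this yields
\[
\Pv_n(A\nleftrightarrow B)\ \le\ (1-p)^{\mathcal S_A/2}\ \le\ \exp\!\Big(-\tfrac{\mathcal S_A\mathcal S_B}{4\Ev[D]n}\Big)\ <\ \e^{-h(n)/(4\Ev[D])},
\]
using $1-x\le\e^{-x}$ for the middle inequality and the assumption $\mathcal S_A\mathcal S_B/n>h(n)$ for the last.

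The argument is essentially bookkeeping: the only points needing a little care are the passage from ``at least $\mathcal S_A/2$ conditionally-lower-bounded Bernoulli trials'' to the product bound $(1-p)^{\mathcal S_A/2}$ (standard stochastic domination by a binomial, via the tower property on the filtration generated by the successive matchings) and the trivial treatment of overlapping $A$ and $B$. Since this is exactly the statement cited from \cite[Volume II., Chapter 5]{H10}, in the paper one may simply invoke it; the sketch above records the short self-contained proof.
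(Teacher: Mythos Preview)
Your argument is correct and is the standard one: the paper itself does not prove this lemma but simply cites \cite[Lemma 4.1]{BarHofKom14} (which in turn is the argument from \cite[Volume II., Chapter 5]{H10}), and your sequential-matching proof is exactly that argument written out. One tiny remark: the hypothesis $\mathcal S_A=o(n)$ is not really needed for your bound, since the success probability lower bound $\mathcal S_B/\CL_n$ is valid at every step regardless (the denominator $\CL_n-2k-1$ only shrinks), so the pool never empties as long as the $B$-half-edges remain; the $o(n)$ assumption is stated in the lemma for safety but plays no role in the proof.
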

 \begin{proof} See the proof of \cite[Lemma 4.1]{BarHofKom14}.
 \end{proof}
 Let us introduce
 \be\label{def::alpha} \alpha_n^{\sss{(r)}}:=1-\frac{(\tau-2)^{b_n^{\sss{(r)}}}}{\tau-1}, \quad
 \beta_n^{\sss{(r)}}:=1+\frac{1}{(3-\tau)}\left(\frac{1}{(\tau-1)\vr^{\sss{(r)}}}-1\right), \ee
\be\label{eq::wideu1} \wur_1:= (C \log n) n/u_{i_{\star\sss{(r)}}}^{\sss{(r)}}=n^{ \anr} (C\log n)^{\beta_n^{\sss{(r)}}},\ee
and the following layer:
\be\label{def::witgamma}\wgar_1:=\{v\in \CMD, D_v > \wur_1\}.\ee
Then, we can apply Lemma \ref{lem::direct_connect} by picking $A$ to be a single red vertex with degree higher than $u_{i_{\star\sss{(r)}}}^{\sss{(r)}}$, and $B$ any vertex in $\wgar_1$. As a result, \emph{as long as blue does not interfere}, \cite[Lemma 4.2]{BarHofKom14} stays valid, stating that all the vertices in $\wgar_1$ are occupied by red at time $T_r+1$, i.e.,
\be\label{eq::slopestart}  \wgar_1\subset \mathcal R_{T_r+1} \quad \mbox{whp.}\ee
Now we investigate what happens if blue \emph{does} interfere with this stage.
To be able to do so, we define the similar quantities for blue as around \eqref{eq::BPblue}, and define
\be\label{eq::uistar-b} u_{i_{\star \sss{(b)}}}^{\sss{(b)}}:= n^{(\tau-2)^{\bnb}/(\tau-1)} (C\log n)^{-e_{i_{\star\sss{(b)}}}},  \ee
with $e_{i_{\star \sss{(b)}}}$ as in \eqref{eq::ui*} but $\vr^{\sss{(r)}}$ replaced by $\vr^{\sss{(b)}}$, and finally
\be \label{eq::wideu1-b} \wub_1:= (C\log n) n / u_{i_{\star\sss{(b)}}}^{\sss{(b)}} =: n^{\anb} (C\log n)^{\benb},\ee
where $\anb, \benb$ are as in \eqref{def::alpha}, but the superscript $(r)$ and $\vr^{\sss{(r)}}$ replaced by $(b)$ and $\vr^{\sss{(b)}}$, respectively.

First of all, note that blue would occupy some vertices with degree at least $n^{(\tau-2)^{\bnb}/(\tau-1)(1+o_{\Pv}(1))}$ at time $T_b$, and it is working its way towards this degree by increasing its maximal degree by a factor of $1/(\tau-2)$ in the exponent with each step. Hence, at time $T_r$, blue occupies some vertices with $\log$(degree)/$\log n$ that is
\[ \frac{(\tau-2)^{\bnb}}{\tau-1}(\tau-2)^{T_b-T_r}(1+o_{\Pv}(1)).\]
Recall that $D_n^{\max}(t)$ stands for the degree of the maximal degree vertex that blue paints at or before time $t$. We have to distinguish three cases.
\begin{description}
\item[\namedlabel{case::tb-tr>2}{$(1)$}] $T_b-T_r\ge2$. In this case, at time $T_r+1$, blue occupies some vertices with $\log$(degree)/$\log n$
that is \[ \frac{\log (D_n^{\max}(T_r+1))}{\log n} = \frac{(\tau-2)^{b_n^{\sss{(b)}}}}{\tau-1}(\tau-2)^{T_b-T_r-1}(1+o_{\Pv}(1)),\]
while red crosses the peak and  tries to occupy every vertex in $\wgar_1$ by \eqref{eq::slopestart}. It is not hard to see in \eqref{def::alpha} that $\anr>(\tau-2)/(\tau-1)$, hence
\[ \frac{\log \wur_1}{\log n} = \anr (1+ o_{\Pv}(1)) >  \frac{\log (D_n^{\max}(T_r+1))}{\log n}. \]
In other words, if $T_b\ge T_r+2$ then blue does not reach any red vertex at time $T_r+1$. Red can cross the peak, and \eqref{eq::slopestart} is valid.
\item[\namedlabel{case::tb=tr}{$(2)$}] $T_b-T_r=0$. In this case, red and blue arrive to the top of the mountain at the same time. By the assumption that $\Yrn>\Ybn$, we have the easy statement
\be\label{eq::frac-compare} \frac{\log\log n - \log ((\tau-1)\Yrn)}{|\log (\tau-2)|}-1 < \frac{\log\log n - \log ((\tau-1)\Ybn)}{|\log (\tau-2)|}-1 \ee
Note that $T_r$ and $T_b$ are the integer parts of these expression. Hence, we conclude that $T_b=T_r$ is only possible if the two expressions in \eqref{eq::frac-compare} differ by at most $1$. This translates exactly to $\Ybn/\Yrn>\tau-2$. Secondly, also note that \eqref{eq::frac-compare} can be rewritten as
\be\label{eq::key-1} T_r + \bnr < T_b + \bnb. \ee
Note that on the event $T_r=T_b$, at time $T_r+1$, both colors try to cross the mountain at the same time, and red tries to occupy every vertex with degree at least $n^{\anr(1+o_{\Pv}(1))}$, while blue tries to occupy every vertex with degree at least $n^{\anb(1+o_{\Pv}(1))}$.
Now, \eqref{eq::key-1} implies that if $T_r=T_b$, we must have $\bnr<\bnb$, which in turn implies that $\anr<\anb$.
Let us introduce $\delta_n^{\sss{(j)}}$ for $j=r,b$ implicitly by
\be\label{def::delta}  \alpha_n^{\sss{(j)}} =: \frac{(\tau-2)^{\delta_n^{\sss{(j)}}}}{\tau-1}. \ee
It is not hard to see from \eqref{def::alpha} that $\delta_n^{\sss{(j)}}\in(0,1]$, and $\dnr>\dnb$ when \eqref{eq::key-1} holds together with $T_r=T_b$.
Recall the rule that if red and blue arrive at a vertex at the same time, then the vertex gets each color with equal probability, independently of everything else. This rule, combined with the previous observation, implies that at time $T_r+1$, approximately half of the vertices with degree at least $\simp\! n^{\anb}$ are red and blue, respectively, while almost all\footnote{In fact, a few vertices might be blue even in this interval, but the proportion of these vertices is negligible. An estimate on how many vertices might be blue is given in Lemma \ref{lem::red-intervals} below. For the purpose of understanding, at this point it is enough that `almost all' vertices are red.} vertices between degree
$(\simp\!n^{\anr}, \simp\!n^{\anb})$ are red.
We introduce the shorthand notation for this as
\be\label{eq::shorthand}(0, \dnb] \in \text{Mix}, \quad (\dnb, \dnr]\in \CR. \ee  For an illustration see Fig.\ \ref{fig::cross-1}. \begin{figure}
\includegraphics[width=0.6\textwidth]{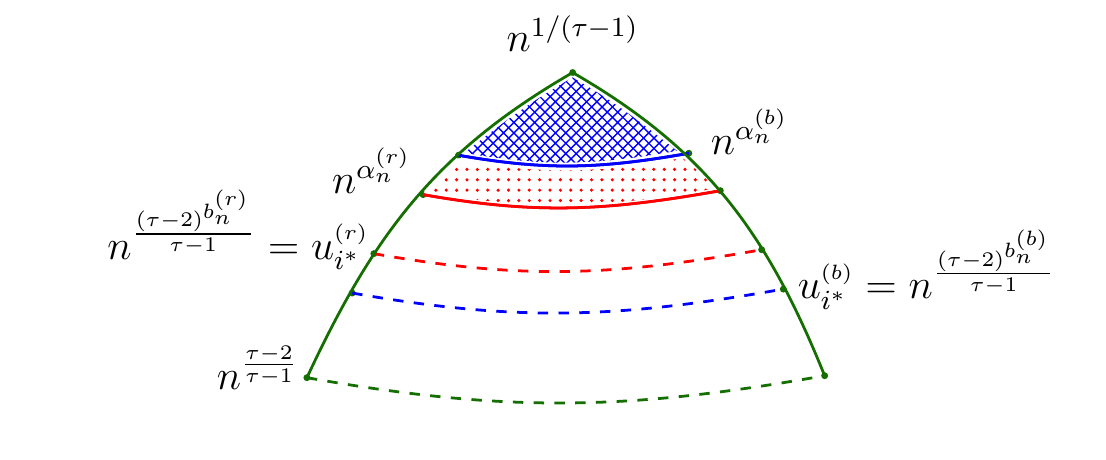}
\caption{Case (2): An illustration of the crossing of the mountain peak when $T_b=T_r$ at time $T_r+1$. The crosshatched (darker) and dotted (lighter) areas indicate the part of the mountain where vertices are red and blue with equal probability, and almost all red, respectively.}\label{fig::cross-1}
\end{figure}
\item[\namedlabel{case::tb-tr=1}{$(3)$}]\label{case::tb-tr=1}$T_b=T_r+1$. In this case, red arrives to the top of the mountain just one step before blue. At time $T_r+1$, blue occupies some vertices of $\log$(degree)/$\log n$ approximately $(\tau-2)^{\bnb}/(\tau-1)(1+o_{\Pv}(1))$.  Note that  by Lemma \ref{lem::numberofverticesinGamma} blue occupies only a few vertices of exactly this degree, but quite a few vertices slightly below this value, i.e., we can add a slightly larger logarithmic correction term at this step and have plenty of vertices that blue tries to color at time $T_r+1$. So, even if the degree of these vertices is higher than $\wur_1$ (by the equal probability upon same arrival rule), blue will whp occury some vertices around this value. Meanwhile, red occupies every other vertex with $\log$(degree)/$\log n$ at least $\anr$.
The question is what happens at time $T_r+2$ in this case.

Apply Lemma \ref{lem::direct_connect} for the blue color at time $T_r+1$: one can see that the blue vertices with $\log$(degree)/$\log n$ at least $(\tau-2)^{\bnb}/(\tau-1)(1+o_{\Pv}(1))$ are whp connected to \emph{every vertex} with degree higher than $n^{\anb}$ (the proof is identical to that of \eqref{eq::slopestart}). Here, there are two cases: either $\anb>\anr$, and then these vertices are already all red, hence blue cannot occupy more vertices. Or, $\anb<\anr$, and in this case blue and red arrives to vertices with $\log$(degree)/$\log n$ in the interval $[\anb, \anr)$ at the same time. Meanwhile, red occupies every vertex in  the interval $[(\tau-2)\anr, \anb)$.\footnote{This fact is verified in \eqref{eq::widetildegamm} below.}

To check which case of the two scenarios can happen we turn to \eqref{eq::frac-compare} and \eqref{eq::key-1}.
Since $T_r, T_b$ are the integer parts of the two sides of \eqref{eq::frac-compare}, $T_b=T_r+1$ can only happen if
$\Ybn/\Yrn> (\tau-2)^2$. Let us write shortly $q:=\Ybn/\Yrn$. There are two cases:
\begin{description}
\item[\namedlabel{casetb-tr=1,nc}{$(3)(a)$}] If $\Ybn/\Yrn \in ((\tau-2)^2, \tau-2]$, then $-\log q/|\log(\tau-2)| \in  [1,2).$ Hence, using $\Ybn=q\Yrn$, the right hand side of \eqref{eq::frac-compare} can be rewritten in two ways as
\be\label{eq::key-2} T_b+ \bnb = T_r + \bnr -\log q/|\log(\tau-2)|.\ee
Since the last term is between $1$ and $2$, $T_b=T_r+1$ is only possible if $\bnb=(-\log q/|\log(\tau-2)|-1) + \bnr$, more importantly, if $\bnb>\bnr$. This in turn implies $\anb>\anr$. This means that at time $T_r+2$ blue tries to occupy vertices that are already colored red. Hence, blue cannot increase its degree anymore and is left with the few maximal degree vertices with $\log$(degree)/$\log n$ approximately $(\tau-2)^{\bnb}/(\tau-1)$. This means that similarly to Case \ref{case::tb-tr>2}, \emph{only red} can cross the peak and \eqref{eq::slopestart} is valid.
\item[\namedlabel{casetb-tr=1,coex}{$(3)(b)$}]  If $\Ybn/\Yrn \in ((\tau-2), 1]$ then $-\log q/|\log(\tau-2)| \in  [0,1).$ Using the rewrite \eqref{eq::key-2} again, we see that $T_b=T_r+1$ in this case is only possible if $-\log q/|\log(\tau-2)| + \bnr >1$, and in this case $\bnb<\bnr$. (Otherwise, $T_r=T_b$ and we are back to Case (2) above.)  This in turn implies $\anb<\anr$, hence, at time $T_r+2$,
blue and red both try to occupy vertices with $\log$(degree)/$\log n$ in the interval $[\anb,\anr)$: here, each vertex has an equal chance to be colored red or blue. On the other hand, at the same time red also occupies almost all vertices in the interval $[(\tau-2)\anr, \anb)$.\footnote{Again, by \eqref{eq::widetildegamm} below. A few vertices might be blue here as well, see Lemma \ref{lem::red-intervals} for error bounds.} Recall $\dnb, \dnr$ from \eqref{def::delta}.  In the shorthand notation introduced in \eqref{eq::shorthand}, we can write in this case
\be\label{eq::shorthand-2} (\dnr, \dnb] \in \text{Mix}, \quad (\dnb, \dnr+1] \in \CR.  \ee For an illustration see Fig. \ref{fig::cross-2}.
\end{description}
\end{description}

\begin{figure}
\includegraphics[width=0.6\textwidth]{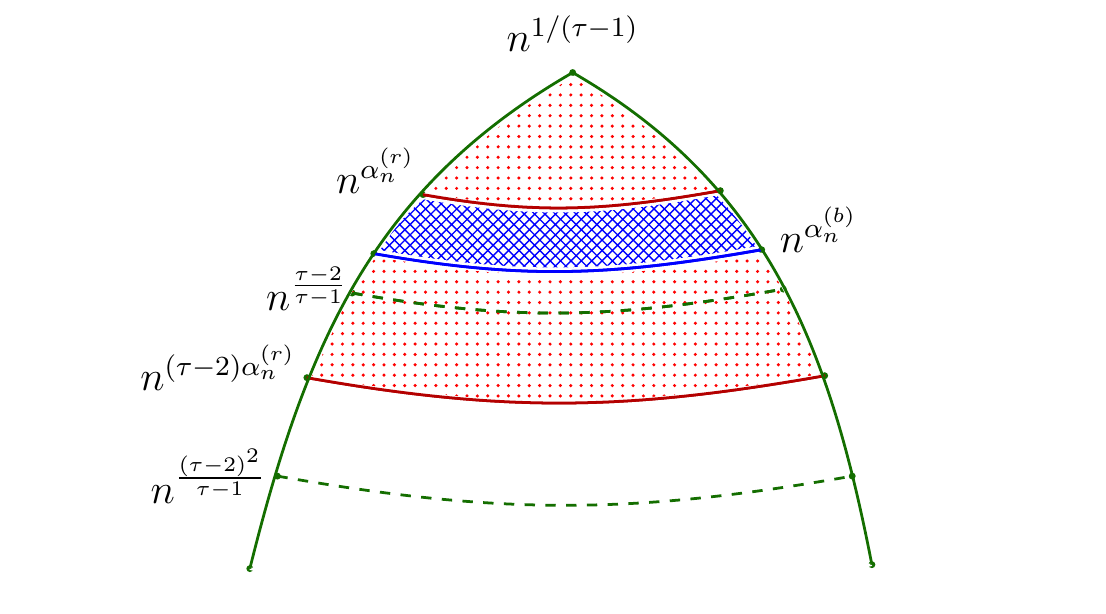}
\caption{Case (3)(b): An illustration of the crossing of the mountain peak when $T_b=T_r+1$ at time $T_r+2$, and $\Ybn/\Yrn>\tau-2$. The crosshatched (darker) and dotted (lighter) areas indicate the parts of the mountain where vertices are red and blue with equal probability, and almost all red, respectively.}\label{fig::cross-2}
\end{figure}

This completes the \emph{crossing the peak of the mountain} phase.

\section{Red or mixed avalanche from the peak}\label{sc::slopedown}
In this section we describe how the (potentially) two colors roll down the mountain. As much as possible, we try to keep together the proofs for Cases (1) -- (3) above. From now on, we write $\Pv_Y(A):=\Pv(A| \Yrn, \Ybn, n)$. Note that all the quantities below are actually random variables that depend on $\Yrn, \Ybn$. We omit the dependence on $n$ in the notation.

Recall $\wgar_1$ and $\wur_1, \wub_1$ from \eqref{def::witgamma} and \eqref{eq::wideu1} and \eqref{eq::wideu1-b}, as well as  define for $j=r,b$
\be \label{eq::wideui_recursion} \widetilde u_{\ell+1}^{\sss{(j)}}=C \log n\!\cdot\!(\widetilde u_{\ell}^{\sss{(j)}})^{\tau-2}. \ee
and also the increasing sequence of sets
\be\label{eq::wgar} \widetilde \Gamma_\ell^{\sss{(j)}}:=\{ v: D_v > \widetilde u_\ell^{\sss{(j)}}\},\ee
i.e., now $\wgar_1 \subset \wgar_2 \subset \dots$ holds, and in Case (2)  $\wgab_1 \subset \wgar_1 \subset \wgab_2 \subset \wgar_2 \dots$, (see Fig.\ \ref{fig::avalanche-2}),  while in Case (3)(b) we have $\wgab_1 \subset \wgar_2 \subset \wgab_2 \subset \wgar_3 \dots$ (see Fig. \ref{fig::avalanche-3}).

Since \eqref{eq::wideui_recursion} is the very same as the recursion in \eqref{eq::ui_recursion}
with indices exchanged, we can apply equation \eqref{eq::gamma-i-connectivity} to $\big(\wgar_\ell\big)_{\ell\ge 1}$, and to $\big(\wgab_\ell\big)_{\ell\ge 1}$  now yielding that, for some $\nu<1$, for all $\ell< \nu \log\log n / |\log(\tau-2)|+ O_{\Pv}(1)$\footnote{This inequality on $\ell$ guaranties that the logarithmic correction terms in $\widetilde u_\ell$ are of less order than the main factor.},
\be\label{eq::widetildegamm} \wgar_{\ell+1}\subset N(\wgar_\ell)\quad\mbox{ and }\quad \wgab_{\ell+1} \subset N(\wgab_{\ell}) \quad \mbox{whp.} \ee
This means that
\begin{enumerate}
\item[(1)] if $T_b-T_r\ge 2$, whp red occupies whp \emph{all} vertices in $\wit\Gamma_\ell$ at time $T_r+\ell$. More precisely, as long as $\wur_\ell>D_{\max}(T_r+\ell)$, every vertex in $\wgar_\ell$ is colored red. If this inequality is not true anymore, at time $T_r+\ell$, red still occupies every vertex in $\wgar_\ell$ that is not in the $\ell$-neighborhood of a vertex that is already blue at time $T_r$. We work out the number of such vertices in Section \ref{sc::path-counting}.
\item[(2)] if $T_b-T_r=0$, we have seen in Section \ref{sc::peak} that at time $T_r+1$, every vertex in $\wgab_1$ is  red or blue with equal probability, and every vertex in $\wgar_1\setminus \wgab_1$ is red, see Fig. \ref{fig::cross-1}. We show  in Lemma \ref{lem::independence} below that whp every vertex in $\wgab_2\setminus \wgar_1$ has plenty of neighbors in  $\wgab_1$, hence, the probability that none of them is blue or none of them is red is negligible. As a result, these vertices are again painted red and blue with equal probability at time $T_r+2$. On the other hand, almost all vertices  in $\wgar_2\setminus \wgab_2$ whp only have neighbors\footnote{For an error bound on the number of blue vertices, see Lemma \ref{lem::red-intervals} below.} in $\wgar_1\setminus \wgab_2$, hence, most vertices in $\wgar_2\setminus \wgab_2$ are painted red whp. In the shorthand notation, we have
\[ (\dnr, \dnb+1] \in \mathcal{M}ix, \quad (\dnb+1, \dnr+1] \in \CR ed\]
One can continue inductively to see that at time $T_r+\ell$, every $m\le \ell$
\be\label{shorthand-case2-ell} (\dnr+m-1, \dnb+m)\in \CM ix, \quad (\dnb+m, \dnr+m] \in \CR ed,\ee
that is, the `mixed-and-red' interval pattern `rolls down' the mountain. For the precise statement, see Lemmas \ref{lem::independence} and \ref{lem::red-intervals} below.

\item[(3)(a)] If $T_b-T_r=1$ and $\Ybn/\Yrn<\tau-2$, only a small number of vertices are blue. At time $T_r+\ell$, red occupies every vertex in $\wgar_\ell$ that is not in the $\ell$-neighborhood of a vertex that is already blue at time $T_r$. This case is exactly the same as Case (1) after red and blue have already met.
\item[(3)(b)] If $T_b-T_r=1$ and $\Ybn/\Yrn>\tau-2$, we have seen that at time $T_r+1$, red paints almost all vertices in $\wgar_1$ and blue reaches some vertices with degree at least $u_{i^\star\sss{(b)}}^{\sss{(b)}}$. Then, at time $T_r+2$, the previous section's Case (3)(b) analysis implies that $\wgar_1\subset\wgab_1$. Combining this with the fact that $\wgab_1\subset \wgar_2$, we see that all vertices in $\wgab_1\setminus \wgar_1$ are coloured red or blue with equal probability, while most vertices in $\wgar_2\setminus \wgab_1$ are whp red. This verifies \eqref{eq::shorthand-2}, and Fig. \ref{fig::cross-2}.
From here, analogously as the reasoning for Case (2), we have that at time $T_r+\ell$, each vertex in the set $\wgab_{\ell-1}\setminus \wgar_{\ell-1}$ is coloured red and blue with equal probability whp, while most vertices in the set $\wgar_{\ell}\setminus \wgab_{\ell-1}$ are red whp.
In shorthand notation, at time $T_r+\ell$, for every positive integer $m\le \ell$,
\be\label{eq::shorthand-case3b-ell}  (\dnr+m-1, \dnb+m-1] \in \CM ix, \quad (\dnb+m-1, \dnr+m] \in \CR ed.\ee
Note that both in Case (2) and (3)(b), at any particular time, the coloring always `ends' with a red layer, that is, the layer with smallest degree that is colored is red.
\end{enumerate}
\begin{figure}[ht]
\centering
\subfigure[Case (2): $T_b=T_r$]{\label{fig::avalanche-2}
\includegraphics[keepaspectratio,width=6cm]{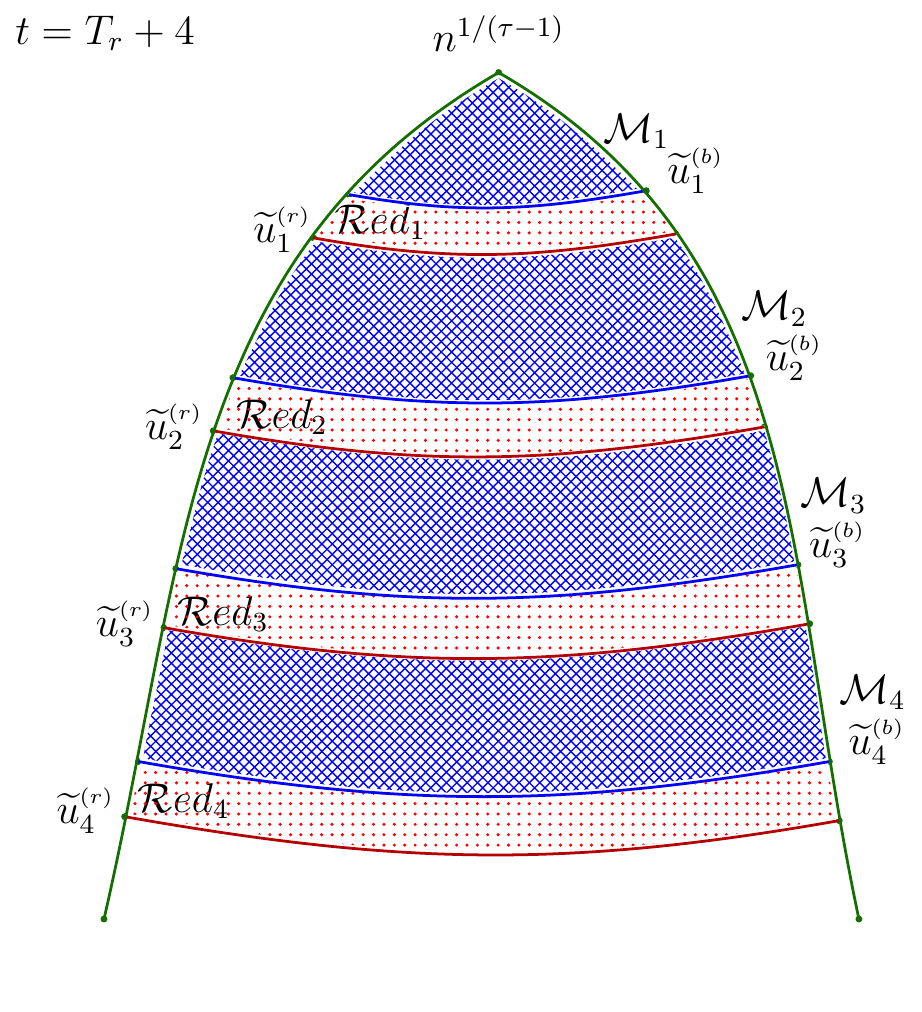}}
\subfigure[Case (3)(b): $T_b=T_r+1$, $\Ybn/\Yrn>\tau-2$]{\label{fig::avalanche-3}
\includegraphics[keepaspectratio,width=6cm]{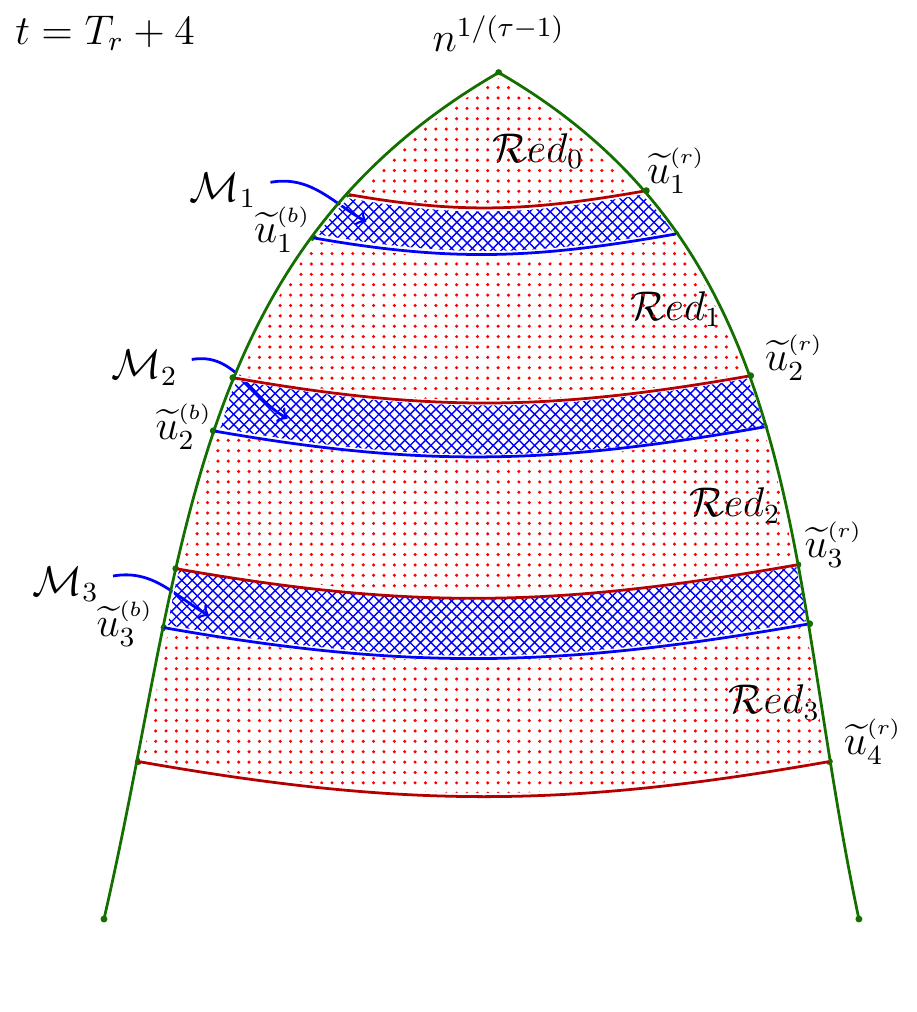}}
\caption{On these pictures, the mixed avalanche is illustrated when $\Ybn/\Yrn>(\tau-2)$, and it shows the state of the avalanche at time $T_r+4$. Dotted (lighter) areas indicate vertices that are almost all coloured red, while cross-hatched (darker) areas indicate vertices with equal probability to be blue or red. On both pictures, $\CM_k$ and $\CR ed_k$ are occupied at time $T_b+k$, $\CM_k$ is mixed while $\CR ed_k$ is mostly red.}\label{fig::avalanches}
\end{figure}

Solving the recursion \eqref{eq::wideui_recursion} yields that
\be\label{eq::ul} \wur_\ell= n^{\anr(\tau-2)^{\ell-1}} (C\log n)^{\bnr (\tau-2)^{\ell -1} + \frac{1}{3-\tau}\left(1-(\tau-2)^{\ell-1}\right)},\ee
where $\anr$ and $\bnr$ were defined in \eqref{def::alpha}.

To shorten the too complicated notation for the next two lemmas, let us introduce the following notation for the all-red and equal probability (mixed) layers in the avalanche:
For $T_b=T_r$, for $\ell\ge 1$ let
\[    \CR{ed}_\ell:= \wgar_\ell \setminus \wgab_{\ell}, \quad \CM_\ell:= \wgab_\ell \setminus \wgar_{\ell-1},  \]
where we mean $\wgar_0:=\varnothing$.
For $T_b=T_r+1$, for $\ell \ge 0$ we let
\[    \CR{ed}_\ell:= \wgar_{\ell+1} \setminus \wgab_{\ell}, \quad \CM_\ell:= \wgab_\ell \setminus \wgar_{\ell},  \]
with $\wgab_0:=\varnothing$. This means that for $T_b=T_r+1$ the red layer at the very top of the avalanche is denoted by $\CR ed_0$, while $\CM_0$ can be ignored.
See Figure \ref{fig::avalanches} for this notation.
More importantly, note that in both settings, $\CM_\ell$ and $\CR ed_\ell$ are colored at the same time, at time $T_b+\ell$. In what follows, we write $d_w$ for the degree of vertex $w$.
\begin{lemma}[Independence of mixed coloring]\label{lem::independence}
Suppose $\Ybn/\Yrn>\tau-2$, and let us condition on the values $\Yrn, \Ybn$, and write $\wit\ind=\ind_{\{T_b=T_r\}}$. Define $\wur_0:=n^{1/(\tau-1)}C \log n$. Then, the two colors red and blue whp reach the vertices in $\wgab_\ell \setminus \wgar_{\ell-\wit\ind}$ at the same time (at $T_b+\ell$), for any $1\le \ell< \nu \log\log n / {|\log (\tau-2)|} + O_{\Pv}(1)$, for some $\nu<1$.

Further, the color of vertices with degree in the interval $[ \wub_\ell)^{1+\ve}, \wur_{\ell-\wit\ind}] $ for some $\ve>0$, can be described as i.i.d. random variables taking value red and blue with equal probability.
\end{lemma}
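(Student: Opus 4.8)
The plan is to argue by induction on $\ell$, continuing the avalanche analysis begun in Section~\ref{sc::peak}. The base case $\ell=1$ is exactly the content of Cases~\ref{case::tb=tr} and~\ref{casetb-tr=1,coex}: there the vertices of $\wgab_1\setminus\wgar_{1-\wit\ind}$ are reached by \emph{both} colours at time $T_b+1$ and then coloured red or blue by an independent fair coin (the shorthands~\eqref{eq::shorthand} and~\eqref{eq::shorthand-2}), while all higher-degree vertices outside this set are reached strictly earlier. For the inductive step, assume that on a whp event the conclusion holds at levels $1,\dots,\ell-1$; in particular the colours of the vertices with degree in $[(\wub_{\ell-1})^{1+\ve},\wur_{\ell-1-\wit\ind}]$ form an i.i.d.\ $\mathrm{Ber}(1/2)$ family. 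It then suffices to establish at level $\ell$: (A) every vertex $w$ with $d_w\in[(\wub_\ell)^{1+\ve},\wur_{\ell-\wit\ind}]$ is \emph{first} reached at time $T_b+\ell$, and then by \emph{both} colours; and (B) conditionally on the history up to time $T_b+\ell-1$ (and on $\Yrn,\Ybn$), each such $w$ then takes colour red or blue with probability $1/2$, independently over $w$. Granting (A), statement (B) is immediate: the model's tie-breaking rule attaches to every vertex reached simultaneously by the two colours an independent fair coin, with this coin independent of the whole past. So the argument reduces to (A).

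For the ``reached by both'' half of (A): that \emph{red} reaches $w$ at time $T_b+\ell$ is immediate from the layered structure~\eqref{eq::widetildegamm}, since $d_w$ lies below the threshold of red's frontier at time $T_b+\ell-1$ but above that of its frontier at time $T_b+\ell$, and the latter is contained in $N$ of the former. The substantive point is that $w$ also has a neighbour that is already coloured \emph{blue} at time $T_b+\ell-1$, i.e.\ a neighbour in the blue half of $\CM_{\ell-1}$. Here one combines the induction hypothesis with concentration: since the degrees $D_v$ are i.i.d., the band $\CM_{\ell-1}$ carries $\simp n/(\wub_{\ell-1})^{\tau-2}$ half-edges whp, and since by the induction hypothesis its colours are i.i.d.\ $\mathrm{Ber}(1/2)$, a Chernoff bound shows its blue half carries $S_B\ge(\tfrac12-o(1))\cdot\simp n/(\wub_{\ell-1})^{\tau-2}$ half-edges whp. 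Conditioning on the degree sequence, the colouring up to time $T_b+\ell-1$, and the pairings revealed so far, Lemma~\ref{lem::direct_connect} applied with $\mathcal S_A=d_w\ge(\wub_\ell)^{1+\ve}$ and $\mathcal S_B$ as above yields
\[\Pv_{Y,n}\big(w\nleftrightarrow\text{blue half of }\CM_{\ell-1}\big)\le\exp\Big\{-\tfrac{1}{4\Ev[D]}\cdot\tfrac{(\wub_\ell)^{1+\ve}S_B}{n}\Big\}.\]
By the recursion~\eqref{eq::wideui_recursion}, $(\wub_\ell)^{1+\ve}S_B/n\simp(\wub_\ell)^{1+\ve}/(\wub_{\ell-1})^{\tau-2}\gtrsim(C\log n)^{1+\ve}(\wub_{\ell-1})^{(\tau-2)\ve}$, and in the admissible range $\ell<\nu\log\log n/|\log(\tau-2)|+O_\Pv(1)$ one has $\wub_{\ell-1}\ge\e^{c(\log n)^{1-\nu}}$, so the bound above is $\le\exp\{-\e^{c'(\log n)^{1-\nu}}\}$; this survives a union bound over the at most $n$ vertices of the band and the $O(\log\log n)$ values of $\ell$. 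The same estimate with ``blue'' replaced by ``red half of $\CM_{\ell-1}$'' (or $\CR{ed}_{\ell-1}$) is only easier, and the case $\wit\ind=0$ is treated identically with the index shift dictated by~\eqref{eq::shorthand-case3b-ell}.

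For the ``not reached before $T_b+\ell$'' half of (A): $w$ could be coloured by time $T_b+\ell-1$ only if it were adjacent to a vertex coloured by time $T_b+\ell-2$, hence, in the layer description of Sections~\ref{sc::climbup}--\ref{sc::slopedown}, only if it had a neighbour in $\wgar_{\ell-2}\cup\wgab_{\ell-2}$, in the peak $\{D_v>\wur_0\}$, or on the short climbing paths. A first-moment computation of the expected number of such neighbours --- and this is precisely where the power-law arithmetic for $\tau\in(2,3)$ enters, via $\al_n^{\sss{(j)}}<1/(\tau-1)$ together with $(\tau-2)^2+1-(\tau-1)=(\tau-2)(\tau-3)<0$, which forces the resulting exponent of $n$ strictly below $0$ --- shows that whp no such neighbour exists, apart from the usual $\mathrm{polylog}$-slop near the boundary of the band, which is absorbed into the error bounds of Lemma~\ref{lem::red-intervals}. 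Combining the two halves of (A) with (B) closes the induction.

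I expect the main obstacle to be the quantitative ``reached by both'' step: the clean i.i.d.\ description becomes available only once the connection-failure probability is small enough to survive a union bound over all $\simp n$ vertices of the band, and at the exact threshold $d_w=\wub_\ell$ Lemma~\ref{lem::direct_connect} delivers only a bound of the form $n^{-O(1)}$, which is insufficient; this is exactly why one must shave off a thin top boundary and work in $[(\wub_\ell)^{1+\ve},\cdot]$, where the overshoot $\ve$ upgrades the estimate to the stretched-exponential bound above. A secondary obstacle is the ``no shortcuts'' lower bound, which requires carefully tracking which degree bands have been coloured at each of the previous two steps, uniformly in $\ell$, and carrying the whole bookkeeping through the two cases $\wit\ind\in\{0,1\}$ in parallel.
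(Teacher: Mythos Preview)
Your proof is correct and follows the same inductive skeleton as the paper, with the same base case from Section~\ref{sc::peak}. The substantive step --- showing each $w\in\CM_\ell$ acquires both a red and a blue neighbour in $\CM_{\ell-1}$ --- is handled differently, however. The paper works \emph{locally}: it fixes the sub-interval $I_{\ell-1}:=[(d_w)^{(1-\ve_\ell)/(\tau-2)},(d_w)^{1/(\tau-2)}]\subset\CM_{\ell-1}$, shows $w$ has at least $\wit C(d_w)^{\ve_\ell}$ \emph{distinct} neighbours there (which requires a separate multi-edge estimate), and then uses the i.i.d.\ colouring of those neighbours to get both colours present with failure probability at most $2\cdot 2^{-\wit C(d_w)^{\ve_\ell}}$. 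You instead work \emph{globally}: condition on the colouring of $\CM_{\ell-1}$, use a Hoeffding-type bound to show its blue half carries a constant fraction of the band's half-edges, and then invoke Lemma~\ref{lem::direct_connect} to connect $w$ to that blue half. Your route bypasses the disjoint-neighbour check; the paper's route avoids the Hoeffding step and the (easy, since the coin flips are external randomness) argument that conditioning on colours leaves the residual matching uniform. Both deliver bounds that survive the union over $\CM_\ell$ and over $\ell$.

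One correction: the induction hypothesis gives i.i.d.\ colours only on the restricted range $[(\wub_{\ell-1})^{1+\ve},\wur_{\ell-1-\wit\ind}]$, so your $S_B$ must count only half-edges there, not all of $\CM_{\ell-1}$. With that restriction the product $d_w S_B/n$ drops from $(C\log n)^{1+\ve}(\wub_{\ell-1})^{(\tau-2)\ve}$ to merely $(C\log n)^{1+\ve}$; the resulting bound $\exp\{-c(\log n)^{1+\ve}\}$ is still super-polynomially small in $n$ and beats the union bound, so nothing breaks. (The paper recovers the stronger stretched-exponential rate by letting the slack shrink along the induction, $\ve>\ve_\ell>\ve_{\ell-1}>\dots>\ve/2$, which you could also adopt.) Finally, you address the ``not reached before $T_b+\ell$'' half more explicitly than the paper does; note that the first-moment sketch shows only that the \emph{proportion} of early-reached vertices in $\CM_\ell$ is $o(1)$, not that there are literally none --- but this is all that is required downstream, and mirrors how Lemma~\ref{lem::red-intervals} absorbs the analogous errors.
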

\begin{proof}
We proceed by induction. Recall that if a not-yet-colored vertex is reached by the red and blue clusters at the same time, then its color is decided by a fair coin flip independently of everything else. Also recall the arguments in Section \ref{sc::peak}, that the vertices in $\CM_1:=\wgab_1 \setminus \wgar_{1-\wit \ind}$ are reached at the same time by the two colors  (see \eqref{eq::shorthand} and \eqref{eq::shorthand-2}). Hence, independent coin flips will decide the color of these vertices, yielding that the statement holds for $\ell=1$.

Next we advance the induction.
Suppose the statement holds for all indices at most $\ell-1$, then we show that it also holds for $\ell$. For this, we show that whp any vertex $w$ satisfying the criterion of the lemma is connected to at least one red and at least one blue vertex that is in $\CM_{\ell-1}$. Hence, a coin flip will decide its color, so its coloring is red and blue with equal probability, and moreover, this coloring is independent of the coloring of other vertices in the same interval, that is, in $\CM_\ell$.

First we show that whp $w$ is connected to many vertices in $\CM_{\ell-1}$.
%$\wgab_{\ell-1} \setminus \wgar_{\ell-1-\wit\ind}$. Fix a positive $\ve_{\ell}<\ve$, to be chosen later.
Using the fact that $d_w$ is at least a power $\ve$ away from the boundary of $\CM_\ell$, let us fix an $\ve_\ell <\ve$ and define
\be\label{eq::i-ell} I_{\ell-1}:=[(d_w)^{(1-\ve_{\ell})/(\tau-2)},(d_w)^{1/(\tau-2)}] \subset \CM_{\ell-1}.\ee
By a concentration of binomial random variables (see e.g. \cite[Lemma 3.2]{BarHofKom14}) and the fact that the degrees are i.i.d. in $\CMD$, using \eqref{eq::F}, the number of vertices in $I_{\ell-1}$
is whp within the interval
\be\label{eq::interval-vertices} [ n (d_w)^{\frac{1-\ve_{\ell}}{\tau-2}(1-\tau)} c_1/2  , 2 C_1 n (d_w)^{\frac{1-\ve_{\ell}}{\tau-2}(1-\tau)}],\ee
as long as $(d_w)^{\ve_\ell (1-\tau)} \to 0$.
Hence, the number of half-edges incident to these vertices  is at least $n (d_w)^{\ve_{\ell}-1} c_1/2$ whp. Now, on the event $\{\CL_n/n \in (\Ev[D]/2, 2\Ev[D])\}$, the expected number of half-edges connecting $w$ to vertices in  $I_{\ell-1}$ is at least
\[  d_w \frac{n (d_w)^{\ve_{\ell}-1} c_1/2 }{\Ev[D]n/2}= \wit C(d_w)^{\ve_{\ell}}, \] for some constant $\wit C$. Next we show that most of these half-edges connect to disjoint vertices:
the probability that there are $2$ half-edges of $w$ that connect to the same vertex is at most
\[\ba  {d_w \choose 2} \sum_{v\in I_{\ell-1} } \frac{d_v^2}{(n\Ev[D]/2)^2} &\le \wit C (d_w)^{2} \frac{(d_w)^{\frac{2}{\tau-2} }}{n^2}  n (d_w)^{\frac{1-\ve_{\ell}}{\tau-2}(1-\tau)}\\
&\le \wit C (d_w)^{\frac{\tau-1}{\tau-2}(1+\ve_{\ell})}/n, \ea\]
which is $o(1)$ as long as $d_w< n^{(\tau-2)(1-\ve')/(\tau-1)}$ for some $\ve'>0$. In particular, this holds for every $\ell\ge 3$,  since then $d_w< \wub_2=n^{(\tau-2)^{\dnb+1}/(\tau-1)(1+o_{\Pv}(1))}$, and also holds for $\ell=2, \wit\ind=0$, since in this case $d_w< \wur_2 = n^{(\tau-2)^{\dnr+1}/(\tau-1)(1+o_{\Pv}(1))}$. If $\ell=2, \wit\ind=1$, and $n^{(\tau-2)/(\tau-1)}<d_w< \wur_1$, then $w$ is whp connected to \emph{all} the vertices in $\wgab_1$:
by Lemma \ref{lem::direct_connect}, since the sum of the exponents of $n$ of the degree of $w$ and any vertex in $\wgab_1$ is
\[ \frac{(\tau-2)^{\dnr} + (\tau-2)}{ \tau-1} > 1,  \]
since $\dnr \in [0,1)$. The number of vertices in $\wgab_1$ is at least $n^{1-(\tau-2)^{\bnb} (1+ o_{\Pv}(1))}$, which is still plenty.
Summarizing, we get that $w$ is connected to at least $\wit C(d_w)^{\ve_{\ell}}$ many vertices in $\CM_{\ell-1}$.

By the induction hypothesis, these vertices are colored red and blue with equal probability. Hence, the probability that all of them are blue or all of them are red is at most $2\cdot 2^{-\wit C(d_w)^{\ve_{\ell}}}$, which tends to zero as long as $d_w> (C\log n)^\sigma$ for some power $\sigma>0$.

Hence, $w$ has whp at least one red and at least one blue neighbor that is colored a time-step earlier, so an independent coin flip will decide the color of $w$.

Note that for the induction to hold true, we need to use a decreasing sequence of $\ve'>\ve_\ell> \ve_{\ell-1}>\ve_{\ell-2} \dots$ to reach higher and higher intervals. First, \eqref{eq::interval-vertices} needs to hold true, and also $(\wub_i)^{\ve_i} \to \infty$ for all $i$. These are all guaranteed if all $\ve_i>\ve'/2$, for instance.
\end{proof}
In the proof of the next lemma, we repeatedly use the following claim:
\begin{claim}\label{claim::Sbound} Let $\CE_{\ge y_n}$ denote the total number of half-edges incident to vertices with degree at least $y_n$, and  $V_{\ge y_n}$ the total number of vertices of degree at least $y_n$, for a sequence $y=y_n$.  Then, for any $\omega(n)\to \infty$, and a large enough constant $C<\infty$,  a small enough constant $0<c$ and for some constant $0<c_2<\infty$,
\be\label{eq::Sbound} \ba \Pv ( \CE_{\ge y_n} \ge C \omega(n) \!\cdot\! n\!\cdot\! y_n^{2-\tau} ) &\le c_2/ \omega(n), \\
\Pv ( V_{\ge y_n} \ge C\!\cdot\!  n\!\cdot\! y_n^{1-\tau}) &\le \exp\{- c_2\cdot  n y_n^{1-\tau}\},\\
\Pv ( V_{\ge y_n} \le c\!\cdot\!  n\!\cdot\! y_n^{1-\tau}) &\le \exp\{- c_2\cdot  n y_n^{1-\tau}\}
 \ea
\ee
\end{claim}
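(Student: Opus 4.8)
The plan is to note that all three quantities depend only on the i.i.d.\ degree sequence $(D_v)_{v\in[n]}$ --- no pairing, hence no conditioning, is involved --- so we are simply estimating tails of sums of i.i.d.\ random variables. For the vertex count we have $V_{\ge y_n}=\sum_{v\in[n]}\ind\{D_v\ge y_n\}\sim\mathrm{Bin}(n,p_n)$ with $p_n=\Pv(D\ge y_n)$, and \eqref{eq::F} gives $\tilde c\,y_n^{1-\tau}\le p_n\le\tilde C\,y_n^{1-\tau}$ for constants $0<\tilde c\le\tilde C<\infty$ depending only on $c_1,C_1,\tau$ (valid once $y_n$ lies in the range where \eqref{eq::F} applies, which always holds in our applications, where $y_n\to\infty$). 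Thus $\mu_n:=np_n$ is of order $n y_n^{1-\tau}$, and I would invoke the standard upper- and lower-tail Chernoff bounds for the binomial distribution (as in \cite[Lemma 3.2]{BarHofKom14}), namely $\Pv(V_{\ge y_n}\ge 2\mu_n)\le\e^{-\mu_n/3}$ and $\Pv(V_{\ge y_n}\le\mu_n/2)\le\e^{-\mu_n/8}$. Choosing $C\ge 2\tilde C$ forces $C n y_n^{1-\tau}\ge 2\mu_n$ and choosing $c\le\tilde c/2$ forces $c n y_n^{1-\tau}\le\mu_n/2$; since $\mu_n\ge\tilde c\,n y_n^{1-\tau}$, these produce the second and third displayed inequalities with $c_2$ a fixed multiple of $\tilde c$.

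For the half-edge count $\CE_{\ge y_n}=\sum_{v\in[n]}D_v\ind\{D_v\ge y_n\}$ I would use only a first-moment estimate, since for $\tau\in(2,3)$ the summand $D_v\ind\{D_v\ge y_n\}$ has infinite second moment and no concentration beyond Markov is available --- this is precisely why the claimed rate is $\omega(n)^{-1}$ rather than exponential. The one computation that is needed is the mean: a layer-cake (summation by parts) identity gives
\[
\Ev\big[D\ind\{D\ge y_n\}\big]=\lceil y_n\rceil\,\Pv(D\ge\lceil y_n\rceil)+\sum_{j>\lceil y_n\rceil}\Pv(D\ge j),
\]
and inserting \eqref{eq::F} together with $\sum_{j>m}j^{1-\tau}=O(m^{2-\tau})$ for $\tau>2$ yields $\Ev[D\ind\{D\ge y_n\}]\le C''y_n^{2-\tau}$, hence $\Ev[\CE_{\ge y_n}]\le C''n y_n^{2-\tau}$ for a constant $C''$ depending only on $c_1,C_1,\tau$. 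Markov's inequality then gives
\[
\Pv\big(\CE_{\ge y_n}\ge C\,\omega(n)\,n\, y_n^{2-\tau}\big)\le\frac{\Ev[\CE_{\ge y_n}]}{C\,\omega(n)\,n\, y_n^{2-\tau}}\le\frac{C''}{C\,\omega(n)},
\]
which is the first inequality. Enlarging $C$ and shrinking $c_2$ if necessary, a single value of $c_2$ serves in all three bounds.

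I expect no genuine obstacle here: this is a routine concentration/Markov estimate. The only points needing a little care are (i) matching the constant-free thresholds $n y_n^{1-\tau}$ and $n y_n^{2-\tau}$ of the statement against the true means $n p_n$ and $n\Ev[D\ind\{D\ge y_n\}]$, which is absorbed into the freedom to take $C$ large and $c$ small; (ii) the bookkeeping between $\Pv(D\ge y_n)$ and $\Pv(D>y_n)$ and the integer part $\lceil y_n\rceil$; and (iii) the remark that \eqref{eq::F} is only a two-sided bound for $x\ge x_0$, so the estimates are valid for $y_n$ eventually large --- which suffices, since in every application $y_n$ is a positive power of $n$ up to polylogarithmic corrections.
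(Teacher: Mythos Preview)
Your proposal is correct and follows essentially the same approach as the paper: binomial concentration (Chernoff) for $V_{\ge y_n}$, and a first-moment bound plus Markov's inequality for $\CE_{\ge y_n}$. The only cosmetic difference is that the paper bounds $\Ev[D\ind\{D\ge y_n\}]$ via a dyadic shell decomposition $D\ind\{D\ge y_n\}\le\sum_{k\ge1}y_n 2^k\ind\{2^{k-1}y_n<D\le 2^k y_n\}$ and then sums the binomial means, whereas you use the layer-cake/summation-by-parts identity directly; both yield $O(y_n^{2-\tau})$ and conclude identically.
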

\begin{proof} Note that
\[ \CE_{\ge y_n}= \sum_{i=1}^n D_i \ind_{\{ D_i \ge y_n\}} \le \sum_{i=1}^n \sum_{k=1}^\infty y_n 2^{k} \ind_{\{  2^{k-1} y_n< D_i \le 2^k y_n\}}. \]
Exchanging sums we can write
\[  \CE_{\ge y_n} \le \sum_{k=1}^{\infty} y_n 2^k X_k^{(n)},  \]
where the marginals of the random variables on the rhs are $X_k^{(n)}\ {\buildrel d \over \le }\  \mathrm{Bin}(n, C_1 (y_n 2^{k-1})^{1-\tau}  )$, where $C_1$ is from \eqref{eq::F}. Calculating the expected value and using Markov's inequality yields \eqref{eq::Sbound}. The proof for $V_{\ge y_n}$ is easier and directly follows from the fact that $V_{\ge y_n} \sim \mathrm{Bin}(n, 1-F(y_n))$ and usual concentration of Binomial random variables (see e.g. \cite[Lemma 3.2]{BarHofKom14}).
 \end{proof}
Recall that $\simp, \overset{\Pv}{\lesssim}$ means whp equality/ inequality up to a multiplicative factor of finite powers of $C \log n$, and that
 we assume that the event  $\{ \CL_n/n \in [ \Ev[D]/2, \Ev[D] ] \}$ holds.
\begin{lemma}[Red coloring in red intervals]\label{lem::red-intervals}
Recall $\wub_0:=n^{1/(\tau-1)}C \log n$, and set some $0<\nu<1$.
Then, for any $\le \ell< \nu \log\log n / {|\log (\tau-2)|} + O_\Pv(1)$, `almost every' vertex $w$ that satisfies
$\wur_{\ell+\wit\ind}<d_w < (\wub_{\ell})^{1-\ve}$ for some $\ve>0$,  is whp painted red, where again $\wit\ind=\ind_{\{T_b=T_r\}}$. More precisely, for a uniformly picked vertex $v$ in $\CR ed_{\ell}$
\[  \Pv_{Y,n}( v \in \CR ed_\ell \text{\ is blue\ }) \le (C \log n)^{x+\ell}\,  \wur_{\ell+\wit \ind }/\wub_\ell ,\]
for some finite $x\in \N$ that depends only on $\Yrn, \Ybn, n$.
%of order $n (\wur_{\ell+\wit \ind})^{2-\tau} / \wub_{\ell}$, while the total number of vertices within $\CR ed_\ell$ is of order
%$n (\wur_{\ell+\wit \ind})^{1-\tau}$.
\end{lemma}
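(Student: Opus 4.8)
The plan is to show that a uniformly chosen vertex $v\in\CR ed_\ell$ is red unless one of its $\ell$ (or fewer) backward paths towards the mountain-top reaches a vertex that was already blue at time $T_b$; since blue only occupies a small number of high-degree vertices at time $T_b$ (and, in Cases (2),(3)(b), only a vanishing proportion of the vertices in the mixed intervals $\CM_1,\dots,\CM_{\ell-1}$), a union bound over paths will give the claimed bound. First I would recall from Section \ref{sc::peak} and Lemma \ref{lem::independence} that at time $T_b+\ell$ every vertex in $\wgar_{\ell+\wit\ind}$ is reached by \emph{red} along a path through the layers $\wgar_1\subset\wgar_2\subset\cdots$, because \eqref{eq::widetildegamm} guarantees $\wgar_{\ell+1}\subset N(\wgar_\ell)$ whp. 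Thus a vertex $v\in\CR ed_\ell$ fails to be red only if \emph{some} path $(\pi_0,\dots,\pi_\ell=v)$ with $\deg(\pi_i)\ge\widetilde u^{\sss(r)}_{\ell+\wit\ind-i}$ (i.e.\ the path stays inside the successively smaller layers) originates at a vertex $\pi_0$ that red does \emph{not} occupy at time $T_b$ — which by the analysis of the crossing of the peak means $\pi_0$ is either blue, or lies in a mixed layer $\CM_j$ and happened to come up blue on its coin flip.

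Next I would estimate, for each $i$, the number of such backward paths of length $i$ ending at $v$. Using the recursion \eqref{eq::wideui_recursion} together with Claim \ref{claim::Sbound} for the number of half-edges $\CE_{\ge y_n}$ above a threshold, a path-counting argument (first-moment / Markov) shows that the expected number of paths from $v$ back to $\wgar_1$ (respectively back into the interval $[\wub_\ell,\wur_{\ell+\wit\ind}]$ carrying a positive probability of being blue) is at most $(C\log n)^{x}$ for some finite $x=x(\Yrn,\Ybn,n)$ — essentially because at each step the branching factor is controlled by $d_{\pi_i}\cdot\CE_{\ge \widetilde u_{i-1}}/\CL_n \le (C\log n)^{O(1)}$, and the $\ell=O(\log\log n)$ factor is absorbed into $(C\log n)^{\ell}$. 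Then each such path, conditionally on its endpoint $\pi_0$, contributes to "$v$ is blue" only if $\pi_0$ itself is blue: in Case (1)/(3)(a) this needs $\pi_0$ to be one of the $\overset{\Pv}{\lesssim}1$ vertices blue has colored at time $T_b$, an event of probability $\overset{\Pv}{\lesssim}\CE_{\ge \wur_{\ell+\wit\ind}}/\CL_n\cdot(\text{number of blue half-edges})$, which after plugging in \eqref{eq::ul} and $\wur_{\ell+\wit\ind}/\wub_\ell$ gives the stated bound; in Cases (2) and (3)(b), $\pi_0$ lies in some $\CM_j$ and by Lemma \ref{lem::independence} is blue with probability $1/2$, but then the path is forced to \emph{stay inside} $\CM_j$ rather than drop into the all-red layer $\CR ed_j$, and the fraction of half-edges available for that restricted step is of order $\wur_{j+\wit\ind}/\wub_j$, so summing the geometric-type series over $j\le\ell$ again yields $(C\log n)^{x+\ell}\wur_{\ell+\wit\ind}/\wub_\ell$.

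Assembling: $\Pv_{Y,n}(v\in\CR ed_\ell\text{ is blue})\le \E_{Y,n}[\#\text{bad backward paths}]\cdot\max\Pv(\text{endpoint bad})\le (C\log n)^{x+\ell}\,\wur_{\ell+\wit\ind}/\wub_\ell$, which is the assertion. The ratio $\wur_{\ell+\wit\ind}/\wub_\ell$ is genuinely small: from \eqref{eq::ul} one reads off that its exponent of $n$ is negative on the range $\Ybn/\Yrn>\tau-2$ (this is exactly where the two sequences of layers interleave as $\wgab_{\ell-\wit\ind}\subset\wgar_{\ell+\wit\ind-\wit\ind}=\wgar_{\ell}$), so the bound does tend to $0$ even after multiplication by any fixed power of $C\log n$, provided $\ell=o(\log\log n/\log\log\log n)$ or, more carefully, provided $\ell<\nu\log\log n/|\log(\tau-2)|+O_\Pv(1)$ with $\nu<1$, which is exactly the hypothesis. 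The main obstacle I anticipate is the bookkeeping in the path-counting step: one must track that the logarithmic correction factors accumulated along $\ell$ layers (the $(C\log n)^{e_i}$-type terms in \eqref{def::ui}, \eqref{eq::ul}) do not overwhelm the polynomially-small gap $\wur_{\ell+\wit\ind}/\wub_\ell$, and that the "stays inside a mixed layer" constraint is correctly a \emph{gain} of a factor $\wur_{j+\wit\ind}/\wub_j$ at the branching step where the path would otherwise have fallen into $\CR ed_j$ — this is where the precise interleaving $\wgab_\bullet\subset\wgar_\bullet\subset\cdots$ established in Section \ref{sc::slopedown} must be invoked carefully. The precise value of $x$ is extracted from the worst-case product of these correction exponents over $i\le\ell$, exactly as in the proof of \cite[Lemma 5.4]{BarHofKom14}.
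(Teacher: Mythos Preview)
Your high-level strategy --- show that $v\in\CR ed_\ell$ can only be blue if some backward path reaches a blue vertex in an earlier mixed layer, and bound this by path counting --- is exactly the paper's. But the quantitative step you write down is wrong. You claim that from a fixed $v$ the branching factor at each backward step is $d_{\pi_i}\cdot\CE_{\ge\wur_{i-1}}/\CL_n\le(C\log n)^{O(1)}$. That product is of order $d_{\pi_i}\cdot(\wur_{i-1})^{2-\tau}\approx d_{\pi_i}\cdot C\log n/\wur_i$ by the recursion \eqref{eq::wideui_recursion}, which is polylog \emph{only} when $d_{\pi_i}$ sits at the bottom of its layer; a vertex near the top of $\CR ed_i$ (degree close to $\wub_i$) has of order $\wub_i/\wur_{i+\wit\ind}$ upward neighbours, which is a genuine power of $n$. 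So the per-vertex path count is not $(C\log n)^x$, and your subsequent sentence about ``the path being forced to stay inside $\CM_j$, giving a restricted-step factor $\wur_{j+\wit\ind}/\wub_j$'' does not describe any actual constraint on the path; the bad paths run through the $\CR ed$ layers and merely \emph{terminate} in some $\CM_{\ell-k}$.

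The paper instead counts globally, which implicitly averages over $v$ and is what makes the bookkeeping close. It decomposes $\CB ad_\ell=\bigcup_{k=1}^\ell\CB ad_\ell(k)$ where $\CB ad_\ell(k)$ are the vertices reached by a length-$k$ path $\CM_{\ell-k}\to\CR ed_{\ell-k+1}\to\cdots\to\CR ed_\ell$ (with $\CM_0$ replaced by the few initial blue seeds $\CB ad_0$ when $k=\ell$). One bounds
\[
\Ev_{Y,n}\bigl[|\CB ad_\ell(k)|\bigr]\ \le\ n(\wub_{\ell-k})^{2-\tau}\;\prod_{i=1}^{k-1}(\wub_{\ell-k+i})^{3-\tau}\;\cdot(\wur_{\ell+1})^{2-\tau}\,(C\log n)^{O(k)},
\]
and then applies the identity $(\wub_s)^{2-\tau}=C\log n/\wub_{s+1}$ repeatedly: the product telescopes and the whole thing collapses to $n(\wur_{\ell+1})^{2-\tau}/\wub_\ell$ times polylog, \emph{independently of $k$}. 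Summing over $k\le\ell$ and dividing by $|\CR ed_\ell|\asymp n(\wur_{\ell+1})^{1-\tau}$ gives the stated ratio $\wur_{\ell+1}/\wub_\ell$. This telescoping is the missing quantitative idea; without it the ``bookkeeping obstacle'' you flag at the end cannot be resolved. (A minor point: this lemma lives entirely in Cases (2) and (3)(b); your appeal to Cases (1)/(3)(a) is out of place.)
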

\begin{proof}[Proof of Lemma \ref{lem::red-intervals} when $T_b=T_r+1$.]

 Due to the shift of indices that will become visible later, we handle the cases $T_b=T_r$ and $T_b=T_r+1$ separately. We start with $T_b=T_r+1$.

We have seen in Section \ref{sc::peak}, that if $T_b=T_r+1$ then all vertices in $\CR ed_0$ are red at time $T_r+1$, except maybe those that are maximal degree blue vertices. (In case $\widehat u_{i_{\star \sss{(b)}} }^{\sss{(b)}} > \wur_1$.) Let us call the set of blue vertices (if any) in $\CR ed_0$ by $\CB ad_0$. By Lemma \ref{lem::numberofverticesinGamma}, there are at most $(C\log n)^{x_0}$ many such vertices (where $x_0$ is a rv that depends only on $\Yrn, \Ybn, n$), which is much smaller than the total size of $\CR ed_0$ ($\simp\! n (\wur_1)^{1-\tau}$).

  To some (minor) extent, $\ell=1$ is different from $\ell\ge 2$, so let us calculate one more step. We want to give an estimate on the total number of blue vertices.  Any vertex in this interval or in $\CM_1$ is colored at time $T_r+2$: for each vertex in $\CR ed_1$, there is at least a neighbor in $\CR ed_0$ (since $\wgar_2 \subset N(\wgar_1)$), but some vertices in $\CR ed_1$ might be connected to maximal degree blue vertices as well (i.e., vertices in $\CB ad_0$.)
Hence, a vertex in $\CR ed_1$ might be blue if it is a neighbor of $\CB ad_0$.
Thus we define as the potentially blue set
\[ \CB ad_1:=  N( \CB ad_0) \cap \CR ed_1.\]
Since $\CR ed_1$ has minimal degree $\wur_2$, by Claim \ref{claim::Sbound}, with $\omega(n):=C\log n$ the total number of half-edges incident to vertices in $\CR ed _1$ is  at most $ n (\wur_2)^{2-\tau} C\log n$.
Then, the expected number of half-edges from $\CB ad_0$ paired to $\CR ed_1$ is bounded by
\be\label{eq::b1-r2}   \Ev_Y[  \#\{ \CB ad_0 \leftrightarrow \CR ed_1\}   ] \le  (C \log n)^{x_0+1}  \widehat u_{i_{\star \sss{(b)}} }^{\sss{(b)}}  \frac{n (\wur_2)^{2-\tau}}{n \Ev[D]/2} \le (C \log n)^{x_0+x_1+1} \frac{n (\wur_2)^{2-\tau}}{\wub_1},  \ee
where we used that $\widehat u_{i_{\star \sss{(b)}} }^{\sss{(b)}}= (C \log n)^{x_1} u_{i_{\star \sss{(b)}} }^{\sss{(b)}} = n (C\log n)^{x_1+1}/ \wub_1$, for some $x_1$ depending on $\Yrn, \Ybn, n$, see \eqref{eq::wideu1}.
We need to show that the right hand side of \eqref{eq::b1-r2} is of smaller order than the total number of vertices in $\CR ed_1$, which is  $\simp n (\wur_2)^{1-\tau}$ by Claim \ref{claim::Sbound}.  Then we can write
\[   \frac{n (\wur_2)^{2-\tau}}{\wub_1} =  \frac{\wur_2}{\wub_1} n (\wur_2)^{1-\tau}\simp \frac{\wur_2}{\wub_1} |\CR ed_1|,\]
Note that $\wur_2 =o(\wub_1)$, (see Figure \ref{fig::avalanche-3}), since if $T_b=T_r+1$ then $\wur_2\le n^{(\tau-2)/(\tau-1)}<\wub_1$.  The statement of the lemma follows with $x=x_0+x_1+2$ for $\ell=1$, where we added the extra $C \log n$ factor for a Markov's inequality.
%Note that $|\CB ad_1|\simp n (\wur_2)^{2-\tau}/\wub_1$ which we use later on.
%To estimate the total degree of vertices in $\CB ad_1$, we proceed as follows.
%The expected degree can be bounded from above by
%\[ \Ev\left[ \CH(\CB ad_1)\right] \le (C \log n)^x \widehat u_{i_{\star \sss{(b)}} }^{\sss{(b)}} \cdot \frac1n\sum_{v %\in \CR ed_1} d_v (d_v-1) \le (C \log n)^x \widehat u_{i_{\star \sss{(b)}} }^{\sss{(b)}} (\wub_1)^{3-\tau} \omega_1(n), %\]
%where we used that the sum on the rhs can be approximated by $n\Ev[D(D-1) \ind_{\{\wur_2< D < \wub_1\}}]$, hence, it is %whp at most $n (\wub_1)^{3-\tau} \cdot \omega_1(n)$, for some increasing function $\omega_1(n)$. Using %\eqref{eq::wideu1}, we get that
%\[ \Ev\left[ \CH(\CB ad_1)\right] \le n (\wub_1)^{2-\tau} (C\log n)^{x+x_1+1} \omega_1(n), \]
%where $x, x_1$ as before. By changing $\omega_1(n)$ to its square, say, the same inequality hold also whp (by an %application of Markov's inequality).

%Let us inductively assume\footnote{Note that this assumption implies that the powers of $C \log n$ hidden in the $\simp$ notation do NOT accumulate. This becomes clear below when comparing the order of `cumulated bad vertices' in \eqref{eq::bl-rl1} and the `newly bad' vertices in \eqref{eq::ml-rl1}, see the discussion below these equations.} that for every $\ell\ge 1$,
In what follows we show that for $\CB ad_\ell$, the blue vertices in $\CR ed_\ell$, for every $\ell<\nu \log\log n /|\log (\tau-2)|+O_{\Pv}(1)$,
\be \label{eq::induction-step-canceled}|\CB ad_\ell| \le n (C \log n)^{x+2\ell}(\wur_{\ell+1})^{2-\tau}/\wub_{\ell},
\ee
where again, $x= x(n)=x_0+x_1+1$ as for $\ell=1$, is a rv that depends only on $\Yrn, \Ybn, n$, and forms a tight sequence of rv-s.
%H(\CB ad_\ell) &\le n
%(\wub_{\ell-1})^{2-\tau} (C \log n)^{x+x_1+1} \omega_1(n)^2\ea\ee
%holds whp.
  By the arguments above, the statement holds for $\ell=1$. Next we show it for general $\ell$. Let us write $N_k(S)$ for the set of vertices $k$ edges away from the set $S$ ($N(S)=N_1(S)$ in this notation).

For this, we decompose $\CB ad_\ell$, $\ell\ge k$ into subsets, according to how many `generations' a vertex has to go back to an $\CM_i$, $i\le \ell$ as follows:
\be \CB ad_\ell= \bigcup_{k=1}^{\ell} \CB ad_{\ell}(k), \ee
where
\be \ba \CB ad_\ell(1)&=N(\CM_{\ell-1}) \cap \CR ed_\ell, \\
\CB ad_\ell(k)&= N (\CB ad_{\ell-1} (k-1))  \cap \CR ed_\ell, \quad k\ge 1. \ea \ee
This means that we divide bad vertices according to `how many generations' are they already bad: for instance, a bad vertex $v$ in $\CB ad_3$ might be `first generation' bad, being a neighbor of a vertex in $\CM_1$. Or, second generation bad, being a neighbor of a first generation bad vertex in $\CB ad_2$, when a path $\CM_1 \to \CB ad_2 (1) \to v$ is present in the graph. Or `third generation' bad being a neighbor of a second generation bad vertex in $\CB ad_2$, when a path $\CB ad_0 \to \CB ad_1 \to \CB ad_2(2) \to v$ is present in the graph\footnote{Note that it is also possible to have a blue vertex in $\CB ad_3$ that is the first or second neighbor of $\CB ad_0$, or the first neighbor of $\CM_1$, but, using the same methods as in \eqref{eq::badellk-bound} below it is easy to show that these, even summed up, are negligible compared to the main terms, represented by $\CB ad_\ell(k)$.}.
\begin{figure}\label{fig::badsets}
\includegraphics[width=0.7\textwidth]{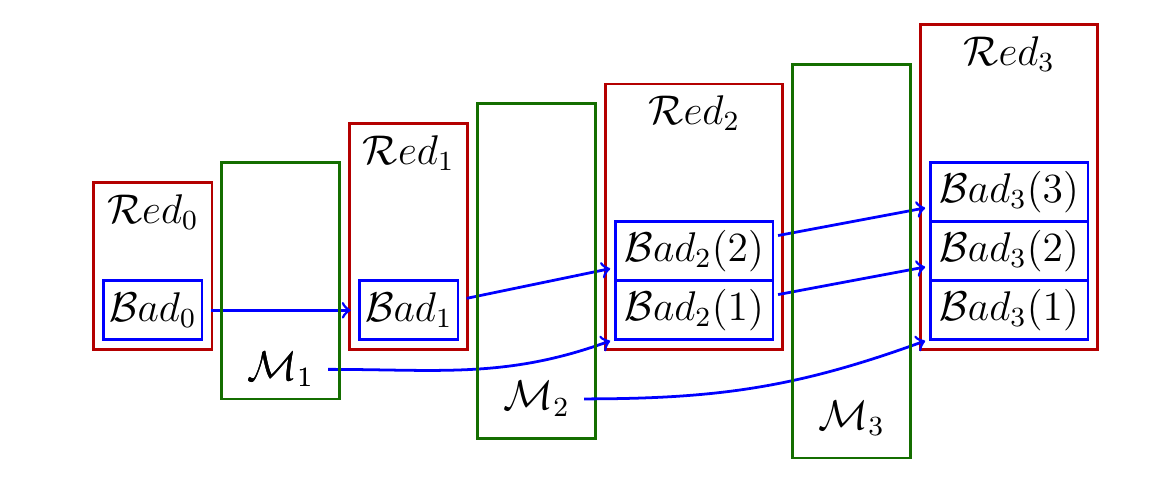}
\caption{An illustration of the structure of bad sets up to $\ell=3$.}
\end{figure}
Next we calculate the expected size of each $\CB ad_\ell(k).$ For a vertex in $\CB ad_\ell(k)$, there must be a path of length $k$
\be  \CM_{\ell-k} \to \CB ad_{\ell-k+1}(1) \to \CB ad_{\ell-k+2}(2) \to \dots \CB ad_{\ell-1}(k-1) \to \CB ad_\ell(k)\ee present in $\CMD$.
Since the total number of half edges $\Ev_Y[H(\CM_{\ell-k})]\le C  n (\wub_{\ell-k})^{2-\tau} $ for some constant $C$ by Claim \ref{claim::Sbound}, the expected number of such paths and hence the size of $\CB ad_{\ell}(k)$ can be estimated , for all $1\le k < \ell$ as
\be\label{eq::badellk} \Ev_{Y,n}[|\CB ad_\ell(k)| ] \le n (\wub_{\ell-k})^{2-\tau} \prod_{i=1}^{k-1} \left(\sum_{v \in \CR ed_{\ell-k+i}} \frac{d_v (d_v-1)}{n}\right) \cdot \sum_{v \in \CR ed_{\ell}} \frac{ d_v}{n},\ee
while for $k=\ell$ we have an extra factor $(C \log n)^{x_0+x_1}$ on the right hand side.
  Now, it is not hard to see that using \eqref{eq::size-biased2}, the sums inside the product can be approximated by
  \be(2\Ev[D])^{-1} \Ev[D(D-1) \ind_{\{\wur_
  {\ell-k+i+1}<D< \wub_{\ell-k+i}\}}] \le C (\wub_{\ell-k+i})^{3-\tau}, \ee
  and hence, multiplying the rhs here with $C \log n$ implies
  \be\label{eq::sum-to-expected} \Pv_{Y,n}\left( \sum_{v \in \CR ed_{\ell-k+i}} \frac{d_v (d_v-1)}{n} \ge  C \log n (\wub_{\ell-k+i})^{3-\tau}  \right) \le \frac{1}{C \log n}.\ee
  The last sum in \eqref{eq::badellk} is at most $C \log n (\wur_{\ell+1})^{2-\tau}$ whp by Claim \ref{claim::Sbound}.
Thus, the rhs of \eqref{eq::badellk} is at most, as $n\to \infty$,
\be \label{eq::badellk-2}  \Ev_{Y,n}[|\CB ad_\ell(k)| ] \le (C \log n)^{1+\ind_{k=\ell }(x_0+ x_1)} n (\wub_{\ell-k})^{2-\tau} \prod_{i=1}^{k-1} \left(C\log n(\wub_{\ell-k+i})^{3-\tau}\right) \cdot (\wur_{\ell+1})^{2-\tau}.  \ee
Now we can repeatedly apply \eqref{eq::wideui_recursion} in the form $(\wub_s)^{2-\tau}= C \log n / \wub_{s+1}$,
and then \eqref{eq::badellk-2} simplifies to
\be\label{eq::badellk-bound} \Ev_{Y,n}[|\CB ad_\ell(k)| ] \le n (C\log n)^{2k+1+\ind_{k=\ell}(x_0+ x_1) } \frac{(\wur_{\ell+1})^{2-\tau}}{\wub_{\ell}}.  \ee
Hence, we get that for some $\hat C$ constant,
\be \Ev_{Y,n}[|\CB ad_\ell|] \le \sum_{k=1}^{\ell} \Ev[\CB ad_{\ell}(k)] \le \hat C (C \log n)^{x-1+2\ell} n \frac{(\wur_{\ell+1})^{2-\tau}}{\wub_{\ell}}, \ee
with $x=x_0+x_1+2$.
Adding an extra factor of $C \log n$ to the rhs, the inequality holds whp without the expected value on the lhs as well.

Finally, by the concentration of binomial random variables, $|\CR ed_\ell| \ge c n  (\wur_{\ell+1})^{1-\tau}$ whp, for some constant $c$.
As a result, the probability that a vertex in $\CR ed_\ell$ is in $\CB ad_\ell$,  conditioned on $\Yrn, \Ybn$
\be\label{eq::prob-of-bad}\Pv_{Y,n}( v \in \CB ad_\ell | v\in \CR ed_\ell ) \le \hat C (C \log n)^{2\ell+x} \frac{\wur_{\ell+1}}{\wub_\ell},  \ee
 whp.
 If $\ell\le \nu \log \log n/ |\log (\tau-2)|$, for some $\nu<1$, then using \eqref{eq::ul}
 \be\label{eq::prob-of-bad-bound}  \wur_{\ell+1}/\wub_\ell = \exp\left\{ - (\log n)^{1-\nu} (\anb-(\tau-2)\anr) \right\} (C \log n)^{1/(3-\tau)} (1+ o(1)),  \ee
 where recall that $\anb>(\tau-2)\anr$ when $T_b=T_r+1$. On the other hand, $(C \log n)^{2\ell}\le \exp\{ c (\log \log n)^2\}$, hence the contribution of this term is negligible if $n$ large enough.

 As a result, the rhs of \eqref{eq::prob-of-bad} tends to zero for all $\ell< \nu \log \log n /|\log (\tau-2)|$, as $n\to \infty$.
\end{proof}
\begin{proof}[Proof of Lemma \ref{lem::red-intervals} when $T_b=T_r$.]

 For $T_b=T_r$, the induction starts slightly differently. Namely, in this case $\wub_1>\wur_1$ and the highest degree vertices belong to $\CM_1$, see Figure \ref{fig::avalanche-2}. $\CM_1$ and $\CR ed_1$ are occupied at the same time: all vertices in $\CR ed_1$ are connected to maximal degree red vertices, but some of them might be also connected to maximal degree blue vertices. These maximal degree blue vertices might or might not be inside the set $\CR ed_1$: let us call them $\CB ad_0$. Further, let us write $\CB ad_1$ for the set of blue vertices in $\CR ed_1$,  then the size of this set can be bounded by the expected number of half-edges from maximal degree blue vertices to $\CR ed_1$, similarly as it was done in \eqref{eq::b1-r2}:
 \be\label{eq::b0-r1-2} \Ev_{Y,n}[  \#\{ \CB ad_0 \leftrightarrow \CR ed_1\}   ] \le  (C \log n)^{x_0} \widehat u_{i_{\star \sss{(b)}} }^{\sss{(b)}}  \frac{n (\wur_1)^{2-\tau}}{n \Ev[D]/2}  \le (C \log n)^{x_0+x_1+1}\frac{n (\wur_1)^{2-\tau}}{\wub_1}.
  \ee
 On the other hand, the total size of $\CR ed_1$ is $\simp n (\wur_1)^{1-\tau}$  by Claim \ref{claim::Sbound}, and since $\wur_1=o(\wub_1)$ if $T_r=T_b$ (see Figure \ref{fig::avalanche-3}), the rhs of  \eqref{eq::b0-r1-2} is of less order than that.

 From here, the exact same proof as for $T_b=T_r+1$ can be repeated, with the indices of $\wur_i$-s shifted by $-1$. In this case $\CR ed_\ell$ has total size of $n (\wur_\ell)^{1-\tau}$, and also $\wur_\ell=o(\wub_\ell)$. This implies that the rhs of equation \eqref{eq::prob-of-bad-bound} contains $\anb-\anr>0$ instead of $\anb-(\tau-2) \anr$. As a result, the corresponding version of \eqref{eq::prob-of-bad} tends to zero also in this case, meaning that the statement of the lemma implies that the proportion of blue vertices in $\CR ed_\ell$ tends to zero also when $T_b=T_r$.

\end{proof}

\section{Typical distances and  the maximal degree of blue}\label{sc::meetingtime}
In this section we describe how the two colors meet and prove Theorem \ref{thm::distances}. As a result of the analysis, we also prove Theorem \ref{thm::maxdegree}.

Recall that the time to reach the top of the mountain for the two colors for $j=r,b$ is denoted by
\be\label{eq::Tj} T_j = \left\lfloor\frac{\log\log n-\log \left((\tau-1) Y_j^{\sss{(n)}}\right)}{|\log(\tau-2)|}-1\right\rfloor = \frac{\log\log n-\log \left((\tau-1) Y_j^{\sss{(n)}}\right)}{|\log(\tau-2)|} -1-b_n^{\sss{(j)}}. \ee

\begin{proof}[Proof of Theorem \ref{thm::distances}]
For the upper bound, we show that there is whp a path that connects red and blue in at most
\be\label{eq::distance-try}  \CD(\CR_0, \CB_0) = T_r + T_b + 1 + \ind\{ \tau-1>(\tau-2)^{\bnr} + (\tau-2)^{\bnb}\} \ee
many steps.
We have seen in Section \ref{sc::climbup} that whp there exists a path of length at most $T_r$ that connects the red source to the top of the mountain, i.e., to some vertex with $\log$(degree)/$\log n$ that is $(\tau-2)^{\bnr}/(\tau-1)(1+o_{\Pv}(1))$.
The crucial observation is, that in terms of the distance of the sources, the timing of the colors does not matter, that is, imaginarily, we can stop the spread of red at this moment. Now, the same method for blue shows that there exists a path of length at most $T_b$ steps, and blue occupies some vertices with $\log$(degree)/$\log n$
that is $(\tau-2)^{\bnr}/(\tau-1)(1+o_{\Pv}(1))$. For an upper bound on typical distances, we can simply assume that these `climbing clusters'  are disjoint, that is, we assume $\CR_{T_r} \cap \CB_{T_b} = \varnothing$.

Now we let the two colors jump, and we reduced the problem to the analysis of Case (2):  If we let them do one jump, then whp the maximal degree red vertex connects to every vertex in layer $\wgar_1$, while the maximal degree blue vertex connects to every vertex in $\wgab_1$ (see Fig. \ref{fig::cross-1}). Note that the distance is then $T_r+T_b + 1$ if and only if the maximal degree blue vertex is in $\wgar_1$ or the maximal degree red vertex is in $\wgab_1$, that is, if
\[ u_{i_{\star \sss{(r)}}}^{\sss{(r)}}  \ge \wit u_1^{\sss{(b)}} \quad \mbox{or} \quad  u_{i_{\star \sss{(b)}}}^{\sss{(b)}}  \ge \wit u_1^{\sss{(r)}}. \]
 Otherwise, we can pick an arbitrary vertex in $\CM_1=\wgab_1\cap \wgar_1$, and both the maximal degree blue and the maximal degree red vertex connect to that vertex whp, hence, the distance is $T_r+T_b+2$.
Since for large enough $n$, the logarithmic factors in $u_{i_{\star \sss{(j)}}}^{\sss{(j)}}$ and $\wit u_1^{\sss{(j)}}$ are negligible, to have distance $T_r+T_b+2$ we need for large enough $n$
\[  \frac{(\tau-2)^{\bnb} }{\tau-1}< \anr \quad \mbox{and}\quad \frac{(\tau-2)^{\bnr}}{\tau-1} < \anb.  \]
 Using \eqref{def::alpha}, we see that both criteria are equivalent to
 \be\label{eq::cond-larger}\tau-1>(\tau-2)^{\bnr}+(\tau-2)^{\bnb}.\ee The region at which this is satisfied is above the red curve on Fig~\ref{fig::contourplot}. Hence, the distance between the two sources is at most \eqref{eq::distance-try}.

Note that the form at which the theorem is presented is a simple rewrite of this equation.
With this, we have finished the upper bound of the proof of Theorem \ref{thm::distances}.

\begin{figure}[ht]
 % Requires \usepackage{graphicx}
\includegraphics[height=6cm]{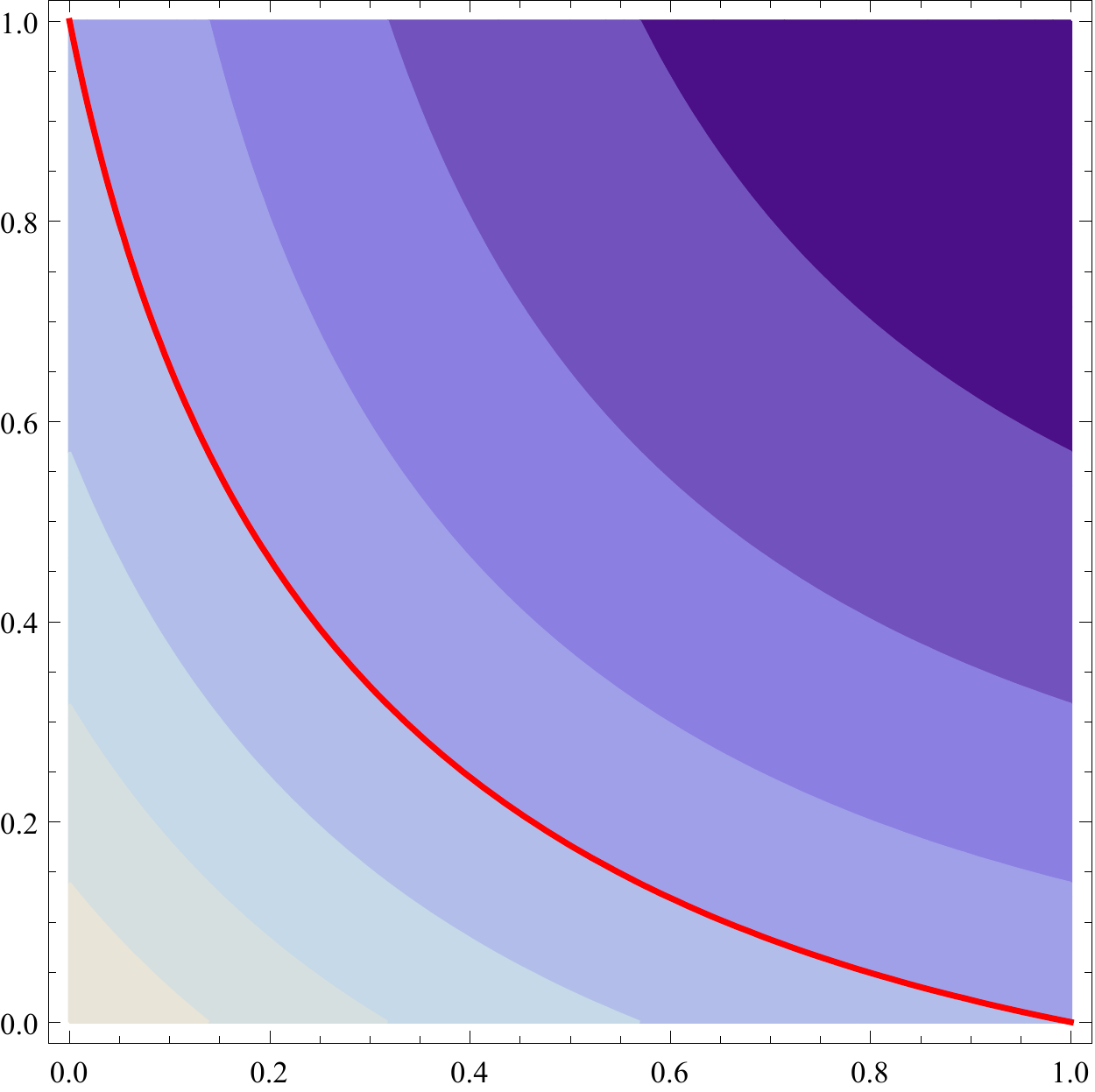}
\caption{The contour plot of the function $0.2^x+0.2^y$ with a red line indicating $0.2^x+0.2^y=1.2$, that is $\tau=2.2$. Darker colors represent smaller values, that is, $\bnb >\dnr$ is satisfied above the red curve. }\label{fig::contourplot}
\end{figure}
For the lower bound, we argue as follows: we have seen in Lemma \ref{lem::badpaths} that $T_r, T_b$ are also lower bounds for the time to reach the top of the mountain. We need to show that these `climbing clusters' are disjoint whp, i.e., any vertex that is distance at most $T_b$ away from the blue source is different from the vertices  that are distance at most $T_r$ away from the red source:
\be \label{eq::no-early-meeting} \CR_{T_r}\cap \CB_{T_b} = \varnothing. \ee
Let us postpone the proof of this statement till Claim \ref{cl::no-early-meeting} and assume that it holds, implying that \be\label{eq::distance-lowerbound} \CD(\CR_0, \CB_0 ) \ge T_b+T_r+1. \ee
So, we only have to show that if
\be\label{eq::equivalent-assumption} \tau-1> (\tau-2)^{\bnr} + (\tau-2)^{\bnb},\ee
then we need one extra edge to connect the two clusters. Recalling the definitions \eqref{eq::ui*}, \eqref{eq::uistar-b}, it is not hard to see that  \eqref{eq::equivalent-assumption} for large $n$  is equivalent to
 \be\label{eq::ui-wideu1} u_{i_{\star \sss{(r)}}}^{\sss{(r)}}  \le \wit u_1^{\sss{(b)}}= \frac{C\log n\cdot n}{u_{i_{\star \sss{(b)}}}^{\sss{(b)}} }  \quad \mbox{and } \quad  u_{i_{\star \sss{(b)}}}^{\sss{(b)}}  \le \wit u_1^{\sss{(r)}}=\frac{C\log n \cdot n}{u_{i_{\star \sss{(r)}}}^{\sss{(r)}} }.\ee
Now, recall Lemma \ref{lem::badpaths}: on the whp event that $\{ j=r,b, \forall i\le i_{\star \sss{(j)}}\  \CB ad \CP^{\sss{(j)}} =\varnothing\}$, $\widehat u_{i}^{\sss{(j)}}  $ as in \eqref{eq::uibar}  serves as an upper bound on the maximal degree of red and blue at time $t(n^{\vr})+ i_{\star \sss{(j)}}, j=r,b$.
Note also that by Lemma \ref{lem::numberofverticesinGamma}, there are only $\exp\{ \log (C \log n) \hat C (\tau-2)^{-i_{\star, \sss{(j)}}}\}$ many vertices with degree at most $\widehat u^{\sss{(j)}}_{i_{\star \sss{(j)}}}$ for $j=r,b$ with  $\hat C=2/(3-\tau)^2$.

Note that we need only one edge to connect $\CR_{T_r}$ to $\CB_{T_b}$ if some maximal degree red vertex is connected to some maximal degree blue vertex. Hence, on the whp event that the total number of half-edges satisfies $\{ \CL_n/n \in (\Ev[D]/2, \Ev[D] )\}$, by a simple union bound, the probability that any of the maximal degree red vertices is connected to any of the maximal degree vertices by an edge is at most
\be\label{eq::top-connect-bound} \Pv_{Y,n}(\CR_{T_r} \leftrightarrow \CB_{T_b}) \le \frac{2}{\Ev[D] n} (C \log n)^{ \hat C (\tau-2)^{-i_{\star \sss{(b)}}}} \widehat u^{\sss{(j)}}_{i_{\star \sss{(b)}}} \cdot
(C \log n)^ {\hat C (\tau-2)^{-i_{\star \sss{(r)}}}} \widehat u^{\sss{(j)}}_{i_{\star \sss{(r)}}},\ee
with $\hat C=2/(3-\tau)^2$.
Since $i_{\star \sss{(j)}}$, $j=r,b$ are  tight random variables that do not grow with $n$, by \eqref{eq::wideui-ui},
\be\label{eq::ui-ui-estimate}\widehat u_{i_{\star \sss{(j)}}}^{\sss{(j)}}\simp u_{i_{\star \sss{(j)}}}^{\sss{(j)}} \simp n^{(\tau-2)^{b_n^{\sss{(j)}}}/(\tau-1)} \ee

Note that the extra powers of $C \log n$  in \eqref{eq::top-connect-bound}, only depend on $i_{\star,\sss{(j)}}$ and hence do not grow with $n$. So, picking $n$ large enough and using \eqref{eq::ui-ui-estimate}, the expression on the rhs of  \eqref{eq::top-connect-bound} is $\simp (u_{i_{\star \sss{(b)}}}^{\sss{(b)}} u_{i_{\star \sss{(r)}}}^{\sss{(r)}})/n$. This is $o(1)$ when \eqref{eq::equivalent-assumption} holds.

Hence, under the condition \eqref{eq::equivalent-assumption} we need at least $T_r+T_b+2$ edges to connect, while we need at least $T_r+T_b+1$ edges in either case by \eqref{eq::no-early-meeting}.
To finish the lower bound of the proof of Theorem \ref{thm::distances}, it is left to show \eqref{eq::no-early-meeting}, that we handle in the following lemma.
\end{proof}

\begin{lemma}\label{cl::no-early-meeting} On the event $\{  \CB ad \CP^{\sss{(j)}} =\varnothing \  \forall i\le i_{\star \sss{(j)}} \mbox{ for } j=r,b\} $, the event $\{\CR_{T_r} \cap \CB_{T_b} = \varnothing\}$ holds whp.
\end{lemma}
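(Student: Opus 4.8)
The plan is to reveal the two neighbourhoods one after the other inside the standard deferred-pairing construction of $\CMD$, and then run a first-moment estimate. First I would explore breadth-first from the red source $\CR_0$, pairing \emph{all} half-edges incident to vertices at graph-distance at most $T_r-1$; this determines $\CR_{T_r}$ together with all its internal edges and leaves a collection of still-unpaired half-edges dangling from the boundary vertices (those at distance exactly $T_r$). Write $H_r$ for the number of these boundary half-edges. On the good events of Sections~\ref{sc::climbup}--\ref{sc::peak} --- no bad paths (Lemma~\ref{lem::badpaths}), the layer-size bound (Lemma~\ref{lem::numberofverticesinGamma}), hence time-to-top equal to $T_r$ (Corollary~\ref{core::time-to-top-who}) --- every vertex of $\CR_{T_r}$ has degree at most $\widehat u_{i_{\star\sss{(r)}}}^{\sss{(r)}}\simp n^{(\tau-2)^{\bnr}/(\tau-1)}$, and $\CR_{T_r}$ has $\overset{\Pv}{\lesssim} n^{(\tau-2)^{\bnr+1}/(\tau-1)}$ vertices; applying Claim~\ref{claim::Sbound} band by band in the degrees (the sum being dominated by the single highest-degree vertex of $\CR_{T_r}$) then yields $H_r\le \sum_{v\in\CR_{T_r}}D_v\overset{\Pv}{\lesssim} n^{(\tau-2)^{\bnr}/(\tau-1)}$, up to subpolynomial factors.

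Next, conditionally on the revealed red structure, I would continue the same construction by growing the blue ball from $\CB_0$ for $T_b$ steps in breadth-first order. The key observation is that every non-boundary vertex of $\CR_{T_r}$ already has \emph{all} its half-edges paired, to vertices inside $\CR_{T_r}$; hence blue can meet $\CR_{T_r}$ only if either $\CB_0\in\CR_{T_r}$, which has probability $|\CR_{T_r}|/n\overset{\Pv}{\lesssim} n^{(\tau-2)^{\bnr+1}/(\tau-1)-1}=o_{\Pv}(1)$, or, at some step $k\le T_b$, a half-edge explored by blue at that step (one incident to a vertex at blue-distance $k-1$) is paired to one of the $H_r$ unpaired boundary half-edges of $\CR_{T_r}$. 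Writing $H_k^{\sss(b)}$ for the number of half-edges incident to $\CB_{k-1}$, and working on $\{\CL_n\in[\Ev[D]n/2,2\Ev[D]n]\}$, the expected number of such pairings at step $k$ is at most $H_k^{\sss(b)}H_r/(\CL_n-O(H_r+\sum_j H_j^{\sss(b)}))$; a union bound over $k\le T_b$ together with the fact that $H_k^{\sss(b)}$ grows doubly-exponentially in $k$ (so the sum is carried by its last term) gives
\[ \Pv_{Y,n}\big(\CR_{T_r}\cap\CB_{T_b}\neq\varnothing\big)\;\overset{\Pv}{\lesssim}\; n^{(\tau-2)^{\bnr+1}/(\tau-1)-1}\;+\;\frac{H_r\,H_{T_b}^{\sss(b)}}{n}\,. \]
Feeding in the blue analogue of the climbing estimates --- $\CB_{T_b-1}$ is the blue ball one step below the peak, so its maximal degree is at most $\widehat u_{i_{\star\sss{(b)}}-1}^{\sss{(b)}}\simp n^{(\tau-2)^{\bnb+1}/(\tau-1)}$ and thus $H_{T_b}^{\sss(b)}\overset{\Pv}{\lesssim} n^{(\tau-2)^{\bnb+1}/(\tau-1)}$ up to subpolynomial factors --- the bound becomes $\overset{\Pv}{\lesssim} n^{\left[(\tau-2)^{\bnr}+(\tau-2)^{\bnb+1}\right]/(\tau-1)-1}$, and the exponent is negative because $(\tau-2)^{\bnr}+(\tau-2)^{\bnb+1}=(\tau-2)^{\bnr}+(\tau-2)(\tau-2)^{\bnb}<1+(\tau-2)=\tau-1$, using $(\tau-2)^{\bnr}<1$ and $(\tau-2)^{\bnb}<1$. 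This gives $\Pv_{Y,n}(\CR_{T_r}\cap\CB_{T_b}\neq\varnothing)\to0$.

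Two points require care. First, the entry-point dichotomy must be justified \emph{jointly} with the construction of the blue ball: one must rule out that the blue exploration itself closes a cycle back into a half-edge of $\CR_{T_r}$ in a way not already counted --- this follows as in the branching-process phase of Section~\ref{sc::BP}, since the total number of half-edges touched, $H_r+\sum_k H_k^{\sss(b)}$, stays polynomially below $\CL_n$. Second, and this I expect to be the real obstacle, is the borderline regime in which $\bnr$ and $\bnb$ are both extremely close to $0$: there the polynomial gap degenerates ($(\tau-2)^{\bnr}+(\tau-2)^{\bnb+1}\uparrow\tau-1$, both colours reaching essentially maximal-degree vertices at their respective times), and one must argue separately --- either that this coincidence is excluded whp (we assumed $\Yrn>\Ybn$ strictly, so $\bnr$ and $\bnb$ differ by a fixed amount modulo $1$, hence cannot both be near $0$ unless $\log(\Ybn/\Yrn)/|\log(\tau-2)|$ is near an integer), or directly, using that whp the near-maximal-degree vertices form a clique while each colour has occupied only a subpolynomial number of them by the relevant step, so that neither a common occupied vertex nor a direct edge between the two occupied hub-sets appears, again with probability $o_{\Pv}(1)$.
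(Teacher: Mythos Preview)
Your proposal is correct and follows essentially the same route as the paper: grow red first to time $T_r$, then grow blue step by step and use a first-moment/union bound on the probability that any newly paired blue half-edge hits one of the free boundary half-edges of $\CR_{T_r}$, arriving at the same decisive exponent $\bigl[(\tau-2)^{\bnr}+(\tau-2)^{\bnb+1}\bigr]/(\tau-1)-1<0$; the paper likewise dismisses the borderline $\bnr,\bnb\downarrow 0$ as a probability-$o(1)$ nuisance rather than giving a separate hub-clique argument. The one place where the paper is more explicit is the bound on $H_r$: it obtains $H(\CR_{T_r})\le n^{(\tau-2)^{\bnr}/(\tau-1)(1+o_\Pv(1))}$ via a direct path-counting identity (Claim~\ref{cl::stoch-dom-alpha-stable} to control the total degree at the end of the BP phase, then the product expansion~\eqref{eq::nkab} over the climbing steps), whereas your appeal to Claim~\ref{claim::Sbound} ``band by band'' is not quite self-contained---that claim bounds half-edges among \emph{all} vertices above a threshold, not within $\CR_{T_r}$, so to make your version airtight you would still need Lemma~\ref{lem::numberofverticesinGamma} layer by layer to control the number of $\CR_{T_r}$-vertices in each degree band.
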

In the proof of this lemma, and also later on in Section \ref{sc::coexistence}, we use the following technical, rather easy claim:
\begin{claim}\label{cl::stoch-dom-alpha-stable} Recall that $\widehat u_0^{\sss{(r)}} = (C \log n \cdot Z_{t(n^{\vr})}^{\sss{(r)}}    )^{1/(\tau-2)} $. Then there exist a $0<c<\infty$, so that
\[ \Pv\left(  \sum_{i=1}^{Z_{t(n^{\vr})}} D_i^\star \ge \widehat u_0^{\sss{(r)}} \right)  \le \frac{c}{(\log n)^{\tau -2}}. \]
\end{claim}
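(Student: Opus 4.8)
The plan is to condition on the value $m:=Z_{t(n^{\vr})}^{\sss{(r)}}$. By the branching property, conditionally on $\{Z_{t(n^{\vr})}^{\sss{(r)}}=m\}$ the forward degrees $D_1^\star,\dots,D_m^\star$ of the individuals in generation $t(n^{\vr})$ are i.i.d.\ with distribution function $F^\star$ from \eqref{def::size-biased1}, and the quantity $\widehat u_0^{\sss{(r)}}=(Cm\log n)^{1/(\tau-2)}=:K$ is then deterministic. It therefore suffices to establish the bound $\Pv(\sum_{i=1}^m D_i^\star\ge K\mid m)\le c/\log n$ \emph{uniformly in} $m\ge1$; the asserted inequality with $(\log n)^{\tau-2}$ then follows at once, since $\tau-2\in(0,1)$ gives $1/\log n\le 1/(\log n)^{\tau-2}$ for $n\ge3$, and taking the expectation over $m$ removes the conditioning. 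This is a routine large-deviation estimate for a sum of i.i.d.\ heavy-tailed summands in the domain of attraction of a stable law of index $\tau-2\in(0,1)$: such a sum is typically of order $m^{1/(\tau-2)}$, a factor $(C\log n)^{1/(\tau-2)}$ below the target $K$.

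I would split each summand at level $K/2$: on $\{\sum_{i=1}^m D_i^\star\ge K\}$ either some $D_i^\star$ exceeds $K/2$, or every $D_i^\star\le K/2$ and the truncated sum already reaches $K$. For the first possibility, a union bound together with the tail estimate \eqref{eq::size-biased2} gives
\[
\Pv\big(\exists\, i\le m:\ D_i^\star>K/2\ \big|\ m\big)\ \le\ m\,C_1^\star\,(K/2)^{-(\tau-2)}\ =\ \frac{C_1^\star 2^{\tau-2}}{C\log n}.
\]
For the second possibility I would bound the truncated mean by a dyadic decomposition using \eqref{eq::size-biased2}:
\[
\Ev\big[D^\star\ind_{\{D^\star\le K/2\}}\big]\ \le\ \sum_{j\ge1:\,2^{j-1}\le K/2}2^{j}\,\Pv\big(D^\star>2^{j-1}\big)\ \le\ C_1^\star 2^{\tau-2}\!\!\sum_{j\ge1:\,2^{j-1}\le K/2}\!\!2^{j(3-\tau)}\ \le\ C_2\,K^{3-\tau},
\]
where the geometric series is controlled by its largest term because $3-\tau\in(0,1)$. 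Hence $\Ev[\sum_{i=1}^m D_i^\star\ind_{\{D_i^\star\le K/2\}}\mid m]\le C_2\,mK^{3-\tau}$, and Markov's inequality, using that $K^{2-\tau}=(Cm\log n)^{-1}$ since $(2-\tau)/(\tau-2)=-1$, yields
\[
\Pv\Big(\sum_{i=1}^m D_i^\star\ind_{\{D_i^\star\le K/2\}}\ge K\ \Big|\ m\Big)\ \le\ C_2\,mK^{2-\tau}\ =\ C_2\,m\,(Cm\log n)^{-1}\ =\ \frac{C_2}{C\log n}.
\]

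Summing the two contributions gives $\Pv(\sum_{i=1}^m D_i^\star\ge\widehat u_0^{\sss{(r)}}\mid Z_{t(n^{\vr})}^{\sss{(r)}}=m)\le c'/\log n$ with $c'$ depending only on $\tau$ and $C$, uniformly in $m\ge1$; taking the expectation over $m$ and using $1/\log n\le 1/(\log n)^{\tau-2}$ completes the proof, so the claim holds with $c=c'$. No serious obstacle is expected here: the only point to keep track of is that both the truncation level $K/2$ and the target $K$ are of the form $(m\log n)^{1/(\tau-2)}$ up to constants, so that the spurious factors of $m$ cancel exactly and leave the clean $1/\log n$ (hence $1/(\log n)^{\tau-2}$) decay; everything else is a standard heavy-tailed moment computation.
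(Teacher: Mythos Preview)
Your proof is correct. The truncation-plus-Markov argument (the classical ``one big jump'' heuristic for heavy-tailed sums) is carried out cleanly, and the cancellation of the factor $m$ via $K^{2-\tau}=(Cm\log n)^{-1}$ is exactly the point. In fact you obtain the sharper bound $c'/\log n$, which implies the stated $c/(\log n)^{\tau-2}$ since $\tau-2\in(0,1)$.

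The paper takes a different route: it dominates $D^\star$ stochastically by $b+X$ with $X$ a totally asymmetric $(\tau-2)$-stable random variable, uses the exact scaling $\sum_{i=1}^m(b+X_i)\overset{d}{=}bm+m^{1/(\tau-2)}X'$ to reduce to a single tail probability $\Pv(b+X'\ge C'\log m)$, and then reads off the $(\log m)^{-(\tau-2)}$ decay directly from the stable tail. This is conceptually elegant and identifies the limiting distributional structure, but it imports the machinery of stable laws. Your approach is more elementary, needs only the power-law tail bound \eqref{eq::size-biased2}, and yields a bound uniform in $m\ge1$ without using that $m$ is polynomial in $n$; in return it gives slightly less information about why the bound has the form it does.
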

\begin{proof}
We prove the claim conditioned on the value $Z_{t(n^{\vr})}^{\sss{(r)}}:=m$. Note that $m=n^{\vr^{\sss{(r)}}}$ is a polynomial of $n$.
First, notice that we can pick a $b>0$ so that $D^\star\ {\buildrel {d}\over{\le}} \ b+X$, where $X$ is a continuous random variable with distribution function $\Pv(X\le x)= 1- C_1^\star\!\cdot\! x^{2-\tau}$ on $[0, \infty)$, where $C_1^\star$ is from \eqref{eq::size-biased2}. Then, $b+X$ is a totally asymmetric stable random variable with skewness $\kappa=1$, shift $b$ and some scale parameter $c$.
As a result, the moment generating function of $X$ is of the form
\[  \Ev[\e^{-\theta (b+X)}] = \exp\{ - \theta b + c' \theta^{\tau-2} \}. \]
Then, calculating the moment generating function of $\sum_{i=1}^m X_i$ for i.i.d.\  $X_i\sim X$ gives that
\be\label{eq::sum-domination} \sum_{i=1}^m D_i^\star \ {\buildrel {d}\over{\le}} \  \sum_{i=1}^m b+X_i \ {\buildrel {d}\over{=}}\  b m + m^{1/(\tau-2)}X' \ {\buildrel {d}\over{\le}\ }m^{1/(\tau-2)} (b + X'),\ee
where $X'\sim X$.
Then, by the stochastic domination and the tail distribution of $X'$, for an arbitrary $0<C'\le \infty$ there is a $c'<\infty$ so that
\[  \Pv\left( \sum_{i=1}^m D_i^\star \ge m^{\frac{1}{\tau-2}} C' \log m\right) \le \Pv( b + X' \ge C' \log m  ) \le \frac{c'}{(\log m)^{\tau-2}}.\]
The statement of the claim follows by noticing that $\log m=\vr^{\sss{(r)}} \log n$, with $\vr'(\tau-2)<\vr^{\sss{(r)}}<\vr'$ which modifies the constant $C'$, $c'$ to $C$ and $c$, respectively.
\end{proof}
\begin{proof}[Proof of Lemma \ref{cl::no-early-meeting}]
Recall that we write $\Pv_{Y,n}(\cdot), \Ev_{Y,n}[\cdot]$ for probabilities of events and expectations of random variables conditioned on the values $\Yrn, \Ybn$.

To prove the lemma  we first calculate the total number of free (unpaired) half-edges going out of the set $\CR_{T_r}$, which we denote by $H(\CR_{T_r})$. We do this via counting the number of paths \emph{with free ends}, that is, now  we say that a sequence of vertices and half-edges $(\pi_0, s_0, t_1, \pi_1, s_1, t_2,  \dots,  t_k, \pi_k, s_k)$ forms a \emph{path with free end} in $\CMD$, if for all $0< i\le k$, the half edges $s_i, t_i$ are incident to the vertex $\pi_i$ and $(s_{i-1}, t_i)$ forms an edge between vertices $\pi_{i-1},\pi_i$. Clearly, since the same vertex might be approached on several paths, the total number of free half-edges in $\CR_{T_r}$ can be bounded from above by the number of paths with free end of length $T_r$, starting from the red source vertex $\CR_0$. Now, on the event $\{  \CB ad \CP^{\sss{(j)}} =\varnothing, \ \forall i\le i_{\star \sss{(j)}}\
\mbox{ for } j=r,b,\} $, at time $t(n^{\vr}) + i$ $\widehat u_{i}^{\sss{(j)}}$ (see definition in \ref{eq::uibar})  is an upper bound on the degrees of color $j$ vertices.
Let us note that
\be\label{eq::d-star-indicator}  \Ev_{Y,n}[D^\star \ind_{\{D^\star < \widehat u_{i}^{\sss{(j)}}\}} ] \le C_1^\star (\widehat u_{i}^{\sss{(j)}})^{3-\tau}\ee
 by \eqref{eq::size-biased2}. We write $N_k(\CA, \text{free})$ for the total number of $k$-length paths with an unpaired half-edge starting from set $\CA$. Then, since $T_r=t(n^{\vr})+i_{\star \sss{(r)}}$ (see \eqref{eq::k*+i*}),
\be\label{eq::half-edge-to-path}H(\CR_{T_r}) \le  N_{i_{\star \sss{(r)}}}(\CR_{t(n^{\vr})}, \text{free}), \ee
and recall $\CR_{t(n^{\vr})}$ is coupled to the branching process described in Section \ref{sc::BP}. Hence, the degrees in the last generation of the BP phase are i.i.d.\ having distribution $D^\star$ satisfying \eqref{eq::size-biased2}.
% the total out-degree in $\CR_{t(n^{\vr})}$
%\[  \frac{\sum_{i=1}^{Z_{t(n^{\vr})}^{\sss{(r)}} } D_i^\star }{ (Z_{t(n^{\vr})}^{\sss{(r)}}   )^{1/(\tau-2)}} \toindis \xi,  \]
%by the generalised Central Limit Theorem for $\alpha$-stable variables. \KJ{add ref, maybe Feller II}
By Claim \ref{cl::stoch-dom-alpha-stable}, the total number of half-edges in this last generation is whp
\[   \sum_{i=1}^{Z_{t(n^{\vr})}} D_i^\star \le \widehat u_0^{\sss{(r)}}. \]
The path counting method described in \cite[Appendix]{BarHofKom14}
gives that the expected number of paths with free ends of length $i_{\star \sss{(r)}}$ under the assumption of the claim satisfies
\be\label{eq::nkab} \Ev_{Y,n}[  N_{i_{\star \sss{(r)}}}(\CR_{t(n^{\vr})}, \text{free})] \le \widehat u_0^{\sss{(r)}} \left(\prod_{i=1}^{i_{\star \sss{(r)}}}\frac{\CL_n}{\CL_n-2i+1}\right)\cdot\sideset{}{^\star}\sum_{\substack{ \pi_1,\dots,\pi_{  i_{\star \sss{(r)}}} \\ \forall i\ \pi_i \in \Lambda_i}} \left(\prod_{i=1}^{i_{\star \sss{(r)}}    }\frac{d_{\pi_i} (d_{\pi_i}-1)}{\CL_n}\right) \ee
where $\sideset{}{^\star}\sum$ means that we sum over distinct vertices, $d_{\pi}$ denotes the degree of vertex $\pi$, and $\Lambda_i=\{v \in [n]: D_v \le  \widehat u_i^{\sss{(r)}} \}$.

Now, we can apply \eqref{eq::sum-to-expected} $i_{\star\sss{(r)}}$ many times (with  a union bound) and get, that on the event $\CL_n/n \in (\Ev[D]/2, \Ev[D])$, whp,
 \be \Ev_{Y,n}[  N_{i_{\star \sss{(r)}}}(\CR_{t(n^{\vr})}, \text{free})] \le (C \log n)^{i_{\star\sss{(r)}}}\cdot \widehat u_0^{\sss{(r)}} \cdot \left(\prod_{i=1}^{i_{\star \sss{(r)}} } (\widehat u_i^{\sss{(r)}})^{3-\tau}  \right) \e^{2 i_{\star \sss{(r)}}^2/ (\Ev[D] n) }. \ee
Note that $i_{\star \sss{(r)}}$ is a tight sequence of rv-s, see \eqref{eq::value_i*}. The value of $\widehat u_i^{\sss{(r)}}$ can be calculated from \eqref{eq::uibar} and is the same as the rhs of \eqref{def::ui} with $+e_i$ in the exponent of $C \log n$. Hence, the product of the first three factors on rhs of the previous formula, after some longish but elementary calculations, equals
\[    n^{\vr^{\sss{(r)}}  (\tau-2)^{ - (i_{\star \sss{(r)}} +1) } }(C\log n)^{   \frac{1}{3-\tau}  ( (\tau-2)^{ - (i_{\star \sss{(r)}}^{\sss{(r)}} +1) } -1  )  } = \widehat u_{i_{\star \sss{(r)}}}.  \]
Recall that the value of  $\widehat u_{i_{\star \sss{(r)}}}^{\sss{(r)}}$ is the same as the rhs of  \eqref{eq::ui*} with $+e_{i_{\star \sss{(r)}}}$ in the exponent of $C \log n$, so by \eqref{eq::half-edge-to-path}, whp,
\[ \Ev_{Y,n}[ H(\CR_{T_r})] \le n^{(\tau-2)^{\bnr} / (\tau-1) (1+o_{\Pv}(1)) }.  \]
Multiplying the rhs with a constant $C \log n$ factor to allow for an application of Markov's inequality, we also have
\be\label{eq::half-edge-red} \Pv_{Y,n}\left(H(\CR_{T_r}) \ge  C \log n \cdot n^{(\tau-2)^{\bnr} / (\tau-1) (1+o_{\Pv}(1)) } \right)  \le 1/ (C\log n). \ee

Now, let us apply the exact same technique for $H(\CB_{T_b-i})$, the half edges from the blue cluster at $T_b-i$, for all $i=1, 2, \dots, i_{\star \sss{(b)}}$. Since now we stop the paths at $t(n^\vr)+i_{\star \sss{(b)}}-1, t(n^\vr)+i_{\star \sss{(b)}}-2, \dots, t(n^{\vr})$, we get that
\[ \Pv_{Y,n}\left(H(\CB_{T_b-i}) \ge (C \log n)^{i} n^{(\tau-2)^{\bnb+i}/(\tau-1) (1+o_{\Pv}(1)) } \right)  \le 1/ (C\log n)^{i}. \]
Summing the error terms up, we get
\be\label{eq::half-edge-blue} \Pv_{Y,n}\left(H(\CB_{T_b-i}) \le (C \log n)^{i} n^{(\tau-2)^{\bnb+i}/(\tau-1) (1+o_{\Pv}(1)) } \ \forall i=1, \dots, i_{\star \sss{(b)}},  \right)  \ge 1-1/ (C\log n). \ee
Now, to see that $\CR_{T_r}$ and $\CB_{T_b}$ are disjoint, we apply the following procedure:
It is easy to see that $H(\CR_{T_r-i})$ is maximised at $i=0$. Hence, we grow the red cluster first till time $T_r$, and then stop it. Then, we grow the blue cluster step by step, looking at $\CB_1, \CB_2, \dots, \CB_{T_b}$, and at each step we check if any of the half edges paired are actually paired to a red half-edge. If this happens for any time before or at $T_b-1$, then an early connection happens and the distance is at most  $T_b+T_r$. (Note that the distance is $T_r+i$ if we pair a blue half-edge that is at the end of a path with free end of length $i-1$, to a red half-edge.  Hence, to have a connection of $T_r+T_b$, we need to find this connection at the latest when pairing the half-edges in $H(\CB_{T_r-1})$. )
The probability that there is a connection before or at $t(n^{\vr})$ is of the same order as the probability that there is a connection at time $t(n^{\vr})$, since the total degree in the whole BP is the same order of magnitude as the total degree in the last generation.
The probability that $H(\CB_{T_b-i})$ connects to $H(\CR_{T_r})$ is then at most, by a union bound,
\[   \Pv_{Y,n}( H(\CB_{T_b-i}) \leftrightarrow H(\CR_{T_r}) ) \le \frac{H(\CB_{T_b-i}) H(\CR_{T_r})}{\CL_n}.  \]
On the event that $\{\CL_n \in (\Ev[D]/2, \Ev[D)]\}$, using \eqref{eq::half-edge-blue} and \eqref{eq::half-edge-red}, we sum up the error terms for $i=1, \dots, i_{\star \sss{(b)}}$,
\be \label{eq::early-error} \Pv_{Y,n}( \CR_{T_r} \cap \CB_{T_b}\neq \varnothing ) \le \frac{2}{\Ev[D]n} n^{(\tau-2)^{\bnr}(\tau-1)(1+o_{\Pv}(1)) } \sum_{i=1}^{i_{\star \sss{(b)}}}n^{(\tau-2)^{\bnb+i} /(\tau-1)(1+o_{\Pv}(1)) }. \ee
 Note that the exponent of $n$ in the dominant term in the numerator is
 \[ \frac{(\tau-2)^{\bnr}+ (\tau-2)^{\bnb+1}}{\tau-1} \le 1, \]
 since $\bnb, \bnr \in [0,1)$. Note that $\bnb=\bnr=0$ happens with probability tending to zero.
 If $n$ is so enough that that the logarithmic factors (hidden in the $(1+o_\Pv(1))$ factor in the exponent) are negligible, the rhs of \eqref{eq::early-error} tends to zero with $n$.
This finishes the proof of the claim, and hence, the proof of the lower bound of Theorem \ref{thm::distances}.

\end{proof}

We continue to prove Theorem \ref{thm::maxdegree}, for which we need to investigate the maximal degree of a vertex occupied by blue.

\begin{proof}[Proof of Theorem \ref{thm::maxdegree}]
Before we start, recall the various notations of `$u$'-s: $u_{i}^{\sss{(j)}}$ denotes the climbing degree of color $j$ at time $t(n^\vr) +i$, while $\widetilde u_\ell^{\sss{(j)}}$ denotes the avalanche degree of color $j$  at time $T_j+\ell$ (we need $j=r$ in this section).

First we handle Case (1), i.e., we assume $T_b-T_r\ge2$. Recall the mountain climbing phase, i.e., the fact that $u_{i+1}^{\sss{(b)}} = (u_{i}^{\sss{(b)}})^{1/(\tau-2)}(1+o_{\Pv}(1))$. At time $T_r+k$, since blue would reach the top of the mountain at time $T_b$, blue is $T_b-T_r-k$ steps away from the top of the mountain, i.e., it is at vertices with $\log$(degree)/$\log n$ equal to
\be\label{eq::bluedeg-k}\frac{(\tau-2)^{\bnb}}{\tau-1}(\tau-2)^{T_b-T_r-k}(1+o_{\Pv}(1))\ee
while red at this moment is at $\log$(degree)/$\log n$ that is $\anr(\tau-2)^k(1+o_{\Pv}(1))$. Recall that $\anr=(\tau-2)^{\dnr}$ and that $\dnr\in[0,1)$. From here, the mountain climbing phase for blue and the avalanche phase for red can be continued, as long as the lowest degree where red occupies all the vertices is still higher than the maximal degree of blue. Let us define the \emph{real} time $t_c$ when this happens as the solution of the equation
\be\label{eq::intersect}(\tau-2)^{{\dnr}}(\tau-2)^{t_c-1}=(\tau-2)^{\bnb}(\tau-2)^{T_b-T_r-t_c}\ee
which yields\footnote{We remark that formula \eqref{eq::tc2} is the same as applying \cite[Equation 6.5]{BarHofKom14}  with $\la=1$ and using the identity $\log (\anr (\tau-1))/|\log (\tau-2)|={\dnr}$.}
\be\label{eq::tc2} t_c=\frac{T_b-T_r+1}{2} + \frac{\bnb-{\dnr}}{2}=\frac{1}{2}\frac{\log(\Ywn/\Yln)}{|\log (\tau-2)|} + \frac{\bnr + 1-{\dnr}}{2}. \ee
Due to the shifts $\dnr,\bnb$, we note that $t_c$ is typically not an integer. Note that the definition of $t_c$ is so that at time $\lfloor t_c\rfloor$, the maximal degree of blue, given by $u_{\lfloor t_c\rfloor}^{\sss{(b)}}$ is just `slightly less' than the location of the red avalanche at degrees $\wur_{\lfloor t_c\rfloor}$.
In what follows, we determine what happens at the next jump, at time $\lfloor t_c\rfloor +1$.

Note that
writing $\lfloor t_c\rfloor= t_c-\{t_c\}$ and using \eqref{eq::tc2} combined with \eqref{eq::bluedeg-k}, the maximal degree of blue  at $\lfloor t_c\rfloor$ and $\wur_{\lfloor t_c\rfloor}$ satisfy
\begin{align} \log (D^{\max}_n(\lfloor t_c\rfloor))/\log n &=\frac{ (\tau-2)^{\frac{\bnb+\dnr}{2}}}{\tau-1} (\tau-2)^{\frac{T_b-T_r-1}{2}} (\tau-2)^{\{t_c\}}(1+o_{\Pv}(1)), \label{eq::dmax-tc} \\
\log (\wur_{\lfloor t_c\rfloor}) / \log n &= \frac{(\tau-2)^{\frac{\bnb+\dnr}{2}}}{\tau-1} (\tau-2)^{\frac{T_b-T_r-1}{2}} (\tau-2)^{-\{t_c\}}(1+o_{\Pv}(1)). \label{eq::wur-tc}
\end{align}

Neglecting the $(1+o_{\Pv}(1))$ terms, we get that the logarithm of \eqref{eq::dmax-tc} divided by \eqref{eq::wur-tc}, divided  by $|\log (\tau-2)|$, i.e.,  the distance between the two exponents is given by
 \be\label{eq::dtc2} 2\{t_c\} = 2\left\{\frac{T_b-T_r+1}{2} + \frac{\bnb-\dnr}{2}\right\}. \ee
Clearly, $2\{t_c\} \in [0,2)$. %
%Note that any maximal degree blue vertex is connected to a vertex that is one layer higher in the mountain (that is, of $\log$degree/$\log n$ that is $1/(\tau-2)$ times the rhs of \eqref{eq::dmax-tc}).
%In case $d(t_c)<1$, this vertex is already painted red, hence, the distance between the red and blue sources is $2(T_r + \lfloor t_c \rfloor) + 1$. On the other hand, if $d(t_c)\in (1,2)$, then blue can still make a jump to one layer higher to vertices that are not yet red,  and at the same time, red also tries to occupy these vertices. Hence, the distance between the red and blue sources in this case is $2(T_r + \lfloor t_c \rfloor) +2$.
%We get
 %\be\label{eq::distance} \CD(\CR_0, \CB_0)\le 2T_r+ 2\lfloor t_c \rfloor + 1 + \ind\{d(t_c)>1\} \quad whp. \ee
   At time $T_r + \lfloor t_c \rfloor+1$, when $2\{t_c\}>1$, the maximal degree of blue is increased by a factor $1/(\tau-2)$ in the exponent, while, when $2\{t_c\}<1$, the vertices `slightly below'\footnote{`Slightly below' here means $\wur_{\lfloor t_c \rfloor}(C\log n)^{-2} $: the number of vertices between degrees $\wur_{\lfloor t_c \rfloor}(C\log n)^{-2} $ and $\wur_{\lfloor t_c \rfloor}$ is then large enough so that whp at least one of them will get color blue at time $\lfloor t_c \rfloor+1$.} $\wur_{\lfloor t_c \rfloor}$ are reached both by color red and blue at the same time (at time $\lfloor t_c \rfloor+1$), and hence each of these vertices their color is red and blue with equal probability. Hence, the  maximal degree of blue can be described as
\be\label{eq::max-degree-final} \frac{\log D_n^{\max}(\infty)}{(\tau-1)^{-1}\log n}=(\tau-2)^{\frac{\bnb+\dnr}{2}} (\tau-2)^{
\frac{T_b-T_r-1}{2}} (\tau-2)^{(\{t_c\}-1)\ind\{2\{t_c\}>1\} - \{t_c\}\ind\{2\{t_c\}<1\}  }(1+o_{\Pv}(1)).\ee
Next we analyse $\{t_c\}$, and then substitute the results into this formula.
Since $T_r,T_b$ are integers, the first term in \eqref{eq::dtc2} contributes either $0$ or $1/2$ to the fractional part, depending on the parity, while the second term is between $(-1/2,1/2)$.  Hence we can distinguish four cases depending on the parity of $T_r+T_b$ and $\bnb-{\dnr} <0$ or $\bnb-{\dnr} >0$.

From \eqref{def::alpha} and \eqref{def::delta} it follows that the condition $\bnb-{\dnr}>0$ is equivalent to
\be\label{cond::alpha<bn2} \tau-1>(\tau-2)^{\bnb}+(\tau-2)^{\bnr},\ee corresponding to Cases $(E_>)$ and $(O_>)$ below. The region at which this is satisfied is above the red curve in Fig~\ref{fig::contourplot}.

 Here we list what happens with the maximal degree of blue in the four cases:
\begin{description}
\item[\namedlabel{caseodd1}{$(O_{>})$}]$T_b-(T_r+1)=2k+1$ and $\bnb>\dnr$. In this case both fractional parts in \eqref{eq::dtc2} are at least $1/2$, thus $\{t_c\}=(1+\bnb-\dnr)/2\ge 1/2$, and $\lfloor t_c \rfloor=(T_b-T_r)/2$, $2\{t_c\}\ge 1$.
The right hand side of \eqref{eq::max-degree-final} equals $(\tau-2)^{\bnb} (\tau-2)^{(T_b-T_r-2)/2}$. Note that $(T_b-T_r-2)/2\ge 0$ if $T_b-T_r\ge 2$.
See Fig~\ref{fig::odd1}.

\item[\namedlabel{caseeven1}{$(E_{>})$}]$T_b-(T_r+1)=2k$ and $\bnb> \dnr$. In this case $2\{t_c\}=\bnb-{\dnr}<1$, and $\lfloor t_c \rfloor=(T_b-T_r+1)/2$. The right hand side of \eqref{eq::max-degree-final}  becomes $ (\tau-2)^{\dnr} (\tau-2)^{(T_b-T_r-1)/2}$. Note that $(T_b-T_r-1)/2$ is integer-valued and $\ge 0$ if $T_b-T_r\ge 3$. See Fig~\ref{fig::even1}.

\item[\namedlabel{caseodd2}{$(O_<)$}]$T_b-(T_r+1)=2k+1$ and $\bnb<\dnr$. In this case $\{t_c\}=(1+\bnb-\dnr)/2$ and $\lfloor t_c \rfloor=(T_b-T_r)/2$, $2\{t_c\}<1$.
%Since a single jump by only one color makes the clusters merge, the distance is $\CD(\CR_0,\CB_0)=2T_r+2\lfloor t_c \rfloor+1=T_r+T_b+1$.
The right hand side of \eqref{eq::max-degree-final}  is $(\tau-2)^{\dnr} (\tau-2)^{(T_b-T_r-2)/2}$. Note that $(T_b-T_r-2)/2\ge 0$ if $T_b-T_r\ge 2$.
See Fig~\ref{fig::odd2}.

\item[\namedlabel{caseeven2}{$(E_<)$}] $T_b-(T_r+1)=2k$ and $\bnb<\dnr$. In this case $\{t_c\}=1+(\bnb-\dnr)/2$ and $\lfloor t_c \rfloor=(T_b-T_r-1)/2$, and $2\{t_c\}>1$.
%Since we need one jump from both colors to make the clusters merge, the distance is $\CD(\CR_0, \CB_0)=2T_r+2\lfloor t_c \rfloor+2=T_r+T_b+1$.
The right hand side of \eqref{eq::max-degree-final}  becomes $(\tau-2)^{\bnb} (\tau-2)^{(T_b-T_r-1)/2}$. Note that $(T_b-T_r-1)/2\ge0$ if $T_b-T_r\ge 3$.
See Fig~\ref{fig::even2}.
\end{description}
\begin{figure}[t]
 % Requires \usepackage{graphicx}
\includegraphics[width=12cm]{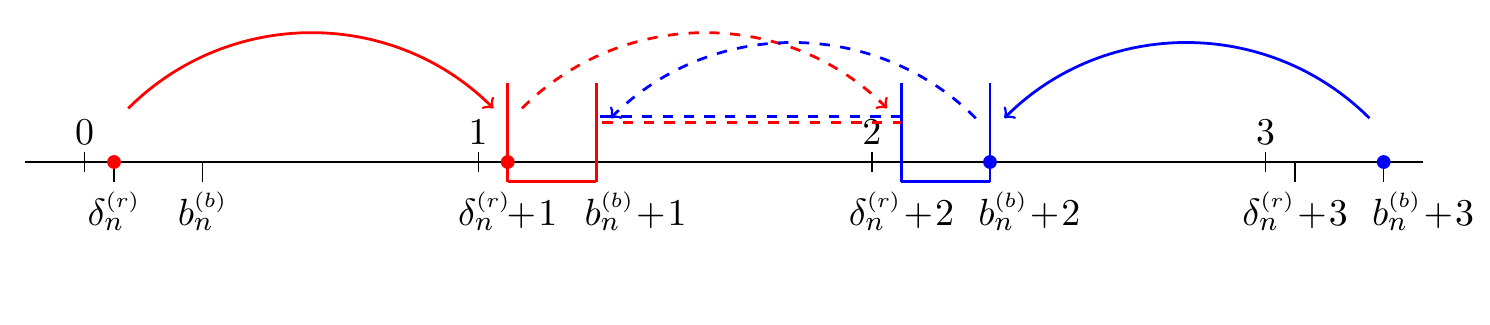}
\caption{Case $(O_>)$:  $\bnb>\dnr$ and $T_b-(T_r+1)$ is odd ($=3$ in the picture). The picture shows the exponent of $\tau-2$ in the formula for $\log$(degrees)/$\log n$. At time $T_r+1$, blue is at $\bnb+3$ and red is at $\dnr$ in the picture; the full and dashed arrows indicate their jump at time $T_r+2$ and $T_r+3$, respectively. Since $\bnb>\dnr$ and $T_b-T_r-1=3$ is odd, the distance between the two colors just before merging (at time $T_r+2$) is $\bnb +2-(\dnr+1)>1$. So, at time $T_r+3$, there is a region between them where they jump to vertices at the same time (indicated by dashed horizontal lines). Hence, the exponent of $\tau-2$ in the maximal degree of blue becomes $\bnb+(T_b-T_r-2)/2$, which is $\bnb+1$ in the figure.}\label{fig::odd1}
\end{figure}
\begin{figure}[t]
 % Requires \usepackage{graphicx}
\includegraphics[width=14cm]{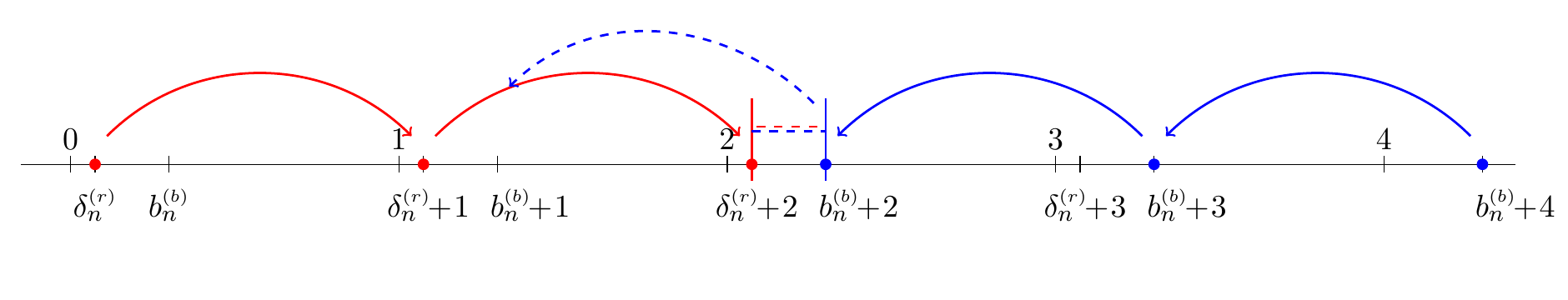}
\caption{Case $(E_>)$: $\bnb>\dnr$ and $T_b-(T_r+1)$ is even ($=4$ in the picture).
The picture shows the exponent of $\tau-2$ in the formula for $\log$(degrees)/$\log n$. At time $T_r+1$, blue is at $\bnb+4$ and red is at $\dnr$ in the picture; the full arrows indicate their jumps at time $T_r+2$ and $T_r+3$. Since $\bnb>\dnr$ and $T_b-T_r-1=4$ is even, the distance between the two colors just before merging (at time $T_r+3$) is less than $1$. So, at time $T_r+4$, there is a region where they jump to vertices at the same time (indicated by dashed horizontal lines), i.e., the exponent of $\tau-2$ in  the maximal degree of blue is $\dnr+(T_b-T_r-1)/2$, which is $\dnr+2$ in the figure.}
\label{fig::even1}
\end{figure}
\begin{figure}[ht]
 % Requires \usepackage{graphicx}
\includegraphics[width=12cm]{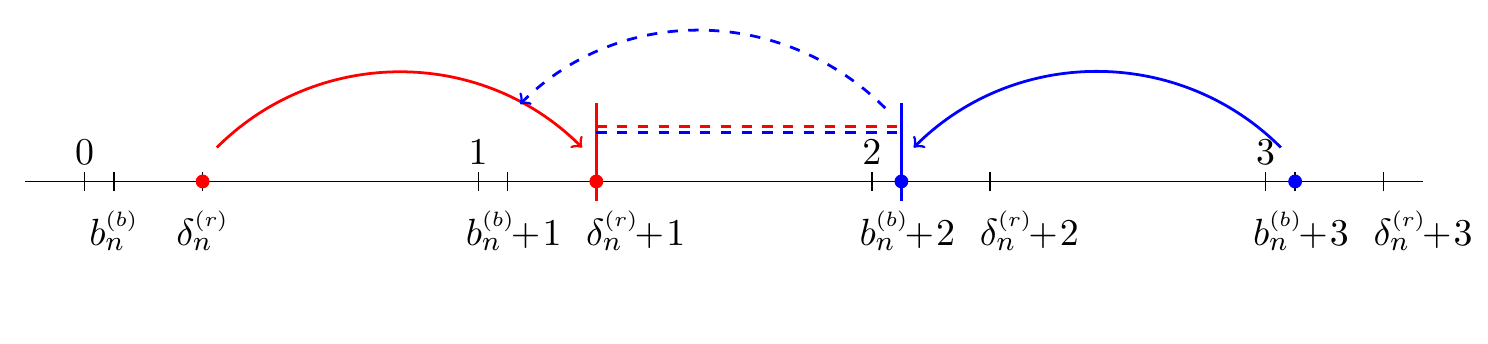}
\caption{Case $(O_<)$: $\bnb<\dnr$ and $T_b-(T_r+1)$ is odd ($=3$ on the picture). The picture shows the exponent of $\tau-2$ in the formula for $\log$(degrees)/$\log n$. At time $T_r+1$, blue is at $\bnb+3$ and red is at $\dnr$ in the picture; the full arrows indicate their jumps at time $T_r+2$. Since $\bnb<\dnr$ and $T_b-T_r-1=3$ is odd, the distance between the two colors just before merging (at time $T_r+2$) is less than $1$. So, at time $T_r+3$, there is a region where they jump to vertices at the same time (indicated by dashed horizontal lines), i.e., the exponent of $\tau-2$ in  the maximal degree of blue is $\dnr+(T_b-T_r-2)/2$, which is $\dnr+1$ in the figure.}\label{fig::odd2}
\end{figure}
\begin{figure}[t]
 % Requires \usepackage{graphicx}
\includegraphics[width=14cm]{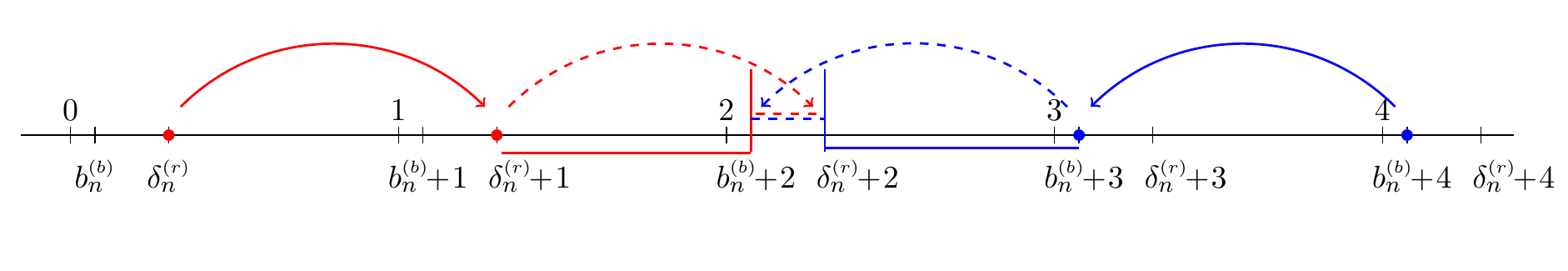}
\caption{Case $(E_<)$: $\bnb<\dnr$ and $T_b-(T_r+1)$ is even ($=4$ on the picture). The picture shows the exponent of $\tau-2$ in the formula for $\log$(degrees)/$\log n$. At time $T_r+1$, blue is at $\bnb+4$ and red is at $\dnr$ in the picture; the full and dashed arrows indicate their jumps at time $T_r+2$ and $T_r+3$. Since $\bnb<\dnr$ and $T_b-T_r-1=4$ is even, the distance between the two colors just before merging (at time $T_r+2$) is larger than $1$. So, at time $T_r+3$, there is a region where they jump to vertices at the same time (indicated by dashed horizontal lines), i.e., the exponent of $\tau-2$ in  the maximal degree of blue is $\bnb+(T_b-T_r-1)/2$, which is $\bnb+2$ in the figure.}\label{fig::even2}
\end{figure}
We are left to calculate the expressions for the maximal degree using \eqref{eq::Tj}:
\be\label{eq::t2-t1exp}(\tau-2)^{(T_b-T_r)/2} = \sqrt{\Ybn/\Yrn} (\tau-2)^{\frac{\bnr-\bnb}{2}}.\ee
Combining this with the maximal degree listed in the four cases above, it is not hard to see that
\be\label{eq::max-deg-2} \frac{\log \left(D_n^{\max}(\infty)\right)}{\log n} = \sqrt{\Ybn/\Yrn}  \frac{1}{\tau-1} h_n(\Yrn, \Ybn)(1+o_{\Pv}(1)),\ee
with
\[\ba h_n(\Yrn, \Ybn) =& \ind_{ E_< \cup O_>} (\tau-2)^{(\bnr+\bnb-1 - \ind_{O_>})/2} +\\
&+\ind_{ E_> \cup O_<} (\tau-2)^{(\bnr-\bnb-1 - \ind_{O_<})/2} ((\tau-1)-(\tau-2)^{\bnr}),   \ea \]
exactly as in \eqref{eq::h1} in the statement of Theorem \ref{thm::maxdegree}. Note that in all cases, using the representations listed in the four cases, the maximal degree is either $(\tau-2)^{\bnb+\ell}/(\tau-1)\le 1$ or $(\tau-2)^{\dnr+\ell}/(\tau-1)\le 1$ for some integer $\ell\ge 1$ when $T_b-(T_r+1)\ge 2$. Hence, the maximal degree in all cases is a random power of $n$ that is at most $(\tau-2)/(\tau-1)$.

It remains to investigate the cases when $T_b-T_r\in \{0,1\}$.

\begin{enumerate}
\item[(2)] If $T_b-T_r=0$, then in Section \ref{sc::peak} we have established that red and blue paint every vertex in the set $\wgab_1$ with equal probability.
 As a result, the maximal degree of blue whp tends to the maximal degree in $\CMD$, that is, it is of order $n^{1/(\tau-1)(1+o_{\Pv}(1))}$ (see Fig. \ref{fig::cross-1}).
\item[(3)(a)] If $T_b-T_r=1$ but $\Ybn/\Yrn<\tau-2$, then we have seen in Section \ref{sc::peak} that at time $T_r+1$, blue arrives to a few vertices with $\log$(degree)/$\log n$ that is $(\tau-2)^{\bnb}/(\tau-1)(1+o_{\Pv}(1))$, while red occupies all vertices with $\log$(degree)/$\log n$ that is $\anr=(\tau-2)^{\dnr}/(\tau-1)$.  Two scenarios are possible:
\begin{enumerate}
\item[(3)(a.1)] If $T_b-T_r=1$ and $\bnb>\dnr$ then at time $T_b=T_r+1$ red is still at higher degree vertices than blue. At $T_r+2$, blue  can increase its exponent to some vertices with $\log$(degree)/$\log n$ at most $(\tau-2)^{\dnr}/(\tau-1).$ Since $\Ybn/\Yrn>\tau-2$ implies that $\dnr>\dnb$ in this case, the vertices blue tries to occupy via crossing the mountain are already all painted red. Note that the maximal degree is a power of $n$ that is less then $1/(\tau-1)$. This case can be merged into Case (1), $(E_>)$ above.
\item[(3)(a.2)] If $T_b-T_r=1$ and $\bnb<\dnr$ then red has already occupied lower degree vertices, so the maximal degree of the blue remains at exponent $(\tau-2)^{\bnb}/(\tau-1)$. Further, again, since $\Ybn/\Yrn>\tau-2$ implies that $\dnr>\dnb$ in this case, the vertices blue tries to occupy via crossing the mountain are already all painted red. Note again that this is a power that is less than $1/(\tau-1)$. This case can be merged into Case (1), $(E_<)$ above.
\end{enumerate}
\item[(3)(b)] If $T_b-T_r=1$ and $\Ybn/\Yrn>\tau-2$. Note that $\Ybn/\Yrn>\tau-2$ implies $\dnb>\dnr$, and hence the maximal degree vertex blue can paint depends on the fact if $\bnb>\dnr$ or $\bnb<\dnr$:
The same argument works here as for Cases (3)(a.1) and (3)(a.2), i.e., if $\bnb>\dnr$, then the maximal degree of blue increases its exponent to $(\tau-2)^{\dnr}/(\tau-1)$, while if $\bnb<\dnr$, then the maximal degree of blue remains at the exponent $(\tau-2)^{\bnr}/(\tau-1)$.  Geometrically, this means that the line where blue jumped from to the mixed area on Figure \ref{fig::cross-2} can be both above or below the bottom of the all-red area on the top of the mountain.
\end{enumerate}

Summarizing, if $\Ybn/\Yrn <\tau-2$, then the maximal degree of blue is always less than $1/(\tau-1)$, and is described by the cases $E_<, E_>, O_<, O_>$ above. Dividing by $h_n(\Yrn, \Ybn)$ we see that
\[ \frac{\log (D_n^{\max}(\infty))}{(\tau-1)^{-1}h_n(\Yrn, \Ybn)\log n} = \sqrt{\Ybn/\Yrn}  (1+o_{\Pv}(1)),
  \] and the right hand side tends to $\sqrt{Y_b/Y_r}$ in distribution.

Note that we did not investigate one issue, namely, what part of the proof was an upper and what part was a lower bound. First, Section \ref{sc::climbup} establishes the existence a blue vertex at time $t(n^{\vr}) +i$ of degree at least  $u_i^{\sss{(b)}}$. On the other hand, Lemma \ref{lem::badpaths} states that whp blue does not paint  any vertex with degree higher than $\wit u_i^{\sss{(b)}}$ at the same time. Hence, on the blue part, the estimates we used were both upper and lower bounds, up to the $(1+o_{\Pv}(1))$ factors that exactly describe the ratio $\log (\widehat u_i^{\sss{(b)}}/u_i^{\sss{(b)}})$, and they tend to zero when divided by $\log n$ in the statement of the theorem.

One might imagine that the avalanche might actually roll down `faster' than described by the layers $\wgar_\ell$s. To eliminate this problem we argue as follows: even though it might happen that the red avalanche occupies at time $T_r+\ell$  some lower degree vertices than $\wur_\ell$, but if so, only a small number of them, not all of them.\footnote{A similar argument could be used as in Lemma \ref{lem::red-intervals} to show that the number of these vertices is small, at least proportionally to the number of vertices in any given layer. However, the argument given here is easier.}
If it would meet blue earlier than it should (i.e., at time $\lfloor t_c \rfloor$ or $\lfloor t_c \rfloor+1$ as described by the four cases $E_<, E_>, O_<, O_>$ above), then the path formed by vertices in the red fast avalanche and the blue climbing path together would establish a path to the top of the mountain for blue that violates Lemma \ref{lem::badpaths}. Hence, this will not happen whp. As a result, our bounds on the maximal degree of blue are both upper and lower bounds, i.e., they hold whp.
\end{proof}

\section{Coexistence when $\Ybn/\Yrn>\tau-2$.}\label{sc::coexistence}
\subsection{Excursion to absolute continuity and some general estimates}
Before we move on to the proof of coexistence, we need some preliminaries. In Proposition \ref{prop::no-gap} below, we collect some results on the generating function of $D, D^\star$ and on the limiting random variable $Y$ of the branching process described in in Definition \ref{def::limit-variables}.
To show coexistence in the competition model, we will later need that  $Y$  has an absolutely continuous distribution with support including some interval $(0, K)$, for some constant $K>0$. Hence the following assumption:

\begin{assumption}\label{assume::convex}
Let us write $h^{\star}(s):=\sum_{k=1}^\infty \Pv(D^{\star}\!=k) s^k$ where $D^\star$ follows the distribution \eqref{def::size-biased1}.
We assume that $f(t):=-\log( 1- h^\star(1-\e^{-t}) )$ is convex or concave on $\R^+$.
 \end{assumption}

\begin{proposition}\label{prop::no-gap} Recall the degree distribution $D$ from \eqref{eq::F} and its size-biased version $D^\star$ from \eqref{def::size-biased1}.
Then

(i) the generating functions $h(s):=\sum_{k=2}^\infty \Pv(D=k) s^k$ and $h^{\star}(s):=\sum_{k=1}^\infty \Pv(D^{\star}=k) s^k$ satisfy
\be\label{eq::gen-func}  1-h(s)=(1-s)^{\tau-1} L(1/(1-s)), \quad 1-h^{\star}(s)=(1-s)^{\tau-2} L^\star(1/(1-s)), \ee
where $0<L(x), L^\star(x)<\infty$ are bounded slowly varying functions.

(ii) Consider $\wit Z_k$, the size of the $k$th generation in a Galton-Watson branching process with offspring distribution $D^\star$. Under Assumption \ref{assume::convex},  $\wit Y=\lim_{k\to \infty} (\tau-2)^k \log \wit Z_k$ has an absolutely continuous distribution function $J(x)$ with full support on $(0, \infty)$, that can be written as
\be\label{eq::jx} J(x)=1-\exp\{  - \Delta(x) \},\ee
where $\Delta(x)$ is continuous, strictly increasing, and can be written as
 \[ \Delta(x) =   \int_0^x p(t) \mathrm dt,\]
 where $p(t)$ is strictly positive; increasing if $f$ is convex, and decreasing if $f$ is concave. Further, $\lim_{x\to \infty} \Delta(x)/x=1$, in agreement with \eqref{eq::exp-tails-Y}.
\end{proposition}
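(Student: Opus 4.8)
The statement to prove is Proposition~\ref{prop::no-gap}, so let me sketch a proof plan for both parts.

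\medskip

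\textbf{Proof plan for part (i).} The goal is to show that $1-h(s) = (1-s)^{\tau-1} L(1/(1-s))$ and $1-h^\star(s) = (1-s)^{\tau-2} L^\star(1/(1-s))$ with $L,L^\star$ bounded slowly varying. First I would recall the standard Tauberian/Abelian correspondence between power-law tails of a distribution and the behaviour of its generating function near $s=1$: if $\Pv(D>x) = x^{-(\tau-1)}\ell(x)$ with $\ell$ slowly varying, then $1-h(s) \sim \Gamma(2-\tau)\,(1-s)^{\tau-1}\ell(1/(1-s))$ up to the leading order, see e.g.\ \cite[Thm.\ 8.1.6]{BGT} or \cite[Vol.~I, Ch.~XIII.5]{Feller} for the $\tau\in(2,3)$ regime. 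Under the bound \eqref{eq::F} one has $\Ev[D]<\infty$ so $h'(1)<\infty$, and the singular part of $1-h(s)$ at order $(1-s)^{\tau-1}$ is exactly what remains after subtracting the linear term $\Ev[D](1-s)$; writing $L(x) := (1-h(1-1/x))/(1-1/x)^{\tau-1}$ and using the two-sided bound \eqref{eq::F} gives $0< c \le L(x)\le C<\infty$ for $x$ large, and boundedness on all of $(1,\infty)$ follows since $L$ is continuous and has finite positive limits at both ends (at $x\to 1^+$, $L(x)\to$ a finite constant because $1-h(s)\to 0$ like $(1-s)$ which is $o((1-s)^{\tau-1})$... actually one must be a little careful and renormalise, but the upshot is boundedness). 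Slow variation of $L$ is inherited from slow variation of the tail. The same argument applied to $D^\star$, whose tail is $x^{-(\tau-2)}\ell^\star(x)$ by \eqref{eq::size-biased2}, gives the second identity with exponent $\tau-2$. This part is essentially bookkeeping with known Tauberian theorems and I expect no real difficulty.

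\medskip

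\textbf{Proof plan for part (ii).} This is the substantive part. The random variable $\wit Y = \lim_k (\tau-2)^k \log \wit Z_k$ exists by Theorem~\ref{thm::davies} (Davies), and we want: (a) $J(x)=\Pv(\wit Y\le x)$ is absolutely continuous with a density that is strictly positive on $(0,\infty)$, and (b) the representation \eqref{eq::jx}--$\Delta(x)=\int_0^x p(t)\,dt$ with $p$ monotone according to the convexity/concavity of $f(t)=-\log(1-h^\star(1-e^{-t}))$. The key structural input is the fixed-point (smoothing) equation for the Laplace transform or distribution function of $\wit Y$ coming from the branching recursion $\wit Z_{k+1} = \sum_{i=1}^{\wit Z_1}\wit Z_k^{(i)}$: taking $(\tau-2)^{k+1}\log(\cdot)$ and letting $k\to\infty$, one gets that $\wit Y \equalsd (\tau-2)\max_{i\le \wit Z_1}\wit Y^{(i)}$ in the appropriate sense — more precisely, since $\log\sum_i \wit Z_k^{(i)}$ is dominated by $\log(\wit Z_1 \cdot \max_i \wit Z_k^{(i)})$ and the $(\tau-2)^{k}$ scaling kills the $\log \wit Z_1$ term, we obtain the distributional identity
\[
\wit Y \equalsd (\tau-2)\cdot \max_{1\le i\le D^\star+1} \wit Y_i,
\]
(with the convention matching Davies' setup and the $+1$ shift from \eqref{def::size-biased1}), where the $\wit Y_i$ are i.i.d.\ copies of $\wit Y$ independent of $D^\star$. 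In terms of $J$, writing $\widehat D^\star$ for the relevant offspring count with generating function $h^\star$, this reads
\[
J(x) = \Ev\!\left[ J(x/(\tau-2))^{\widehat D^\star}\right] = h^\star\!\big(J(x/(\tau-2))\big).
\]
Now set $x = e^{-t}$ is the wrong substitution — rather, substitute so that the tail $1-J$ is what appears: with $G(x):=1-J(x)$ we get $1-G(x) = h^\star(1-G(x/(\tau-2)))$, i.e.
\[
G(x) = 1 - h^\star\big(1-G(x/(\tau-2))\big).
\]
Writing $\Delta(x) := -\log G(x)$ (legitimate since $G>0$ on $(0,\infty)$ by Davies' tail estimate \eqref{eq::exp-tails-Y} and $G<1$ because $\wit Y>0$ a.s.), and recalling $f(t) = -\log(1-h^\star(1-e^{-t}))$ from Assumption~\ref{assume::convex}, the functional equation becomes the clean conjugacy
\[
\Delta(x) = f\big(\Delta(x/(\tau-2))\big).
\]
From here the plan is: iterate this to get $\Delta(x) = f^{\circ n}\big(\Delta(x/(\tau-2)^n)\big)$; use that $(\tau-2)^{-1}>1$ so $x/(\tau-2)^n\to 0$ and $\Delta(y)\to 0$ as $y\to 0^+$ (again from $G\to 1$), combined with the local behaviour of $f$ near $0$ (where $f(t)\approx (\tau-2)t$ by part (i): $1-h^\star(1-u) = u^{\tau-2}L^\star(1/u)$ so... wait, one needs $f(t) = -\log(t^{\tau-2}L^\star(\cdot)) = (\tau-2)(-\log t) - \log L^\star$, which blows up — so actually $\Delta(x/(\tau-2)^n)$ must be understood via this; the correct asymptotic is $\lim \Delta(x)/x = 1$ which forces the right normalisation). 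Absolute continuity: differentiate the functional equation $\Delta(x) = f(\Delta(x/(\tau-2)))$; if $\Delta$ has a density-type derivative $p = \Delta'$ on some interval then $p(x) = f'(\Delta(x/(\tau-2)))\cdot p(x/(\tau-2))\cdot (\tau-2)^{-1}$, a self-referential relation that propagates smoothness from a neighbourhood of $0$ (where, using part (i) and Karamata, one can verify $\Delta$ is well-behaved) outward to all of $(0,\infty)$. Strict positivity of $p$ follows because $f'>0$ (as $h^\star$ is strictly increasing) and $p>0$ near $0$. Monotonicity of $p$: from $p(x)=(\tau-2)^{-1}f'(\Delta(x/(\tau-2)))p(x/(\tau-2))$, if $f$ is convex then $f'$ is increasing and $\Delta$ is increasing, so each iteration multiplies by a larger factor as $x$ grows, and an induction on the dyadic-in-$(\tau-2)$ scale gives $p$ increasing; the concave case is symmetric. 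Finally $\lim_{x\to\infty}\Delta(x)/x = 1$ is just a restatement of Davies' exponential tail \eqref{eq::exp-tails-Y}, since $-\log(1-J(x)) = \Delta(x)$.

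\medskip

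\textbf{Main obstacle.} The delicate point is establishing absolute continuity and the monotonicity of $p$ rigorously near $x=0$ and bootstrapping it globally — the functional equation $\Delta(x)=f(\Delta(x/(\tau-2)))$ controls $\Delta$ on $[(\tau-2)x_0, x_0]$ in terms of its values on $[(\tau-2)^2 x_0,(\tau-2)x_0]$, etc., so one needs a clean base case: showing $\Delta$ is $C^1$ with strictly positive, monotone derivative on one fundamental interval, using only the slowly-varying structure from part (i) and the convexity/concavity hypothesis on $f$. Controlling the slowly varying functions $L^\star$ (which need not be differentiable or monotone) while still extracting monotonicity of $p$ is where care is needed; the convexity/concavity assumption on $f$ is precisely what buys this, and verifying that it passes through the limit $\Delta(x)=\lim f^{\circ n}(\Delta(x/(\tau-2)^n))$ uniformly on compacts is the technical heart of the argument.
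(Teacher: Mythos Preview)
The paper does not actually prove this proposition: its ``proof'' consists entirely of two citations --- part (i) is referred to Feller, with the remark that boundedness of the slowly varying functions follows from the two-sided bound \eqref{eq::F}, and part (ii) is declared to be a rewrite of Seneta's results \cite[Theorem C, Theorem 4]{Sene74}. So there is no argument to compare against; the proposition is treated as a black-box import.

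Your plan for part (ii) is, in fact, a faithful reconstruction of Seneta's method: the distributional identity $\wit Y \equalsd (\tau-2)\max_{i\le \wit Z_1}\wit Y_i$ (your ``$+1$'' is spurious --- the max runs over the $\wit Z_1=D^\star$ first-generation individuals, not $D^\star+1$), the resulting functional equation $J(x)=h^\star(J(x/(\tau-2)))$, and its transformation into $\Delta(x)=f(\Delta(x/(\tau-2)))$ are exactly the structure Seneta exploits. Your identification of the ``main obstacle'' (bootstrapping regularity of $\Delta$ from a fundamental interval via iterates of $f$) is also where Seneta's work lives. So you are not proposing a different route; you are sketching what the cited reference contains, which is more than the paper itself does.

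One caution on part (i): your bookkeeping around the boundedness of $L$ for $D$ (not $D^\star$) is shaky, as you yourself flag. Since $\Ev[D]<\infty$, one has $1-h(s)\sim\Ev[D](1-s)$ as $s\uparrow 1$, so $L(x):=(1-h(1-1/x))\,x^{\tau-1}$ grows like $x^{2-\tau}$ and is \emph{not} bounded above near $x=\infty$; the paper's statement is imprecise here (though it does not affect the only place $L$ is used, namely the tail exponent computation \eqref{eq::jw-tail}, where any subexponential growth suffices). For $L^\star$ your argument is fine: $\Ev[D^\star]=\infty$ with $\tau-2\in(0,1)$ puts you in the genuine Tauberian regime and $L^\star$ is indeed bounded.
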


\begin{proof}The proof of (i) can be found in general in \cite{Feller71}, and the fact that the slowly varying functions are bounded is a consequence of the condition in \eqref{eq::F}, i.e., the slowly varying function hidden in the distribution function is bounded.
Part (ii) is a rewrite of \cite[Theorem C, Theorem 4]{Sene74}.
\end{proof}
The rest of this section holds under the milder condition that Assumption \ref{assume::abs-cont} holds, but we will use the notations $J(x)$ and $\Delta(x)$ as in Proposition \ref{prop::no-gap}. Note that already Assumption \ref{assume::abs-cont} implies that $J(x)$ is strictly increasing, hence it can also be written in a form as in \eqref{eq::jx}, with a strictly increasing, though not necessarily continuous, $\Delta(x)$.

Note that if we would like to approximate the exploration of the graph from a uniformly chosen vertex $w$ in $\CMD$, then the root of the approximating BP has offspring distribution $D$, and all the further individuals have offspring from distribution $D^\star$. We denote the number of individuals in this BP by $Z_k^w$. To identify the limit random variable $Y^w:=\lim_{k\to \infty} (\tau-2)^k \log Z_k^w$, we use \cite[Lemma 2.4]{BarHofKom14}, stating
$Y^w=(\tau-2) \max_{i=1, \dots D_w} \wit Y^{(i)}$. From this representation and Proposition \ref{prop::no-gap} it is obvious that under Assumption \ref{assume::abs-cont} $Y^w$ also has full support on $(0, K)$, with strictly increasing distribution function $J^w(x)$, since
\[ J^w(x):=\Pv(Y^w \le x) = \sum_{k=2}^\infty \Pv(D=k) J(x)^k,\]
by dominated convergence and the fact that each term is strictly increasing by Assumption \ref{assume::abs-cont}. Hence,
$\Delta^w(x) := - \log (1- J^w(x))$ is strictly increasing.
Further, using \eqref{eq::jx} and \eqref{eq::gen-func},
\[ J^w(x) = \Ev[ (1- \e^{-\Delta(x/(\tau-2))})^D  ]= 1- \e^{ -(\tau-1) \Delta(x/(\tau-2)) } L(\e^{\Delta(x/(\tau-2))}),\]
where $L$ is defined in \eqref{eq::gen-func}.
Since $L(x)$ is a bounded positive function, $Y^w$ has also exponential decay with exponent
\be\label{eq::jw-tail} \lim_{x\to \infty} \Delta^w(x)/x=(\tau-1)/(\tau-2). \ee
Next, we investigate the relationship between the sum of the degrees and the maximum of the degrees in each generation in the branching process with root $w$. We write $\CG_k$ for the set of individuals in the $k$-th generation. Clearly $|\CG_k|=Z_k^w$.
\begin{claim}\label{cl::max-sum-relation}
Let $M_k:=\max_{i\in \CG_k} D_i^\star$. On the event $\lim_{k\to \infty}(\tau-2)^k \log Z_k^w = Y^w$, the limit  $\lim_{k\to \infty}(\tau-2)^k \log M_k = Y^w/(\tau-2)$ holds.
%Further, ..... \KJ{move here the needed stuff!}
\end{claim}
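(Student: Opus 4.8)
\textbf{Proof proposal for Claim \ref{cl::max-sum-relation}.}

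The plan is to sandwich $M_k$ between $Z_k^w$ and $(Z_k^w)^{1/(\tau-2)}$ up to lower-order factors, and then take logarithms, multiply by $(\tau-2)^k$, and let $k\to\infty$. For the lower bound on $M_k$, I would argue that the largest of the $Z_{k-1}^w$ i.i.d. offspring counts attached to the $(k-1)$st generation (each a copy of $D^\star$, which has power-law exponent $\tau-2$ by \eqref{eq::size-biased2}) is typically of order $(Z_{k-1}^w)^{1/(\tau-2)}$ up to logarithmic corrections; this is exactly the content of Lemma \ref{lem::maxdegree} applied with $m=Z_{k-1}^w$ and $\alpha=\tau-2$. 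Concretely, applying \eqref{eq::logwhp} with $K=K_k$ growing slowly (say $K_k=k^2$) gives that $M_k \ge \big(Z_{k-1}^w/(K_k\log Z_{k-1}^w)\big)^{1/(\tau-2)}$ with probability at least $1-(Z_{k-1}^w)^{-c_1 K_k}$, and since $Z_{k-1}^w\to\infty$ these failure probabilities are summable, so by Borel--Cantelli the bound holds for all large $k$ almost surely on the event $\{(\tau-2)^k\log Z_k^w\to Y^w\}$. Taking $\log$, multiplying by $(\tau-2)^k$, and using $(\tau-2)^{k-1}\log Z_{k-1}^w\to Y^w$ together with $(\tau-2)^k\log(K_k\log Z_{k-1}^w)\to 0$ yields $\liminf_k (\tau-2)^k\log M_k \ge Y^w/(\tau-2)$.

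For the matching upper bound, the simplest route is the crude deterministic inequality $M_k \le \sum_{i\in\CG_k} D_i^\star$, combined with a bound on this sum in terms of $Z_k^w$. Here I would invoke the same mechanism as in Claim \ref{cl::stoch-dom-alpha-stable}: conditionally on $Z_k^w=m$, the sum $\sum_{i=1}^m D_i^\star$ is stochastically dominated by $m^{1/(\tau-2)}(b+X')$ for a fixed totally asymmetric $(\tau-2)$-stable variable $X'$ (equation \eqref{eq::sum-domination}), so $\Pv\big(\sum_{i=1}^m D_i^\star \ge m^{1/(\tau-2)}(\log m)^2\big) \le c/(\log m)^{2(\tau-2)}$. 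Since $Z_k^w$ grows doubly exponentially, $\log Z_k^w$ grows exponentially, so these probabilities are again summable along the (random but almost surely doubly-exponentially growing) sequence $Z_k^w$, and Borel--Cantelli gives $M_k \le (Z_k^w)^{1/(\tau-2)}(\log Z_k^w)^2$ for all large $k$ a.s. Taking logs, multiplying by $(\tau-2)^k$, and noting $(\tau-2)^k\log\big((\log Z_k^w)^2\big)\to 0$ while $(\tau-2)^k\cdot\frac{1}{\tau-2}\log Z_k^w \to Y^w/(\tau-2)$ gives $\limsup_k (\tau-2)^k\log M_k \le Y^w/(\tau-2)$.

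Combining the two bounds yields $\lim_{k\to\infty}(\tau-2)^k\log M_k = Y^w/(\tau-2)$ on the event $\{(\tau-2)^k\log Z_k^w\to Y^w\}$, which is the assertion. The only mild subtlety I anticipate is making the Borel--Cantelli arguments clean: the indices at which we apply the concentration/anti-concentration estimates involve the random quantities $Z_{k-1}^w, Z_k^w$, so one should first restrict to the almost-sure event that $Z_k^w\to\infty$ at least exponentially (which follows from \eqref{eq::limitY} with $Y^w>0$ a.s. under Assumption \ref{assume::abs-cont}, since the support is an interval $(0,K)$ with $0$ only a boundary point), and only then the failure probabilities become summable. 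This is the step I would write out most carefully; everything else is routine.
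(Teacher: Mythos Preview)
Your overall strategy is sound, but there is an index slip in the lower bound that makes the argument fail as written. You describe $M_k$ as ``the largest of the $Z_{k-1}^w$ i.i.d.\ offspring counts attached to the $(k-1)$st generation,'' but by definition $M_k=\max_{i\in\CG_k}D_i^\star$ is the maximum over the $Z_k^w$ individuals in generation $k$, not $k-1$. Even if one accepted the bound $M_k\ge\big(Z_{k-1}^w/(K_k\log Z_{k-1}^w)\big)^{1/(\tau-2)}$, the computation then gives $(\tau-2)^k\log M_k\ge(\tau-2)^{k-1}\log Z_{k-1}^w+o(1)\to Y^w$, which is strictly \emph{weaker} than the required $Y^w/(\tau-2)$ (recall $\tau-2<1$), so your claimed conclusion does not follow. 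The fix is immediate: apply Lemma~\ref{lem::maxdegree} conditionally with $m=Z_k^w$ and $\alpha=\tau-2$ to get $M_k\ge\big(Z_k^w/(K_k\log Z_k^w)\big)^{1/(\tau-2)}$ eventually a.s., and then $(\tau-2)^k\log M_k\ge\frac{1}{\tau-2}(\tau-2)^k\log Z_k^w+o(1)\to Y^w/(\tau-2)$. Your upper bound is correct but over-engineered: since $M_k\le\sum_{i\in\CG_k}D_i^\star=Z_{k+1}^w$ deterministically, one has $(\tau-2)^k\log M_k\le\frac{1}{\tau-2}(\tau-2)^{k+1}\log Z_{k+1}^w\to Y^w/(\tau-2)$ directly on the assumed event, with no stable domination or second Borel--Cantelli needed.

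For comparison, the paper argues via the ratio $M_k/Z_{k+1}^w$: it sandwiches both $M_k/m^{1/(\tau-2)}$ and $Z_{k+1}^w/m^{1/(\tau-2)}$ (with $m=Z_k^w$) between fixed positive random variables using two-sided stable and Fr\'echet-type dominations, deduces that $(\tau-2)^k\log(M_k/Z_{k+1}^w)\to0$, and then writes $(\tau-2)^k\log M_k=(\tau-2)^k\log(M_k/Z_{k+1}^w)+\frac{1}{\tau-2}(\tau-2)^{k+1}\log Z_{k+1}^w$. Once your index is corrected, your route is arguably more elementary on the upper side (purely deterministic) and comparably direct on the lower side; the paper's ratio argument avoids Borel--Cantelli but needs more care to justify the joint domination in \eqref{eq::ratio-domination}.
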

\begin{proof} Intuitively, the statement of the lemma should hold since $M_k\approx Z_{k+1}^w$. In a bit more detail,
similarly as in the proof of Claim \ref{cl::stoch-dom-alpha-stable}, we can pick $0\le b_1\le b_2$ so that  $b_1+ X^{\sss{(1)}}\  {\buildrel {d}\over \le } \  D^\star\  {\buildrel {d}\over \le } \  b_2+ X^{\sss{(2)}}$, where the random variable $X^{\sss{(1)}}$ has distribution function $1-c_1^\star/ x^{\tau-2}$ on $[0, \infty)$, and $X^{\sss{(2)}}$ has distribution function $1-C_1^\star/x^{\tau-2}$ on $[0, \infty)$.
Both of these random variables are totally asymmetric stable distributions with skewness $\kappa=1$, shifts $b_1$ and $b_2$, and some scale parameters $c^{\sss{(1)}}$ and $c^{\sss{(2)}}$, respectively.
Then for any $m\in \N$, $x\in \R_+$
\be\label{eq::stoch-dom-above}  \Pv\left( \max_{i=1}^m X_i^{\sss{(1)}} \le m^{1/(\tau-2)} x \right) = \left(1- \frac{c_1^\star} {x^{1/(\tau-2)} m}\right)^m \le \exp\{ -c_1^\star/ x^{1/(\tau-2)}   \}, \ee
while
\be\label{eq::stoch-dom-below}  \Pv\left( \max_{i=1}^m X_i^{\sss{(2)}} \le m^{1/(\tau-2)} x \right) = \left(1- \frac{C_1^\star} {x^{1/(\tau-2)} m}\right)^m \ge \exp\{ -\tfrac12 C_1^\star x^{1/(\tau-2)}   \}, \ee
for large enough $x\in \R$.
Let us denote random variables with distribution function given by the right hand side of \eqref{eq::stoch-dom-above} and \eqref{eq::stoch-dom-below} by $M^{\sss{(1)}}$ and $M^{\sss{(2)}}$, respectively. Then
\[ \frac{b_1}{m^{1/(\tau-2)}} + M^{\sss{(1)}}\  {\buildrel {d}\over \le }\  \frac{\max_{i=1}^m D_i^\star}{m^{1/(\tau-2)}}\  {\buildrel {d}\over \le } \  \frac{b_2}{m^{1/(\tau-2)}}+ M^{\sss{(2)}}. \]
On the other hand, using the method in the proof of Claim \ref{cl::stoch-dom-alpha-stable}, the stochastic domination in \eqref{eq::sum-domination} can be used to estimate the moment generating function of
$\sum_{i=1}^m D_i^\star/m^{1/(\tau-2)}$ which yields
\[ b_1+ X^{\sss{(1)}}\  {\buildrel {d}\over \le }\  \frac{\sum_{i=1}^m D_i^\star}{m^{1/(\tau-2)}}\  {\buildrel {d}\over \le }\  b_2+ X^{\sss{(2)}}\  \]
Combining the last two estimates yields
\be\label{eq::ratio-domination} \frac{M^{\sss{(1)}}}{b_2+ X^{\sss{(2)}}}\ \ {\buildrel {d}\over \le }\ \frac{M_k}{Z_{k+1}}\  {\buildrel {d}\over \le }\  \frac{M^{\sss{(2)}} + b_2/m^{1/(\tau-2)}}{b_1+X^{\sss{(1)}}}, \ee
where all random variables are positive a.s. Hence, writing
\[ (\tau-2)^k\log M_k = (\tau-2)^k \log \left(\frac{M_k}{Z_{k+1}}   \right) + \frac{1}{\tau-2}(\tau-2)^{k+1} \log Z_{k+1},  \]
 we conclude that the first term tends to zero by the stochastic dominations in \eqref{eq::ratio-domination}, and the second term tends to $\wit Y^w / (\tau-2)$ by Theorem \ref{thm::davies}.
\end{proof}

\subsection{Coexistence}
Next we turn to the proof of coexistence when $\Ybn/\Yrn>\tau-2.$

To understand the proportion of vertices that blue eventually paints, we use the usual trick that
\be\label{eq::empirical-blue} \frac{\CB_\infty}{n}= \frac{1}{n}\sum_{v\in [n]} \ind_{\{ v \text { is eventually blue} \}},\ee
which can be interpreted as the empirical measure of blue. To show coexistence it is thus enough to show that this expression is strictly positive with positive probability. We do this via the first and second moment method. The first moment gives the probability (conditional on $n, \Yrn, \Ybn$) that a uniformly chosen vertex is eventually painted blue, while for the second moment we need to investigate the probability that two uniformly chosen vertices are both eventually blue.

We have seen in Section \ref{sc::slopedown} that when $\Ybn/\Yrn>\tau-2$ there is a `mixed' avalanche occupying lower and lower degree vertices. In both cases (i.e., $T_b=T_r$ or $T_b=T_r+1$), with each additional time unit, there is a new interval on the log-log scale that gets colored, namely $\CM_\ell \union \CR ed_\ell$ at time $T_b+\ell$. In both cases vertices in the `top' part of this interval (in $\CM_\ell$) are red and blue with equal probability (Lemma \ref{lem::independence}), while almost all vertices in the `bottom' part of the interval (in $\CR ed_\ell$) are painted red (Lemma \ref{lem::red-intervals}), see Fig. \ref{fig::avalanches}.
%Hence, we define the \emph{core of the graph} similarly as in \cite[Vol II., Chapter 5]{H10} as fixing a $\sigma>1/(3-\tau)$ and setting
%\[ \mathrm {Core}_n:=\{ v\in \CMD: D_v> (\log n)^ \sigma\}.\]

The idea to prove coexistence is as follows: we have to avoid the problem of the degrees getting too small (and as a result, our estimates getting too noisy). Hence, we only `run' this mixed avalanche as long as it stays in relatively high vertices, the `core' of the graph, that we call $\mathrm{Core}_n$. We choose this $\mathrm{Core_n}$ in a way that `coincides' with the layers of the avalanche. After the avalanche reaches the boundary of the core, we stop it. As a result, every vertex in the core has been painted whp.

Then, we investigate how the neighbourhood of a random vertex $w$ `enters' the painted core of the graph: we approximate the neighborhood of the vertex by a branching process that is described in Definition \ref{def::limit-variables} and we let this BP grow until the random generation when it first hits the core of the graph. Depending on what the degrees are, the vertices in this last generation might be red, red or blue with equal probability, or uncolored. This gives a partial coloring of the last generation of the BP.

These colors then `percolate' through the BP tree towards the root, following the rules of the coloring scheme, that is, if an uncolored vertex at any time has neighbors of only one color, then it takes that color in the next time step, while if it has neighbors of more than one color, then it picks a color with equal probability. We will show that in this random coloring scheme, the root of the BP gets both colors with strictly positive probability that depends on the ratio $q=\Ybn/\Yrn$, and the probability that the root is blue tends to zero as $q \searrow (\tau-2)$. After this have been shown, a second moment method finishes the proof.

We start to investigate the first moment. In this section we condition on $\Yrn$ and $\Ybn$. Recall the definition of $\wur_\ell, \wub_\ell$ from \eqref{eq::wideui_recursion} and $\wgar_\ell, \wgab_\ell$ from \eqref{eq::wgar}. Throughout, we write $\wit\ind=\ind_{\{T_b=T_r\}}$ and $\simp$ for whp equality up to factor of finite powers of $\log n$.

Note that if $\ell=\nu\log\log n / |\log(\tau-2)| +1 + x_n$, for some $\nu<1$, and $x_n$ is chosen so that the expression is an integer, then \eqref{eq::wideui_recursion} gives, for $j=r,b$,
\be \label{eq::core-ell} \widetilde u_\ell^{\sss{(j)}} = \e^{(\log n)^{1-\nu}\alpha_n^{\sss{(j)}} (\tau-2)^{x_n}} (C \log n)^{1/(3-\tau)} (1+o_{\Pv}(1)).\ee
 Also, by Lemma \ref{lem::red-intervals}, the probability that a vertex in a red interval is not red is of order $\simp \wur_{\ell+\wit\ind}/ \wub_{\ell}$. We would like to keep this probability small, and we also would like that $\wub_{\ell}\simp (\wur_{\ell+\wit\ind})^{\anb (\tau-2)^{\wit\ind-1} / \anr}$ holds.
This holds as long as $0<\nu<1$. Note that for any fixed positive $\nu<1$, $\wur_\ell$ and $\wub_\ell$ are sub-polynomial in $n$.

So, let us fix a $0\!<\!\nu\!<\!1$ and then set $\ell_{\max}:=\lfloor \nu \log\log n / |\log (\tau-2)|\rfloor$, and define
\be\label{def::core-n} \mathrm{Core}_n=\{ v\in \CMD: D_v> \wur_{\ell_{\max}} \}.\ee
From now on we simply write $Q:=\wur_{\ell_{\max}}.$ Note that the bottom of the last mixed interval is at degree $\wub_{\ell_{\max}-\wit\ind}\simp Q^{\anb(\tau-2)^{\wit\ind-1}/\anr}$, which implies that almost all vertices with degree in the interval
$[Q, \simp\!Q^{\anb(\tau-2)^{\wit\ind-1}/\anr} )$ are painted red,  while  vertices with degree in the interval $[\simp Q^{\anb(\tau-2)^{\wit \ind-1}/\anr}, \simp\!Q^{1/(\tau-2)})$ are colored red and blue with equal probability.
We will simply write
\be\label{eq::color-rule}  [Q, Q^{\gamma}) \in \CR ed,  \quad  [Q^{\gamma}, Q^{1/(\tau-2)}) \in \mathcal Mix,  \ee
with \be\label{def::gamma}\gamma:=\anb(\tau-2)^{\wit \ind-1}/\anr.\ee
Note that $\gamma \in (1,(\tau-2)^{-1}).$
By Lemma \ref{lem::red-intervals}, the proportion of potentially blue vertices in $\CR ed_{\ell_{\max}}=[Q, Q^{\gamma}) $ is
\be\label{eq::p-error} p_e:= \frac {\wur_{\ell_{\max}}}{\wub_{\ell_{\max} - \wit \ind}} (1+o_{\Pv}(1))\simp Q^{1-\gamma}.\ee
In this section, we write $\Pv_\gamma(.):= \Pv(.| \gamma, Q, \Yrn, \Ybn), \Ev_\gamma[.]:= \Ev[.| \gamma, Q, \Yrn, \Ybn]$.

\begin{remark}\label{rem::q-gamma-relation}\normalfont
 It is intuitively clear that $q\to \Ybn/\Yrn\approx(\tau-2)$ if and only if $\gamma\to\tfrac{1}{\tau-2}$, while $\Ybn/\Yrn \approx 1$ implies $\gamma \approx 1$.
  To see this, let $\ve, \ve', \ve'', \ve'''$ be small positive numbers.
Recall that $\anb, \anr \in (\tfrac{\tau-2}{\tau-1}, \tfrac{1}{\tau-1}]$ (see \ref{def::alpha}).

 When $T_r=T_b$, then $\gamma =\anb/\anr= \tfrac{1-\ve}{\tau-2}$ is only possible if $\anb= \tfrac{1-\ve'}{\tau-1}$, $\anr\approx  \tfrac{\tau-2+\ve''}{\tau-1}$ and this in turn implies $\bnb=1-\ve'$ and $\bnr = \ve''$. By \eqref{eq::key-2}, this is only possible if $\Ybn/\Yrn = (\tau-2)^{1-\ve' -\ve'' }$ and $n$ is so that $\tfrac{\log\log n - \log Y_j^{\sss{n}}}{ |\log (\tau-2)}$ is close to an integer for both $j=r,b$.

 When $T_b=T_r+1$, then since $\gamma=\anb/(\anr(\tau-2)) $ in this case, and $\anb<\anr$, $\gamma = \tfrac{1-\ve}{\tau-2}$ is only possible if $\anb = \anr-\ve'$, that is, if $\bnb = \bnr- \ve''$, which, combined with $T_r=T_b+1$ again implies that this is only possible if  $\Ybn/\Yrn \approx (\tau-2)^{1-\ve'''}$.

Summarizing, we see that if $\gamma\approx 1/(\tau-2)$ then $ q=\Ybn/\Yrn\approx \tau-2$. The other direction can be treated similarly, as well as the equivalence between $\gamma \approx 1$ and $q=\Ybn/\Yrn\approx 1$.
\end{remark}
 To investigate the color of a uniform vertex $w$, we couple its local neighborhood to an independent copy of a  branching process described in Section \ref{sc::BP}, independent of the blue and red BPs. We will denote this BP by $\mathrm{BP}_w$ and run it until the stopping time when its maximum degree reaches $Q$.
 This coupling can be achieved by extending \cite[Lemma 2.2]{BarHofKom14} to three vertices, where first we couple the degrees of the local neighbourhoods of the source vertices to the red and blue BP, determining $\Ybn, \Yrn$, then couple the degrees of the third vertex to a BP, till it reaches maximum degree $Q$, and then we can continue with everything else. Note that since $Q=o(n^\ve)$ for every $\ve>0$, this can easily be done, since the maximal degree is the same order of magnitude as the total number of vertices in the next generation by Claim \ref{cl::max-sum-relation}, and hence the size of the $\mathrm{BP}_w$ at stopping will be still sub-polynomial in $n$. Hence, the coupling can be done with coupling error that tends to zero with $n$.

 Recall that every vertex except the root has degree from distribution $F^\star$ from \eqref{def::size-biased1}. We write $D^\star_x$ for the degree of vertex $x$ in $\mathrm{BP}_w$. Denote the set of vertices in generation $i$ of $\mathrm{BP}_w$ by $\CG_i$ and define the stopping time $\kappa$ as
\[\kappa:= \inf\{j: \max_{x \in \CG_j} D_x^\star \ge Q\}.\]
This definition of $\kappa$ ensures that $\kappa$ is the first generation where the local neighbourhood of vertex $w$ meets colored vertices, i.e., the shortest path from $w$ to  $\mathrm{Core}_n$ and hence \emph{to any of the  colored vertices} is of length $\kappa$:
\be\label{eq::shortest-kappa} \CD(w, \mathrm{Core}_n)=\kappa. \ee
As a consequence, vertex $w$ will be coloured \emph{exactly} $\kappa$ time unit later than the coloring of the last interval in $\mathrm{Core}_n$ (that is, $w$ is colored at time $T_r+\ell_{\max}+ \kappa$).

We color vertices in $\CG_{\kappa}$ so that their coloring in $\mathrm{BP}_w$ corresponds to the coloring of $\mathrm{Core}_n$. To provide an upper and a lower bound on the number of blue vertices in $\CG_\kappa$, we describe two colorings that we couple together:
  By Lemmas \ref{lem::independence} and \ref{lem::red-intervals} we color vertices of $\CG_\kappa$ \emph{independently} of each other according to following rules, where $p_e$ is from \eqref{eq::p-error}:

\begin{framed}
\emph{Starting Rule 1:} a vertex $x\in \CG_\kappa$ gets color
\begin{enumerate}[(i)]
\item  red when $D_x \in [Q, Q^{\gamma})$,
 \item red or blue with equal probability when $D_x\ge Q^{\gamma}$,
 \item neutral when $D_x<Q$.
\end{enumerate}	
\end{framed}
\begin{framed}
\emph{Starting Rule 2:}
 a vertex $x\in \CG_\kappa$ gets color
\begin{enumerate}[(i)]
\item  red with probability $1-p_e$ and blue with probability $p_e$ when $D_x \in [Q, Q^{\gamma})$,
 \item red or blue with equal probability when $D_x\ge Q^{\gamma}$,
 \item neutral when $D_x<Q$.
\end{enumerate}
\end{framed}
Neutral color means no coloring, that is, the vertex is not in the core of the graph.

The two rules can be naturally coupled to the coloring of the core and also to each other: we can use a $p_e$-coin flip in Rule 2 to decide what happens when $D_x\in [Q, Q^{\gamma})$. Under this coupling, the number of blue vertices in Rule 1 and in Rule 2  is a stochastic lower and upper bound on the blue vertices in the intersection of the last generation of  $\mathrm{BP}_w$ and  $\mathrm{Core}_n$, respectively, due to Lemmas \ref{lem::independence} and \ref{lem::red-intervals}.

After we colored $\CG_\kappa$, we color the vertices independently of each other in subsequent generations $\kappa-1, \kappa-2, ...., 1, 0$ as follows, (for both starting rules):

\begin{framed}
\emph{Flow rule:}
If a vertex $x\in \CG_i$ has children in $\CG_{i+1}$ that
\begin{enumerate}[(i)]
\item are all neutral, vertex $x$ gets color neutral,
\item  are all either neutral or red, then it gets color red,
\item  are all either neutral or blue, then it gets color blue,
\item have both colors red and blue, then it gets color red and blue with equal probability.
\end{enumerate}	
\end{framed}

Note that this rule exactly corresponds to the rule that we set at the beginning of the paper for the spread of each color: if a vertex gets color $\CC$ at time $t$, it colors its not yet colored neighbors to color $\CC$ at time $t+1$, and succeeds to do so for each not yet colored neighbor $z$ unless $z$ has a neighbor of the other color as well, in which case $z$ gets color red and blue with equal probability. We can rewrite this rule from the point of view of $z$: $z$ stays neutral as long as it has no colored neighbors; and if some neighbors become colored at time $t$, $z$ takes the same color at time $t+1$ if it is a unique color (only red or only blue) while picks a color with equal probability otherwise.

%By the definition of $\kappa$, there is at least one colored vertex in generation $\kappa$ of the BP. Hence, the root $w$ will be colored red or blue at time $\kappa$. To understand the first moment of the expression in \eqref{eq::empirical-blue}, we have to understand the probability that the root $w$ gets color blue.
	
Let us introduce some notation: we write $Z_i:= |\CG_i|$,  $M_i:=\max_{x \in \CG_i} D_x^\star$,
so that the definition of $\kappa$ is equivalent to $\kappa=\inf\{ i: M_i\ge Q\}$.
%From the generalised Central Limit Theorem for power law variables \KJ{add ref, maybe Feller II, and investigate if it is even true!} it follows that
%\be\label{eq::mi-zi-converge} \frac{M_i}{Z_{i+1}} = \frac{\max_{x\in \CG_i}D_x^\star}{ \sum_{x\in \CG_i} D_x^\star} \toindis \xi. \ee
%Further, from  Theorem \ref{thm::davies} and \eqref{eq::limitY} it follows that $(\tau-2)^k\log Z_k \to Y^w$.

%\[ \lim_{k\to \infty} (\tau-2)^k \log M_k = \lim_{k\to \infty} (\tau-2)^k \log Z_{k+1} = Y^w/(\tau-2). \]
To start with, let us introduce \be\label{eq::Y_k^w} Y_k^w:=(\tau-2)^{k+1} \log M_{k},\ee
and then we have  $Y_k^w\toas Y^w$ by Claim \ref{cl::max-sum-relation}. Rewrite this formula for $k=\kappa$, and compare it to the value $Q$:
\be\label{eq::mkappa} M_{\kappa}=\exp\{  Y_{\kappa}^w (\tau-2)^{- (\kappa+1)}\}\ge Q. \ee The solution to this inequality, conditioned on $Y_\kappa^w$, is given by
\be\label{eq::kappa}\kappa=\left\lceil\frac{\log\log Q-\log Y_\kappa^w}{|\log (\tau-2)|}-1\right\rceil := \frac{\log\log Q-\log Y_\kappa^w}{|\log (\tau-2)|}-c_Q, \ee
with $c_Q:=\left\{\frac{\log\log Q-\log Y_\kappa^w}{|\log (\tau-2)|}\right\}\in[0,1).$

In what follows, we analyse the structure of the BP under the conditioning that we stopped it at $\kappa$.
This conditioning implies that
\begin{enumeratei}
\item the maximal degree vertex in $\CG_\kappa$ has degree $M_{\kappa}\ge Q$;
\item all other vertices in $\CG_\kappa$ have degree $\le M_{\kappa}$;
\item all vertices in $\CG_i, \  i<\kappa$ have degree $<Q$.
 \end{enumeratei}
Without loss of generality we can assume that the maximal degree vertex $v^\star$ is unique in $\CG_{\kappa}$ (if not, we pick the `leftmost' one).
Let us call the unique path to the root from $v^\star$ the \emph{ray}, and let us number its vertices backwards, that is, $v^\star:=v_0$, $v_1$ is the parent of $v^\star$, $v_2$ is the parent of $v_1$, etc, finally, $v_{\kappa}:=w$.
 \begin{figure}[ht]
\includegraphics[width=\textwidth]{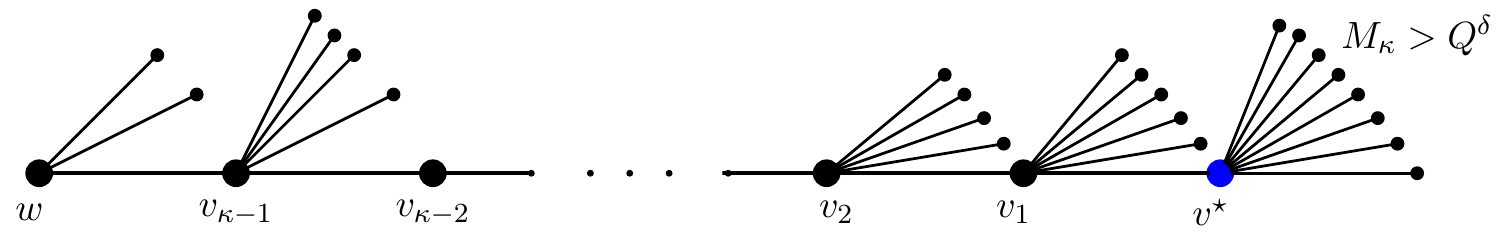}
\end{figure}

We would like to show that $\Pv(w \text{ is red})$ and $\Pv(w \text{ is blue})$ are both strictly positive.
For the first, it is enough to show that the probability that there are no blue vertices at all in $\CG_\kappa$ is strictly positive. For the second, it is enough to show that the probability $r_{\kappa}$ that there is a blue sibling of $v_{\kappa-1}$ is strictly positive, since then, $w$ gets color blue with probability at least $r_{\kappa}/2$.

%Since most of the colored vertices in $\CG_\kappa$ are red, intuitively it is clear that it is easier to show that $\Pv(w \text{ is red})>0$ %than to show that $\Pv(w \text{ is blue})>0$.
%In the natural coupling of Starting Rule 1 and 2, there are less blue vertices in Starting Rule 1, hence, by monotonicity, it is obvious that %it is harder to color the root blue under Starting Rule 1.
%the number of from the Starting Rules that
%the harder part of the co-existence statement is to show that even though the proportion of blue vertices compared to red vertices in the last generation of the BP is negligible, blue can still occupy the root with strictly positive probability.
%Hence, we work out the details of the proof for Starting Rule 1. Then, to finish the proof of Theorem \ref{thm::main2} we only have to argue that even if we had applied Starting Rule 2, the probability that red occupies the root still tends to $1$ as $q=\Ybn/\Yrn\searrow \tau-2$.

As a preliminary result, we show that under both starting rules, both $\Pv(v^\star \text{ is blue})$ and $\Pv(v^\star \text{ is red,\ } D_{v^\star} <Q^\gamma)$ are strictly positive (and hence both strictly less than $1$). Note that under Starting Rule 1, the latter event implies that \emph{all} vertices in $\CG_\kappa$ are colored red, hence, $\Pv(v^\star \text{ is red}, D_{v^\star} < Q^\gamma)$ gives a lower bound on $\Pv(w \text{\ is red})$.
  \begin{lemma} \label{lem::q-delta} Let $\gamma\in (1,1/(\tau-2))$ and apply Starting Rule 1 or 2 for coloring the generation $\kappa$ of the stopped BP_w. Then there exist strictly positive numbers $0<q_{\gamma}, q_\gamma^{\text{red}}< 1$ such that
 \be\label{eq::q-delta-lem}  \Pv_{\gamma}(v^\star \text{ is blue\,}) \ge q_{\gamma}, \quad \Pv_{\gamma}(v^\star \text{ is red,\ } D_{v^\star} <Q^\gamma) \ge q_{\gamma}^{\text{red}}.\ee
Further, $q_\gamma \searrow 0$ and $q_\gamma^{\text{red}} \nearrow 1$ as $\gamma\nearrow 1/(\tau-2)$.
  \end{lemma}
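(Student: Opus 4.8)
The plan is to condition on the value of $Y_\kappa^w$ (equivalently $\kappa$ and $M_\kappa$), and then analyse the degree distribution of $v^\star$ and of the remaining vertices of $\CG_\kappa$ directly from the size-biased tail \eqref{eq::size-biased2}. First I would observe that, by construction of $\kappa$ and the relation \eqref{eq::mkappa}--\eqref{eq::kappa}, the conditioned degree $D_{v^\star}$ of the maximal-degree vertex is (stochastically, up to the coupling error) a size-biased power-law variable conditioned to exceed $Q$, so that $\Pv_\gamma(D_{v^\star}\ge Q^s)$ is bounded below and above by constant multiples of $Q^{-(\tau-2)(s-1)}$ for each fixed $s\in[1,1/(\tau-2))$; in particular $\Pv_\gamma(D_{v^\star}\in[Q,Q^\gamma))$ is bounded away from $0$ and $1$ uniformly in $Q$, and $\Pv_\gamma(D_{v^\star}\ge Q^\gamma)$ is likewise bounded away from $0$ and $1$. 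This is the only place where the tail behaviour \eqref{eq::F}/\eqref{eq::size-biased2} is really used.

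Next I would combine this with the starting rules. For the blue bound: under either Starting Rule, conditionally on $D_{v^\star}\ge Q^\gamma$ the vertex $v^\star$ is painted blue with probability exactly $1/2$; hence
\[
\Pv_\gamma(v^\star\text{ is blue})\ \ge\ \tfrac12\,\Pv_\gamma(D_{v^\star}\ge Q^\gamma)\ \ge\ q_\gamma
\]
for some $q_\gamma>0$ independent of $Q$ (one also gets a contribution of order $p_e$ under Rule 2, but the $\frac12$-term already suffices). For the red bound: under either Starting Rule, conditionally on $D_{v^\star}\in[Q,Q^\gamma)$ the vertex is red with probability $1$ (Rule 1) or $1-p_e\to1$ (Rule 2), so
\[
\Pv_\gamma(v^\star\text{ is red},\ D_{v^\star}<Q^\gamma)\ \ge\ (1-p_e)\,\Pv_\gamma(D_{v^\star}\in[Q,Q^\gamma))\ \ge\ q_\gamma^{\text{red}}>0 ,
\]
again uniformly in $Q$. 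Taking $\liminf_{Q\to\infty}$ and using that the coupling of $\mathrm{BP}_w$ to the local neighbourhood has error $o(1)$ (and that $Y_\kappa^w\toas Y^w$, whose law has full support on an interval $(0,K)$ by Assumption~\ref{assume::abs-cont}, so the conditioning on $Y_\kappa^w$ does not degenerate) yields the stated bounds.

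Finally, for the limiting behaviour as $\gamma\nearrow 1/(\tau-2)$: the interval $[Q^\gamma,Q^{1/(\tau-2)})$ shrinks on the $\log\log$-scale, and from the tail estimate above $\Pv_\gamma(D_{v^\star}\ge Q^\gamma)\asymp Q^{-(\tau-2)(\gamma-1)}$. As $\gamma\nearrow 1/(\tau-2)$ the exponent $(\tau-2)(\gamma-1)\nearrow (\tau-2)(\tfrac{1}{\tau-2}-1)=3-\tau>0$ stays bounded away from $0$, so this by itself does not force $q_\gamma\to0$; the point is rather that as $\gamma\to 1/(\tau-2)$ the \emph{range} of degrees that can produce a blue $v^\star$ disappears \emph{relative to} the maximal possible degree $M_\kappa\simp Q^{1/(\tau-2)}$, i.e.\ $\Pv_\gamma(D_{v^\star}\ge Q^\gamma \mid D_{v^\star}\ge Q)\to 0$ since under the conditioning $D_{v^\star}\ge Q$ the overshoot exponent is, by \eqref{eq::size-biased2}, concentrated near its minimal value. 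Dually, $\Pv_\gamma(D_{v^\star}\in[Q,Q^\gamma)\mid D_{v^\star}\ge Q)\to 1$, whence $q_\gamma^{\text{red}}\nearrow 1$. I would make this precise by writing the conditional law of $\log_Q D_{v^\star}-1$ given $D_{v^\star}\ge Q$ as an (asymptotically) Pareto variable with index $\tau-2$ on $[0,\infty)$ and integrating.

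The main obstacle I anticipate is bookkeeping the conditioning cleanly: $\kappa$, $M_\kappa$, $Y_\kappa^w$ and $D_{v^\star}$ are all tangled together by the stopping rule $\kappa=\inf\{i:M_i\ge Q\}$, so one must be careful that "$D_{v^\star}$ given we stopped at $\kappa$" is genuinely the size-biased tail conditioned on exceeding $Q$ and not something skewed by the stopping. Claim~\ref{cl::max-sum-relation} and the stochastic-domination bounds \eqref{eq::stoch-dom-above}--\eqref{eq::stoch-dom-below} should do exactly this job — $M_\kappa$ behaves like $Z_{\kappa+1}^w$ up to tight multiplicative factors, and the largest order statistic in generation $\kappa$ is, conditionally, a Pareto-tailed variable truncated below at $Q$ — but turning this into the two clean constants $q_\gamma,q_\gamma^{\text{red}}$ uniform in $Q$ is the step that needs care.
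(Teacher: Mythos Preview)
Your central claim --- that $D_{v^\star}=M_\kappa$, conditioned on the stopping rule, behaves like a size-biased variable conditioned to exceed $Q$, so that $\Pv_\gamma(M_\kappa\ge Q^s)\asymp Q^{-(\tau-2)(s-1)}$ --- is not correct, and this undermines the whole scheme. If it were true, then for any fixed $\gamma>1$ you would get $\Pv_\gamma(M_\kappa\ge Q^\gamma)\to 0$ as $Q\to\infty$, and hence $q_\gamma=0$, which is the opposite of what you need. The stopping rule $\kappa=\inf\{i:M_i\ge Q\}$ does \emph{not} produce a Pareto overshoot: because the branching process grows doubly exponentially, $M_\kappa$ is essentially $Q^{(\tau-2)^{c_Q-1}}$ where $c_Q=\{(\log\log Q-\log Y_\kappa^w)/|\log(\tau-2)|\}$ is a fractional part (see \eqref{eq::mkappa}--\eqref{eq::kappa}). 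Thus $\log_Q M_\kappa$ does not have a power-law tail at all; it takes values in $[1,1/(\tau-2))$ with a distribution determined entirely by where $Y_\kappa^w$ falls modulo the lattice $\{(\tau-2)^m\log Q\}_{m\in\bN}$.

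This is exactly why Assumption~\ref{assume::abs-cont} is the heart of the matter and not a side remark: $\{M_\kappa\ge Q^\gamma\}=\{c_Q\in[0,\wit\gamma)\}$ with $\wit\gamma=1-\log\gamma/|\log(\tau-2)|$, which translates into $\{Y_\kappa^w\in\bigcup_m A_{n,\wit\gamma}(m)\}$ for a union of intervals of fixed relative width. The paper then uses absolute continuity and full support of $Y^w$ near $0$ to show this union has probability bounded away from $0$ and $1$ \emph{uniformly in $Q$} (handling the approximation $Y_\kappa^w\to Y^w$ via an $\ve$-fattening/shrinking of the intervals). The limit $\gamma\nearrow 1/(\tau-2)$ then falls out cleanly: $\wit\gamma\searrow 0$, the intervals shrink, and continuity of the measure gives $q_\gamma\searrow 0$ and $q_\gamma^{\text{red}}\nearrow 1$. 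Your attempted fix in the last paragraph (``overshoot exponent concentrated near its minimal value'') is not salvageable, because the overshoot exponent is \emph{not} concentrated --- it genuinely spreads over $[1,1/(\tau-2))$ according to the law of $c_Q$.
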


  \begin{proof}
Under Starting Rule 1, $ \{ D_{v^\star}<Q^\gamma\} \subset \{v^\star \text{is red}\}$, hence,
\[ \Pv(D_{v^\star} < Q^\gamma) =  \Pv_{\gamma}(M_{\kappa} < Q^{\gamma}), \quad \Pv(v^\star \text{ is blue}) \ge  \Pv_{\gamma}(M_{\kappa} \ge Q^{\gamma})/2. \]
Using \eqref{eq::mkappa} and \eqref{eq::kappa} we get that $M_{\kappa}=Q^{(\tau-2)^{c_Q-1}}(1+o_\Pv(1))$, hence $M_{\kappa} < Q^{\gamma}$ if
\[ (\tau-2)^{c_Q-1}<\gamma.\]
This holds if and only if \be\label{eq::crit} c_Q > 1-\frac{\log \gamma}{|\log (\tau-2)|}:=\wit\gamma. \ee
Note also that $c_Q\in [0,1)$ by definition, and since $\gamma\in [1, 1/(\tau-2))$ whp, $\wit \gamma \in [0,1)$ whp.
By the definition of the fractional part,
\be\label{eq::only-red}\ba  \Pv_{\gamma}(M_\kappa <Q^\gamma) &= \Pv_{\gamma}( c_Q \in (\wit\gamma, 1) )\\
&= \Pv_{\gamma}\left(  Y_\kappa^w \in \bigcup_{m\in \N}\left((\tau-2)^{m+1} \log n , (\tau-2)^{m+\wit \gamma} \log n\right) \right). \ea\ee
Similarly,
\be\label{eq::not-only-red} \ba \Pv_{\gamma}(M_\kappa \ge Q^\gamma ) &= \Pv_{\gamma}( c_Q \in [0, \wit\gamma) )\\
&= \Pv_{\gamma}\!\left( \! Y_\kappa^w \in\! \bigcup_{m\in \N}\!\left((\tau-2)^{m+\wit \gamma} \log n , (\tau-2)^{m} \log n\right) \!\right)\\
&= \Pv_{\gamma}(Y_\kappa^w \in \bigcup_{m\in \N}\!\! A_{n,\wit\gamma}(m)). \ea\ee
We need to show that both \eqref{eq::only-red} and \eqref{eq::not-only-red} are strictly positive for all $n$ and $\gamma\in(1, (\tau-2)^{-1})$.

We only deal with $ \eqref{eq::not-only-red}$, but \eqref{eq::only-red} can be handled analogously. Note that since $Y_\kappa^w=Y_{\kappa(n)}^w \toas Y^w$ as $n\to \infty$, \eqref{eq::not-only-red} can be rewritten in the form
$\Pv(Y_n \in A_{n,\wit\gamma})$, where $Y_n=Y_{\kappa(n)}^w$,  and $A_{n,\wit\gamma}:=\cup_{m\in \N} A_{n,\wit\gamma}(m)$. The problem is that $A_{n,\wit\gamma}\subset \R^+$ does not converge with $n$.
We can overcome this issue as follows: first of all, recall that one of our assumptions was that $(0,K) \subset \mathrm{supp} (Y^w)$ for some $K>0$. Let us assume wlog that $K>1$: if this does not hold, replace $1$ in the following proof by any number that is smaller then $K$. Let us restrict our interest to the interval $[0,1]$, i.e., we investigate only those $A_{n,\wit\gamma}(m)$ for which $(\tau-2)^m\log n<1$.
We can then write
\be\label{eq::yn-decompose} \Pv_{\gamma}(Y_n \in A_{n, \wit\gamma} \ind_{[0,1]}) \ge  \Pv_{\gamma}(Y^w \in A_{n, \wit \gamma} \ind_{[0,1]} ) - \Pv_{\gamma}(Y^w \in A_{n, \wit \gamma}\ind_{[0,1]}, Y_n \notin A_{n, \wit \gamma}\ind_{[0,1]}). \ee
% Below in \eqref{eq::q-delta} we show that  $\Pv(Y^w \in A_{n, \wit \gamma} \ind_{[0,1]} )\ge r_{\wit\gamma}>0$ for some $r_{\wit\gamma}$ to be determined later. Given $r_{\wit\gamma}$ we show first that the second term on the rhs is comparably small (at most $r_{\wit\gamma}/2)$ as $n\to \infty$.
Our goal is to show that the second term is sufficiently small for large enough $n$. Below in \eqref{eq::q-delta} we aim to show  $\Pv_{\gamma}(Y^w \in A_{n, \wit \gamma} \ind_{[0,1]} )\ge r_{\wit\gamma}>0$. Given this number $r_{\wit\gamma}$, we show
\be \label{eq::second-error}  \Pv_{\gamma}(Y^w \in A_{n, \wit \gamma}\ind_{[0,1]}, Y_n \notin A_{n, \wit \gamma}\ind_{[0,1]}) \le r_{\wit \gamma}/2. \ee
For this, let us introduce for $\ve>0$
\be\label{eq::and-epsilon}\ba A_{n, \wit\gamma}^{-\ve}(m):&= ((\tau-2)^{m+\wit\gamma-\ve} \log n, (\tau-2)^{m+\ve} \log n), \\
A_{n, \wit\gamma}^{-\ve}&:= \bigcup_{m\in \N}A_{n, \wit\gamma}^{-\ve}(m).\ea\ee
Clearly, $A_{n, \wit\gamma}^{-\ve}(m)\subset A_{n, \wit\gamma}(m).$
Using this notation, we have the upper bound
\be\label{eq::error-n-d} \ba \Pv_{\gamma}( Y^w \in A_{n, \wit \gamma}\ind_{[0,1]},& Y_n \notin A_{n, \wit \gamma}\ind_{[0,1]})  \le \Pv_{\gamma}( Y^w \in \ind_{[0,1]} A_{n, \wit\gamma}\setminus A_{n, \wit\gamma}^{-\ve} ) \\
&+\!\!\!\!\!\!\!\!\!\! \sum_{\substack{m\in \N\\ (\tau-2)^m\log n\le1 } }\!\!\!\!\!\!\Pv_{\gamma}( Y_n \notin A_{n, \wit \gamma}(m)| Y^w \in A_{n, \wit \gamma}^{-\ve}(m) ) \Pv_{\gamma}( Y^w \in A_{n, \wit\gamma}^{-\ve}(m)). \ea\ee
Now pick $\ve$ small enough so that $\Pv_{\gamma}( Y^w \in \ind_{[0,1]} A_{n, \wit\gamma}\setminus A_{n, \wit\gamma}^{-\ve} )\le r_{\wit\gamma}/4$. This can be done uniformly in $n$ by the absolute continuity of the measure of $Y^w$ and the compactness of the set $[0,1]$: the set $A_{n, \wit \gamma}\ind_{[0,1]}$ can be generalised to the form
\[  B_{\wit\gamma,x}:= \bigcup_{m\in \N} (x(\tau-2)^{m+\wit\gamma}, x(\tau-2)^m), \quad x \in [0,1].\]  Note that the set $B_{\wit\gamma,x}$ only depends on the initial point $x$ and on the `width' given by the factor $(\tau-2)^{\wit \gamma}$. Now, we can define $B_{\wit\gamma,x}^{-\ve}(m)$ similarly as for $A_{n, \wit\gamma}(m)$ for all $m\in \N$, and pick
an $\ve=\ve(\wit\gamma)$ so small that the total length of the union of intervals $B_{\wit\gamma, x}(m)\setminus B_{\wit\gamma,x}^{-\ve}(m)$ is small enough even for the worst starting point $x$.  Then by the absolute continuity of the measure (see \cite[Theorem 3.5]{Foll84}) it follows that $\Pv_{\gamma}(Y^w \in B_{\wit\gamma,x}\setminus B_{\wit\gamma,x}^{-\ve}) ) \le r_{\wit\gamma}/4$ for all possible initial point $x$.

For the second term on the right hand side of \eqref{eq::error-n-d}, a simple calculation shows that $ \Pv_{\gamma}( Y_n \notin A_{n, \wit \gamma}(m)| Y^w \in A_{n, \wit \gamma}^{-\ve}(m) ) $ implies either $Y_n/Y^w > (\tau-2)^{-\ve}$ or $Y_n/Y^w\le (\tau-2)^\ve$, hence,
\[  \Pv_{\gamma}\Big( Y_n \notin A_{n, \wit \gamma}(m)\Big| Y^w \in A_{n, \wit \gamma}^{-\ve}(m) \Big) \le \Pv_{\gamma}\Big( \log Y_n - \log Y^w \notin ( -\wit \ve, \wit \ve ) \Big), \]
where $\wit\ve:= \ve | \log (\tau-2)|$. Now since $Y_n=Y_{\kappa(n)} \toas Y^w$, clearly, $\log Y_n \toas \log Y^w$ and hence the latter probability tends to zero for each fixed $\ve$ as $n\to \infty$. Pick $n_0=n_0(\ve)$ large enough so that this probability is at most $r_{\wit\gamma}/4$ for all $n\ge n_0(\ve)$. Then, combining the estimates for the two terms and the fact that $A_{n,\wit\gamma}^{-\ve} \subset A_{n, \wit\gamma}$, we get
\[ \Pv_{\gamma}(  Y^w \in A_{n, \wit \gamma}\ind_{[0,1]}, Y_n \notin A_{n, \wit \gamma}\ind_{[0,1]}) \le r_{\wit\gamma}/4 + \Pv_{\gamma}(Y^w \in A_{n, \wit \gamma}\ind_{[0,1]} ) r_{\wit\gamma}/4 \le r_{\wit\gamma}/2.\]

%However, if $1/(\tau-2) \in \N$, then starting with $n=n_0$, for all $k\ge0$, if we set $n_k:=n_0^{(\tau-2)^{-k}}$
%\[ A_{n_0,\wit\gamma} \subseteq A_{n_k, \wit \gamma}, \]
%and more importantly, the sets $A_{n_k, \wit\gamma}(m+k)$ such that $\log n_0 (\tau-2)^m < C$ (for any constant $C>0$) are part of $A_{n_0^{(\tau-2)^{-k}}, \wit \gamma},$ for all $k$.
%Hence, defining the set \[ B_{n_0, \wit\gamma}:= \bigcup_{\substack{ m\in \N \\ (\tau-2)^m\log n_0<1}} ((\tau-2)^{m+\wit \gamma} \log n_0, (\tau-2)^{m+\wit \gamma} \log n_0) \]
%we have
%\[ \Pv(Y_{n_k}\in A_{n_k, \wit\gamma}) \ge  \Pv( Y_{n_k} \in B_{n_0, \wit\gamma}), \]
%and now observe that the set $B_{n_0, \wit\gamma}$ is open, and it does not depend on $n_k$ anymore. Hence, the convergence of $Y_n \toinp Y^w$ implies that
%\[\lim \inf_{k}\Pv( Y_{n_k} \in B_{n_0, \wit\gamma}) \ge \Pv(Y^w \in B_{n_0, \wit\gamma}).\]
%\KJ{We need some more precise thing here. What bothers me, what happens if $1/(\tau-2)$ is not an integer??? $\wit\gamma<1/2$, then it is easy to construct two sequences $n_k$ and $n_k'$ with starting values $n_0$ and $n_0'$ so that $A_{n_k, \wit\gamma}$ is completely disjoint from $A_{n_k', \wit\gamma}$.}
To finish the proof that \eqref{eq::yn-decompose} is at most $r_{\wit\gamma}/2$, we are left to give the uniform lower bound $r_{\wit\gamma}$ on  $\Pv(Y^w \in A_{n, \wit\gamma}\ind_{[0,1]})$.
Clearly,
\be\label{eq::indicator-Y-in-01} \ba \Pv_{\gamma}(Y^w \in A_{n, \wit\gamma}\ind_{[0,1]}) &=\!\!\!\!\!\!\!\!\! \sum_{\substack{m\in \N \\ (\tau-2)^m\log n< 1}} \!\!\!\!\!\!\!\!\exp\{- \Delta^w( (\tau-2)^{m+\wit\gamma} \log n)\} - \exp\{- \Delta^w( (\tau-2)^{m} \log n )\}\\
&\ge \exp\Big\{- \Delta^w( (\tau-2)^{1-c_n+\wit\gamma})\Big\} - \exp\Big\{- \Delta^w( (\tau-2)^{1-c_n+\wit\gamma})\Big\}
\ea  \ee
%&\ge\!\!\!\!\!\!\!\!\!\!\sum_{\substack{m\in \N \\ m> \frac{\log\log n}{|\log (\tau-2)|} }}\!\!\!\!\!\!\! \!\!\!\!\!\exp\Big\{\!\!-\!\! \Delta^w( (\tau-2)^{m+\wit\gamma} \log n)\!\Big\} \min \left\{ \frac12, \left(\!\Delta^w( (\tau-2)^{m} \log n) -\Delta^w( (\tau-2)^{m+\wit\gamma} \log n)\!\right)/2\right\}.
%&\ge \frac14\sum_{k< \frac{\log\log n}{|\log (\tau-2)|} + C } \exp\left\{- \Delta^w( (\tau-2)^{k+\wit\gamma} \log n)\right\}\\
%where we used that $1-e^{-x}\ge \min\{x/2, 1/2\}$ for all $x\in \CR^+$.
where we took the first $m$ that satisfies the criterion, i.e., $m=\lfloor\log\log n / | \log (\tau-2)|\rfloor + 1$, and introduced $c_n:=\{\log\log n / | \log (\tau-2)|\}\in [0,1)$.
Using that $(\tau-2)^{1-c_n} \in (\tau-2, 1)$ and $\Delta^w$ is strictly increasing (i.e., the difference is strictly positive), the right hand side is at least
 \be\label{eq::q-delta} r_{\wit\gamma}:=\min_{x \in [\tau-2,1]} \left\{ \exp\big\{\!-\! \Delta^w( x(\tau-2)^{\wit\gamma})\big\}
 -\exp \big\{ \!-\!\Delta^w( x)\big\} \right\}.
 \ee
 Note that this minimum exists and is strictly positive as well, since if it would be zero, that would imply the presence of an interval of size at least $(\tau-2)(1-(\tau-2)^{\wit\gamma})$ where $\Delta^w$ is a constant, which contradicts the fact that it is strictly increasing.\footnote{Without further assumption on the form of the generating function of $D^\star$ it is not possible to determine where this minimum is taken. E.g. it is not hard to determine using Proposition \ref{prop::no-gap} Part (2)  that the minimum is taken at $(\tau-2)$ if $f$ is convex and at $1$ if $f$ is concave in Proposition \ref{prop::no-gap}.}
  Further and more importantly, $r_{\wit\gamma}$ only depends on $\wit\gamma$, but not on $n$, hence, it provides a uniform lower bound on the probability that $v^\star$ is painted blue.
 %For an upper bound, we can use the similar calculations with $1-e^{-x}\le  \min\{x, 1\}$, and that $\Delta(x)$ is strictly increasing. Hence, we arrive at
%\be\label{eq::q-delta-upper}\ba &\Pv_{\gamma}(Y^w \in A_{n, \wit\gamma}\ind_{[0,1]})  \\
%&\le\sum_{ \ell = 0}^\infty\exp\Big\{\!\!-\!\! \Delta^w( (\tau-2)^{\ell+c_N+\wit\gamma})\!\Big\} \min \left\{ 1, \left(\!\Delta^w( (\tau-2)^{\ell+c_N} ) -\Delta^w( (\tau-2)^{\ell+c_N+\wit\gamma} )\!\right)\right\}=4q_{\wit\gamma}.
%\ea  \ee
%where we used that $1-e^{-x}\le \min\{x, 1\}.$
Combining everything, we get that
\[ \Pv_{\gamma}(v^\star \text { is blue}) = \Pv_{\gamma}( M_\kappa > Q^{\gamma})/2 = \Pv_{\gamma}( c_Q \in (0, \wit\gamma))/2 \ge r_{\wit\gamma}/4.\]
Setting $q_\gamma:= r_{\wit\gamma}/2$ finishes the proof for $\Pv(v^\star \text{\ is blue\,})$.
Further, since $\wit\gamma=1- \log \gamma/ |\log (\tau-2)|$, $\wit\gamma\searrow 0$ if $\gamma\nearrow 1/(\tau-2)$. So, the interval $[0, \wit\gamma]$ vanishes, and hence the lower bounds for \eqref{eq::not-only-red} vanish
by the continuity of measure of $Y^w$. As a result, $q_\gamma=r_{\wit\gamma}/4\to 0$.
 A lower bound on \eqref{eq::only-red} can be given in a similar manner, defining $q_\gamma^{\text{red}}$.

  To see that $q_\gamma^{\text{red}} \to 1$ uniformly in $n$ as $\gamma\to \tfrac{1}{\tau-2}$, we also need to show that \eqref{eq::not-only-red} tends to $0$ when $\wit\gamma\to 0$, uniformly in $n$.
  We fix a constant $L>0$ to be chosen later and write
\be\label{eq::upper-bound-on-and} \Pv_{\gamma}(Y_n \in A_{n, \wit\gamma}) \le  \Pv_{\gamma}(Y^w > L) + \Pv(Y^w\in A_{n, \wit\gamma}\ind_{[0,L]}) + \Pv_{\gamma}(Y^w \notin A_{n, \wit \gamma}, Y^w \le L,  Y_n \in A_{n, \wit \gamma}). \ee
 In the second term, the total Lebesque measure of the union intervals $A_{n, \wit \gamma}(m)\ind_{[0,L]}$ is at most
 \[ (1-(\tau-2)^{\wit\gamma}) L  \sum_{m\ge 1} (\tau-2)^m =(1-(\tau-2)^{\wit\gamma}) L \frac{\tau-2}{3-\tau}.  \]
Now, if $L^2(1-(\tau-2)^{\wit\gamma})<1$, then the last term is at most $\frac{\tau-2}{3-\tau}/L$. Hence, we can pick  $L=L(\wit\gamma)$ satisfying this, and use the continuity of the measure \cite[Theorem 3.5]{Foll84} of $Y^w$ on $[0,L]$ to get $\Pv( Y^w \in A_{n, \wit \gamma} \ind_{[0,L]})\le C/L.$ The first term is at most $\exp\{ - \Delta(L)\}$, hence it is sufficiently small when $L$ is large enough. This can be guaranteed even when $L^2(1-(\tau-2)^{\wit\gamma})<1$, when $\wit\gamma$ is sufficiently close to $0$.

The problem with the third term in \eqref{eq::upper-bound-on-and} is that $Y^w\notin A_{n, \wit \gamma}$ is a very likely event. But, we can apply a similar trick as in \eqref{eq::and-epsilon} and define for an $\ve>0$
\be\label{eq::and+epsilon}\ba A_{n, \wit\gamma}^{+\ve}(m):&=
 ((\tau-2)^{m+\wit\gamma+\ve} \log n, (\tau-2)^{m-\ve} \log n), \\
A_{n, \wit\gamma}^{+\ve}&:= \bigcup_{m\in \N}A_{n, \wit\gamma}^{+\ve}(m).\ea\ee
Clearly, $A_{n, \wit\gamma}^{+\ve}(m) \supset A_{n, \wit\gamma}(m)$.
We can then estimate the third term in \eqref{eq::upper-bound-on-and} as
\[\ba   \Pv_{\gamma}(Y^w \notin A_{n, \wit \gamma}, Y^w \le L,  Y_n \in A_{n, \wit \gamma}) &\le \Pv(Y^w \in A_{n, \wit\gamma}^{+\ve}\setminus A_{n, \wit\gamma}, Y\le L ) \\
 &\ \ +\Pv_{\gamma}(Y^w \notin A_{n, \wit \gamma}^{+\ve},Y_n \in A_{n, \wit \gamma}, Y^w\le L). \ea\]
Notice that the first term is small when $\ve$ is small enough by the continuity of the measure of $Y^w$ and the fact that $[0,L]$ is a bounded interval. The second term is  small if $n\ge n_0(\ve)$ since it implies that $Y_n/Y^w \notin ((\tau-2)^\ve, (\tau-2)^{-\ve})$, and $Y_n\toas Y^w$.
This shows that $\Pv_\gamma( w \text{ is red}) \to 1$ uniformly in $n$ as $\gamma \to 1/(\tau-2)$.

To finish up the proof of Lemma \ref{lem::q-delta}, we are left to show what error bound we have if we had applied Starting Rule 2 instead of Starting Rule 1. First of all note that the estimates on $\Pv(v^\star \text{ is blue})$ and
$\Pv(M_\kappa< Q^\delta)$ do not change. However, it might happen that even though $M_\kappa < Q^\delta$, there are some blue vertices in $G_\kappa$. This fact only might ruin $\Pv( v^\star \text{ is red, } D_{v^\star}<Q^\gamma)$.
However, note that the probability that any vertex is mis-colored, $p_e\to 0$ as $Q\to \infty$, which is true due to the choice of $Q$.
\end{proof}
As an immediate corollary of Lemma \ref{lem::q-delta}, we get the following.
\begin{corollary}\label{cor::red-first-moment} For both starting rules, the probability that the root is red is strictly positive for all $\gamma\in (1, 1/(\tau-2))$. Further, this probability tends to $1$ when $\gamma\nearrow 1/(\tau-2)$.
\end{corollary}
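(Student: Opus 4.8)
The plan is to read both assertions off Lemma~\ref{lem::q-delta}; the one new ingredient is a deterministic observation turning control of the maximal-degree vertex $v^\star$ of $\CG_\kappa$ into control of the root $w$.

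First I would record that, under Starting Rule~1, the event $\{M_\kappa<Q^\gamma\}$ forces $w$ to be red. Indeed, since $D_{v^\star}=M_\kappa=\max_{x\in\CG_\kappa}D_x^\star$, on this event every vertex of $\CG_\kappa$ has degree below $Q^\gamma$, hence is coloured red (degree in $[Q,Q^\gamma)$) or neutral (degree below $Q$); in particular $\CG_\kappa$ contains no blue vertex. By the Flow rule, if $\CG_{i+1}$ contains no blue vertex then neither does $\CG_i$, so no generation $\CG_i$, $0\le i\le\kappa$, contains blue. Following the ray $v_0=v^\star,v_1,\dots,v_\kappa=w$: $v_0$ is red, and for each $i$ all children of $v_{i+1}$ are red or neutral with $v_i$ red among them, so $v_{i+1}$ is red by the Flow rule; hence $w$ is red. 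Thus $\{M_\kappa<Q^\gamma\}\subseteq\{w\text{ is red}\}$, and Lemma~\ref{lem::q-delta} gives
\[
\Pv_\gamma(w\text{ is red})\ \ge\ \Pv_\gamma(M_\kappa<Q^\gamma)\ \ge\ q_\gamma^{\mathrm{red}}\ >\ 0 ,
\]
which is the corollary for Starting Rule~1; the monotone convergence $q_\gamma^{\mathrm{red}}\nearrow1$ as $\gamma\nearrow1/(\tau-2)$ is also part of Lemma~\ref{lem::q-delta}.

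For Starting Rule~2 I would couple the two colourings of $\CG_\kappa$: use the same fair coins on the vertices of degree $\ge Q^\gamma$, and on each vertex of degree in $[Q,Q^\gamma)$ an independent $p_e$-coin. If none of these $p_e$-coins comes up blue, the two colourings of $\CG_\kappa$ coincide, hence so do the colourings of the whole tree produced by the Flow rule, and $w$ has the same colour under both rules. Therefore
\[
\Pv_\gamma(w\text{ red, Rule 2})\ \ge\ \Pv_\gamma(w\text{ red, Rule 1})\ -\ p_e\,\Ev_\gamma\!\big[\,\#\{x\in\CG_\kappa:D_x\ge Q\}\,\big] .
\]
The number of vertices of $\CG_\kappa$ of degree $\ge Q$ is tight --- it is essentially the overshoot of the stopped branching process, of order one by the stable-domination estimates behind Claim~\ref{cl::max-sum-relation} --- while $p_e\simp Q^{1-\gamma}=o(1)$ because $Q=\wur_{\ell_{\max}}\to\infty$; so the correction vanishes as $n\to\infty$. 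Hence for every fixed $\gamma\in(1,1/(\tau-2))$ we get whp $\Pv_\gamma(w\text{ is red})\ge q_\gamma^{\mathrm{red}}/2>0$, and letting $\gamma\nearrow1/(\tau-2)$ this bound tends to $1$. The step I expect to need most care is the order of the two limits $n\to\infty$ and $\gamma\nearrow1/(\tau-2)$: as in the proof of Lemma~\ref{lem::q-delta}, one must fix $\gamma$, send $n\to\infty$ to remove the coupling error, and only then send $\gamma\nearrow1/(\tau-2)$.
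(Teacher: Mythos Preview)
Your Rule~1 argument is correct and matches the paper. The gap is in your Rule~2 argument for strict positivity: the claim that ``the number of vertices of $\CG_\kappa$ of degree $\ge Q$ is tight --- it is essentially the overshoot of the stopped branching process, of order one'' is false. Conditionally on $\kappa$ and $M_\kappa$, the generation size satisfies $Z_\kappa\simp M_\kappa^{\tau-2}=Q^{(\tau-2)^{c_Q}}$ (the paper derives exactly this bound), and each vertex in $\CG_\kappa$ has degree $\ge Q$ with probability $\approx Q^{2-\tau}$, so
\[
\Ev_\gamma\big[\#\{x\in\CG_\kappa:D_x\ge Q\}\big]\ \asymp\ Q^{(\tau-2)^{c_Q}+2-\tau}.
\]
When $c_Q$ is close to $0$ (which occurs with positive probability), this is of order $Q^{3-\tau}\to\infty$, not $O(1)$. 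Your correction term is then $p_e\cdot Q^{3-\tau}\simp Q^{1-\gamma+3-\tau}$, which diverges whenever $\gamma<4-\tau$; since $4-\tau\in(1,1/(\tau-2))$, your bound gives nothing for $\gamma$ in the nonempty range $(1,4-\tau)$.

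The paper avoids this entirely by a symmetry--monotonicity argument: at $\gamma=1$ the interval $[Q,Q^\gamma)$ is empty, so under Rule~2 every coloured vertex of $\CG_\kappa$ is red or blue with equal probability, whence $\Pv_\gamma(w\text{ is red})=1/2$ by symmetry; then, since $\{w\text{ is red}\}$ is increasing in the set of red vertices in $\CG_\kappa$ under the Flow rule, a coupling in $\gamma$ gives $\Pv_\gamma(w\text{ is red})\ge 1/2$ for all $\gamma\in[1,1/(\tau-2))$. For the limit $\gamma\nearrow 1/(\tau-2)$, the paper does compute the quantity you attempted (denoted $\Ev[\mathrm{Blue}_\gamma]$ there) and shows its exponent is at most $-(3-\tau)^2+x$ with $x=1/(\tau-2)-\gamma$, which is negative for $\gamma$ sufficiently close to $1/(\tau-2)$; so your coupling idea is salvageable for the limit statement, but only with the correct polynomial estimate on the count, not tightness.
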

\begin{proof}
Under Starting Rule 1, $\{ w \text{ is red} \} = \{ D_{v^\star} < Q^\delta \}$, since on this event  \emph{all} vertices get color red in $\CG_\kappa$. As a result, $\Pv(w \text{ is red}) \ge q_\gamma$, and the result follows from Lemma \ref{lem::q-delta}.

It is left to investigate the effect of Starting Rule 2. Recall that in the interval $[Q^\gamma, Q^{1/(\tau-2)}]$ all vertices are painted red and blue with equal probability, and in the interval $[Q, Q^\gamma)$ they are painted red with probability $1-p_e$ and blue with $p_e$, where $p_e\to 0$ as $n \to \infty$. When $\gamma=1$, then $\Pv(w \text{ is red}) =1/2$ by symmetry. Then, a simple coupling argument and monotonicity of the Flow Rule implies that the event $\{w \text{ is red}\}$ is an \emph{increasing event}\footnote{If we write $\omega_i=0,1$ if the $i$th individual in $\CG_\kappa$ is blue or red, respectively, and two colorings $\underline\omega\le \underline\omega'$ iff $\omega_i\le \omega_i'$ for all  $i\in \CG_\kappa$, then $\Pv(w \text{ is red } | \CG_\kappa, \underline\omega ) \le  \Pv(w \text{ is red } |\CG_\kappa, \underline\omega')$.} in the number of red vertices in $\CG_\kappa$.
This implies that $\Pv(w \text{ is red}) \ge 1/2$ for all $\gamma \in [1,1/(\tau-2)).$

Thus, the statement that $\Pv(w \text{ is red})$ strictly positive also holds under Starting Rule 2. It is left to investigate what happens when $\gamma\nearrow 1/(\tau-2)$. Let us denote the number of differently colored vertices in the two colourings by $\mathrm{Blue}_\gamma$, that is, the number of blue vertices in $\CG_\kappa$ with degree in $ [Q, Q^\gamma)$.
We would like to show that
\[ \Pv(\mathrm{Blue}_\gamma\ge 1) \to 0 \]
as $\gamma \nearrow (\tau-2)^{-1}$.
Note that $\mathrm{Blue}_\gamma\  {\buildrel d \over \le }\  \mathrm{Bin}(\#\{x\in G_\kappa, D_x\ge Q\} , p_e)$, hence, first we aim to give an estimate on $|\CG_\kappa|=Z_\kappa$, then on the proportion of vertices with degree $D_x\ge Q^\gamma$.
Using the stochastic domination argument in the proof of Claim \ref{cl::max-sum-relation}, we get
\[ X^{\sss{(1)}}\ {\buildrel {d} \over \le }\ \frac{Z_{i+1}}{Z_i^{1/(\tau-2)}} \ {\buildrel {d} \over \le }\  b_2+X^{\sss{(2)}},  \]
for some positive random variables $X^{\sss{(1)}}, X^{\sss{(2)}}$ and shift $b_2>0$. This fact, combined with Claim \ref{cl::max-sum-relation} and the \emph{proof} of Claim \ref{cl::stoch-dom-alpha-stable} gives that for some logarithmic correction term, if $Q$  is large enough, whp
\[ Z_\kappa \le (\log Q) (M_\kappa)^{(\tau-2)} = Q^{(\tau-2)^{c_Q}} \log Q. \]
Recall that in the $\kappa$th generation of the stopped BP, all vertices have i.i.d.\  degrees with distribution $D^\star$ conditioned on being $\le M_\kappa$.
Hence, the probability that the degree of a vertex $v\neq v^\star$ falls in the interval $[Q, Q^{\gamma})$ is at most
\[ \Pv(D^\star \ge Q | D^\star \le M_\kappa, M_\kappa) \le \frac{1-F^\star(Q)}{F^\star(M_\kappa)}\le 2 C_1^\star Q^{2-\tau}.\]
According to Starting Rule 2, each vertex in this interval gets color blue with probability $\simp Q^{1-\gamma}$ independently of each other,
hence, $B_\gamma$ can be stochastically dominated by a binomial random variable, and so
\[ \Ev[\mathrm{Blue}_\gamma] \le C Q^{(\tau-2)^{c_Q} + 2-\tau+ 1-\gamma} \log Q.\]
Writing $\gamma:=1/(\tau-2) - x$, and recalling that $c_Q\in [0,1)$,
the exponent of $Q$ is at most $-(3-\tau)^2 +x$, with equality if $c_Q=0$.
Hence, if $\gamma\nearrow 1/(\tau-2)$, $x\searrow 0$ and so we can pick a small enough $x$ so that the exponent is negative.
By Markov's inequality, $\Pv(\mathrm{Blue}_\gamma\ge 1) \to 0$ in this  case, and so the coloring in Starting Rule 2 is whp the same as the coloring in Starting Rule 1. For the latter, $\Pv_\gamma(w \text{ is red\,}) \to 1$ has been already shown in Lemma \ref{lem::q-delta}.
\end{proof}

The next lemma is the crucial ingredient for the proof of coexistence, and it shows that the probability that the blue color reaches the root is uniformly positive in $n$. It also implies Proposition \ref{prop::BP-color}.
\begin{lemma}\label{lem::blue-first-moment} Let $\gamma\in (1, 1/(\tau-2))$.
For both starting rules, the probability that the root $w$ of the branching process is painted blue is at least
\[ \Pv(w \text{ is blue}) \ge \frac12 \e^{-\gamma} q_{\gamma(1+\ve)}, \]
where $\ve>0$ is  such that $\gamma(1+\ve)<1/(\tau-2)$ holds, and  $q_{\gamma}$ is from Lemma \ref{lem::q-delta}.
\end{lemma}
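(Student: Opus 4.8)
The plan is to exhibit a blue vertex among the children of the root $w$ and then invoke the Flow rule. Recall the ray $v_0=v^\star,v_1,\dots,v_\kappa=w$, where $v_0$ is the (unique) maximal-degree vertex of $\CG_\kappa$, so that $d_{v_0}=M_\kappa$ and $d_{v_i}<Q$ for every $i\ge 1$ (the latter because $v_i\in\CG_{\kappa-i}$ lies in a generation before $\kappa$). Since $v_{\kappa-1}$ is a child of $w$, it suffices to show that, with probability at least $e^{-\gamma}q_{\gamma(1+\ve)}$, the vertex $v_{\kappa-1}$ is painted blue: then $w$ has a blue child and, by the Flow rule, $w$ is itself painted blue with probability at least $\tfrac12$, which accounts for the prefactor $\tfrac12$ in the statement.

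First I would settle the bottom of the ray using Lemma \ref{lem::q-delta}. Applying that lemma with $\gamma$ replaced by $\gamma(1+\ve)$ — permissible since $\gamma(1+\ve)<1/(\tau-2)$, and since the event $\{M_\kappa\ge Q^{\gamma(1+\ve)}\}$ concerns only $\mathrm{BP}_w$ and not the starting rule — gives $\Pv(M_\kappa\ge Q^{\gamma(1+\ve)})\ge 2q_{\gamma(1+\ve)}$, because under either Starting Rule $v^\star$ is painted blue by an independent fair coin whenever its degree lies in the mixed range. On $\{M_\kappa\ge Q^{\gamma(1+\ve)}\}$ the degree $d_{v^\star}=M_\kappa$ indeed lies in $[Q^\gamma,M_\kappa]$, so $v^\star$ is painted blue with probability $\tfrac12$. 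From now on I condition on $\mathcal H:=\{M_\kappa\ge Q^{\gamma(1+\ve)}\}\cap\{v^\star\text{ is painted blue}\}$.

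The core of the proof is that blue ``climbs the ray'': I would bound $\Pv(v_i\text{ blue}\mid v_{i-1}\text{ blue},\mathcal H)$ from below for each $i=1,\dots,\kappa-1$ and show the resulting product stays above $e^{-\gamma}$. By the Flow rule, once $v_{i-1}$ is blue the vertex $v_i$ can fail to be blue only if it has some red child, in which case it becomes blue with probability $\tfrac12$; hence the conditional probability is at least $1-\tfrac12\Pv(v_i\text{ has a red child}\mid\mathcal H)\ge 1-p_i$ with $p_i:=\tfrac12\,d_{v_i}\,\sup_x\Pv(\text{child of }v_i\text{ red}\mid\cdots)\lesssim d_{v_i}\,Q^{2-\tau}$, using $1-F^\star(Q)\le C_1^\star Q^{2-\tau}$. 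The crucial structural input I would establish is that the ancestral line of the maximal vertex of $\CG_\kappa$ consists whp of near-maximal vertices of the earlier generations, so that the ray-degrees decay doubly exponentially: conditionally on $\kappa$ and on $\mathcal H$, $d_{v_i}\le (C\log Q)\,M_{\kappa-i}$ with $M_{\kappa-i}=Q^{(\tau-2)^{c_Q-1+i}(1+o_{\Pv}(1))}$, a sharpening of the size–max comparison of Claim \ref{cl::max-sum-relation} together with the spine structure behind Davies' theorem (Theorem \ref{thm::davies}). Consequently $p_i\lesssim Q^{(\tau-2)^{c_Q-1+i}-(\tau-2)}$, which is $o_{\Pv}(1)$ as soon as $i>2-c_Q$; these ``high'' levels contribute a product $\prod_{i>2-c_Q}(1-p_i)\to 1$ (the exponents decrease doubly exponentially, so the tail is summable), whereas the at most two ``low'' levels $i\le 2-c_Q$, where $d_{v_i}$ may be a small power of $Q$ and $v_i$ has many red children, are handled each by the crude bound $\Pv(v_i\text{ blue}\mid\cdots)\ge\tfrac12$. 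Multiplying the finitely many constant factors with the convergent tail product, and noting the number of ``low'' levels is $\lceil 2-c_Q\rceil\le 2$, one may absorb everything into a single exponential and take the bound $e^{-\gamma}$ (legitimate since $\gamma>1$), obtaining $\Pv(v_{\kappa-1}\text{ blue}\mid\mathcal H)\ge e^{-\gamma}$. Combining with the two fair coins, $\Pv(w\text{ blue})\ge\Pv(M_\kappa\ge Q^{\gamma(1+\ve)})\cdot\tfrac12\cdot e^{-\gamma}\cdot\tfrac12\ge\tfrac12 e^{-\gamma}q_{\gamma(1+\ve)}$. The bound for Starting Rule 2 follows from that for Starting Rule 1 by the monotonicity of the Flow rule in the number of blue vertices of $\CG_\kappa$ (as in the proof of Corollary \ref{cor::red-first-moment}), since Starting Rule 2 only paints additional vertices blue.

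I expect the main obstacle to be the doubly-exponential decay of the ray-degrees $d_{v_i}$ under the conditioning on $\kappa$ and on the large-deviation event $\mathcal H$: one must rule out that the ancestral line of the maximal vertex of $\CG_\kappa$ passes through many moderately large vertices, since it is precisely the boundedness of the number of ``bad'' ray levels that prevents a $2^{-\kappa}$-type loss with $\kappa\to\infty$. The remaining ingredients — the red-child estimate, the geometric-type tail bound, the relation $\Pv(M_\kappa\ge Q^{\gamma'})\ge 2q_{\gamma'}$ extracted from Lemma \ref{lem::q-delta}, and the Starting Rule 2 comparison — are routine given the lemmas already proved.
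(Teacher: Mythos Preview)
Your approach diverges from the paper's in a way that does not work. The paper does \emph{not} push blue up the ray; it looks at the off-ray subtrees $\CT_{-r}$ (the subtrees rooted at the siblings of $v_{\kappa-1}$), where the degrees are conditionally i.i.d.\ with law $D^\star\mid D^\star<Q$ (or $D^\star\mid D^\star\le M_\kappa$ in $\CG_\kappa$). There it sets $s_0:=\Pv(\text{a leaf in }\CG_\kappa\cap\CT_{-r}\text{ is blue}\mid M_\kappa)\ge \tfrac{c_1^\star}{4}Q^{-\gamma(\tau-2)}$ and shows the one–step recursion $s_{i+1}\ge \tfrac{c_2^\star}{4}s_i^{\tau-2}$ via the truncated generating function. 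Solving gives $s_{\kappa-1}\ge C\exp\{-\gamma Y_\kappa^w\}$, which is the probability that a \emph{sibling} of $v_{\kappa-1}$ is blue; since $D_w\ge 2$ the root has such a sibling, and the final $\tfrac12$ comes from the Flow rule at $w$. The crucial feature is that $x\mapsto c\,x^{\tau-2}$ is \emph{expanding} near $0$, so the tiny starting probability $s_0$ is pumped up to a constant after $\kappa$ iterations; no control of the ray degrees is needed at all.

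Your argument has a concrete error in the red-child estimate. For $i\ge 2$ the children of $v_i$ lie in generation $\kappa-i+1<\kappa$, so by definition of $\kappa$ their degrees are $<Q$; the bound ``$\Pv(\text{child of }v_i\text{ red})\lesssim 1-F^\star(Q)\le C_1^\star Q^{2-\tau}$'' is therefore vacuous. A child $c$ of $v_i$ is painted red not because $d_c\ge Q$, but because its subtree reaches a red vertex of $\CG_\kappa$ after $i-1$ further generations; the relevant probability is of order $\Pv(D^\star\ge Q^{(\tau-2)^{i-1}})\approx Q^{-(\tau-2)^i}$, not $Q^{2-\tau}$. Plugging this into your scheme with $d_{v_i}\approx Q^{(\tau-2)^{c_Q-1+i}}$ gives $p_i\gtrsim Q^{(\tau-2)^i((\tau-2)^{c_Q-1}-1)}$, whose exponent is \emph{positive} for every $i$ (since $c_Q<1$), so the union bound is useless and you fall back to $\Pv(v_i\text{ blue}\mid v_{i-1}\text{ blue})\ge \tfrac12$ at every level. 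The product along the ray is then at best $2^{-(\kappa-1)}$, which tends to $0$ because $\kappa\asymp \log\log Q/|\log(\tau-2)|\to\infty$. In short, ``climbing the ray'' cannot avoid the $2^{-\kappa}$ loss you were trying to rule out; the decisive idea you are missing is the amplifying recursion $s_{i+1}\ge c\,s_i^{\tau-2}$ in the off-ray forest, which is precisely what turns a vanishing leaf probability into a uniformly positive bound at the root.
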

\begin{proof}[Proof of Proposition \ref{prop::BP-color}].
 The lower bound in statement of the proposition directly follows from Lemma \ref{lem::blue-first-moment} with $c(\gamma):=\frac12 \e^{-\gamma} q_{\gamma(1+\ve)}$. Further, recall that $q_\gamma\searrow 0$ as $\gamma \nearrow 1/(\tau-2)$ holds, see the statement of Lemma \ref{lem::q-delta}. The upper bound in the proposition follows from Lemma \ref{lem::q-delta} and Corollary \ref{cor::red-first-moment}, with $C(\gamma):=1-q_\gamma^{\text{red}}$.
\end{proof}
\begin{proof}[Proof of Lemma \ref{lem::blue-first-moment}]
Throughout the proof, we analyse the worst starting scenario for blue, that is, Starting Rule 1. Recall that the maximal degree vertex in generation $\CG_\kappa$ is denoted by $v^\star$, and $v_{\kappa-1}$ is the child of the root $w$ so that the subtree of $v_{\kappa-1}$ contains $v^\star$.

 Lemma \ref{lem::q-delta} implies that there is a positive chance that there are blue vertices in generation $\CG_\kappa$ of the BP.  Note that by the same argument, for any small $\ve$ such that $\gamma(1+\ve) < 1/(\tau-2)$, we also have $\Pv(M_\kappa> Q^{ \gamma(1+\ve)})\ge r_{\wit\gamma-\log(1+\ve)/|\log(\tau-2)|}/4 >0$, with $\wit\gamma$ defined in \eqref{eq::crit}.
Hence, let us fix an $\ve$ for which $\wit\gamma-\log(1+\ve)/|\log(\tau-2)| \ge \wit\gamma/2$, and we aim to show that \emph{conditioned on}  $M_\kappa> Q^{\gamma(1+\ve)}$, the probability that a sibling of $v_{\kappa-1}$ is blue is strictly positive, i.e., $r_\kappa>0$.
Let us write $\CT^{(i)}$ for the subtree of the $i$th sister of $v_{\kappa-1}$, and let
\[ \CT_{-r}:= \bigcup_{i=1}^{D_w-1} \CT^{(i)}\]
The reason to restrict our attention to the subtrees of the siblings of $v_{\kappa-1}$ is that the degrees in these subtrees are conditionally independent of each other and also of $D_w$.\footnote{On the other hand, $D_w$ is not independent of $\kappa$ and $M_\kappa$, and also the degrees of vertices on the ray, starting from $v_{\kappa-i}$ for $1\le i\le \kappa-1$ are not conditionally independent: the conditioning that $v_{\kappa-i}$ leads to a maximal degree vertex influences the degree of $v_{\kappa-i}$.} By the definition of $\kappa$, conditioned on $M_\kappa$ and the position of the ray, the  degrees of vertices in every generation in $\CT_{-r}$ are \emph{independent}, and the degrees in $\CG_\kappa \cap \CT_{-r}$ have distribution $D^\star| D^\star\le M_\kappa$, while in every earlier generations the degrees have distribution $D^\star| D^\star<Q$.

  From now on, we work under the assumption that $M_\kappa\ge Q^{\gamma(1+\ve)}$,  (otherwise, we get that $s_0$ below is zero). Then, for any vertex in $\CG_\kappa\cap \CT_{-r}$, using \eqref{eq::size-biased2},
\be\label{eq::s0} \ba \Pv_{\gamma}(x \in \CG_{\kappa} \cap \CT_{-r} \text{ is blue}\, | M_\kappa ) &= \Pv_{\gamma}(D^\star> Q^{\gamma}| D^\star \le M_\kappa, M_\kappa)/2\\
&\ge \frac{c_1^\star}{2} Q^{-\gamma(\tau-2)}\frac{1- Q^{-\gamma\ve(\tau-2)}}{1-c_1^\star Q^{-\gamma(1+\ve)(\tau-2)}} \\
&\ge \frac{c_1^\star}{4} Q^{-\gamma(\tau-2)}=:s_0,\ea \ee
where we have used that the last ratio is at least $1/2$ if $Q$ is large enough (which clearly holds since $Q\ge \exp\{ (\log n)^{1-\nu-c}  \}$ for some small $c>0$). Recall that the Flow rule ensures that a vertex in $\CG_{\kappa-1}$ gets color blue with probability at least $1/2$ if it has at least one blue child. Also,
recall that the degree of any vertex in $\CG_{\kappa-1}\cup \CT_{-r}$ has distribution $D^\star| D^\star < Q$.
Then, the probability that any vertex in $\CG_{\kappa-1}$ has a blue child is at least
\be\label{eq::before-s1} \Pv_{\gamma}(x\in \CG_{\kappa-1}  \cap \CT_{-r} \text{ is blue} ) \ge \frac12 \Ev_{\gamma}[ 1- (1-s_0)^{D^\star} | D^\star<Q]=\frac12(1-\wih h(1-s_0)),\ee
where $\wih h$ is the generating function of  $D^\star | D^\star<Q$. We calculate using \eqref{eq::gen-func} from Proposition \ref{prop::no-gap} that
\[  \wih h(s)=\frac{\sum_{k=1}^\infty \Pv(D^\star=k)s^k - \sum_{k=Q}^\infty \Pv(D^\star=k)s^k}{\Pv(D^\star< Q)} \
\le \frac{1-(1-s)^{\tau-2} L^\star(\tfrac{1}{1-s}) }{1-C_1^\star  Q^{-(\tau-2)}  }.\]
Hence, \eqref{eq::before-s1} becomes
\be\label{eq::at-s1} \Pv_{\gamma}(x\in \CG_{\kappa-1}  \cap \CT_{-r} \text{ is blue} )\ge \frac{1}{2} \frac{s_0^{\tau-2}L^\star(\frac{1}{s_0})- C_1^\star  Q^{-(\tau-2)}}{1-C_1^\star  Q^{-(\tau-2)}} \ge \frac{c_2^\star}{4} s_0^{\tau-2}=:s_1,
\ee
 where the last inequality is true since $L^\star(\cdot)>c_2^\star$ is a strictly positive bounded function by Proposition \ref{prop::no-gap}, further, $\gamma\in(1, 1/(\tau-2))$ implies that $s_0^{\tau-2}\gg  Q^{-(\tau-2)}$ (see \eqref{eq::s0}).
For any $i\ge 1$, any vertex in generation $\CG_{\kappa-(i+1)}$ is blue with at least a probability $1/2$ if it has at least one blue child in generation $\CG_{\kappa-i}$. Hence, writing
\[  \Pv_{\gamma}(x\in \CG_{\kappa-i}  \cap \CT_{-r} \text{ is blue} )\ge s_i,
 \] we can repeat \eqref{eq::before-s1} and \eqref{eq::at-s1} using $s_i$ instead of $s_0$. Note that the condition $s_i^{\tau-2}\gg  Q^{-(\tau-2)}$ is also satisfied for all $i$. This yields that
that $s_i$ satisfies the recursion $s_{i+1}=\frac{c_2^\star}{4} s_{i}^{\tau-2}$, hence
 \be\label{eq::s-kappa-1}s_{\kappa-1}= s_0^{(\tau-2)^{\kappa-1}} \left(\frac{c_2^\star}{4}\right)^{(1-(\tau-2)^{\kappa-1})/(3-\tau)}.\ee
By definition, $s_{\kappa-1}=\Pv( x \in \CG_1 \cap \CT_{-r} \text{ is blue})$. Also, since every vertex has degree at least $2$, $D_w-1\ge1$ and hence the root has at least one child that is not on the ray. As a result,
\[ \Pv(w \text{ is blue}) \ge s_{\kappa-1} /2.\]
Using the definition of $\kappa$ in \eqref{eq::kappa}, we calculate
\[ (\tau-2)^{\kappa-1}=\frac{Y_\kappa^w}{\log Q} (\tau-2)^{-c_Q-1}.\]
Combining this, \eqref{eq::s-kappa-1} and the value of $s_0$ from \eqref{eq::s0}, conditioned on the value of $\kappa$,
\be\label{eq::s-kappa} \ba s_{\kappa-1}&\ge \left(\frac{c_2^\star}{4}\right)^{\frac{1}{3-\tau}} \left(\frac{c_1^\star 4^{\frac{\tau-2}{3-\tau}} }{(c_2^\star)^{\frac{1}{3-\tau}}}\right)^{ (\tau-2)^{\kappa-1}} \exp\{-(\log Q) \gamma(\tau-2) \frac{Y_\kappa^w}{\log Q} (\tau-2)^{-c_Q-1}\} \\
 &\ge  C \exp\{ -\gamma(\tau-2)^{-c_Q} Y_\kappa^w\} \ge C \exp\{ -\gamma Y_\kappa^w\},  \ea\ee
where we used that $c_Q \in (0,1).$

Note that this expression is conditioned on the value of $Y_\kappa^w$, hence, it is left to evaluate its expectation (conditioned on $\gamma$), but recall, that we also have assumed that $M_\kappa \ge Q^{\gamma(1+\ve)}$, which event is the same as  $c_Q \in (0, \wit\gamma- \log(1+\ve)/|\log (\tau-2)| )$ by \eqref{eq::crit}.
Hence, combining \eqref{eq::s-kappa} with this conditioning, we get the probability that the root  $w$ is blue can be bounded from below by

\[  \Ev_{\gamma}\left[  \exp\{ -\gamma Y_\kappa^w\} \ind_{ \left\{ \left\{\frac{\log\log n - \log Y_\kappa^w}{|\log (\tau-2)|}\right\} \in (0, \wit\gamma- \frac{\log(1+\ve)}{|\log (\tau-2)|}) \right\}}\right]\]

 We can get a lower bound on this expression by restricting $Y_\kappa^w$ to $[0,1]$, in which case, the factor before the indicator is at least $\exp\{ - \gamma \}$, while the expectation of the indicator is treated in the \emph{proof of} Lemma \ref{lem::q-delta}, and is at least $r_{\wit\gamma - \frac{\log(1+\ve)}{|\log (\tau-2)|} }$ (where $r_{\wit\gamma}$ is defined in \eqref{eq::q-delta}, see also \eqref{eq::indicator-Y-in-01}).
 Combining these we arrive at
 \[ \Pv_{\gamma}(w \text{ is blue }) \ge \frac12 \e^{-\gamma} r_{\wit\gamma - \frac{\log(1+\ve)}{|\log (\tau-2)|} }, \]
 and since $\ve$ satisfies that $\wit\gamma-\frac{\log(1+\ve)}{|\log (\tau-2)|}\ge 0$, this finishes the proof.
 \end{proof}
\begin{proof}[Proof of Theorem \ref{thm::main2}]
In the proof of this theorem, we work conditionally on $q=\Ybn/\Yrn \in (\tau-2, 1)$. Of course, if $\Ybn/\Yrn \in (1, 1/(\tau-2))$, then the same statements are true, with the role of red and blue exchanged. Recall that $\Ev_\gamma[\cdot] = \Ev_[ \cdot | \gamma, Q, \Yrn, \Ybn]$, where $\gamma, Q$ were defined in \eqref{def::gamma} and just after \eqref{def::core-n}, respectively.

We aim to show that there exist constants $c(q), c(q)^{\text{red}}>0$, so that $\CB_\infty/n\ge c(q)$ and $\CR_\infty/n\ge c(q)^{\text{red}}$ hold whp. The upper bound in the statement of the theorem is then obvious with $C(q):=1- c(q)^{\text{red}}$ since $\CB_\infty + \CR_\infty =n$.

We only show this statement for $\CB_\infty/n$, but $\CR_\infty/n\ge q_{\gamma}^{\text{red}}/2:=c(q)^{\text{red}}$ can be treated analogously, using Lemma \ref{lem::q-delta} and Corollary \ref{cor::red-first-moment} by setting  $\gamma$ in these to be equal to the value in \eqref{def::gamma}.
Let us write $2p_{\gamma}:= \exp\{ - \gamma \} r_{\wit \gamma- \frac{\log(1+\ve)}{|\log (\tau-2)|}}/2$.
Lemma \ref{lem::blue-first-moment} ensures that, with $w$ being a uniformly chosen vertex,
\[ \Ev_{\gamma}\left[\CB_\infty/n\right] = \sum_{v\in [n]} \Ev_{\gamma}[ \ind_{\{ v \text{ is blue}\}}]/n = \Pv_{\gamma}(w \text{ is blue}  ) \ge 2 p_{\gamma}. \]
Then we use Chebishev's inequality to get
\[ \Pv_{\gamma}( \CB_\infty/ n \le p_{\gamma} ) \le \Pv_{\gamma}( |\CB_\infty/ n  -  \Ev_{\gamma}\left[\CB_\infty/n\right]| \ge p_{\gamma} ) \le \frac{\Var_{\gamma}[ \CB_\infty/n  ]}{ p_{\gamma}^2 }.  \]
To prove the statement of the theorem, we need to show that the rhs tends to zero with $n$.
We write
\[\ba  \Var_{\gamma}[\CB_\infty /n] &\le  \frac{1}{n^2} \sum_{w \in [n]} \Pv_{\gamma}(w \text{ is blue}) \\&+  \frac{1}{n^2} \sum_{w\neq z \in [n]} \Pv_{\gamma}(w,z \text{ is blue}) - \Pv_{\gamma}(w \text{ is blue})\Pv_{\gamma}(z \text{ is blue}). \ea\]
Clearly, the first sum is at most $n$, hence, the first term is at most $1/n$.
For the second term we need to show that for a uniformly chosen pair of vertices $w,z$
\[ \Pv_{\gamma}(w,z \text{ is blue}) \to \Pv_{\gamma}(w \text{ is blue})\Pv_{\gamma}(z \text{ is blue}).\]
This statement is a consequence of the coupling to independent branching processes. Indeed, if we have two uniformly chosen red and blue source vertices $u,v$, and two other uniformly chosen pair of vertices $w,z$, then one can generalise
\cite[Lemma 2.2]{BarHofKom14}: the local neighbourhoods of these four vertices can be coupled to four independent branching processes, where in each of them the degree of the root is an i.i.d copy of $D$, and all other forward degrees are distributed as $D^\star$. This coupling can be achieved by extending \cite[Lemma 2.2]{BarHofKom14} to four vertices in the following way: first we finish coupling the degrees of the local neighbourhoods of the source vertices $u,v$ to the red and blue BP up to total size $n^{\vr^{\sss{(r)}}}$, determining $\Ybn, \Yrn, Q, \gamma$. Then, we imaginarily stop the spreading of these two colors at this point, and we couple the degrees in the exploration process of the local neighbourhood of $w,z$ to a collection of independent random variables, forming another two independent BPs, till their maximum degree reaches $Q$, and then we can continue with everything else.
If we pick $\vr$ in Section \ref{sc::climbup} sufficiently small, the total number of vertices explored in the four processes together is still at most of order of magnitude $n^{\vr/\tau-2}< n^{\vr'}$ (where $\vr'$ is from \cite[Lemma 2.2]{BarHofKom14}), hence, the coupling of the forward degrees is still valid by \cite[Lemma 2.2]{BarHofKom14}.
Clearly, conditioned on the value $Q$ and $\gamma$, (determined by the red and blue BPs), the probability that the roots of the two independent BP-s are blue under this independent coupling is $\Pv_{\gamma}(w\text{ is blue}\,)\Pv_{\gamma}(z\text{ is blue}\,)$.
Hence, the probability that $\Pv_{\gamma}(w,z \text{\ is blue\,}) \neq \Pv_{\gamma}(w\text{ is blue\,})\Pv_{\gamma}(z\text{ is blue\,})$ is exactly the probability that the coupling fails, which tends to zero as  $n\to \infty$ (an estimate on order of magnitude of the coupling error can be found in \cite[Appendix A.2]{BHH10}, where it is shown to be $O(n^{-\ve})$ for some small but positive $\ve$.)

This finishes the proof of Theorem \ref{thm::main2} with $c(q):= p_\gamma$ and $C(q):=1- q_\gamma^{\text{red}}/2.$
Note that $ p_\gamma \to 0, q_\gamma^{\text{red}} \to 1$ as $\gamma \nearrow 1/(\tau-2)$ exactly implies $c(q), C(q) \to 0$ as $q\searrow (\tau-2)$, see Remark \ref{rem::q-gamma-relation}.
\end{proof}

\section{Number of maximum degree vertices}\label{sc::mbn}
In what follows, we aim for the proof of Theorem \ref{thm::main} if $\Ybn/\Yrn<\tau-2$, but we still need plenty of preparation for that.
Recall from Section \ref{sc::slopedown}, page 19 that Case (1) stands for $T_b-T_r\ge 2$, while Case (3)(a) means $T_b-T_r=1$ with $\Ybn/\Yrn <\tau-2$.
We have seen in the proof of Theorem \ref{thm::maxdegree} (see page 30-33), that $D^{\max}_n(\infty)$, the degree of the highest degree vertex that blue can paint, can be expressed by distinguishing four cases $O_>, E_>, O_<, E_<$ (see page 31) representing $T_b-(T_r+1)$ being odd or even and $\tau-1>(\tau-2)^{\bnr} + (\tau-2)^{\bnb}$ holds or not. Recall that $h_n(\Yrn, \Ybn)$ captures the oscillating part that depends on these 4 cases for the normalising oscillating random variable in Theorem \ref{thm::maxdegree}.

In this section we investigate \emph{how many} maximum degree vertices are reached by blue. Here, by `maximum degree' vertex we mean any vertex $w$ that satisfies  $\log (\deg_w) = \log (D^{\max}_n(\infty)) (1+ o_{\Pv}(1))$.  We show that  in Cases $E_>,O_<$, the number of these vertices is in fact so large that it corresponds to an additional factor for the total number of half-edges in maximum degree vertices of blue.

More precisely, let us denote the set of outgoing half edges from the maximal degree vertices by $\CMBN$, and its size by $\MBN$.
 \begin{lemma}\label{lem::verticeswithmaxdegree}
The number of outgoing half-edges from the set of maximal degree vertices, i.e. the sum of the forward degrees of vertices for which \eqref{eq::max-deg-2} holds, satisfies
\[ \frac{\log\MBN}{(\log n)\!\cdot\! (\tau-1)^{-1} h_n^{\text{half-edge}}( \Yrn, \Ybn)  } \toindis \sqrt{\frac{Y_b}{Y_r}},  \]
where  $h_n^{\text{half-edge}}( \Yrn, \Ybn)$ stochastically dominates  $h_n(\Yrn, \Ybn)$, and is a bounded random variable given below in formula \eqref{def::gfunction}.
\end{lemma}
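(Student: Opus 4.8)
The plan is to revisit, case by case, the four scenarios $O_>,E_>,O_<,E_<$ identified in the proof of Theorem \ref{thm::maxdegree} (pages 30--33), and in each of them to count the half-edges incident to the \emph{blue} vertices whose degree is $\simp D_n^{\max}(\infty)$, i.e.\ the vertices for which \eqref{eq::max-deg-2} holds. The first step is to separate two qualitatively different situations. In Cases $O_>$ and $E_<$ (where $2\{t_c\}>1$) blue attains its maximal degree through its \emph{own climbing path}, so $D_n^{\max}(\infty)=u_{\lfloor t_c\rfloor+1}^{\sss{(b)}}(1+o_{\Pv}(1))$ lies in a climbing layer; by Lemma \ref{lem::numberofverticesinGamma} (on the whp event of Lemma \ref{lem::badpaths}) there are only $\exp\{O(\log\log n)\}$ vertices of blue with degree within a poly-logarithmic window of $D_n^{\max}(\infty)$, and their degrees are bounded by the $\widehat u^{\sss{(b)}}$'s, so $\MBN\simp D_n^{\max}(\infty)$ and one may take $h_n^{\text{half-edge}}=h_n$. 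In Cases $E_>$ and $O_<$ (where $2\{t_c\}<1$) blue reaches $D_n^{\max}(\infty)$ by \emph{sharing}, via independent fair coin flips, an avalanche layer with red, so $D_n^{\max}(\infty)\simp \wur_{\lfloor t_c\rfloor}$; this is exactly the case that produces the polynomial gain.

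For the cases $E_>,O_<$ I would establish matching bounds on $\MBN$. Fix $b$ large and set $W_n:=\{v:\wur_{\lfloor t_c\rfloor}(C\log n)^{-b}<D_v\le \wur_{\lfloor t_c\rfloor}(C\log n)^{b}\}$, which is precisely the set of vertices with $\log D_v=\log D_n^{\max}(\infty)(1+o_{\Pv}(1))$. \textbf{Upper bound:} the analysis of Theorem \ref{thm::maxdegree} already shows that whp no vertex of degree $\gg D_n^{\max}(\infty)$ is blue, so $\MBN$ is at most $H(W_n)$, the total number of half-edges incident to $W_n$, and Claim \ref{claim::Sbound} applied with $\omega(n)=C\log n$ to $\CE_{\ge \wur_{\lfloor t_c\rfloor}(C\log n)^{-b}}$ gives $H(W_n)\simp n(\wur_{\lfloor t_c\rfloor})^{2-\tau}$, hence $\MBN\overset{\Pv}{\lesssim} n(\wur_{\lfloor t_c\rfloor})^{2-\tau}$. \textbf{Lower bound:} Lemma \ref{lem::independence} guarantees that $W_n$ sits inside the topmost mixed interval and that its vertices are coloured red and blue by independent fair coins, so the number of blue half-edges incident to $W_n$ equals $\sum_{v\in W_n}D_v\,\xi_v$ with i.i.d.\ $\xi_v\sim\mathrm{Bernoulli}(1/2)$; since every summand satisfies $D_v\le \wur_{\lfloor t_c\rfloor}(C\log n)^{b}$, which is $o(H(W_n)(\log n)^{-K})$ for every $K$ because $H(W_n)\simp n(\wur_{\lfloor t_c\rfloor})^{2-\tau}$ while $\wur_{\lfloor t_c\rfloor}$ is sub-polynomial in $n$, a Hoeffding/Bernstein estimate yields $\sum_{v\in W_n}D_v\xi_v=\tfrac12 H(W_n)(1+o_{\Pv}(1))\simp n(\wur_{\lfloor t_c\rfloor})^{2-\tau}$. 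The part I expect to be the genuine obstacle is this degree-weighted concentration combined with the transfer of the whp-statements of Section \ref{sc::slopedown} and Theorem \ref{thm::maxdegree} (phrased on the graph) to the half-edge bookkeeping: the power-law weights force one to use that $W_n$ is a narrow, truncated window so that no single degree dominates the sum, and one must also control the (negligible, by an argument as in Lemma \ref{lem::red-intervals}) contribution of vertices that blue might exceptionally paint slightly outside $W_n$.

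Finally I would assemble the convergence. Combining the two bounds, in Cases $E_>,O_<$ one gets $\log\MBN/\log n=(2-\tau)\log\wur_{\lfloor t_c\rfloor}/\log n+1+o_{\Pv}(1)$, while in Cases $O_>,E_<$ one gets $\log\MBN/\log n=\log D_n^{\max}(\infty)/\log n+o_{\Pv}(1)$. Substituting \eqref{eq::wur-tc}, \eqref{eq::tc2} and the case-by-case values of $\{t_c\}$ listed in the four cases $(O_>),(E_>),(O_<),(E_<)$, and then using \eqref{eq::t2-t1exp} to write $(\tau-2)^{(T_b-T_r)/2}=\sqrt{\Ybn/\Yrn}\,(\tau-2)^{(\bnr-\bnb)/2}$, produces $\log\MBN/\log n=\sqrt{\Ybn/\Yrn}\,(\tau-1)^{-1}h_n^{\text{half-edge}}(\Yrn,\Ybn)(1+o_{\Pv}(1))$ with $h_n^{\text{half-edge}}$ the bounded oscillating random variable displayed in \eqref{def::gfunction}. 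The pointwise inequality $h_n^{\text{half-edge}}\ge h_n$ (hence the stated stochastic domination) is then immediate: in Cases $O_>,E_<$ the two coincide, and in Cases $E_>,O_<$ it reduces to $\log\wur_{\lfloor t_c\rfloor}/\log n\le 1/(\tau-1)$, which holds because $D_n^{\max}(\infty)$ is a degree in $\CMD$ and in fact has exponent at most $(\tau-2)/(\tau-1)$. The convergence in distribution then follows from $(\Yrn,\Ybn)\toindis(Y_r,Y_b)$ and continuity of $(y_r,y_b)\mapsto\sqrt{y_b/y_r}$, using that $h_n^{\text{half-edge}}$ is bounded away from $0$ and $\infty$.
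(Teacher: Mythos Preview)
Your treatment of Cases $O_>$ and $E_<$ matches the paper's: there, the set of maximal-degree blue vertices is just the climbing layer $\CA_{i_{\max}}^{\sss{(b)}}$, Lemma~\ref{lem::numberofverticesinGamma} gives the polylog bound on its cardinality, and $\MBN\simp D_n^{\max}(\infty)$ so $h_n^{\text{half-edge}}=h_n$ on that event.

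The gap is in Cases $E_>$ and $O_<$. You implicitly assume that at time $T_r+\lfloor t_c\rfloor+1$ blue reaches \emph{every} vertex in the window $W_n$ around $\wur_{\lfloor t_c\rfloor}$, so that by the fair-coin rule $\MBN\simp\tfrac12 H(W_n)\simp n(\wur_{\lfloor t_c\rfloor})^{2-\tau}$. But this is false in the regime $\Ybn/\Yrn<\tau-2$. At time $T_r+\lfloor t_c\rfloor$, blue sits in its last full climbing layer and has only $\simp A_{i_{\max}}^{\sss{(b)}}\,\widehat u_{i_{\max}}^{\sss{(b)}}$ half-edges, with $u_{i_{\max}}^{\sss{(b)}}=(\wur_{\lfloor t_c\rfloor})^{\xi^{-1}}$ and $\xi=(\tau-2)^{-2\{t_c\}}$. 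The product $u_{i_{\max}}^{\sss{(b)}}\cdot\wur_{\lfloor t_c\rfloor}=(\wur_{\lfloor t_c\rfloor})^{1+\xi^{-1}}$ is in general $o(n)$ (for instance whenever $T_b-T_r\ge 3$; try $\tau=2.5$, $T_b-T_r=5$, $\bnr=0.1$, $\bnb=0.9$), so Lemma~\ref{lem::direct_connect} does \emph{not} guarantee that blue connects to each $v\in W_n$. What actually happens is a Binomial: the number of vertices of $W_n$ that blue hits has mean $\simp u_{i_{\max}}^{\sss{(b)}}(\wur_{\lfloor t_c\rfloor})^{2-\tau}$, and after multiplying by the degree $\simp\wur_{\lfloor t_c\rfloor}$ one obtains $\log\MBN=(1+\xi(3-\tau))\log u_{i_{\max}}^{\sss{(b)}}(1+o_\Pv(1))=(\xi^{-1}+3-\tau)\log D_n^{\max}(\infty)(1+o_\Pv(1))$, which is \eqref{eq::logmbn-to-logdmax}. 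This is \emph{not} the same as your $1+(2-\tau)\log\wur_{\lfloor t_c\rfloor}/\log n$: your expression has a constant term $1$ and hence cannot be written as $\sqrt{\Ybn/\Yrn}(\tau-1)^{-1}$ times a bounded oscillating factor, so the substitution you announce at the end would not in fact reproduce \eqref{def::gfunction}. (Incidentally, Lemma~\ref{lem::independence} is stated only for $\Ybn/\Yrn>\tau-2$, and the remark that ``$\wur_{\lfloor t_c\rfloor}$ is sub-polynomial in $n$'' is also incorrect: it is a genuine power of $n$.)

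The fix is exactly the paper's Binomial domination \eqref{eq::bindom3}: bound the number of blue vertices in the truncated top layer $\Gamma^\diamond$ by ${\sf Bin}\bigl(A_{i_{\max}}^{\sss{(b)}}\widehat u_{i_{\max}}^{\sss{(b)}},\,\CE_\xi/\CL_n\bigr)$, use Claim~\ref{claim::Sbound} for $\CE_\xi$, and then multiply by the per-vertex degree $\simp(u_{i_{\max}}^{\sss{(b)}})^{\xi}$ to get \eqref{eq::addgamma} and \eqref{eq::xi-estimate-1}; a matching lower Binomial bound (one blue vertex of degree $\ge u_{i_{\max}}^{\sss{(b)}}$ suffices) gives the other direction.
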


\begin{proof}
The proof is analogous to that of \cite[Lemma 6.3]{BarHofKom14}, but some details differ. Hence, we work out the proof here.
To start with, recall from the proof of Theorem \ref{thm::maxdegree} that in Cases $E_<, O_>$, blue finishes its last jump at a certain layer $\Gamma_{i}^{\sss{(b)}}$. See the argument after \eqref{eq::dtc2} for this, and the four cases on page 31.
Thus, in Case $E_<, O_>$ the statement is a direct consequence of Lemma \ref{lem::numberofverticesinGamma}, since blue is stuck with its maximal degree at layer $\Gamma_{T_r+\lfloor t_c \rfloor-t(n^{\vr})+1}^{\sss{(b)}}$, and hence $\MBN= A_{T_r+\lfloor t_c \rfloor-t(n^{\vr})+1}^{\sss{(b)}} D_n^{\max}(\infty)$. Let us write from now on in this proof $i_{\max}:=T_r+\lfloor t_c \rfloor-t(n^{\vr})+\ind_{E_< \cup O_>}$. Recall that $i_{\star \sss{(b)}}$ denotes the total number of layers blue can go through if red would not be present (see \eqref{eq::b-definitions}), so we clearly have $i_{\max}\le i_{\star\sss{(b)}}$.
 Then, for Cases $E_<, O_>$, we get the bound
\[\log \MBN \le \log D_n^{\max}(\infty)+ \log  A_{i_{\max}}^{\sss{(b)}},\] and since $i_{\max} < i_{\star \sss{(b)}}$ is whp a bounded random variable, the last term  disappears when we divide by $\log n$.

We are left with handling the cases were the last jump of blue is not a full layer, i.e.,\ Cases $E_>, O_<$. In these cases, after reaching layer $\Gamma_{i_{\max}}^{\sss{(b)}}$, blue still jumps up, but not a full layer:
 due to the presence of red color the forward degrees are truncated at $\wit u_{\lfloor t_c \rfloor}^{\sss{(r)}}$.

First, we apply Lemma \ref{lem::numberofverticesinGamma} to see that $\log A_{i_{\max}}^{\sss{(b)}}$ in the last `full' layer $\Gamma_{i_{\max}}^{\sss{(b)}}$ is small.
Let us recall the notation $u_{i_{\max}}^{\sss{(b)}}=D_n^{\max}(T_r+\lfloor t_c \rfloor)$, and the fact that the `size' of the last jump of blue in the exponent of $(\tau-2)$ is $2\{t_c\}<1$  at time $T_r+\lfloor t_c \rfloor +1$ for Case $E_<, O_>$ (see \eqref{eq::dmax-tc} and \eqref{eq::wur-tc} to get \eqref{eq::dtc2}). Let us introduce as the extra factor of the $\log$(degrees)/$\log n$ reached at time $T_r+\lfloor t_c \rfloor+1$ by blue
\be\label{def::xi} \xi:=(\tau-2)^{-2\{t_c\} }, \ee  and then we introduce a new layer
\[ \Gamma^\diamond:=\left\{ v\in \CMD:   d_v \ge  \frac{(u^{\sss{(b)}}_{i_{\max}})^\xi}{(C\log n)^{1/(\tau-2)} } \right\},\]
and we denote the number of half-edges in this set by $\CE_{\xi}$.

By Lemma \ref{lem::badpaths}, whp blue does not reach higher degree vertices than $\widehat u_{i_{\max}}^{\sss{(b)}}$ at time $T_r+\lfloor t_c \rfloor$. So,
 conditioned that there are $A_{i_{\max}}^{\sss{(b)}}$ many vertices in layer $\Gamma_{i_{\max}}^{\sss{(b)}}$, the number of vertices in $\Gamma^\diamond$ to which blue is connected to is dominated by
 \be\label{eq::bindom3}\mathcal B \cap \Gamma^\diamond \  {\buildrel {d}\over{ \le }}\     {\sf Bin} \left( A_{i_{\max}}^{\sss{(b)}} \widehat u_{i_{\max}}^{\sss{(b)}}, \frac{\CE_{\xi}}{\CL_n (1+o(1))}  \right), \ee
where we recall that $\CL_n$ is the total number of half-edges in $\CMD$. We can bound
\[ \CE_\xi\le n (u^{\sss{(b)}}_{i_{\max}})^{-\xi(\tau-2)} C\log n\]  by using Claim \ref{claim::Sbound}.
 Thus, conditioned on $A_{i_{\max}}^{\sss{(b)}}$, on the event $\CL_n/n\in \{\Ev[D]/2, \Ev[D]\}$ the expected value of the binomial variable in \eqref{eq::bindom3} is whp bounded from above by
 \[  \frac{2 C_1^2}{c_1} (C \log n) A_{i_{\max}}^{\sss{(b)}} \widehat u_{i_{\max}}^{\sss{(b)}}  (u_{i_{\max}}^{\sss{(b)}})^{-\xi(\tau-2)}
  = \frac{2 C_1^2}{c_1} (C \log n) A_{i_{\max}}^{\sss{(b)}}\frac{\widehat u_{i_{\max}}^{\sss{(b)}}}{u_{i_{\max}}^{\sss{(b)}}} (u_{i_{\max}}^{\sss{(b)}})^{1-\xi(\tau-2)}.  \]
This gives an upper bound on the number of \emph{vertices} with degree at least $(u^{\sss{(b)}}_{i_{\max}})^\xi/ (C \log n)^{1/(\tau-2)}$, thus the total number of \emph{half-edges} going out from maximal degree vertices can be bounded from above by
 \be\label{eq::addgamma} \frac{2 C_1^2 (C \log n)^{\frac{\tau-3}{\tau-2}}}{c_1} A_{i_{\max}}^{\sss{(b)}}\frac{\widehat u_{i_{\max}}^{\sss{(b)}}}{u_{i_{\max}}^{\sss{(b)}}} (u_{i_{\max}}^{\sss{(b)}})^{1+\xi(3-\tau)}.\ee
  Since $i_{\max}\le i_{\star\sss{(b)}}$ is bounded, we can use \eqref{eq::aifinal}, \eqref{eq::uibar} with \eqref{def::ui} to see that
  \[ A_{i_{\max}}^{\sss{(b)}}\frac{\widehat u_{i_{\max}}^{\sss{(b)}}}{u_{i_{\max}}^{\sss{(b)}}}\le (A_{i_{\max}}^{\sss{(b)}})^2 \le (A_{i_{\star \sss{(b)}}}^{\sss{(b)}})^2\] is some bounded power of $C \log n$, and thus it disappears when taking logarithm and dividing by $\log n$.
   Hence, we get that in Cases $E_<, O_>$ with $\xi=(\tau-2)^{-2\{t_c\}}$, whp
   \be\label{eq::xi-estimate-1}\log \MBN\le(1+\xi(3-\tau))\log (u_{i_{\max}}^{\sss{(b)}} ).\ee
The \emph{lower bound} on the number of blue vertices in $\Gamma^\diamond$ is easier: we can establish a similar binomial domination argument (now from below), using  that there is at least one blue vertex in $\Gamma_{i_{\max}}^{\sss{(b)}}$ with degree at least $u_i^{\sss{(b)}}$, and that the number of half edges $\CE_\xi\  {\buildrel d \over \ge y_n} \mathrm{Bin}(n, c_1 y_n^{1-\tau})$, with $y_n= (u^{\sss{(b)}}_{i_{\max}})^\xi/(C\log n)^{1/(\tau-2)} $, and using the concentration of the binomial random variables. The estimate gives the same order of magnitude as the rhs of \eqref{eq::xi-estimate-1}, with small error probabilities: the details are left for the reader.

 Recall the definition of $\xi$ from \eqref{def::xi} and that in Cases $E_<, O_>$
  \[ \log(D_n^{\max}(\infty)) = \xi \log u_{i_{\max}}^{\sss{(b)}} (1+o_{\Pv}(1)) \]
  holds. Comparing this with \eqref{eq::xi-estimate-1} we arrive at
  \be\label{eq::logmbn-to-logdmax}  \log \MBN = (\xi^{-1} + (3-\tau) )  \log D_{\max}^{\sss{(b,n)}}(\infty)  (1+o_{\Pv}(1)).\ee
  To get the final expression for total number of half-edges at the last up-jump, we need to multiply the function $h_n(\Yrn, \Ybn)$ in \eqref{eq::h1} by $(\xi^{-1} + (3-\tau))$ in Cases $O_<, E_>$, using that $2\{t_c\}=\bnb-\dnr$ for Case $E_>$, while $2\{t_c\}=1+\bnb-\dnr$ for Case $O_<$. As a result, we obtain
 \[ \frac{ \log \MBN} {(\tau-1)^{-1}\log n} = \sqrt{\frac{\Ybn}{\Yrn}} h^{\text{half-edge}}_n( \Yrn, \Ybn) (1+o_{\Pv}(1)),\]
 with
 \be\label{def::gfunction} \ba &h^{\text{half-edge}}_n( \Yrn, \Ybn):=\ind_{ E_< \cup O_>} (\tau-2)^{(\bnr+\bnb-1 - \ind_{O_>})/2} +\\
&+\ind_{ E_> \cup O_<} (\tau-2)^{(\bnr-\bnb-1 - \ind_{O_<})/2} ((\tau-1)-(\tau-2)^{\bnr})(3-\tau)\\
&+\ind_{ E_> \cup O_<} (\tau-2)^{(\bnr+\bnb- \ind_{E_>})/2}. \ea\ee Dividing by $h^{\text{half-edge}}_n( \Yrn, \Ybn)$ and using that $(\Yrn, \Ybn)\toindis (Y_r, Y_b)$ finishes the proof of Lemma \ref{lem::verticeswithmaxdegree}.
\end{proof}

Before moving on to the next section, let us introduce the time when the maximal degree is reached, which is nothing else but the time of the last possible up-jump of blue, i.e., for all four cases $O_<, O_>, E_<, E_>$ it is
\be\label{def::t_b}\ba t_b&:=T_r+\lfloor t_c \rfloor+1.
\ea \ee
\section{Path counting methods for blue}\label{sc::path-counting} We have seen in Section \ref{sc::mbn} that if $\Ybn/\Yrn<(\tau-2)$, blue has $\MBN$ many half-edges of highest degree at time $t_b$. At this time, only $o(n)$ vertices are reached by red and blue together\footnote{This statement needs verification, but it follows simply from the fact that typical distances in $\CMD$ are $2 \log \log n/|\log (\tau-2)|+O_{\Pv}(1)$ while $t_b= \log \log n / |\log (\tau-2)| + O_{\Pv}(1)$, hence most vertices have  not been reached by any color at time $t_b$ yet.}
-- most of the vertices are still not colored. Thus, it still remains to determine how many vertices blue can reach \emph{after} time $t_b$. We do this via giving a matching upper and a lower bound on how many vertices blue occupies in this last phase. This part is a direct application of the methods described in \cite[Section 7]{BarHofKom14}, thus, we only describe the idea and check that each condition in the lemmas there is satisfied.

For the upper bound, the idea is that we count the close neighborhood of the half-edges that are just occupied at time $t_b$. Since the red avalanche continues to be in its avalanche phase and occupies all vertices at lower and lower degrees as time passes, the spreading of blue is more and more restricted, so this local neighborhood is quite small. We call this the \emph{optional cluster} of blue. We give a concentration result on its size. (That is, a concentrated upper bound on what blue can get.)

For the lower bound, we estimate how many vertices red might `bite out' of this optional cluster. This can happen since even a constant degree vertex might be close to both colors. We show that this intersection of the clusters is negligible compared to the size of the optional cluster.

We start describing the upper bound -- the optional cluster of blue -- in more detail. At time $t_b$, the half-edges in the set $\CMBN$ start their own \emph{exploration clusters}, i.e., an exploration process from the half-edge to not-yet
 occupied vertices. At time $t_b+ j$, we color every vertex $v$ blue, whose distance is exactly $j$ from some half-edge $h$ in $\CMBN$, and the degrees of vertices on the path from $h$ to $v$ are less than what red occupied by that time.  That is, the degree of the $j+1$st vertex on the path must be less than $\wit u_{ t_b-T_r+ j-1}^{\sss{(r)}}$.
 We do this via estimating the number of paths with degree restrictions from $\CMBN$ and call this the \emph{optional cluster of blue}, and denote the set by $\opt_{\max}$ and its size by $\ops_{\max}$. Corollary \ref{cor::chebisev} below determines its asymptotic behavior.

On the other hand, not just half edges in $\CMBN$ can gain extra blue vertices: from half edges in $\CA_{i_{\max}-z}^{\sss{(b)}},\  z=0,1,2\dots$ the explorations start earlier (at time $t_b-z$) towards small degree vertices. Let us denote the vertices reached via half-edges from layer $\CA_{i_{\max}-z}^{\sss{(b)}}\setminus \CA_{i_{\max}-z+1}^{\sss{(b)}}$ by $\opt_{-z}$ and its size by $\ops_{-z},\, z\ge 0$. At time $t_b- z+ j$, we color every vertex $v$ blue, whose distance is exactly $j$ from a half-edge $h$ in $\CA_{i_{\max}-z}^{\sss{(b)}}$, and the degrees of vertices on the path from $h$ to $v$ are less than $u_{i_{\max}-z+j}^{\sss{(b)}}$, and also what red has already occupied at that moment, i.e. the degree of the $j$-th vertex on the path must be less than $\min\{u_{i_{\max}-z+j}^{\sss{(b)}}, \wit u_{ t_b-z+ j-1-T_r}^{\sss{(r)}}$\}.  This extra truncation is needed since we do not want to count vertices explored from $\CA_{i_{\max}-z}^{\sss{(b)}}$ towards $\CA_{i_{\max}-z+1}^{\sss{(b)}}$, hence the additional restriction. We show that the total number of optional blue vertices in lower layers, $\sum_{z\ge 0}\ops_{-z}$ with these additional explorations is at most the same order as $\ops_{\max}$ in Lemma \ref{lem::opt-z-lemma}. %We give an upper and a lower bound for $\ops_{\max}$.

For the lower bound of what blue can occupy after $t_b$, note that not every vertex in $\opt_{\max}$ will be occupied by blue: red can still bite out some parts of these vertices by simply randomly being close to some parts of the blue cluster. We estimate the number of vertices in the intersection of $\opt_{\max}$ and red, and then subtracting the gained estimate from the lower bound on $\ops_{\max}$ gives a lower bound on what blue occupies from the graph after $t_b$, see Lemma \ref{lem::red-blue-intersection} below. Now we turn to the calculations.

As before, we use $\Pv_{Y,n}(\cdot), \Ev_{Y,n}[\cdot]$ defined in \eqref{def::py-ey}.

We introduce the expected truncated degree of a vertex that is distance $j$ away from the set $\CMBN$ by
\be\label{def::nuj} \nu_j:=\Ev_{Y,n}\left[D^\star \ind_{\left\{D^\star< \wit u_{t_b+j-1-T_r}^{\sss{(r)}}\right\}} \right],  \ee
Then \eqref{eq::d-star-indicator} yields an upper bound on $\nu_j$, and the same expression with $C_1^\star$ replaced by $c_1^\star$ serves as a lower bound.
Let us also define
\[ \kappa_j:=\frac{1}{\Ev[D]}\Ev\left[D(D-1)(D-2)\ind_{\left\{D< \wit u_{t_b+ j-1-T_r}^{\sss{(r)}}\right\}}\right] \]
Then, again by \eqref{eq::F},
\[ \frac{c_1}{\Ev[D]} \left(\wit u_{t_b+ j-1-T_r}^{\sss{(r)}}\right)^{4-\tau}\le \kappa_j \le  \frac{C_1}{\Ev[D]} \left(\wit u_{ t_b+ j-1-T_r}^{\sss{(r)}}\right)^{4-\tau}.\]
Recall the definition of a path from page 13. This time, let us call a path of length $k$ from $\CMBN$ with vertices $\left(\pi_j\right)_{j\le k}$ \emph{good} if $\pi_j\le \wit u_{ t_b+ j-1-T_r}^{\sss{(r)}}$, and \emph{good-directed}  if  $\wit u_{ t_b+ j-T_r}^{\sss{(r)}}\le \pi_j\le \wit u_{ t_b+ j-1-T_r}^{\sss{(r)}}$.
\begin{lemma}\label{lem::wihZk} For $k\ge 0$, denote by $\ops_{\max}(k), \ops_{\max}^{\mathrm{d}}(k)$ the number of vertices that are on good and good-directed paths of distance $k$ away from $\CMBN$, respectively.  Then,
\be \label{eq::wihZk_expected}   \MBN \cdot \prod_{j=1}^{k} \nu_j \le \Ev[\ops_{\max}(k)\mid \MBN ]\le  \MBN \cdot \prod_{j=1}^{k} \nu_j \cdot \left(1+O\left(\frac{k^2}{n}\right)\right)\ee
and
\be \label{eq::wihZk_expected2}    \MBN \cdot \prod_{j=1}^{k} (\nu_j-\nu_{j+1}) \le \Ev[\ops_{\max}^{\mathrm{d}}(k)\mid \MBN ]\le  \MBN \cdot \prod_{j=1}^{k} (\nu_j-\nu_{j+1}) \cdot \left(1+O\left(\frac{k^2}{n}\right)\right)\ee
while for the variance of the latter:
\be \ba \label{eq::wihZk_variance}&\Vv[\ops_{\max}^{\mathrm{d}}(k)|\MBN] \le \Ev[\ops_{\max}^{\mathrm{d}}(k)|\MBN] \\
&\ + \overline{\Ev[\ops_{\max}^{\mathrm{d}}(k)|\MBN]}^2 \cdot \left(\frac{\nu_{k-1}}{(\nu_{k-1}-1)}\frac{\kappa_1}{\nu_{1}^2}\left(\frac{1}{\MBN} + \frac{2}{\CL_n}\right)+ \frac{\nu_{k-1}^2}{(\nu_{k-1}^2-1)^2}\frac{\kappa_1^2}{\nu_{1}^4} \frac{2}{\MBN  \CL_n}+e_{k,n}\right), \ea\ee
where  $\overline{\Ev[\ops_{\max}^{\mathrm{d}}(k)|\MBN]}$ means the upper bound on $\Ev[\ops_{\max}^{\mathrm{d}}(k)|\MBN]$ in \eqref{eq::wihZk_expected2}, and the error term $e_{k,n}$ is given by
\be\ba\label{def::ekn} e_{k,n}&=\left( \prod_{i=1}^{k} \frac{\CL_n-2i+1}{\CL_n-2i-2k+1}-1\right)\\
&\ \ +   \left(1+\frac{\kappa_1\nu_{k-1}}{\nu_1^2}\frac{1}{\MBN} \right)\left(1+\frac{\kappa_1 \nu_{k-1}}{\nu_1^2 }\frac{1}{c\CL_n}\right)\frac{k}{\nu_{k-1}-1}\left(e^{k^2 \kappa_1^2 \nu_{k-1}/(\nu_{1}^4\CL_n)}-1\right).\ea\ee
\end{lemma}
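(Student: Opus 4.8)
The plan is to run the path-counting computation of \cite[Section 7]{BarHofKom14}, adapted to the truncation thresholds $\wit u^{\sss{(r)}}_{\cdot}$ that arise here for $\la=1$, and to check that the conditions needed there remain in force. First I would fix the conditioning: work conditionally on $\Yrn,\Ybn,n$ and on $\MBN$, and record that at time $t_b$ only $o(n)$ vertices have been explored --- this follows since typical distances in $\CMD$ are $2\log\log n/|\log(\tau-2)|+O_\Pv(1)$ while $t_b=\log\log n/|\log(\tau-2)|+O_\Pv(1)$. Consequently \cite[Lemma 2.2]{BarHofKom14} together with \cite[Proposition 4.7]{BHH10} lets me couple, with error $o(1)$, the forward degrees of the vertices reached while growing the optional cluster to i.i.d.\ variables with distribution $F^\star$ from \eqref{def::size-biased1}. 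This places us in the branching-process-with-truncation picture: pair half-edges one at a time, and for each new vertex $\pi_j$ reached at step $j$ keep its forward degree truncated at $\wit u^{\sss{(r)}}_{t_b+j-1-T_r}$ for good paths, respectively restricted to the window $[\wit u^{\sss{(r)}}_{t_b+j-T_r},\wit u^{\sss{(r)}}_{t_b+j-1-T_r}]$ for good-directed paths.

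For the expectations I would write $\ops_{\max}(k)$ as a sum of indicators over length-$k$ good paths rooted at the $\MBN$ half-edges of $\CMBN$, sum over degree sequences, and use that the probability that a prescribed sequence of $2k$ half-edges is paired into a path equals $\prod_{i=1}^{k}(\CL_n-2i+1)^{-1}$. The expected number of such paths then factorizes over the $k$ steps, each step contributing $\nu_j$ (from \eqref{def::nuj}) for a good path and $\nu_j-\nu_{j+1}$ for a good-directed path, which yields the lower bounds in \eqref{eq::wihZk_expected}--\eqref{eq::wihZk_expected2}. The upper bounds come from the without-replacement correction $\prod_{i=1}^{k}\CL_n/(\CL_n-2i+1)=1+O(k^2/\CL_n)=1+O(k^2/n)$, plus the observation that restricting to paths ending at distinct vertices discards only a fraction $O(k^2/n)$ of the count --- exactly as in \cite[Section 7]{BarHofKom14}, once one checks that $k=O(\log\log n)$ on the relevant range so that $k^2/n\to 0$.

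For the variance I would expand the second moment of $\ops_{\max}^{\mathrm{d}}(k)$ as a count of ordered pairs of length-$k$ good-directed paths and split according to how the two paths relate. Pairs ending at the same vertex (the diagonal) give the first-moment term $\Ev[\ops_{\max}^{\mathrm{d}}(k)\mid\MBN]$; vertex-disjoint pairs contribute at most $\overline{\Ev[\ops_{\max}^{\mathrm{d}}(k)\mid\MBN]}^2(1+e_{k,n})$; and pairs that agree on an initial segment and then branch carry, at the branch vertex, one incoming and two outgoing half-edges, hence a third-moment factor governed by $\kappa_1$. Summing over the branch position and over whether the two paths issue from the same half-edge of $\CMBN$ (probability $\asymp 1/\MBN$) or from distinct half-edges paired into a common vertex (probability $\asymp 1/\CL_n$), each additional shared step contributes a ratio $\kappa_1/\nu_1^2$, while the geometric sum over the branch position produces the factors $\nu_{k-1}/(\nu_{k-1}-1)$ and $\nu_{k-1}^2/(\nu_{k-1}^2-1)^2$ and, together with the pairing corrections, the error term $e_{k,n}$ of \eqref{def::ekn}. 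This is precisely the scheme of \cite[Section 7]{BarHofKom14}, the only change being that the truncation levels $\wit u^{\sss{(r)}}_{\cdot}$, and hence $\nu_j,\kappa_j$, take their $\la=1$ values.

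The hard part will be the bookkeeping for the variance: organising the many shared-prefix cases and collapsing the without-replacement and geometric-series corrections into the stated $e_{k,n}$, while simultaneously verifying that the $\la=1$ avalanche thresholds $\wit u^{\sss{(r)}}_{\cdot}$ --- which decay faster in $j$ than their $\la>1$ analogues --- keep $\nu_{k-1}>1$ and preserve the monotonicity that makes the geometric series converge on the whole range of $k$ we use. Once these checks are done, the estimates of \cite[Section 7]{BarHofKom14} transfer directly and give the claimed bounds.
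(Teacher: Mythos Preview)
Your proposal is correct and follows essentially the same route as the paper: the paper does not reprove this lemma but simply observes that the path-counting computation in \cite[Appendix]{BarHofKom14} goes through verbatim once one sets $\la=1$, and your outline of that computation (first moment via products of $\nu_j$ with the without-replacement correction $\prod_i \CL_n/(\CL_n-2i+1)$, second moment by decomposing pairs of paths according to diagonal/disjoint/shared-prefix structure) matches it. One small framing remark: the argument in \cite{BarHofKom14} is carried out directly in the configuration model by summing $\prod_i d_{\pi_i}(d_{\pi_i}-1)/\CL_n$ over admissible vertex sequences, rather than via the BP coupling you invoke at the start; the $\nu_j$ and $\kappa_j$ then enter as empirical truncated moments of the degree sequence, not as i.i.d.\ expectations, so you may want to drop the appeal to \cite[Lemma 2.2]{BarHofKom14} and work straight with the pairing probabilities, but this does not affect the substance of your plan.
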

 The proof of this lemma uses path counting methods and is similar to that of \cite[Lemma 5.1]{Janson10}. Similar techniques can also be found in \cite[Volume II.]{H10}. The detailed proof can be found in  \cite[Appendix]{BarHofKom14}, where $\la=1$ can be set.

Now we state the immediate corollary of Lemma \ref{lem::wihZk}. Recall the definition of $t_b$ from \eqref{def::t_b}.
\begin{corollary}[Chebyshev's inequality for blue vertices]\label{cor::chebisev} Take $c_3\le (1-\ve) |\log(\tau-2)|^{-1}$ and any $k\le c_3\log\log n.$
Then, conditioned on the number of blue half-edges $\MBN$ at time $t_b$, the number of vertices optionally occupied by blue up to time $t_b+  k$ satisfies that, conditionally on $\MBN$,
\[\frac{\log (\ops_{\max}(k))}{\log \MBN + \sum_{i=1}^{k-1} \log \nu_i} \toinp 1. \]
\end{corollary}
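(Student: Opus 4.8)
The plan is to prove the statement first for the directed count $\ops_{\max}^{\mathrm d}(k)$ via Chebyshev's inequality, using the moment estimates \eqref{eq::wihZk_expected2} and \eqref{eq::wihZk_variance} of Lemma \ref{lem::wihZk}, and then to squeeze $\ops_{\max}(k)$ between $\ops_{\max}^{\mathrm d}(k)$ (a lower bound, since every good-directed path is good) and a Markov bound coming from the first moment \eqref{eq::wihZk_expected} (an upper bound). Throughout we condition on $\MBN$ and on $\Yrn,\Ybn$.

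First I would record how the truncation degrees behave along the relevant range. Since $t_b-T_r=\lfloor t_c\rfloor+1=\Op(1)$, for $k\le c_3\log\log n$ with $c_3\le(1-\ve)|\log(\tau-2)|^{-1}$ the index $\ell=t_b+j-1-T_r$ stays below $(1-\ve)\log\log n/|\log(\tau-2)|+\Op(1)$ for all $j\le k$, so solving \eqref{eq::wideui_recursion} (cf.\ \eqref{eq::ul}) gives $\wit u_{t_b+j-1-T_r}^{\sss{(r)}}\ge\exp\{(\log n)^{\ve'}\}$ whp, for all $j\le k$ and some $\ve'>0$. By \eqref{eq::d-star-indicator} and its lower counterpart this yields $\nu_j\asymp(\wit u_{t_b+j-1-T_r}^{\sss{(r)}})^{3-\tau}\to\infty$ for every $j\le k$, while $\wit u_{\ell+1}^{\sss{(r)}}/\wit u_\ell^{\sss{(r)}}=C\log n\,(\wit u_\ell^{\sss{(r)}})^{\tau-3}\le\exp\{-c(\log n)^{\ve'}\}$ forces $\nu_{j+1}/\nu_j\le\exp\{-c(\log n)^{\ve'}\}$ for all $j\le k-1$. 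Consequently $\nu_j-\nu_{j+1}=\nu_j(1-o(1))$ uniformly in $j\le k$, so $\prod_{j=1}^k(\nu_j-\nu_{j+1})$ and $\prod_{j=1}^k\nu_j$ agree up to a $(1+o(1))$ factor and $\sum_{j=1}^k\log(1-\nu_{j+1}/\nu_j)\to0$.

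Next I would check that every error term in the variance bound \eqref{eq::wihZk_variance}--\eqref{def::ekn} tends to $0$. The factors $\nu_{k-1}/(\nu_{k-1}-1)$, $k/(\nu_{k-1}-1)$ are $1+o(1)$ and $o(1)$ since $\nu_{k-1}\to\infty$ much faster than $k\asymp\log\log n$; the terms carrying $1/\MBN$ vanish because $\MBN=n^{\Theta_\Pv(1)}$ by Lemma \ref{lem::verticeswithmaxdegree}, the coefficient being tight and bounded away from $0$; and for the terms carrying $1/\CL_n\asymp1/n$ the decisive arithmetic is that $\kappa_1/\nu_1^2\asymp(\wit u_{t_b-T_r}^{\sss{(r)}})^{\tau-2}\le n^{(\tau-2)/(\tau-1)}(C\log n)^{O(1)}$ and $\tfrac{2(\tau-2)}{\tau-1}-1=\tfrac{\tau-3}{\tau-1}<0$, so $k^2\kappa_1^2\nu_{k-1}/(\nu_1^4\CL_n)\le n^{(\tau-3)/(\tau-1)}\exp\{c(\log n)^{\ve'}\}(C\log n)^{O(1)}\to0$, which also kills $e^{k^2\kappa_1^2\nu_{k-1}/(\nu_1^4\CL_n)}-1$, while the first summand of $e_{k,n}$ is $O(k^2/\CL_n)=o(1)$. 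Hence $\Vv[\ops_{\max}^{\mathrm d}(k)\mid\MBN]\le\Ev[\ops_{\max}^{\mathrm d}(k)\mid\MBN]+o(1)\,\overline{\Ev[\ops_{\max}^{\mathrm d}(k)\mid\MBN]}^2$, and since the expectation tends to infinity, Chebyshev's inequality gives $\ops_{\max}^{\mathrm d}(k)/\Ev[\ops_{\max}^{\mathrm d}(k)\mid\MBN]\toinp1$.

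Finally I would assemble the two-sided bound: $\ops_{\max}(k)\ge\ops_{\max}^{\mathrm d}(k)\ge\tfrac12\MBN\prod_{j=1}^k(\nu_j-\nu_{j+1})$ whp by the previous step and \eqref{eq::wihZk_expected2}, and $\ops_{\max}(k)\le(\log n)\,\MBN\prod_{j=1}^k\nu_j$ whp by Markov's inequality applied to \eqref{eq::wihZk_expected} (if ``up to time $t_b+k$'' is meant, the growth $\nu_{j+1}\to\infty$ makes $\sum_{j\le k}\ops_{\max}(j)$ of the same logarithmic order). Taking logarithms and using $\sum_{j=1}^k\log(1-\nu_{j+1}/\nu_j)\to0$ yields $\log\ops_{\max}(k)=\log\MBN+\sum_{j=1}^k\log\nu_j+\Op(\log\log n)$; since $\log\MBN+\sum_{i=1}^{k-1}\log\nu_i$ is of order $\log n$ (the sum being dominated by its first few terms, each of order $\log n$) and $\log\nu_k$, $\log\log n$, $\log2$ are $o(\log n)$, dividing by this normaliser gives the claim. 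I expect the main obstacle to be the bookkeeping of the third paragraph: one must confirm that the power of $n$ from $(\tau-3)/(\tau-1)<0$ really dominates the sub-polynomial factors $k^2$ and $\exp\{c(\log n)^{\ve'}\}$, which is precisely where the hypothesis $k\le c_3\log\log n$ with $c_3\le(1-\ve)|\log(\tau-2)|^{-1}$ enters.
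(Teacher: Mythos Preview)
Your approach is essentially the paper's: apply Chebyshev to the good-directed count using the moment bounds of Lemma~\ref{lem::wihZk}, then sandwich $\ops_{\max}(k)$ between $\ops_{\max}^{\mathrm d}(k)$ and a Markov upper bound. The paper organises the same ingredients via a triangle-inequality split $J\le J_1+J_2+J_3$ (Markov on $\ops_{\max}(k)-\ops_{\max}^{\mathrm d}(k)$, Chebyshev on $\ops_{\max}^{\mathrm d}(k)$, deterministic comparison of expectations), but the substance is the same.

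There is, however, one genuine gap and one minor bookkeeping slip. The gap is in your treatment of the $1/\MBN$ term in the variance. You write that ``the terms carrying $1/\MBN$ vanish because $\MBN=n^{\Theta_\Pv(1)}$'', but the coefficient of $1/\MBN$ is $\kappa_1/\nu_1^2\asymp(\wit u_{\lfloor t_c\rfloor+1}^{\sss{(r)}})^{\tau-2}$, which is \emph{also} a power of $n$; merely knowing that the exponent of $\MBN$ is bounded away from zero does not beat it. Your crude upper bound $\kappa_1/\nu_1^2\le n^{(\tau-2)/(\tau-1)}$ would require $\log\MBN/\log n>(\tau-2)/(\tau-1)$, which fails when $\Ybn/\Yrn$ is small. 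The paper closes this with a structural comparison: by the recursion \eqref{eq::wideui_recursion}, $\kappa_1/\nu_1^2\asymp\wit u_{\lfloor t_c\rfloor+2}^{\sss{(r)}}$, and since at time $t_b$ the red avalanche has already passed below blue's maximal degree (otherwise blue could still jump), $\kappa_1/\nu_1^2=o(D_n^{\max}(\infty))\le o(\MBN)$. You should insert this step.

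The bookkeeping slip is your bound $\nu_{k-1}\le\exp\{c(\log n)^{\ve'}\}$ in the analysis of $k^2\kappa_1^2\nu_{k-1}/(\nu_1^4\CL_n)$: this holds only when $k$ is near $c_3\log\log n$, not for small $k$, where $\nu_{k-1}$ is a genuine power of $n$. The fix is to use monotonicity, $\nu_{k-1}\le\nu_1$, which gives $k^2\kappa_1^2/(\nu_1^3\CL_n)\asymp k^2(\wit u_{\lfloor t_c\rfloor+1}^{\sss{(r)}})^{\tau-1}/n\to0$ since $\anr(\tau-2)^{\lfloor t_c\rfloor}(\tau-1)<1$ always. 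With these two corrections your argument goes through.
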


\begin{proof}
 In this proof below, all expectations and probabilities are conditional wrt.\ $\MBN$: since $\MBN$ is a deterministic function $n,\Yrn, \Ybn,$ we express this conditioning by using the $\Pv_{Y,n}(\cdot), \Ev_{Y,n}[\cdot]$ notation.  Let us write
\[ J:=\Pv_{Y,n}\left( \big|\ops_{\max}(k) -\Ev_{Y,n}[\ops_{\max}(k) ]\big|\ge \frac12 \Ev_{Y,n}[\ops_{\max}(k)] \right).\]
Then by a simple triangle inequality,
\be\label{eq::triangle} \ba J &\le \Pv_{Y,n}\left(|\ops_{\max}(k) - \ops_{\max}^{\text{d}}(k)| \ge \Ev_{Y,n}[\ops_{\max}(k)]/6 \right) \\
&+\Pv_{Y,n}\left( \big|\ops_{\max}^{\text{d}}(k) -\Ev_{Y,n}[\ops_{\max}^{\mathrm{d}}(k) ]\big|\ge \Ev_{Y,n}[\ops_{\max}(k)]/6 \right)\\
&+\Pv_{Y,n}\left( |\Ev_{Y,n}[\ops_{\max}^{\text{d}}(k) - \Ev_{Y,n}[ \ops_{\max}(k)] |\ge  \Ev[\ops_{\max}(k)]/6 \right)\\
&:= J_1+J_2+J_3. \ea\ee
We can apply Chebyshev's inequality on the second term, using the lower bound in \eqref{eq::wihZk_expected}, the upper bound in \eqref{eq::wihZk_expected} as an upper bound on $\overline{\Ev[\ops_{\max}^{\mathrm{d}}(k)|\MBN]}$, and the variance formula  in Lemma \ref{lem::wihZk}:
\be\label{eq::J}\ba J_2&\le \frac{9 \Vv[\ops_{\max}^{\text{d}}(k)| \MBN]}{\Ev[\ops_{\max}(k)|\MBN]^2}  \\
&\le \left( \frac{1}{\MBN } \frac{\kappa_1}{\nu_1^2}\frac{\nu_{k-1}}{(\nu_{k-1}-1)} +\frac{ \kappa_1^2}{\nu_{1}^4 \CL_n}  \frac{2k^4 \nu_{k-1}}{\nu_{k-1}-1} \Big(1+ O\big(\frac{1}{\MBN } \frac{\kappa_1}{\nu_1^2}\big)\Big) \right) \left(1+ O(\tfrac{k^2}{n})\right).  \ea\ee
The term containing $\kappa_1^2/\nu_{1}^4 \CL_n$ comes from the Taylor expansion of the exponential factor in the formula for $e_{k,n}$.
We have to verify that the rhs to $0$. For this we need $\kappa_1/(\nu_1^2 \MBN) \to 0$ and also $\kappa_1^2/(\nu_1^4 \CL_n) \to 0$.

For $\kappa_1/(\nu_1^2 \MBN)$ note that $\MBN\ge D_n^{\max}(\infty)$, since it counts the number of half-edges with maximal degree $D_n^{\max}(\infty)$. Further, $\kappa_1/\nu_1^2= (\wit u_{ t_b -T_r}^{\sss{(r)}})^{\tau-2}=(\wit u_{\lfloor t_c \rfloor+1}^{\sss{(r)}})^{\tau-2}=o( D_n^{\max}(\infty))$, since it is not hard to see that at time $t_b$, the degree above which red occupies everything (i.e., $\wit u_{\lfloor t_c\rfloor+1}^{\sss{(r)}}$) is already less than $D_n^{\max}(\infty)$,
 otherwise blue could have still increased its maximal degree at $t_b+1$ by an extra jump.
  (Alternatively, compare the exact values of $D_n^{\max}(\infty)$ in \eqref{eq::max-deg-2}, and compare it to that
 of $(\wit u_{  t_c \rfloor+1}^{\sss{(r)}})^{(\tau-2)^2}$, which can be derived by multiplying the rhs of \eqref{eq::wur-tc} by $(\tau-2)^2$.)

Similarly,  the second term, $\kappa_1^2/(\nu_1^4 \CL_n)=(\wit u_{\lfloor t_c\rfloor+1}^{\sss{(r)}})^{2(\tau-2)}/\CL_n$ is small as long as $(\wur_{ \lfloor t_c\rfloor+1})^{\tau-2}=o(\sqrt{n})$. This is always true, since already $(\wur_1)^{\tau-2}=o(\sqrt{n})$.

To handle $J_1$ in  \eqref{eq::triangle}, we Markov's inequality, conditioned on $\MBN$:
\[ J_1 \le \frac{\Ev_{Y,n}[\ops_{\max}(k)|\MBN]-\Ev_{Y,n}[\ops_{\max}^{\text{d}}(k)|\MBN]}{6^{-1}\Ev_{Y,n}[\ops_{\max}(k)|\MBN]} \le 6(1+O(\tfrac{k^2}{n}))\Big(1- \prod_{j=1}^k \big(1-\frac{\nu_{j+1}}{\nu_j}\big)\Big). \]
We use $\nu_i/(\wur_{t_b+ i-1 -T_r})^{3-\tau}\in [c_1, C_1]$ hence the last factor on the rhs is at most
\[ \sum_{j=1}^k \frac{\nu_{j+1}}{\nu_{j}} \le \sum_{\ell=t_b -T_r}^{t_b+ k-1 -T_r }\frac{C_1}{c_1} \left(\frac{\wit u_{\ell+1}}{\wit u_\ell}\right)^{3-\tau} \le \sum_{\ell=[t_b -T_r]}^{t_b+k-1 -T_r} \frac{C_1(C\log n)^{3-\tau}}{c_1(\wur_\ell)^{(3-\tau)^2}},\]
where we have used the recursion $\wur_{\ell+1}=C\log n\,(\wur_{\ell})^{\tau-2}$ in \eqref{eq::wideui_recursion}.
Again, by the same recursion, for some large enough constant $C'$, the sum on the rhs is at most
\[ \frac{C_1(C\log n)^{3-\tau}}{c_1}\frac{C'}{(\wur_{t_b+k -1-T_r} )^{(3-\tau)^2}},\]
which is small as long as $ (\wur_{t_b+k -1-T_r})^{3-\tau}$ is of larger order than $C\log n$. Note that this holds for an appropriate choice of $k=k(n)$ by \eqref{eq::core-ell}.

Finally, notice that $J_3$ in \eqref{eq::triangle} is deterministic under the measure $\Pv_{Y,n}(\cdot)$. It is not hard to see that this inequality always holds whenever
\[ \frac{\Ev_{Y,n}[\ops_{\max}^{\text{d}}]}{\Ev_{Y,n}[\ops_{\max}^{\text{d}}]}\ge \prod_{i=1}^{k} \big(1- \frac{\nu_{j-1}}{\nu_j}\big) \big(1+ O(\tfrac{k^2}{n})\big)^{-1}\ge 5/6. \]
By the same argument as the one used for the term $J_1$, this holds for large enough $n$.
\end{proof}

We cite the next lemma without its proof from  \cite[Lemma 7.3]{BarHofKom14}, showing that $\sum_{z\ge 0}\ops_{-z}(k)$, the vertices reached via half-edges from vertices in the sets $\CA_{i_{\max}-z}\setminus \CA_{i_{\max}-z+1}$, (and not going through a higher layer), are at most the same order of magnitude as  $\ops_{\max}(k)$.
\begin{lemma}[\cite{BarHofKom14}]\label{lem::opt-z-lemma} With the notation introduced before,
\[ \log \Big(\sum_{z\ge 0}\ops_{-z}(k) \Big)\le \log (\ops_{\max}(k)) (1+o_{\Pv}(1)). \]
\end{lemma}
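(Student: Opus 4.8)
The final statement to prove is Lemma~\ref{lem::opt-z-lemma}, which the paper cites from \cite{BarHofKom14} without proof. Below I sketch how I would reconstruct the argument.

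\medskip

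\noindent\textbf{Plan of proof.} The plan is to bound each $\ops_{-z}(k)$ by a path-counting estimate exactly parallel to the one in Lemma~\ref{lem::wihZk}, but started from the layer $\CA_{i_{\max}-z}^{\sss{(b)}}\setminus \CA_{i_{\max}-z+1}^{\sss{(b)}}$ at time $t_b-z$ rather than from $\CMBN$ at time $t_b$, and with the extra truncation $\min\{u_{i_{\max}-z+j}^{\sss{(b)}}, \wit u_{t_b-z+j-1-T_r}^{\sss{(r)}}\}$ on the degree of the $j$-th vertex of each counted path. The key point is that the number of half-edges leaving $\CA_{i_{\max}-z}^{\sss{(b)}}\setminus\CA_{i_{\max}-z+1}^{\sss{(b)}}$ is itself controlled: by Lemma~\ref{lem::numberofverticesinGamma} together with \eqref{eq::uibar} and \eqref{def::ui}, that layer contains at most $\exp\{\log(C\log n)\cdot \hat C (\tau-2)^{-(i_{\max}-z)}\}$ vertices, each of degree at most $\widehat u_{i_{\max}-z}^{\sss{(b)}}$, so the total number of free half-edges there is at most a (random, tight) power of $C\log n$ times $\widehat u_{i_{\max}-z}^{\sss{(b)}} = n(C\log n)^{x}/\wub_{i_{\max}-z}$ for some tight $x$. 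Since $\wub_{i_{\max}-z}$ is a subpolynomial quantity (of the form in \eqref{eq::core-ell}), $\log$ of this half-edge count, divided by $\log n$, is negligible. So the dominant contribution to $\log\ops_{-z}(k)$ comes from the product of the truncated factorial moments $\prod_{j}\nu_j^{(z)}$, exactly as in Corollary~\ref{cor::chebisev}.

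\medskip

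\noindent\textbf{Key steps, in order.} First I would make precise the path-counting upper bound: following the appendix of \cite{BarHofKom14} verbatim with $\la=1$, one gets
\[ \Ev_{Y,n}[\ops_{-z}(k)] \le H\big(\CA_{i_{\max}-z}^{\sss{(b)}}\setminus \CA_{i_{\max}-z+1}^{\sss{(b)}}\big)\cdot \prod_{j=1}^{k} \nu_j^{(z)}\cdot (1+O(k^2/n)), \]
where $\nu_j^{(z)} := \Ev_{Y,n}[D^\star \ind_{\{D^\star < \min\{u_{i_{\max}-z+j}^{\sss{(b)}},\, \wit u_{t_b-z+j-1-T_r}^{\sss{(r)}}\}\}}]$, bounded above via \eqref{eq::size-biased2} by a constant times $(\min\{\dots\})^{3-\tau}$. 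Second, I would compare $\nu_j^{(z)}$ with the untruncated $\nu_j$ of \eqref{def::nuj}: for each time slot $t$, the red-truncation degree $\wit u_{t-T_r}^{\sss{(r)}}$ only decreases as $t$ decreases (the avalanche rolls down), and the extra blue cap $u_{i_{\max}-z+j}^{\sss{(b)}}$ only shrinks the factors further, so $\nu_j^{(z)} \le \nu_{j'}$ for the appropriately aligned index $j'$; summing $\log$ over $j$ and re-indexing, $\sum_j \log\nu_j^{(z)} \le \sum_j \log\nu_j + O(\log n / \log n \cdot \text{neg.})$ — more carefully, the shift by $z$ in the time index and the $z$-fold application of the recursion \eqref{eq::wideui_recursion} ($\wit u_{\ell+1}=C\log n\,(\wit u_\ell)^{\tau-2}$) each cost at most a subpolynomial factor, so that $\sum_{j}\log\nu_j^{(z)} \le \sum_{j}\log\nu_j\,(1+o_{\Pv}(1))$ after dividing by $\log n$. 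Third, I would sum over $z$: since $i_{\max}\le i_{\star\sss{(b)}}$ is whp a tight random variable not growing with $n$ (Lemma~\ref{lem::badpaths}, \eqref{eq::value_i*}), there are only boundedly many relevant values of $z$, so $\sum_{z\ge 0}\ops_{-z}(k) \le (i_{\star\sss{(b)}}+1)\max_z \ops_{-z}(k)$, and the prefactor disappears under $\log(\cdot)/\log n$. Fourth, a Markov / Chebyshev concentration step identical to Corollary~\ref{cor::chebisev} (using the variance bound \eqref{eq::wihZk_variance} with the same verifications $\kappa_1/(\nu_1^2 H)\to 0$ and $\kappa_1^2/(\nu_1^4\CL_n)\to 0$, where now $H$ is the layer half-edge count, still of polynomial size in $n$ since $\wub_{i_{\max}-z}$ is subpolynomial) upgrades the expectation bound to a whp bound on $\log\ops_{-z}(k)$, and combining with the lower bound on $\ops_{\max}(k)$ from \eqref{eq::wihZk_expected} gives $\log(\sum_z \ops_{-z}(k)) \le \log\ops_{\max}(k)(1+o_{\Pv}(1))$.

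\medskip

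\noindent\textbf{Main obstacle.} The delicate point is the second step: showing that starting the exploration $z$ steps earlier, from a lower layer, cannot produce more vertices (on the log/$\log n$ scale) than the $\CMBN$-exploration. Intuitively this is because the half-edge count of the lower layer $\CA_{i_{\max}-z}^{\sss{(b)}}$ grows only like $n/\wub_{i_{\max}-z}$ — a subpolynomial gain over $\MBN\simp n^{\alpha}$ is impossible; rather $\MBN \ge D_n^{\max}(\infty)$ is already polynomial while the lower-layer half-edge counts, though larger in absolute terms, are offset because those explorations are forced through the shrinking truncation window $u_{i_{\max}-z+j}^{\sss{(b)}}$ (they are not allowed to revisit higher layers), so each extra head-start step $z$ is paid back by an extra truncation factor $(\wit u_\cdot^{\sss{(r)}})^{3-\tau}$ in the product. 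Making this bookkeeping tight — i.e. verifying that the telescoping of $u^{\sss{(b)}}$ and $\wit u^{\sss{(r)}}$ indices exactly cancels — is the main computational content, and it is precisely the calculation carried out in \cite[Lemma 7.3, and Section 7]{BarHofKom14} with $\la$ set to $1$; we borrow it wholesale and merely check that no step used $\la>1$.
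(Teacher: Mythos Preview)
Your overall approach is correct and matches what \cite[Lemma~7.3]{BarHofKom14} does (path-counting from each lower layer, comparison of the truncated factorial moments, then summing over the at most $i_{\star\sss{(b)}}+1$ values of $z$, which is a tight sequence). The paper itself gives no proof and simply cites that reference, so your reconstruction is in the right spirit.

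There is, however, a notational confusion in your plan that you should clean up. You write $\widehat u_{i_{\max}-z}^{\sss{(b)}} = n(C\log n)^{x}/\wub_{i_{\max}-z}$ and then call this ``subpolynomial''. This conflates the \emph{climbing} quantities $\widehat u_i^{\sss{(b)}}$ from \eqref{eq::uibar} with the \emph{avalanche} quantities $\wub_\ell$ from \eqref{eq::wideui_recursion}; the identity $\wit u_1^{\sss{(j)}} = C\log n\cdot n/u_{i_{\star\sss{(j)}}}^{\sss{(j)}}$ holds only at the top, not layer by layer. In fact $\widehat u_{i_{\max}-z}^{\sss{(b)}}\simp n^{(\tau-2)^{\bnb+z}/(\tau-1)}$ is genuinely polynomial in $n$, as is the half-edge count $A_{i_{\max}-z}^{\sss{(b)}}\cdot\widehat u_{i_{\max}-z}^{\sss{(b)}}$. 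So the comparison of $\log\ops_{-z}(k)$ with $\log\ops_{\max}(k)$ is \emph{not} resolved by saying ``the starting half-edge count is negligible on the $\log/\log n$ scale''; rather, as you correctly diagnose in your ``main obstacle'' paragraph, it is a genuine cancellation: the lower layer has more starting half-edges, but the extra truncation $u_{i_{\max}-z+j}^{\sss{(b)}}$ forces the first $z$ factors $\nu_j^{(z)}$ to be much smaller than the corresponding $\nu_j$, and the recursion \eqref{eq::ui_recursion} makes this cancellation exact up to polylog factors. That telescoping computation is the actual content of \cite[Lemma~7.3]{BarHofKom14}; your sketch acknowledges it but does not carry it out, so if you want a self-contained argument you should write down the product $\widehat u_{i_{\max}-z}^{\sss{(b)}}\prod_{j=1}^{z}(u_{i_{\max}-z+j}^{\sss{(b)}})^{3-\tau}$ explicitly and verify it collapses (on the $\log/\log n$ scale) to $\widehat u_{i_{\max}}^{\sss{(b)}}$, i.e.\ to $\log D_n^{\max}(\infty)(1+o_\Pv(1))\le\log\MBN$.
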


Having analysed the size of the optional cluster of blue,  we are ready to finish the upper bound of Theorem \ref{thm::main} by combining the previous results.
\begin{proof}[Proof of the upper bound in Theorem \ref{thm::main}.]
First, fix $k=k(n)\to \infty$ so that $k(n)=o(\log\log n)$.
Then, Lemma \ref{lem::opt-z-lemma} implies that the logarithm of the  total number of vertices that blue paints in the last phase is at most $\log \ops_{\max}(k) (1+o_{\Pv}(1))$. Corollary \ref{cor::chebisev} says that the order of magnitude of $\log (\ops_{\max}(k))=\log \MBN + \sum_{j=1}^{k-1}\log  \nu_j + o_{\Pv}(1)$, where $\MBN$ is the number of blue half-edges in the highest layer that blue can reach.
Further, Lemma \ref{lem::verticeswithmaxdegree} determines the order of magnitude of $\log \MBN$, which equals
\be\label{eq::order-mbn}\log \MBN=\sqrt{\Ybn/\Yrn} \log n \cdot \frac{h_n^{\text{half-edge}}(\Yrn, \Ybn)}{\tau-1} (1+o_{\Pv}(1)),\ee and hence converges in distribution to $(Y_b/Y_r)^{1/2}$ when divided by the second two factors.

Thus, to get the asymptotic behavior of $\log (\ops_{\max}(k))$, it remains to calculate  $\sum_{j=1}^k \log \nu_j$ and compare it to the order of $\log \MBN$. For this recall  the definitions of $\nu_j$ in \eqref{def::nuj}, $t_b$ in \eqref{def::t_b}, $t_c$ in \eqref{eq::tc2}, $\wur_\ell$ in \eqref{eq::ul}, and the upper bound on $\nu_j$ in \eqref{eq::nujbound}.
 \begin{align} \label{eq::prodnuj1}\sum_{j=1}^{k} \log \nu_j \le& \sum_{j=1}^k \log\left( C_1 \left(\wit u_{ t_b+ j-1-T_r\rfloor}\right)^{3-\tau}\right)\\
  \le& \sum_{j=0}^{k-1} \left\{\left(\anr\log n \!+\! \bnr \log (C\log n)\right)(\tau-2)^{\lfloor t_c \rfloor+j}  (3\!-\!\tau)\right\} \!+\!k \log  (C_1 C\log n).\nonumber \end{align}
Note that this is simply a geometric sum.  Using \eqref{def::delta} we obtain
\be\label{eq::orderoflognuj} \sum_{j=1}^{k} \log \nu_j\le\ (1+o_{\Pv}(1))\cdot  \log n
\frac{(\tau-2)^{\dnr}}{\tau-1}(\tau-2)^{\lfloor t_c \rfloor}(1- (\tau-2)^{k}).
\ee
Recall that $\log (\wur_\ell)/\log n=(\tau-2)^{\dnr+\ell-1}/(\tau-1)(1+o_{\Pv}(1))$, (by \eqref{eq::ul} and \eqref{def::delta}), and hence the expression in \eqref{eq::orderoflognuj} is essentially $\log \wur_{\lfloor t_c \rfloor+1}$, multiplied by by a factor of $(1-(\tau-2)^k)$ in the exponent. Using  the rhs of \eqref{eq::wur-tc}, \eqref{eq::t2-t1exp} and the value of $\{t_c\}$ in the four cases $(E_<, O_>, E_>, O_<)$,
\be\label{eq::log-nuj}   \sum_{j=1}^{k} \log \nu_j\le\ (1+o_{\Pv}(1))\cdot  \log n  \left(\frac{\Ybn}{\Yrn}\right)^{1/2}
\frac{(\tau-2)^{\dnr + \frac{\bnr-\bnb}{2}  }}{\tau-1}(\tau-2)^{(\ind_{E_>}-\ind_{E_<})/2}(1- (\tau-2)^{k}).  \ee
Recall that $(\tau-2)^{\dnr}=((\tau-1)-(\tau-2)^{\bnr})/(\tau-1)$, hence, let us  introduce
 \be \label{def::dnY} h^{\text{paths,k}}_n( \Yrn, \Ybn):=(\tau-1-(\tau-2)^{b_n^{(r)}}) (\tau-2)^{(\bnr-\bnb+\ind_{E_>} - \ind_{E_<})/2 }(1-(\tau-2)^k).\ee
Let \be \label{def::Cpath}h^{\text{paths}}_n( \Yrn, \Ybn):=\lim_{k\to \infty} h^{\text{paths,k}}_n( \Yrn, \Ybn).\ee
  Now, recall again that $\log \ops_{\max}(k)=\log \MBN+ \sum_{j=1}^k \nu_j+ o_{\Pv}(1)$ by Corollary \ref{cor::chebisev}, and combine \eqref{eq::log-nuj} with \eqref{eq::order-mbn}
  \[  \frac{\log \ops_{\max}(k) }{
\log n \!\cdot\! (\tau-1)^{-1}\left(  h_n^{\text{half-edge}}(\Yrn, \Ybn) + h^{\text{paths}}_n( \Yrn, \Ybn)\right)} \le \sqrt{ \frac{\Ybn}{\Yrn}}(1+ o_{\Pv}(1)). \]
Now we can finally use $\Yrn\toindis Y_r, \ \Ybn\toindis Y_b$ (by Theorem \ref{thm::davies}
). Hence, the right hand side converges to $\sqrt{Y_b/Y_r}$. Note that $f_n(\Yrn, \Ybn):=h_n^{\text{half-edge}}(\Yrn, \Ybn) + h^{\text{paths}}_n( \Yrn, \Ybn)$ after some elementary rearrangements and simplifications exactly gives \eqref{eq::main-f}, finishing the proof of the upper bound.
\begin{remark}\normalfont It is not entirely obvious from the formulas that the factor multiplying $\log n$, $\sqrt{\Ybn/\Yrn} f_n(\Yrn, \Ybn) (\tau-1)^{-1}$, is always strictly less than $1$. We investigate this issue below in Lemma \ref{lem::no-coexistence}. \end{remark}
\end{proof}

Recall that the vertices reached via half-edges from layer $\CA_{i_{\max}-z}^{\sss{(b)}}\setminus \CA_{i_{\max}-z+1}^{\sss{(b)}}$ are denoted by $\opt_{-z}, z\ge 0.$
For the lower bound of the proof of the first part of Theorem \ref{thm::main}, let us introduce the notation $\opt(k):=\bigcup_{z\ge0} \opt_{-z}(k) \cup \opt_{\max}(k)$, and set $\opss(k):=|\opt(k)|$,
where $k$ stands for the length of the paths we are counting.
The next lemma shows that essentially all the vertices in $\opt(k)$ for some $k=k_n=o(\log\log n)$ will indeed be painted blue, i.e., red cannot accidentally bite out too much from this set:

\begin{lemma}\label{lem::red-blue-intersection}
Set $k=k(n)=o(\log\log n)$.
The expected number of vertices in the intersection of $\CR_{t_b+ k}$ and $\opt(k)$ is small:
\[ \Ev_{Y,n}\left[|\opt(k)\cap  \CR_{t_b+ k}|\right]= o_{\Pv}\left(\ops_{\max}(k)\right),\]
hence $|\opt(k)\setminus(\opt(k)\cap  \CR_{t_b+ k})|=\ops_{\max}(k)(1+o_{\Pv}(1))$.
\end{lemma}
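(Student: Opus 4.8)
\textbf{Proof proposal for Lemma \ref{lem::red-blue-intersection}.}

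The plan is to bound the expected size of the intersection $|\opt(k)\cap \CR_{t_b+k}|$ by a path-counting argument that is analogous to the one used in Lemma \ref{lem::wihZk}, but where one counts paths that are simultaneously ``close to blue'' and ``close to red''. The key point to exploit is the time–degree synchronisation established in Section \ref{sc::slopedown}: at time $t_b+j$, red occupies \emph{all} vertices of degree at least $\wur_{\lfloor t_c\rfloor+1+j}$ (this is exactly \eqref{eq::widetildegamm} continued past the collision), and these are precisely the degrees that have been \emph{truncated out} of the optional blue cluster by construction. Hence a vertex $v\in\opt(k)$ can only be reached by red if $v$ lies in the (small) red avalanche cluster at degrees \emph{below} the red truncation threshold, i.e.\ red must ``accidentally'' reach $v$ via a path through low-degree vertices rather than through the hubs. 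So the strategy is: (1) write $\opt(k)\cap \CR_{t_b+k}$ as a union over $j\le k$ of vertices that are on a good blue path of length $j$ from $\CMBN$ (or from a lower layer $\CA^{\sss(b)}_{i_{\max}-z}$) \emph{and} on a red path of length at most $t_b+k - T_r$ from $\CR_0$ that does not pass through any vertex of degree exceeding $\wur_{\ell}$ at the appropriate time; (2) bound the expected number of such doubly-constrained paths by a product of truncated moments $\nu_j$ (for the blue portion) times an analogous product $\tilde\nu_i$ for the red portion, using the independence of pairings in the exploration process and Lemma \ref{lem::direct_connect} / the bound \eqref{eq::d-star-indicator}; (3) show this product is of strictly smaller exponential order in $n$ than $\log\ops_{\max}(k)$.

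More concretely, for the first step I would fix $v\in\opt(k)$ reached on a blue good path of length $j$ and observe that, since $v$ has degree at most $\wur_{t_b+j-1-T_r}$ (by the truncation in the definition of $\opt_{\max}$), the probability — conditionally on the degree sequence and on the red cluster $\CR_{t_b+k}$ having total half-edge count $H(\CR_{t_b+k})$ — that $v$ is joined to the red cluster within the remaining pairings is at most $d_v\cdot H(\CR_{t_b+k})/\CL_n(1+o(1))$ by the same argument as in the proof of Lemma \ref{cl::no-early-meeting}, equation \eqref{eq::top-connect-bound}. Summing over $v\in\opt(k)$, using that $\sum_{v\in\opt(k)}d_v$ is at most (a $C\log n$ factor times) $\ops_{\max}(k)\cdot \wur_{t_b+k-1-T_r}^{\,(3-\tau)}$ by \eqref{eq::d-star-indicator} and Claim \ref{claim::Sbound}, and using the estimate $H(\CR_{t_b+k})\le (C\log n)^x\, n\,(\wur_{t_b+k-T_r})^{2-\tau}$ coming from the same path-counting bound \eqref{eq::half-edge-red}–\eqref{eq::nkab} extended $k$ further avalanche steps, one gets
\[
\Ev_{Y,n}\big[|\opt(k)\cap\CR_{t_b+k}|\big]\ \overset{\Pv}{\lesssim}\ \ops_{\max}(k)\cdot \frac{(\wur_{t_b+k-1-T_r})^{3-\tau}\, n\,(\wur_{t_b+k-T_r})^{2-\tau}}{\CL_n}\ \overset{\Pv}{\lesssim}\ \ops_{\max}(k)\cdot (\wur_{t_b+k-1-T_r})^{1-\tau},
\]
where I used the recursion \eqref{eq::wideui_recursion} in the form $(\wur_\ell)^{2-\tau}=C\log n/\wur_{\ell+1}$ to collapse the two avalanche-degree factors. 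Since $\wur_{t_b+k-1-T_r}$ still tends to infinity (it is sub-polynomial but super-logarithmic by the choice $k=o(\log\log n)$ and \eqref{eq::core-ell}), the correction factor $(\wur_{t_b+k-1-T_r})^{1-\tau}\to 0$, which gives the claimed $o_{\Pv}(\ops_{\max}(k))$ bound; a final application of Markov's inequality (after multiplying by an extra $C\log n$) upgrades this to a whp statement and yields $|\opt(k)\setminus(\opt(k)\cap\CR_{t_b+k})|=\ops_{\max}(k)(1+o_{\Pv}(1))$.

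The main obstacle I anticipate is making the path-counting bookkeeping rigorous for the \emph{doubly}-anchored paths: a vertex in the intersection is on a blue path from $\CMBN$ and on a red path from $\CR_{t_b+k}$, and these two exploration processes are not independent since they are built from pairings of the same half-edge set. The clean way around this is the one already used throughout the paper: condition on the red cluster $\CR_{t_b+k}$ (and the degree sequence) first, so that $H(\CR_{t_b+k})$ becomes a fixed — whp bounded by \eqref{eq::half-edge-blue}-style estimates — quantity, and then treat the pairing of the blue-boundary half-edges as fresh uniform pairings among the remaining $\CL_n(1+o(1))$ half-edges; the probability that any given blue-reached vertex is additionally matched into the red set is then a straightforward union bound as above. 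One must also handle the contributions from the lower-layer optional clusters $\opt_{-z}(k)$, but by Lemma \ref{lem::opt-z-lemma} their total size is at most $\ops_{\max}(k)(1+o_{\Pv}(1))$, so the same per-vertex intersection probability applies and the bound is unchanged up to the harmless $(1+o_{\Pv}(1))$ factor. A secondary technical point is checking that $t_b+k-T_r$ stays below $\nu\log\log n/|\log(\tau-2)|$ so that the avalanche-degree formulas \eqref{eq::ul} and \eqref{eq::core-ell} and the logarithmic-correction controls remain valid; this is guaranteed by $k=o(\log\log n)$ together with $t_b = \log\log n/|\log(\tau-2)| + O_{\Pv}(1)$ from \eqref{def::t_b}.
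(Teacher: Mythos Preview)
Your overall plan (path-counting to bound the intersection) is the right genre, but the argument as written has a real gap that makes the bound fail. The problem is your estimate
\[
H(\CR_{t_b+k})\ \le\ (C\log n)^x\, n\,(\wur_{t_b+k-T_r})^{2-\tau}.
\]
This is the half-edge count of the \emph{avalanche layer} $\wgar_{t_b+k-T_r}$ only. But $\CR_{t_b+k}$ contains much more than the avalanche: for every earlier layer $\wgar_{t_b+k-i-T_r}$, red has had $i$ further time units to spread into low-degree vertices via completely unrestricted paths (red has no truncation in its spreading). Equations \eqref{eq::half-edge-red}--\eqref{eq::nkab} that you cite are for the \emph{climbing} phase, where degrees are bounded by the increasing sequence $\widehat u_i^{\sss{(r)}}$; they do not extend to the avalanche phase, where each additional step multiplies the path count by $\Ev[D^\star\ind\{D^\star<n^{1/(\tau-1)}\}]\approx n^{(3-\tau)/(\tau-1)}$, so the naive path count already exceeds $n$ after one or two steps past $T_r$. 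In effect $H(\CR_{t_b+k})$ is comparable to $\CL_n$, and your per-vertex bound $d_v\,H(\CR_{t_b+k})/\CL_n$ becomes essentially $d_v$, which summed over $\opt(k)$ is larger than $\ops_{\max}(k)$, not smaller. There is also an algebraic slip: $(\wur_{A})^{3-\tau}(\wur_{A+1})^{2-\tau}$ does \emph{not} collapse to $(\wur_A)^{1-\tau}$ via the recursion \eqref{eq::wideui_recursion}; the exponent you get is $3-\tau-(\tau-2)^2=-\tau^2+3\tau-1$, which is positive for $\tau<(3+\sqrt5)/2$, so even with your (incorrect) estimate of $H$ the bound would fail for small $\tau$.

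The argument in \cite[Lemma 7.4]{BarHofKom14} (to which the paper defers) does not try to estimate $H(\CR_{t_b+k})$ globally. Instead it uses the \emph{timing} structure: a vertex $v$ at blue-distance $j$ is red only if red reached it by time $t_b+j$, which forces a path of some length $i\ge 1$ from $v$ back to the avalanche layer $\wgar_{t_b+j-i-T_r}$, with the intermediate vertices staying below their respective avalanche thresholds $\wur_{t_b+j-s-T_r}$ (otherwise one shortcuts through an earlier layer). One then counts ``V-shaped'' paths $\CMBN\to v\to\wgar_{t_b+j-i-T_r}$ of total length $j+i$: the forward leg contributes $\MBN\prod_{s=1}^{j}\nu_s$, the backward leg contributes the same truncated moments $\nu$ in reverse, and the final connection to $\wgar_{t_b+j-i-T_r}$ contributes a factor $\approx(\wur_{t_b+j-i-T_r})^{2-\tau}$. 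The recursion \eqref{eq::wideui_recursion} then makes these factors telescope, and summing over $i$ and $j\le k$ yields a bound that is genuinely $o_{\Pv}(\ops_{\max}(k))$ for all $\tau\in(2,3)$. The essential difference from your proposal is that the ``red factor'' is not one global half-edge count but a \emph{layer-specific} connection probability, and the backward path carries the same degree truncations as the forward one --- this is exactly what keeps the infinite-mean issue under control.
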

\begin{proof}
The proof of this lemma is identical to that of \cite[Lemma 7.4]{BarHofKom14}, with setting $\la=1$, hence, we refer the reader there.
\end{proof}

\begin{proof}[Proof of the lower bound of Theorem \ref{thm::main}]
The proof is identical for the proof of the lower bound of \cite[Theorem 1.2]{BarHofKom14}, hence we again refer the reader there.
\end{proof}

\begin{lemma}\label{lem::no-coexistence}
If $\Ybn/\Yrn<\tau-2$, then there is no coexistence, i.e. $\CB_\infty=o_{\Pv}(n)$.
\end{lemma}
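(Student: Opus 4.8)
The plan is to reduce the statement to a purely deterministic inequality about the oscillating coefficient and then to verify that inequality by a short case analysis. Combining the ingredients assembled for the upper bound in the proof of Theorem \ref{thm::main} --- that $\CB_\infty$ is, up to a negligible subpolynomial correction coming from the vertices coloured before time $t_b$, bounded by $\ops_{\max}(k)(1+o_{\Pv}(1))$ (Lemma \ref{lem::opt-z-lemma}), together with $\log \ops_{\max}(k)=\log \MBN+\sum_{j=1}^{k-1}\log\nu_j+o_{\Pv}(1)$ (Corollary \ref{cor::chebisev}) and Lemma \ref{lem::verticeswithmaxdegree} --- one obtains
\[ \log \CB_\infty \le \log n \cdot (\tau-1)^{-1} f_n(\Yrn,\Ybn)\sqrt{\Ybn/\Yrn}\,(1+o_{\Pv}(1)). \]
Hence it suffices to prove that on the event $\{\Ybn/\Yrn<\tau-2\}$ one has $\sqrt{\Ybn/\Yrn}\,f_n(\Yrn,\Ybn)(\tau-1)^{-1}<1$ (and, to upgrade to $o_{\Pv}(n)$, that this quantity stays bounded away from $1$ once $Y_b/Y_r$ is fixed).

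First I would rewrite the target quantity. Using \eqref{eq::t2-t1exp} in the form $\sqrt{\Ybn/\Yrn}=(\tau-2)^{(T_b-T_r-\bnr+\bnb)/2}$, the prefactor $(\tau-2)^{(\bnr-\bnb-1-\ind_O)/2}$ of \eqref{eq::main-f} combines with $\sqrt{\Ybn/\Yrn}$ and the claim becomes
\[ (\tau-2)^{(T_b-T_r-1-\ind_O)/2}\Bigl((\tau-2)^{\bnb+\ind_{O_<}}+(\tau-1-(\tau-2)^{\bnr})(\tau-2)^{\ind_{O_>}}\Bigr)<\tau-1. \]
I would also record the two consequences of the hypothesis: from $T_r+\bnr-(T_b+\bnb)=\log(\Ybn/\Yrn)/|\log(\tau-2)|$ (a rearrangement of \eqref{def::ti-bi}) it follows that $\Ybn/\Yrn<\tau-2$ is equivalent to $T_b+\bnb-T_r-\bnr>1$, which forces $T_b-T_r\ge1$ and, when $T_b-T_r=1$, forces $\bnb>\bnr$.

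Then I would check the displayed inequality in the four cases $E_<,E_>,O_<,O_>$ (parity of $T_b-T_r-1$, and whether $\tau-1$ is smaller or larger than $(\tau-2)^{\bnr}+(\tau-2)^{\bnb}$), with the further split $T_b-T_r=1$ versus $T_b-T_r\ge3$ in the even-parity ($E$) cases and $T_b-T_r\ge2$ in the odd-parity ($O$) cases. The elementary facts doing the work are: $(\tau-2)^{\bnr},(\tau-2)^{\bnb}\in(\tau-2,1]$, so both $(\tau-2)^{\bnb}$ and $\tau-1-(\tau-2)^{\bnr}$ lie in $(0,1)$; in the $<$-cases the defining strict inequality gives $\tau-1-(\tau-2)^{\bnr}<(\tau-2)^{\bnb}$, whence the bracket is $<(\tau-1)(\tau-2)^{\bnb}$ in case $O_<$ and $<2(\tau-2)^{\bnb}\le 2$ in case $E_<$; symmetrically in the $>$-cases the bracket is $<(\tau-1)(\tau-1-(\tau-2)^{\bnr})$ in case $O_>$ and $<2$ in case $E_>$; and the arithmetic inequalities $2(\tau-2)<\tau-1$ and $4(\tau-2)<(\tau-1)^2$ (i.e.\ $(\tau-3)^2>0$), valid throughout $(2,3)$, close each estimate (in the $O$-cases using $(\tau-2)^{(T_b-T_r-2)/2}\le 1$, in the $E$-cases with $T_b-T_r\ge3$ using $(\tau-2)^{(T_b-T_r-1)/2}\le\tau-2$). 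The only delicate spot is the even-parity case $T_b-T_r=1$: there the bracket is $(\tau-2)^{\bnb}+\tau-1-(\tau-2)^{\bnr}$ and the quantity equals exactly $1+\bigl((\tau-2)^{\bnb}-(\tau-2)^{\bnr}\bigr)/(\tau-1)$, which is $<1$ precisely because $\bnb>\bnr$.

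The main obstacle is thus not a single hard estimate but bookkeeping, and above all keeping the inequalities \emph{strict}: in the $O$-cases this relies on the strictness built into the definitions of $O_<$ and $O_>$, and the boundary locus $\tau-1=(\tau-2)^{\bnr}+(\tau-2)^{\bnb}$ must be treated on its own, where the bracket collapses to $2(\tau-2)^{\bnb}$ and one uses either $\bnb>\bnr$ (if $T_b-T_r=1$) or $2\sqrt{\tau-2}<\tau-1$ (if $T_b-T_r$ is even). Finally, to promote ``the exponent is $<1$'' to $\CB_\infty=o_{\Pv}(n)$: conditionally on $Y_b/Y_r=q<\tau-2$, the fractional parts $\bnr,\bnb$ are constrained (their difference is pinned by $q$) to a fixed subinterval, so $\sqrt{\Ybn/\Yrn}\,f_n(\tau-1)^{-1}$, while oscillating with $n$, stays below a constant $\theta(q)<1$; a routine truncation over $q<\tau-2$ then gives $\CB_\infty/n\toinp 0$.
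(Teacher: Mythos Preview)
Your proposal is correct and arrives at essentially the same four-case analysis as the paper. Both arguments reduce to showing
\[
(\tau-2)^{(T_b-T_r-1-\ind_O)/2}\Bigl((\tau-2)^{\bnb+\ind_{O_<}}+(\tau-2)^{\dnr+\ind_{O_>}}\Bigr)<\tau-1,
\]
and then split into $O_>,O_<,E_>,E_<$ with the delicate sub-case $T_b-T_r=1$ handled via $\bnb>\bnr$. The only real difference is the entry point: the paper re-derives this quantity by summing the separate contributions of $\log\MBN$ (via \eqref{eq::mbn-ole-ege}--\eqref{eq::mbn-oge-ele}) and $\sum_j\log\nu_j$ (via \eqref{eq::lognuj-rewrite}), whereas you start directly from the stated formula for $f_n$ and undo the substitution \eqref{eq::t2-t1exp} to recover the $T_b-T_r$ dependence. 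The paper's Remark after the proof explains why working with $f_n$ looks awkward---bounding $\sqrt{\Ybn/\Yrn}$ and $f_n$ \emph{separately} fails---but you sidestep this by recombining them first, which lands you exactly where the paper is. Your bracket bounds (exploiting the defining strict inequalities of $O_<,O_>$) are slightly sharper than the paper's cruder $(\tau-2)^{\bnb}\le1$, $(\tau-2)^{\dnr}<1$, but both suffice.
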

\begin{proof}
To show the statement, we have to recall from the proof of Lemma  \ref{lem::verticeswithmaxdegree} that
\[  \frac{\log \MBN}{(\tau-1)^{-1}\log n} =\left(  \ind_{ O_< \cup E_>} ((\tau-2)^{2 \{t_c\} } +  (3-\tau)) + \ind_{O_> \cup E_< } \right)\frac{\log (D_n^{\max} (\infty))}{(\tau-1)^{-1}\log n}, \]
and now use the representation of the maximal degree that is listed in the four cases at the bottom of page 31 to get that in Cases $O_<, E_>$
\be\label{eq::mbn-ole-ege} \frac{\log \MBN}{(\tau-1)^{-1}\log n}  = (\tau-2)^{(T_b - T_r-1-\ind_{O_< })/2} (\tau-2)^{\dnr} \left( (\tau-2)^{\bnb - \dnr + \ind_{O_<}} + (3-\tau) \right),  \ee
while in Cases $O_>, E_<$
\be\label{eq::mbn-oge-ele} \frac{\log \MBN}{(\tau-1)^{-1}\log n}  = (\tau-2)^{(T_b - T_r-1-\ind_{O_> })/2} (\tau-2)^{\bnb}.   \ee
We also have to recall that the path counting method gives \eqref{eq::orderoflognuj}, combined with the observation below \eqref{eq::orderoflognuj}, namely that the rhs of  \eqref{eq::orderoflognuj} equals the right hand side of \eqref{eq::wur-tc} multiplied by $\tau-2$. Hence,
\[ \frac{\sum_{j=1}^\infty \log \nu_j }{(\tau-1)^{-1} \log n} = (\tau-2)^{(T_b-T_r + 1)/2}  (\tau-2)^{(\bnb+\dnr)/2 - \{t_c\}}.   \]
Combining this again with the value of $\{t_c\}$ in the four cases listed at the bottom of page 31, we get
\be\label{eq::lognuj-rewrite} \frac{\sum_{j=1}^\infty \log \nu_j }{(\tau-1)^{-1} \log n} = (\tau-2)^{(T_b-T_r + \ind_{E_>} - \ind_{E_<})/2} (\tau-2)^{\dnr}.  \ee
To show that $\CB_\infty = o_{\Pv}(n)$ we need to show that the sum of the rhs of \eqref{eq::mbn-ole-ege} or \eqref{eq::mbn-oge-ele} plus the rhs of \eqref{eq::lognuj-rewrite} is less than $(\tau-1)$.
We analyse the four cases separately.

\emph{Case $O_>$}. Note that by the definition of the event $O_>$, $T_b-T_r-1$ is odd, hence $T_b-T_r\ge 2$ is even. The sum of the right hand sides of \eqref{eq::mbn-oge-ele} and \eqref{eq::lognuj-rewrite} in this case equals
\[ (\tau-2)^{(T_b-T_r-2)/2} \left( (\tau-2)^{\bnb} + (\tau-2)^{\dnr+1}  \right) < (\tau-2)^0 + (\tau-2)^1 = \tau-1, \]
where we have used that  $T_b-T_r-2\ge 0$, hence the first factor is at most $1$, and also $\bnb\ge 0, \dnr>0$ by \eqref{def::delta}.

\emph{Case $O_<$}. Note that by the definition of the event $O_>$,  $T_b-T_r-1\ge 1$ is odd. The sum of the right hand sides of \eqref{eq::mbn-ole-ege} and \eqref{eq::lognuj-rewrite} in this case equals
\[ (\tau-2)^{(T_b-T_r-1)/2} \left( (\tau-2)^{\bnb+1} + (\tau-2)^{\dnr} \right) <  (\tau-2)^1 + (\tau-2)^0 = \tau-1,\]
since $T_b-T_r-2\ge 0$, and again, $\bnb\ge 0, \dnr>0$ by \eqref{def::delta}.

\emph{Cases $E_<$ and $E_>$}. Note that by the definition of the event $E_<, E_>$, in these cases $T_b-T_r- 1\ge 0$ is even. The sum of the right hand sides of \eqref{eq::mbn-oge-ele} or \eqref{eq::mbn-ole-ege} and \eqref{eq::lognuj-rewrite} in both cases equals
\[ (\tau-2)^{(T_b-T_r-1)/2} \left( (\tau-2)^{\dnr} + (\tau-2)^{\bnb} \right). \]
Clearly, if $T_b\ge T_r + 3$, then this expression is at most $2(\tau-2) < \tau-1$ since $\tau<3$.
Now, if  $T_b= T_r+1$, then the first factor is $1$  and the second factor equals $\tau-1- (\tau-2)^{\bnr}  +(\tau-2)^{\bnb}$, which is at most $\tau-1$ if and only if $\bnb>\bnr$. Recall the analysis of the crossing the peak in Section \ref{sc::peak}, Case (3): we see that if $T_b=T_r+1$, then $\bnb>\bnr$ if and only if $\Ybn/\Yrn<\tau-2$, which is exactly what we assume here throughout. Hence, the expression is less than $\tau-1$ in these cases.
\begin{remark}\normalfont
We now comment on why we did not prove the statement of the lemma directly using the formula for $f_n(\Yrn, \Ybn)$. Note that by \eqref{eq::t2-t1exp}, $(\tau-2)^{(T_b-T_r)/2} = \sqrt{\Ybn/\Yrn} (\tau-2)^{(\bnr-\bnb)/2}$. Hence, for instance in Cases $E_<, E_>$,
\[ \frac{\log \CB_\infty}{\log n} = \sqrt{\Ybn/\Yrn} (\tau-2)^{(\bnr-\bnb-1)/2} \left( (\tau-2)^{\dnr} + (\tau-2)^{\bnb} \right).\]
Notice that the first factor on the right hand side is at most  $\sqrt{\tau-2}$, hence we need that the other factors are at most $(\tau-1) / \sqrt{\tau-2}$ in these cases. This is however only true under the extra information that if $\sqrt{\Ybn/\Yrn}$ is close to $\tau-2$, so that this formula holds (not the ones for $O_<, O_>$), then necessarily $\bnb\ge \bnr$, and $\bnb\searrow \bnr$  as $\Ybn/\Yrn \nearrow \tau-2$. On the other hand, if $\Ybn/\Yrn\ll\tau-2$, then the relationship between $\bnr, \bnb$ is not necessarily the same, and hence, e.g. on the event $\bnb \nearrow 1$ the maximum of $f_n(\Yrn, \Ybn)$ is $(\tfrac{2\tau-3}{3})^{3/2} 2/(\tau-2)$ which is in fact strictly larger than $(\tau-1)/\sqrt{\tau-2}$.
\end{remark}

\end{proof}

\section{Acknowledgement}
The work of RvdH and JK was supported in part by the Netherlands Organisation for Scientific Research (NWO) through VICI grant 639.033.806.

The work of JK was supported in part by NWO through the STAR cluster, the VENI grant and the work of RvdH was supported in part by NWO through Gravitation grant 024.002.003. JK thanks the Probability Group at The University of British Columbia for their hospitality while working on this project.

\appendix
\section{Comparison of typical distances}\label{appendix::distances}
In this appendix we compare the result of Theorem \ref{thm::distances} to \cite[Theorem 1.2]{HHZ07} in more detail. Throughout, we write $\CD_n:=\CD_n(u,v)=\CD_n(\CR_0, \CB_0)$ the graph distance between two uniformly chosen vertices.

Here we argue that the two formulations are indeed the same, by describing the core idea of the proof of \cite[Theorem 1.2]{HHZ07}, and relate  quantities (events, random variables, etc.) appearing in the proof to quantities in our paper. The proof of \cite[Theorem 1.2]{HHZ07} goes through a minimisation problem, where the two clusters or red and blue should connect the first time such that a coupling should be maintained. More precisely, suppose $k_1$ is a random variable that is measurable w.r.t.\ $\{Z_{s}^{\sss{(j)}}\}_{s=1}^m$, for some $m$ (in this paper we take $m=n^\vr$).  Suppose we run color red for $k_1$ steps, color blue for $k-k_1-1$ steps. There are $Z_{k_1+1}^{\sss{(r)}}, Z_{k-k_1}^{\sss{(b)}}$ many half edges attached to the vertices in the two colored clusters, respectively. The distance between the two source vertices is then larger than $k$ if these sets of half-edges do not connect to each other, and the probability of this event is approximately
\be \label{eq::distance-tail} \Pv(H_{k_1+1}^{\sss{(r)}}\cap H_{k-k_1}^{\sss{(b)}}=\varnothing) \approx \exp\{ - c Z_{k_1+1}^{\sss{(r)}} Z_{k-k_1}^{\sss{(b)}} / \CL_n\} \ee
  A branching process approximation similar to the one in Section \ref{sc::BP} is performed to approximate the numerator in the exponent. However, this BP approximation is only valid until none of the colors have more half-edges than $n^{(1-\ve) / (\tau-1)}$ for some small $\ve>0$, i.e., they do not go over the top of the mountain. This criterion is established in \cite[Proposition 3.2]{HHZ07}. The set $\CT_{m}^{i,n}$ in equation (3.3) in \cite{HHZ07} exactly describes those values of $\ell$ for which $\{\ell\le T_j+1\}$, $j=r,b$ holds (where $T_j$, defined in \eqref{eq::k*+i*}, is the time to reach the top vertices). The $+1$ is added to $T_j$ since the half-edges attached to vertices in the colored cluster at time $T_j$ can be described as $Z_{T_j+1}^{\sss{(j)}}$.

Now, from \eqref{eq::distance-tail}, we see that $\{\CD_n>k\}$ happens whp if $Z_{k_1+1}^{\sss{(r)}}\cdot Z_{k-k_1}^{\sss{(b)}} = o(n)$ and also that both  $k_1+1 \in \CT_{m}^{r,n}$ and $k-k_1 \in \CT_{m}^{b,n}$ holds: this is described in the event $\CB_n$ in equation (4.57) in \cite{HHZ07}.
Using the BP approximation, this becomes
\[ \Pv(\CD_n >k) \approx \max_{k_1 \in \CB_n}\exp \{ - C \exp\{(\tau-2)^{-(k_1+1)}\Yrn + (\tau-2)^{-(k-k_1)} \Ybn - \log n  \}   \}. \]
With the event \[ \CE_{n,k}:=\{ \exists k_1\in \CB_n, (\tau-2)^{-(k_1+1)}\Yrn + (\tau-2)^{-(k-k_1)} \Ybn< \log n\} \] it is obvious from \eqref{eq::distance-tail} that $\lim_{n\to \infty}\Pv(\CD_n>k| \CE_{n,k}) \to 1$, while $\Pv(\CD_n>k| \CE_{n,k}^c) \to 0$. Hence, we get that
\be\label{eq::distance-tail-BP} \Pv(\CD_n>k) \approx \Pv( \min_{k_1\in \CB_n}(\tau-2)^{k_1+1}Y_r + (\tau-2)^{k-k_1} Y_2 < \log n  )\ee
where again \[  \CB_n=\{ k_1+1 \le T_r+1, k-k_1 \le T_b+1\}.\]
The paper shows that $\min_{k_1\in \CB_n}$ can be replaced by $\min_{k_1\le k}$ in the minimum above.
%&=\Big\{ k_1+1 \le \frac{\log\log n - \log Y_r(\tau-1)}{\tau-2}, k-k_1 \le \frac{\log\log n - \log Y_1(\tau-1)}{\tau-2} \Big\} \\
Next we show that the formulation of \eqref{eq::distance-tail-BP} gives the same distances as our statement for typical distances in Theorem \ref{thm::distances}, that is,
\be\label{eq::our-statement} \CD_n = T_r + T_b + 1 + \ind_{(\tau-2)^{\bnb} + (\tau-2)^{\bnr} < \tau-1}.\ee
From \eqref{eq::k*+i*} it is an elementary calculation to check that that for any $i\in \Z$, $j=r,b$
\be \label{eq::degree-at-tji} \left(\tfrac{1}{\tau-2}\right)^{T_j+1+i} Y_j^{\sss{(n)}} = \log n \frac{(\tau-2)^{\bnr-i}}{\tau-1}.  \ee

First, we check that \eqref{eq::distance-tail-BP} gives $\Pv(\CD_n > T_r+ T_b) =1$.
So, we set $k=T_r+T_b$, and let us write $k_1:=T_r-\ell$ for some $\ell\in \Z$, then $k-k_1=T_b+\ell$. Hence, we can rewrite \eqref{eq::distance-tail-BP} using  \eqref{eq::degree-at-tji} with $i=-\ell$ and $i=\ell-1$, and get
\[ \Pv( \CD_n > T_r+T_b) \approx  \Pv( \min_{\ell} \frac{(\tau-2)^{\bnr+\ell} + (\tau-2)^{\bnb+1-\ell}}{\tau-1} < 1  ). \]
It is  clear now that setting $\ell=0$ gives a solution for all $\bnr, \bnb \in [0,1)$, since the expression after the $\min_\ell$, for $\ell=0$ is at most $(1+(\tau-2))/(\tau-1) = 1$.  Moreover, note that for $\ell=0$ both $k_1+1\le T_r+1$ and $k-k_1 \le T_b+1$ hold, hence, we found a solution in $\CB_n$.

Next, we check that
\be\label{eq::distance-indicator} \Pv(\CD_n > T_r+T_b +1) = \Pv( \ind_{(\tau-2)^{\bnb} + (\tau-2)^{\bnr} < \tau-1}=1  ).\ee
For this, we set $k=T_r+T_b+1$, write again $k_1=T_r-\ell$, then $k-k_1=T_b+1+\ell$. Hence, we can rewrite \eqref{eq::distance-tail-BP} using \eqref{eq::degree-at-tji} with $i=-\ell$ and $i=\ell$ and get
 \[ \Pv( \CD_n > T_r+T_b+1) \approx  \Pv( \min_{\ell} \frac{(\tau-2)^{\bnr+\ell} + (\tau-2)^{\bnb-\ell}}{\tau-1} < 1  ). \]
It is clear now that setting $\ell=0$ yields \eqref{eq::distance-indicator},  moreover, note that for $\ell=0,$ the event $\CB_n$ holds as well. We argue that no other choice of $\ell$ gives a smaller answer. Wlog we can assume $\ell\ge 1$, the case when $\ell\le -1$ can be treated similarly. Then,
we need to show that for all $\ell\ge 1$,
\be\label{eq::needed}  (\tau-2)^{\bnr+\ell} +  (\tau-2)^{\bnb-\ell} > (\tau-2)^{\bnr} +  (\tau-2)^{\bnb}. \ee
since $\bnr,\bnb \in [0,1)$, the following two inequalities hold:
\[ \ba  (\tau-2)^{\bnb} \left( (\tau-2)^{-\ell }-1\right) &> (\tau-2)^{-\ell+1} -(\tau-2) \\
 (\tau-2)^{\bnr} \left( 1-(\tau-2)^{\ell }\right) &\le 1-(\tau-2)^{\ell}. \ea\]
Hence, \eqref{eq::needed} holds if $1-(\tau-2)^{\ell} \le (\tau-2)^{-\ell+1} -(\tau-2)$ holds. It is elementary to check that this is the case for all $\ell\ge 1$ and $\tau\in(2,3)$.

Finally, we check that \be\label{eq::distance-toolarge} \Pv(\CD_n > T_r+T_b +2) = 0  \ee
holds also. For this, we set $k=T_r+T_b+2$, write again $k_1=T_r-\ell$, then $k-k_1=T_b+2+\ell$. Hence, we can rewrite \eqref{eq::distance-tail-BP} using \eqref{eq::degree-at-tji} with $i=-\ell$ and $i=\ell+1$ and get
 \[ \Pv( \CD_n > T_r+T_b+2) \approx  \Pv( \min_{\ell} \frac{(\tau-2)^{\bnr+\ell} + (\tau-2)^{\bnb-\ell-1}}{\tau-1} < 1  ). \]
We need to show that no $\ell\in \Z$ satisfies this minimisation problem.
For this, we use again that $\bnr,\bnb \in [0,1)$ implies that
\[(\tau-2)^{\bnr+\ell} + (\tau-2)^{\bnb-\ell-1} > (\tau-2)^{\ell+1} + (\tau-2)^{-\ell},\]
and it is elementary to show again that the rhs is at least $\tau-1$ for all $\ell \in \Z$ and $\tau \in (2,3)$.

These calculations show that the statement of Theorem \ref{thm::distances} is identical - though a non-trivial rewrite of - the statement of \cite[Theorem 1.2]{HHZ07}. The final formula of \cite[Theorem 1.2]{HHZ07}, i.e., the distribution of the fluctuation of the typical distance around $2\log\log_n/|\log (\tau-2)|$ is then obtained by solving the minimisation problem on the rhs of \eqref{eq::distance-tail-BP} with $k_1\in \CB_n$ replaced by $k_1 \le k$.

\bibliographystyle{abbrv}
\bibliography{refscompetition}

\end{document}